\definecolor{dblue}{rgb}{0,0,0.70}
\renewcommand{\tocsection}[3]{%
  \indentlabel{\@ifnotempty{#2}{\bfseries\ignorespaces#1 #2\quad}}\bfseries#3}
\renewcommand{\tocsubsection}[3]{%
  \indentlabel{\@ifnotempty{#2}{\ignorespaces#1 #2\quad}}#3}
\renewcommand{\tocsubsubsection}[3]{%
  \indentlabel{\hspace{30pt}\@ifnotempty{#2}{\ignorespaces#1 #2\quad}}#3}
\newcommand\@dotsep{4.5}
\def\@tocline#1#2#3#4#5#6#7{\relax
  \ifnum #1>\c@tocdepth % then omit
  \else
    \par \addpenalty\@secpenalty\addvspace{#2}%
    \begingroup \hyphenpenalty\@M
    \@ifempty{#4}{%
      \@tempdima\csname r@tocindent\number#1\endcsname\relax
    }{%
      \@tempdima#4\relax
    }%
    \parindent\z@ \leftskip#3\relax \advance\leftskip\@tempdima\relax
    \rightskip\@pnumwidth plus1em \parfillskip-\@pnumwidth
    #5\leavevmode\hskip-\@tempdima{#6}\nobreak
    \leaders\hbox{$\m@th\mkern \@dotsep mu\hbox{.}\mkern \@dotsep mu$}\hfill
    \nobreak
    \hbox to\@pnumwidth{\@tocpagenum{\ifnum#1=1\bfseries\fi#7}}\par% <-- \bfseries for \section page
    \nobreak
    \endgroup
  \fi}
\renewcommand\csname r@tocindent0\endcsname{0pt}
\def\l@subsection{\@tocline{2}{0pt}{2.5pc}{5pc}{}}
\newtheorem{theorem}{Theorem}[section]	
\newtheorem*{theorem*}{Theorem}
\newaliascnt{lemma}{theorem}
\newtheorem{lemma}[lemma]{Lemma}
\newtheorem{claim}[theorem]{Claim}
\newtheorem*{lemma*}{Lemma}
\newaliascnt{proposition}{theorem}
\newtheorem{proposition}[proposition]{Proposition}
\newaliascnt{corollary}{theorem}
\newtheorem{corollary}[corollary]{Corollary}
\newtheorem*{subclaim*}{Subclaim}
\theoremstyle{remark}
\newaliascnt{remark}{theorem}
\newtheorem{remark}[remark]{Remark}
\theoremstyle{theorem}
\newaliascnt{question}{theorem}
\newtheorem{question}[question]{Question}
\newtheorem{notation}[theorem]{Notation}
\newtheorem*{question*}{Question}
\newaliascnt{definition}{theorem}
\newtheorem{definition}[definition]{Definition}
\newaliascnt{example}{theorem}
\newtheorem{example}[example]{Example}
\newcommand{\axiomft}[1]{\mathsf{#1}} 
\newcommand{\ZFC}{\axiomft{ZFC}}
\newcommand{\CH}{\axiomft{CH}}
\newcommand{\Ord}{\mathrm{Ord}}
\newcommand{\PFA}{\axiomft{PFA}}
\newcommand{\MA}{\axiomft{MA}}
\newcommand{\cof}{\mathrm{cof}}
\newcommand{\Lev}{\mathrm{Lev}}
\DeclareMathOperator{\dom}{dom}
\DeclareMathOperator{\rank}{rank}
\DeclareMathOperator{\Add}{Add}
\DeclareMathOperator{\crit}{crit}
\newcommand{\forces}{\mathrel{\Vdash}}
\newcommand{\PP}{\mathbb{P}}
\newcommand{\QQ}{\mathbb{Q}}
\newcommand{\BB}{\mathbb B}
\newcommand{\ran}{\mathrm{ran}}
\newcommand{\Tr}{\mathrm{Tr}}
\newcommand{\FA}{\mathsf{FA}}
\newcommand{\BFA}{\mathsf{BFA}}
\newcommand{\NP}{\mathsf{N}}
\newcommand{\club}{\mathsf{club}\text{-}}
\newcommand{\stat}{\mathsf{stat}\text{-}}
\newcommand{\ub}{\mathsf{ub}\text{-}}
\newcommand{\fo}{\Sigma_0^{\mathrm{(sim)}}\text{-}}
\newcommand{\sforces}{\Vdash^+} 
\newcommand{\pow}{\mathcal{P}}
\newcommand{\BN}{\mathsf{BN}}
\newenvironment{enumerate-(a)}{\begin{enumerate}[label={\upshape (\alph*)}, leftmargin=2pc]}{\end{enumerate}}
\newenvironment{enumerate-(1)}{\begin{enumerate}[label={\upshape (\arabic*)}, leftmargin=2pc]}{\end{enumerate}}
\newenvironment{enumerate-(i)}{\begin{enumerate}[label={\upshape (\roman*)}, leftmargin=2pc]}{\end{enumerate}}
\newenvironment{itemizenew}{\begin{itemize}[leftmargin=2pc]}{\end{itemize}}
\author{Philipp Schlicht}
\address[Philipp Schlicht]{School of Mathematics, 
University of Bristol, 
Fry Building.
Woodland Road, 
Bristol, BS8~1UG, UK}
\email[Philipp Schlicht]{philipp.schlicht@bristol.ac.uk}
\author{Christopher Turner}
\address[Christopher Turner]{School of Mathematics, 
University of Bristol, 
Fry Building.
Woodland Road, 
Bristol, BS8~1UG, UK}
\email[Christopher Turner]{christopher.turner@bristol.ac.uk}
\date{\today}
\subjclass[2020]{(Primary) 03E57; (Secondary) 03E35, 03E65}
\keywords{Forcing axioms, generic absoluteness}
\title{Forcing axioms via ground model interpretations} 
\begin{document}
\begin{abstract} 
%We isolate and 
We study principles of the form: if a name $\sigma$ is forced to have a certain property $\varphi$, then there is a ground model filter $g$ such that $\sigma^g$ satisfies $\varphi$. 
%the existence of ground model interpretations $\sigma^g$ of names $\sigma$ with specified first-order or second-order properties. 
% and which have appeared implicitly in the study of forcing axioms. 
%We isolate and study \emph{name principles} of the form ``if $\Vdash_\PP\varphi(\sigma)$ holds, then there is a filter $g$ such that $\varphi(\sigma^g)$ holds''. 
% without having been explicitly isolated. 
%The strengthening $\PFA^+$ of the proper forcing axiom motivates the 
%These turn out to be quite general: all \
%Notably, 
We prove a general correspondence connecting these name principles to forcing axioms. 
Special cases of the main theorem are:  
%between name principles and forcing axioms. 
%The main result shows that all known forcing axioms are special cases of name principles. 
%, a correpondence between name principles and forcing axioms,  
%In essence, it 
%shows that the former yield a more fine-grained hierarchy. 
% than forcing axioms. 
% for appropriate formulas $\varphi$ and names $\sigma$. For instance, $\PFA$ is equivalent to a principle for names for subsets of $\omega_1$, and the subcomplete forcing axiom $\SCFA$ is equivalent to a principle for names for subsets of $\omega$ assuming $\CH$. We also derive the following applications: 
%Using the correspondence, we derive the following results: 
%For instance, we have the applications: 
%In particular, this allows us to express all known forcing axioms as name principles and obtain other applications, for instance: 
%As applications, we obtain: 
\begin{itemize} 
\item 
Any forcing axiom can be expressed as a name principle. For instance, $\PFA$ is equivalent to: 
\begin{itemizenew} 
\item[\mantriangleright] 
A principle for rank $1$ names (equivalently, nice names) for subsets of $\omega_1$. 
\item[\mantriangleright] 
A principle for rank $2$ names for sets of reals. 
%\item[\mantriangleright] 
%The subcomplete forcing axiom $\SCFA$ is equivalent to a principle for names for reals 
%subsets of $\omega$ 
%assuming $\CH$. 
\end{itemizenew} 
\item 
%We prove 
$\lambda$-bounded forcing axioms are equivalent to name principles. 
Bagaria's characterisation of $\BFA$ via generic absoluteness is a corollary. 
%The bounded forcing axiom $\BFA_{\PP}$ is equivalent to $\PP$-generic absoluteness for $\Sigma^1_1(\omega_1)$-formulas. This yields %Bounded forcing axioms are equivalent to $\Sigma_1$-absoluteness (this reproves a result of Bagaria) and variants thereof. 
%Variants of Bagaria's characterisation of bounded forcing axioms by generic absoluteness 
%\item \ 
%[I think these are bounded forcing axioms. The subsection is not yet correct.] 
%Any forcing axiom $\FA_{\PP,\kappa}$ is equivalent to the existence of certain elementary embeddings associated to Boolean ultrapowers  
%Characterisations of forcing axioms via elementary embeddings associated to Boolean ultrapowers  
\end{itemize} 

We further systematically study name principles where $\varphi$ is a notion of largeness 
%postulating that $\sigma^g$ satisfies a notion of largeness 
for subsets of $\omega_1$ (such as being unbounded, stationary or in the club filter) and corresponding forcing axioms. 
\end{abstract}

\maketitle

\setcounter{tocdepth}{3}
\tableofcontents

%%%%%%%%%%
\section{Introduction} 

In this paper, we isolate and study \emph{name principles}. They express that for names $\sigma$ such that a certain property is forced for $\sigma$, there exists a filter $g$ in the ground model $V$ such that $\sigma^g$ already has this property in $V$. 
In general, we fix a class $\Sigma$ of names, for example nice names for sets of ordinals. 
Given a forcing $\PP$ and a formula $\varphi(x)$, one can then study the principle: 
\begin{quote}
\emph{``If $\sigma\in \Sigma$ and $\PP\Vdash \varphi(\sigma)$ holds, then there exists a filter $g\in V$ on $\PP$ such that $\varphi(\sigma^g)$ holds in $V$.''} 
\end{quote} 
Such principles are closely related to Bagaria's work on generic absoluteness and forcing axioms \cite{bagaria2000bounded}. 
Recall that the forcing axiom $\FA_{\PP,\kappa}$ associated to a forcing $\PP$ and an uncountable cardinal $\kappa$ states: 
\begin{quote} 
\emph{``For any sequence $\vec{D}=\langle D_\alpha \mid \alpha<\kappa\rangle$ of predense subsets of $\PP$, there is a filter $g\in V$ on $\PP$ such that $g\cap D_\alpha \neq\emptyset$ for all $\alpha<\kappa$.''} 
\end{quote} 
Often, proofs from forcing axioms can be formulated by first proving a name principle and then obtaining the desired result as an application. 

\begin{example} 
$\FA_{\PP,\omega_1}$ implies that for any stationary subset $S$ of $\omega_1$, $\PP$ does not force that $S$ is nonstationary. 
We sketch an argument via a name principle. 
We shall show in Section \ref{Section correspondence and applications} that 
$\FA_{\PP,\omega_1}$ implies the name principle for any nice name $\tau$ for a subset of $\omega_1$ and any $\Sigma_0$-formula $\varphi$.  
So towards a contradiction, suppose there is a name $\tau$ for a club with $\Vdash_\PP \tau\cap S=\emptyset$. 
Apply the name principle for the formula ``$\tau$ is a club in $\omega_1$ and $\tau\cap \check{S}
=\emptyset$''. 
Hence there is a filter $g\in V$ such that $\tau^g$ is a club and $\tau^g\cap S=\emptyset$. 
%From $\FA_\PP$, one can prove that there is a filter $g\in V$ such that $\tau^g$ is a club and $\tau^g\cap S=\emptyset$. 
%In fact, this works for any $\Sigma_0$-formula in place of the statements ``$\tau^g$ is club in $\omega_1$'' and ``$\tau^g\cap S=\emptyset$''. 
However, the existence of $\tau^g$ contradicts the assumption that $S$ is stationary. 
%Note that the statements ``$\tau^g$ is a club'' and ``$\tau^g\cap S=\emptyset$'' are first-order properties of the structure $(\omega_1,\in,S,\tau^g)$. 
\end{example} 

Name principles for stationary sets have appeared implicitly in combination with forcing axioms. 

\begin{example} 
The forcing axiom $\PFA^+$ states: 
\begin{quote} 
For any proper forcing $\PP$, any sequence $\vec{D}=\langle D_\alpha \mid \alpha<\omega_1\rangle$ of predense subsets of $\PP$ and any nice name $\sigma$ for a stationary subset of $\omega_1$, there is a filter $g$ on $\PP$ such that 
\begin{itemize} 
\item 
$g\cap D_\alpha \neq\emptyset$ for all $\alpha<\omega_1$ and 
\item 
$\sigma^g$ is stationary. 
\end{itemize} 
\end{quote} 
Thus $\PFA^+$ is a combination of the forcing axiom $\PFA$ with a name principle for stationary sets. 
Note that the formula ``$\sigma$ is stationary'' is not $\Sigma_0$. 
%second-order over the structure $(\omega_1,\in,\sigma^g)$. 
\end{example} 

We aim for an analysis of name principles for their own sake. 
The main result of this paper is that name principles are more general than forcing axioms. 
In other words, all known forcing axioms can be reformulated as name principles. 
For instance, we have: 

\begin{theorem}
\label{special case of main theorem}  (see Theorem \ref{correspondence forcing axioms name principles}\footnote{This follows from Theorem \ref{correspondence forcing axioms name principles} \ref{correspondence forcing axioms name principles 2} for $X=\kappa$ and $\alpha=1$.})   
	Suppose that $\PP$ is a forcing and $\kappa$ is a cardinal. 
	Then the following statements are equivalent: 
	\begin{enumerate-(1)} 
	\item 
	$\FA_{\PP,\kappa}$ 
	\item 
	The name principle $\NP_{\PP,\kappa}$ for nice names $\sigma$ and the formula $\sigma=\check{\kappa}$. 
	\item 
	The	 simultaneous name principle $\fo\NP_{\PP,\kappa}$ for nice names $\sigma$ and all first-order formulas over the structure $(\kappa,\in,\sigma)$. 
	\end{enumerate-(1)} 
\end{theorem}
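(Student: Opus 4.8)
My plan is to prove the cycle of implications $(1)\Rightarrow(3)\Rightarrow(2)\Rightarrow(1)$, with the first implication carrying all of the real content. The implication $(3)\Rightarrow(2)$ is immediate: I would apply the simultaneous principle to the single first-order sentence $\forall x\,(x\in\sigma)$, i.e.\ ``the predicate for $\sigma$ is everything''. If $\sigma$ is a nice name with $\PP\Vdash\sigma=\check\kappa$, then $\PP\Vdash(\kappa,\in,\sigma)\models\forall x\,(x\in\sigma)$, so the filter $g$ provided by $\fo\NP_{\PP,\kappa}$ satisfies $(\kappa,\in,\sigma^g)\models\forall x\,(x\in\sigma)$, which says exactly $\sigma^g=\kappa$; this is $\NP_{\PP,\kappa}$. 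For $(2)\Rightarrow(1)$, I would code a given sequence $\langle D_\alpha\mid\alpha<\kappa\rangle$ of predense sets into a nice name. Replacing each $D_\alpha$ by its downward closure (which is open dense), I choose a maximal antichain $A_\alpha$ below $D_\alpha$; each $A_\alpha$ is then predense and each of its elements lies below some member of $D_\alpha$. The nice name $\sigma=\bigcup_{\alpha<\kappa}\{\check\alpha\}\times A_\alpha$ satisfies $\PP\Vdash\check\alpha\in\sigma$ for every $\alpha$, hence $\PP\Vdash\sigma=\check\kappa$. Applying $\NP_{\PP,\kappa}$ yields $g\in V$ with $\sigma^g=\kappa$, i.e.\ $g\cap A_\alpha\neq\emptyset$ for all $\alpha$; since $g$ is upward closed, this gives $g\cap D_\alpha\neq\emptyset$, establishing $\FA_{\PP,\kappa}$.

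The heart of the argument is $(1)\Rightarrow(3)$, which I would prove via a \emph{truth lemma for filters}. Fix a nice name $\sigma=\bigcup_\alpha\{\check\alpha\}\times A_\alpha$ for a subset of $\kappa$. Using $\kappa^{<\omega}=\kappa$, I assemble a family $\mathcal D$ of at most $\kappa$ predense sets: for every first-order formula $\varphi(\bar x)$ and tuple $\bar\alpha\in\kappa^{<\omega}$ a maximal antichain of conditions \emph{deciding} $\varphi(\bar\alpha)$; for each atom $\check\alpha\in\sigma$ a maximal antichain $\bar A_\alpha\supseteq A_\alpha$; and for each existential formula $\exists y\,\psi(y,\bar x)$ and tuple $\bar\alpha$ a \emph{witnessing} antichain refining the dense set of conditions that either force $\neg\exists y\,\psi(y,\bar\alpha)$ or force $\psi(\check\eta,\bar\alpha)$ for some explicitly designated ground-model witness $\eta<\kappa$. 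Applying $\FA_{\PP,\kappa}$ produces a filter $g$ meeting every member of $\mathcal D$.

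The key claim, proved by induction on the complexity of $\varphi$, is that for all $\varphi(\bar x)$ and all $\bar\alpha$ one has $(\kappa,\in,\sigma^g)\models\varphi(\bar\alpha)$ if and only if some $p\in g$ forces $\varphi(\bar\alpha)$. Atomic $\in$-formulas are absolute; the atom $\check\alpha\in\sigma$ is handled by $\bar A_\alpha$ together with the nice-name computation $\sigma^g=\{\alpha:g\cap A_\alpha\neq\emptyset\}$; the Boolean steps use downward directedness of the filter (any two elements of $g$ have a common lower bound in $g$) together with the decision sets. The only subtle step is the existential one: in the forward direction, if $p\in g$ forces $\exists y\,\psi$, then the element of $g$ meeting the witnessing antichain cannot force $\neg\exists y\,\psi$ (a common extension would force a contradiction), so it forces $\psi(\check\eta,\bar\alpha)$ for its designated witness $\eta$, and the induction hypothesis delivers $(\kappa,\in,\sigma^g)\models\psi(\eta,\bar\alpha)$. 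With the truth lemma in hand, any first-order sentence $\varphi$ with $\PP\Vdash(\kappa,\in,\sigma)\models\varphi$ is forced by $1_\PP\in g$, hence holds in $(\kappa,\in,\sigma^g)$; as a single $g$ works for all such $\varphi$ at once, this is precisely $\fo\NP_{\PP,\kappa}$.

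I expect the main obstacle to be exactly this existential step and the bookkeeping it demands. The witnessing antichains introduce new ground-model parameters $\eta$ whose own subformulas must then also be decided by $g$, so the construction of $\mathcal D$ has to be closed under subformula-witnessing while keeping $|\mathcal D|\le\kappa$. Getting this closure right—so that the induction hypothesis is always available for the witnesses actually chosen by $g$—is where the argument needs the most care, and it is the same mechanism that underlies the proof that forcing axioms yield generic absoluteness.
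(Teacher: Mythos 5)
Your proof is correct, and for the key direction $(1)\Rightarrow(3)$ it takes a genuinely more specialized route than the paper. The paper obtains Theorem \ref{special case of main theorem} as the instance $X=\kappa$, $\alpha=1$ of Theorem \ref{correspondence forcing axioms name principles}, whose hard direction rests on Lemma \ref{collection of dense sets to witness first order statement}: a double induction on the rank of arbitrary $\kappa$-small names and on the complexity of $\Sigma_0$-formulas of set theory, treating $\sigma=\check{A}$, $\sigma=\tau$, $\tau\in\sigma$ and their negations separately via the strong-forcing relation $\sforces$. Because you restrict to rank-$1$ nice names and to first-order formulas over $(\kappa,\in,\sigma)$, the only non-absolute atomic formula is $\alpha\in\sigma$, which is handled directly by the antichains $A_\alpha$ of the nice name; this collapses the induction on names entirely and lets you run a clean Henkin-style truth lemma with decision antichains and designated existential witnesses. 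Your worry about closure under witnessing is already resolved by your own setup: since $\mathcal{D}$ contains decision and witnessing antichains for \emph{every} formula and \emph{every} parameter tuple in $\kappa^{<\omega}$ (still only $\kappa$ many sets), the induction hypothesis is automatically available for whichever witness $\eta$ the filter selects. The remaining implications match the paper: $(3)\Rightarrow(2)$ is the observation that $\sigma=\check{\kappa}$ is expressible by a formula covered by the simultaneous principle, and $(2)\Rightarrow(1)$ is the coding of dense sets into a name as in Lemma \ref{Lemma_FA iff N} --- your extra step of refining each $D_\alpha$ to a maximal antichain is exactly what is needed to stay within the class of \emph{nice} names, which the paper's version of that lemma does not need since it allows arbitrary rank-$1$ names. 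What your approach buys is a self-contained, more elementary proof of the special case; what the paper's buys is the generality needed for higher-rank names and the bounded versions.
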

The main Theorems \ref{correspondence forcing axioms name principles} and \ref{correspondence bounded forcing axioms name principles} are more general and cover: 
%\begin{itemize} 
%\item 
(i) arbitrary names instead of nice names and 
%\item 
(ii) bounded forcing axioms. 
%\end{itemize} 

Bagaria proved an equivalence between bounded forcing axioms and generic absoluteness principles \cite{bagaria1997characterization, bagaria2000bounded}. 
The following corollary of \ref{correspondence bounded forcing axioms name principles} has Bagaria's result as a special case. 
Here $\BFA_{\PP,\kappa}$ denotes the usual bounded forcing axiom, i.e. for $\kappa$ many predense sets of size at most $\kappa$, and the principle in \ref{Bagaria's characterisation 2c} denotes the name principle for names of the form 
$ \{ (\check{\alpha},p_\alpha)\mid \alpha\in \kappa \}$ and for all $\Sigma_0$-formulas simultaneously.

\begin{theorem}
\label{Theorem intro Bagaria} 
(see Theorems \ref{Bagaria's characterisation} and \ref{variant of Bagaria's characterisation for countable cofinality}) 
Suppose that $\kappa$ is an uncountable cardinal, $\PP$ is a complete Boolean algebra and $\dot{G}$ is a $\PP$-name for the generic filter. 
The following conditions are equivalent: 
\begin{enumerate-(1)} 
\item 
\label{Bagaria's characterisation 1c} 
$\BFA_{\PP,\kappa}$ 
\item 
\label{Bagaria's characterisation 2c} 
$\fo\BN^1_{\PP,\kappa}$ 
\item 
\label{Bagaria's characterisation 3c} 
$\Vdash_\PP V \prec_{\Sigma^1_1(\kappa)}V[\dot{G}]$
\end{enumerate-(1)} 
If $\cof(\kappa)>\omega$ or there is no inner model with a Woodin cardinal, then the next condition is equivalent to \ref{Bagaria's characterisation 1c}, \ref{Bagaria's characterisation 2c} and  \ref{Bagaria's characterisation 3c}: 
\begin{enumerate-(1)} 
\setcounter{enumi}{3}
\item 
\label{Bagaria's characterisation 5c} 
$\Vdash_\PP H_{\kappa^+}^V \prec_{\Sigma_1} H_{\kappa^+}^{V[\dot{G}]}$
\end{enumerate-(1)} 
If $\cof(\kappa)=\omega$ and $2^{<\kappa}=\kappa$, then the next condition is equivalent to \ref{Bagaria's characterisation 1c}, \ref{Bagaria's characterisation 2c} and \ref{Bagaria's characterisation 3c}: 
\begin{enumerate-(1)} 
\setcounter{enumi}{4}
\item 
\label{Bagaria's characterisation 4c} 
$1_\PP$ forces that no new bounded subset of $\kappa$ are added. 
\end{enumerate-(1)} 
\end{theorem}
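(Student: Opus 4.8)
The plan is to establish the unconditional core $(1)\Leftrightarrow(2)\Leftrightarrow(3)$ first, and then to attach the two conditional equivalences $(3)\Leftrightarrow(4)$ and $(3)\Leftrightarrow(5)$ under their respective hypotheses. For $(1)\Leftrightarrow(2)$ I would simply specialise the general correspondence between bounded forcing axioms and name principles, Theorem~\ref{correspondence bounded forcing axioms name principles}: the names $\{(\check\alpha,p_\alpha)\mid\alpha\in\kappa\}$ are exactly the bounded rank-$1$ names treated there, the coordinates $p_\alpha$ match up with the conditions occurring in a sequence of $\kappa$ predense sets of size $\le\kappa$, and $\BFA_{\PP,\kappa}$ is by definition the forcing axiom attached to this class. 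Both directions then come for free, and it remains to run the cycle $(2)\Rightarrow(3)\Rightarrow(1)$.

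For $(2)\Rightarrow(3)$, upward $\Sigma^1_1(\kappa)$-absoluteness is immediate, since a $V$-witness $X\subseteq\kappa$ persists to $V[\dot G]$ and the first-order matrix is absolute. For the downward direction, suppose $V[G]\models\exists X\subseteq\kappa\,\varphi(X,\vec A)$ with $\vec A\in V$; a witness is $\sigma^G$ for a standard name $\sigma=\{(\check\alpha,p_\alpha)\mid\alpha\in\kappa\}$, and passing below $b=\llbracket\exists X\,\varphi(X,\vec A)\rrbracket$ I would work in the cone $\PP\restriction b$, which again satisfies the name principle (as $\BFA$, hence $(2)$ via $(1)\Leftrightarrow(2)$, localises to cones). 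Applying $(2)$ over $\PP\restriction b$ to a name forced to satisfy $\varphi$ yields a filter $g\in V$ with $\varphi(\sigma^g,\vec A)$ in $V$, so $\sigma^g$ is the desired ground-model witness. For $(3)\Rightarrow(1)$, given predense $\langle D_\alpha\mid\alpha<\kappa\rangle$ of size $\le\kappa$, the assertion ``there is a filter meeting every $D_\alpha$'' is $\Sigma^1_1(\kappa)$ (the filter is a compatible subset of the size-$\le\kappa$ set $\bigcup_\alpha D_\alpha$ meeting each $D_\alpha$, coded by a subset of $\kappa$ with $\PP\restriction\bigcup_\alpha D_\alpha$ and the $D_\alpha$ as parameters); it holds in $V[G]$ because the generic filter meets every predense subset of $\PP$ lying in $V$, and reflecting to $V$ by $(3)$ produces the ground-model filter.

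For the conditional equivalences, $(4)\Rightarrow(3)$ is unconditional: in a $\Sigma^1_1(\kappa)$ statement the witness $X\subseteq\kappa$ lies in $H_{\kappa^+}$ and the first-order matrix has quantifiers bounded by $\kappa$, so the statement is $\Sigma_1$ over $H_{\kappa^+}$, and $\Sigma_1$-elementarity gives the absoluteness. The substantial direction is $(3)\Rightarrow(4)$: given $H_{\kappa^+}^{V[G]}\models\exists y\,\theta(y,\vec a)$ with $\vec a\in V$, I would code $y$ by a subset $Y\subseteq\kappa$ and try to express ``$Y$ codes a well-founded extensional relation, and $\theta$ holds of its transitive collapse'' as a $\Sigma^1_1(\kappa)$ statement, so that $(3)$ reflects it to $V$; the well-foundedness clause is the obstruction and is exactly where the hypothesis enters. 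For $(5)$, the implication $(3)\Rightarrow(5)$ is clean: if $\PP$ added a new bounded subset of $\kappa$, then using $2^{<\kappa}=\kappa$ I would enumerate all ground-model bounded subsets as the sections of a single parameter $A\subseteq\kappa$ and observe that ``there is a bounded $X\subseteq\kappa$ differing from every section of $A$'' is $\Sigma^1_1(\kappa)$, true in $V[G]$ via the new subset but false in $V$, contradicting $(3)$. The converse $(5)\Rightarrow(3)$ uses that $H_\kappa^V=H_\kappa^{V[G]}$ together with a reflection argument exploiting $\cof(\kappa)=\omega$, and is the delicate direction at countable cofinality.

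The main obstacle throughout is certifying well-foundedness of a $\kappa$-code in a $\Sigma^1_1(\kappa)$-absolute way, which is what links $\Sigma^1_1(\kappa)$-absoluteness to $\Sigma_1$-elementarity of $H_{\kappa^+}$. A well-founded relation coded on $\kappa$ may have rank $\ge\kappa$, so ``well-founded'' is genuinely $\Pi^1_1(\kappa)$ rather than $\Sigma^1_1(\kappa)$, and a naive rank-function coding is circular. When $\cof(\kappa)>\omega$ this is resolved because any descending $\omega$-chain is bounded below $\kappa$, so well-foundedness reduces to well-foundedness of the bounded pieces, each of which is $\Sigma_1$ over $H_\kappa$ via a rank function into $\kappa$, and is therefore absolute enough to fit into $(3)$. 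When $\cof(\kappa)=\omega$ this fails, and I expect the argument to invoke the non-existence of an inner model with a Woodin cardinal: under that hypothesis the core model is correct about well-foundedness at the relevant projective level, supplying the missing absoluteness. This is precisely why the theorem splits into the two cofinality cases and why the large-cardinal hypothesis appears only for condition $(4)$.
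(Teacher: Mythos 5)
Your skeleton for the unconditional core matches the paper's: $(1)\Rightarrow(2)$ specialises Theorem~\ref{correspondence bounded forcing axioms name principles}, $(2)\Rightarrow(3)$ interprets a $1$-bounded witness name (legitimate because $\PP$ is a complete Boolean algebra, so every rank-$1$ name can be replaced by a $1$-bounded one), $(3)\Rightarrow(1)$ notes that the existence of the desired filter is itself $\Sigma^1_1(\kappa)$, and your treatment of well-foundedness for $\cof(\kappa)>\omega$ (reduce to restrictions $R\restriction\gamma$ for $\gamma<\kappa$ and certify each by a rank function into $\kappa$) is exactly the paper's proof of $(3)\Rightarrow H_{\kappa^+}$-elementarity. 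Two smaller caveats: $(2)\Rightarrow(1)$ does \emph{not} come for free from the correspondence theorem, since the canonical name built from $\kappa$ predense sets of size $\le\kappa$ is only $\kappa$-bounded, not $1$-bounded — the detour through $(3)$ is genuinely needed, as you in fact arrange; and your passage to the cone $\PP\restriction b$ presupposes that the name principle localises to cones, which is not obvious (the paper's own Lemma~\ref{ubFA implies BFA} quantifies over all cones precisely because such axioms need not localise). The paper avoids this by proving the dense-set lemma in a localised form: the filter need only contain \emph{some} $p$ forcing $\varphi(\vec\sigma)$.

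The genuine gaps are in the two conditional equivalences at $\cof(\kappa)=\omega$. First, your guess about how ``no inner model with a Woodin cardinal'' enters is wrong: it is not used for correctness of a core model about well-foundedness at some projective level, but via weak covering — the Jensen--Steel $K$ is generically absolute and computes $\kappa^+$ correctly at singular cardinals, so a forcing preserving $\kappa$ cannot collapse $\kappa^+$; once $(\kappa^+)^V=(\kappa^+)^{V[G]}$ is secured, the well-foundedness clause is handled as in the uncountable-cofinality case except that one fixes in advance a rank bound $\gamma<\kappa^+$ and witnesses ``well-founded of rank $\le\gamma$'' by a rank function into $\gamma$, coded as a subset of $\kappa$ via a parameter coding $(\gamma,<)$ on $\kappa$. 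Your proposed mechanism does not produce a $\Sigma^1_1(\kappa)$ formulation of well-foundedness and so does not close the argument. Second, for the direction ``no new bounded subsets of $\kappa$ are added $\Rightarrow$ $\Sigma^1_1(\kappa)$-absoluteness'' you offer only the phrase ``a reflection argument''; the actual proof requires constructing in $V$ a tree of height $\omega$ (Lemma~\ref{tree projecting to a set}) whose nodes are bounded approximations $x\restriction\kappa_i$ to a witness together with coherent truth tables and Skolem data, so that branches correspond exactly to witnesses of the $\Sigma^1_1$ statement; since no bounded subsets are added the tree has the same nodes in $V$ and $V[G]$, and absoluteness of ill-foundedness for trees of height $\omega$ pulls a branch back to $V$. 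Without this construction the direction you yourself flag as delicate is not proved.
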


The second topic of this paper is the study of name principles for \emph{specific} formulas $\varphi(x)$. 
In particular, we will consider these principles when $\varphi(x)$ denotes a notion of largeness for subsets of $\kappa$ such as being unbounded, stationary, or in the club filter. 
For each of these notions, we also study the corresponding forcing axiom. 
For instance, the \emph{unbounded forcing axiom} $\ub\FA_{\PP,\kappa}$ states: 
\begin{quote} 
\emph{``For any sequence $\vec{D}=\langle D_\alpha \mid \alpha<\kappa\rangle$ of predense subsets of $\PP$, there is a filter $g$ on $\PP$ such that 
$g\cap D_\alpha \neq\emptyset$ for unboundedly many $\alpha<\kappa$.''} 
\end{quote} 
All these principles are defined formally in Section \ref{Section definitions}. 
The next diagram displays some results about them. 
Solid arrows denote non-reversible implications, dotted arrows stand for implications whose converse remains open, and dashed lines indicate that no implication is provable. 
The numbers indicate where to find the proofs. 

%add/remove/change option [hb] to change placement of figure 
%\begin{figure}[hb] 
\begin{figure}[H]
\[ \xymatrix@R=3.5em{ 
{\txt{$\NP_\kappa$}} \ar@{<->}[r]^{\ref{Corollary equivalence of FA, NP, clubFA, clubNP}} \ar@{<->}[d]_{\ref{Lemma_FA iff N}} & {\txt{$\club\NP_\kappa$}} \ar@{<->}[d]_{\ref{Corollary equivalence of FA, NP, clubFA, clubNP}} \ar@{--}[r]^{\ref{Lemma stat-NP for sigma-centred},\ \ref{Remark_FACohen_meagre} }_{\ref{Remark_strength of statNsigma-closed}} & \txt{$\stat\NP_\kappa$} \ar@{->}[r] \ar@{->}[d]^{\ref{Lemma statN to statFA}}  & \txt{$\ub\NP_\kappa$} \ar@{<->}[d]_{\ref{Lemma ubN to ubFA}}^{\ref{Lemma ubFA to ubN}} & \\ 
{\txt{$\FA_\kappa$}} \ar@{<->}[r]^{\ref{Lemma basic FA implications}}_{\ref{Lemma equivalence of clubFA and FA}} & \txt{$\club\FA_\kappa$} \ar@{->}[r]^{\ref{Lemma basic FA implications}} & \txt{$\stat\FA_\kappa$} \ar@{.>}[r]^{\ref{Lemma basic FA implications}} & \txt{$\ub\FA_\kappa$} & \\ 
%\txt{$\BN^1$} 
%\txt{$\BFA^1$} & \txt{$\club\BFA^1$} & \txt{$\stat\BFA^1$} & \txt{$\ub\BFA^1$} & 
}\]
\caption{Forcing axioms and name principles for regular $\kappa$} 
\label{diagram of implications} 
\end{figure}
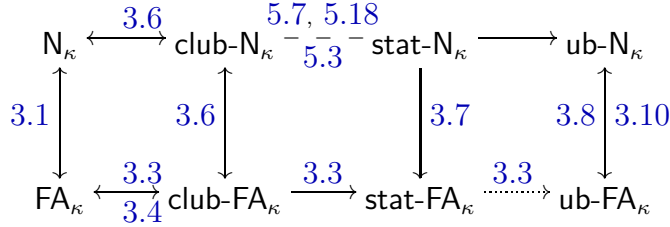

%\clearpage 

We also investigate whether similar implications hold for $\lambda$-bounded name principles and forcing axioms, where $\lambda$ is any cardinal. 
The results about the cases $\kappa\leq\lambda$, $\omega\leq\lambda<\kappa$ and $1\leq\lambda<\kappa$ are displayed in the next diagrams. 
Here a CBA is a complete Boolean algebra. 

\begin{figure}[H]
\[ \xymatrix@R=3.5em{ 
{\txt{$\BN_\kappa^\lambda$}} \ar@{<.}[r]^{\leftrightarrow \text{ for CBAs}} \ar@{<->}[d] & {\txt{$\club\BN_\kappa^\lambda$}} \ar@{.>}[d]_{\leftrightarrow \text{ for}}^{\text{CBAs}} 
%\ar@{--}[r] 
& \txt{$\stat\BN_\kappa^\lambda$} \ar@{.>}[r] \ar@{.>}[d] & \txt{$\ub\BN_\kappa^\lambda$} \ar@{<->}[d] & \\ 
{\txt{$\BFA_\kappa^\lambda$}} \ar@{<->}[r] & \txt{$\club\BFA_\kappa^\lambda$} \ar@{->}[r] & \txt{$\stat\BFA_\kappa^\lambda$} \ar@{.>}[r] & \txt{$\ub\BFA_\kappa^\lambda$} & \\ 
%\txt{$\BN^1$} 
%\txt{$\BFA^1$} & \txt{$\club\BFA^1$} & \txt{$\stat\BFA^1$} & \txt{$\ub\BFA^1$} & 
}\]
\caption{$\lambda$-bounded forcing axioms and name principles for regular $\kappa$ and $\lambda\geq\kappa$ } 
\label{diagram of implications - bounded with lambda>kappa} 
\end{figure}
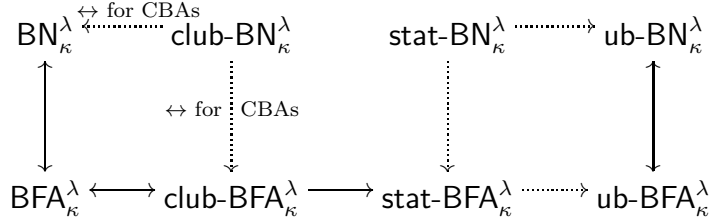 

It is open whether $\club\BN^\lambda_{\PP,\kappa}$ implies $\stat\BN^\lambda_{\PP,\kappa}$. 
Conversely, there are forcings $\PP$ where $\stat\BN^\lambda_{\PP,\kappa}$ holds for all $\lambda$, but $\club\BN^\lambda_{\PP,\kappa}$ fails for all $\lambda\geq\omega$ (see Section \ref{section ccc forcings}, Lemma \ref{Lemma stat-NP for sigma-centred} and Remark \ref{Remark_FACohen_meagre}). 

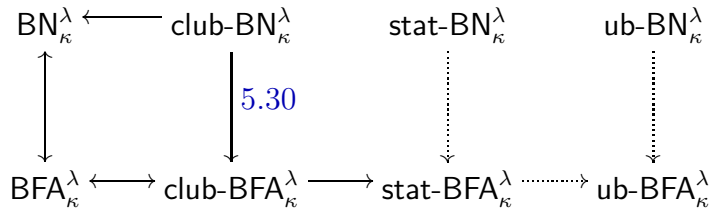
\begin{figure}[H]
\[ \xymatrix@R=3.5em{ 
{\txt{$\BN_\kappa^\lambda$}} \ar@{<->}[d] \ar@{<-}[r] & {\txt{$\club\BN_\kappa^\lambda$}} \ar@{->}[d]^{\ref{Lemma separating BFA from clubBN} } 
%\ar@{--}[r] 
& \txt{$\stat\BN_\kappa^\lambda$} \ar@{.>}[d] & \txt{$\ub\BN_\kappa^\lambda$} \ar@{.>}[d] & \\ 
{\txt{$\BFA_\kappa^\lambda$}} \ar@{<->}[r] & \txt{$\club\BFA_\kappa^\lambda$} \ar@{->}[r] & \txt{$\stat\BFA_\kappa^\lambda$} \ar@{.>}[r] & \txt{$\ub\BFA_\kappa^\lambda$} & \\ 
%\txt{$\BN^1$} 
%\txt{$\BFA^1$} & \txt{$\club\BFA^1$} & \txt{$\stat\BFA^1$} & \txt{$\ub\BFA^1$} & 
}\]
\caption{$\lambda$-bounded forcing axioms and name principles for regular $\kappa$ and $\omega\leq\lambda<\kappa$ } 
\label{diagram of implications - bounded with lambda<kappa} 
\end{figure} 

Again, it is open whether $\club\BN^\lambda_{\PP,\kappa}$ implies $\stat\BN^\lambda_{\PP,\kappa}$, but the converse implication does not hold by the previous remarks.

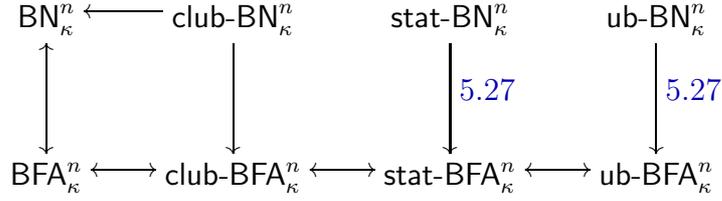
\begin{figure}[H]
\[ \xymatrix@R=3.5em{ 
{\txt{$\BN_\kappa^n$}} \ar@{<->}[d] \ar@{<-}[r] & {\txt{$\club\BN_\kappa^n$}} \ar@{->}[d] 
%\ar@{--}[r] 
& \txt{$\stat\BN_\kappa^n$} \ar@{->}[d]^{\ref{Suslin trees}} & \txt{$\ub\BN_\kappa^n$} \ar@{->}[d]^{\ref{Suslin trees}} & \\ 
{\txt{$\BFA_\kappa^n$}} \ar@{<->}[r] & \txt{$\club\BFA_\kappa^n$} \ar@{<->}[r] & \txt{$\stat\BFA_\kappa^n$} \ar@{<->}[r] & \txt{$\ub\BFA_\kappa^n$} & \\ 
%\txt{$\BN^1$} 
%\txt{$\BFA^1$} & \txt{$\club\BFA^1$} & \txt{$\stat\BFA^1$} & \txt{$\ub\BFA^1$} & 
}\]
\caption{$n$-bounded forcing axioms and name principles for regular $\kappa$ and $1\leq n<\omega$} 
\label{diagram of implications - bounded with finite lambda} 
\end{figure} 

The principles in the bottom row and $\BN^n_\kappa$ are all provable.

\medskip 
The implications and separations in the previous diagrams are proved using specific forcings such as Cohen forcing, random forcing and Suslin trees. 
For instance, we have the following results:

\begin{proposition}(see Lemma \ref{Lemma_Random ubFA}) 
	Let $\PP$ denote random forcing. The following are equivalent:
	\begin{enumerate-(1)}
		\item 
		$\FA_{\PP,\omega_1}$
		\item 
		$\ub\FA_{\PP,\omega_1}$
		\item 
		$2^\omega$ is not the union of $\omega_1$ many null sets
	\end{enumerate-(1)}
\end{proposition}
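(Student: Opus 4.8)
The plan is to establish the cycle $(1)\Rightarrow(2)\Rightarrow(3)\Rightarrow(1)$. The implication $(1)\Rightarrow(2)$ is immediate and needs no special feature of random forcing: a filter witnessing $\FA_{\PP,\omega_1}$ for a given sequence of predense sets meets \emph{all} of them, hence in particular unboundedly many, so it also witnesses $\ub\FA_{\PP,\omega_1}$. (This is just the general implication $\FA\Rightarrow\ub\FA$ from the diagram.)

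The heart of the argument is $(2)\Rightarrow(3)$, and this is where I would spend the effort. Suppose $\ub\FA_{\PP,\omega_1}$ holds and, towards a contradiction, that $2^\omega=\bigcup_{\alpha<\omega_1}N_\alpha$ with each $N_\alpha$ Lebesgue null. Replacing $N_\alpha$ by $\bigcup_{\beta\le\alpha}N_\beta$, a countable and hence null union, I may assume the $N_\alpha$ are increasing and Borel. For each $\alpha$ I set $D_\alpha=\{\,B\subseteq 2^\omega : B\text{ closed},\ \mu(B)>0,\ B\cap N_\alpha=\emptyset\,\}$. By inner regularity of Lebesgue measure and since $N_\alpha$ is null, $D_\alpha$ is dense in $\PP$: given any positive-measure Borel $C$, the set $C\setminus N_\alpha$ has the same measure and contains a closed positive-measure subset disjoint from $N_\alpha$. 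Applying $\ub\FA_{\PP,\omega_1}$ to $\langle D_\alpha\mid\alpha<\omega_1\rangle$ then yields a filter $g$ and an unbounded $A\subseteq\omega_1$ with $g\cap D_\alpha\neq\emptyset$ for all $\alpha\in A$; I choose $B_\alpha\in g\cap D_\alpha$ for $\alpha\in A$.

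The key point is that compactness of $2^\omega$ now does the work that meeting cofinally many measure-shrinking conditions cannot. The sets $\{B_\alpha:\alpha\in A\}$ are closed and all lie in the filter $g$, so any finite subfamily has a common lower bound in $g$ and therefore nonempty (indeed positive-measure) intersection; thus the family has the finite intersection property. By compactness of $2^\omega$ we get $\bigcap_{\alpha\in A}B_\alpha\neq\emptyset$, so I may pick a point $r$ in it. Since the $N_\alpha$ increase and $A$ is unbounded, for every $\beta<\omega_1$ there is $\alpha\in A$ with $\beta\le\alpha$, whence $r\in B_\alpha$ and $B_\alpha\cap N_\beta\subseteq B_\alpha\cap N_\alpha=\emptyset$, so $r\notin N_\beta$. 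Therefore $r\notin\bigcup_{\beta<\omega_1}N_\beta=2^\omega$, contradicting $r\in 2^\omega$.

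Finally, $(3)\Rightarrow(1)$ is the classical characterisation of Martin's axiom for random forcing: if $2^\omega$ is not a union of $\omega_1$ null sets, then $\FA_{\PP,\omega_1}$ (equivalently $\MA_{\omega_1}$ for random forcing) holds, a result going back to Solovay. The only genuine obstacle I anticipate is in $(2)\Rightarrow(3)$. The naive approach would also insert the countably many dense sets forcing the measure or diameter of conditions to shrink, so as to pin down a single real from the filter; but since $\omega_1$ has uncountable cofinality, an unbounded filter cannot be guaranteed to meet infinitely many of them, and no monotone reindexing fixes this. Replacing the pin-down step by the compactness/finite-intersection argument above—made possible precisely by working with \emph{closed} conditions and an \emph{increasing} cover—is what allows the unbounded forcing axiom, rather than the full one, to suffice.
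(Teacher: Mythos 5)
Your proposal is correct and follows the same cycle $(1)\Rightarrow(2)\Rightarrow(3)\Rightarrow(1)$ as the paper's Lemma \ref{Lemma_Random ubFA}, with the same dense sets $D_\alpha$ of closed positive-measure conditions disjoint from the (increasing) null sets. The interesting divergence is in the mechanism for $(2)\Rightarrow(3)$. The paper argues the contrapositive: it extends the given filter to a maximal filter, observes that such a filter selects exactly one basic clopen set $N_t$ at each level and hence determines a single real $x$ lying in the closure of every condition in the filter, and then notes that meeting $D_\alpha$ forces $x\notin S_\alpha$. You instead keep the filter as given and extract the witnessing real by compactness: the chosen conditions $B_\alpha\in g\cap D_\alpha$ are closed sets with the finite intersection property (any finite subfamily has a positive-measure common refinement in $g$), so $\bigcap_{\alpha\in A}B_\alpha\neq\emptyset$. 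Both routes exploit exactly the same two features (closed representatives and an increasing cover), but yours avoids the ultrafilter extension and the slightly informal ``$x$ is in the closure of every element of $g$'' step, which is a modest gain in cleanliness; the paper's version has the advantage that the real $x$ is produced uniformly from the filter, independently of which $D_\alpha$ are met, which it reuses in Lemma \ref{corollary_dense subsets for ubFA and FA}. The one place you fall short of the paper is $(3)\Rightarrow(1)$: you cite it as Solovay's classical result, whereas the paper gives a self-contained proof via density points and the Lebesgue density theorem (and that argument is then recycled for Lemma \ref{corollary_dense subsets for ubFA and FA} and Lemma \ref{Lemma random statN implies FA+}). Since the paper itself labels the equivalence of $(1)$ and $(3)$ as well known, this is a defensible omission rather than a gap, but a complete write-up should either include that argument or give a precise reference.
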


\begin{proposition} (see Corollary \ref{Suslin trees})
	Suppose that a Suslin tree exists. 
	Then there exists a Suslin tree $T$ such that  $\stat\BN^1_{T,\omega_1}$ fails.
\end{proposition}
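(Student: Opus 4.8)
The plan is to produce, by normalizing the hypothesised Suslin tree, a normal Suslin tree $T$ and a single $1$-bounded name $\sigma=\{(\check\alpha,p_\alpha)\mid\alpha<\omega_1\}$ with $T\Vdash$``$\sigma$ is stationary'' for which $\sigma^g$ is nonstationary for every filter $g\in V$ on $T$; this $\sigma$ then witnesses the failure of $\stat\BN^1_{T,\omega_1}$. The second requirement is almost automatic. Ordering $T$ so that a condition is stronger the higher it sits, a filter on $T$ is a chain, hence countable because $T$ is Suslin. If each marker $p_\alpha$ is placed on level $\alpha$, then $\sigma^g=\{\alpha\mid p_\alpha\in g\}$ is contained in $\{\Lev(q)\mid q\in g\}$ and so is countable, hence nonstationary in $V$. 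Thus the whole problem is to arrange that $\sigma$ is forced stationary.

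I would first convert forced stationarity into a combinatorial predensity statement. As $T$ is ccc, every club of a generic extension $V[\dot G]$ contains a ground-model club (bound, for each $\alpha$, a name for $\min(\dot C\setminus\alpha)$ using that antichains are countable, then close off). Since a generic branch meets $\{p_\alpha\mid\alpha\in D\}$ precisely when $\sigma^{\dot G}\cap D\neq\emptyset$, this reduces $T\Vdash$``$\sigma$ is stationary'' to: for every club $D\in V$ the set $\{p_\alpha\mid\alpha\in D\}$ is predense in $T$. Reading predensity off node by node, this is equivalent to
\[(\ast)\qquad\text{for every }t\in T,\ \{\alpha<\omega_1\mid p_\alpha\text{ is comparable with }t\}\text{ is stationary.}\]

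The construction of markers realizing $(\ast)$ is the only real work. Using Solovay's theorem I would fix a partition $\langle A_t\mid t\in T\rangle$ of $\omega_1$ into pairwise disjoint stationary sets indexed by the $\omega_1$ nodes of $T$. Normality gives, for every node $t$ and every $\alpha>\Lev(t)$, a node on level $\alpha$ extending $t$; so for $\alpha\in A_t$ with $\alpha>\Lev(t)$ I let $p_\alpha$ be such a node, and an arbitrary level-$\alpha$ node otherwise. Then for each fixed $t$ the stationary set $\{\alpha\in A_t\mid\alpha>\Lev(t)\}$ lies in $\{\alpha\mid p_\alpha\ge_T t\}$, which gives $(\ast)$; and since every $p_\alpha$ sits on level $\alpha$, no node is assigned to two indices, keeping the ground-model interpretations countable. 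The delicate points are the ccc club-capturing lemma and the observation that meeting every ground-model club already certifies stationarity in $V[\dot G]$; granting these, $\sigma$ refutes $\stat\BN^1_{T,\omega_1}$.
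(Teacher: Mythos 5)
Your argument is correct, but it takes a genuinely different route from the paper's. The paper first observes that $\stat\BN^\omega_{T,\omega_1}$ fails for \emph{any} Suslin tree $T$, via the $\omega$-bounded name $\{(\check{\alpha},p): p\in T,\ \Lev(p)=\alpha\}$ for $\omega_1$ (each level is a countable antichain, and any ground-model filter is a countable chain), and then invokes the Hamkins splitting lemma (Lemma \ref{lemma_failure of lambda-bounded and 1-bounded name principle}, Corollary \ref{lemma_failure of lambda-bounded and 1-bounded name principle for trees}): decomposing a $\lambda$-bounded counterexample into $\lambda$ many $1$-bounded pieces, one piece must be forced stationary below some condition $p$, so $\stat\BN^1$ already fails for the cone $T_p$, which is again a Suslin tree. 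You instead exhibit the $1$-bounded witness directly: after pruning to a Suslin tree in which every node has successors on all higher levels, you index a Solovay partition of $\omega_1$ into disjoint stationary sets $A_t$ by the nodes and place the marker $p_\alpha$, for $\alpha\in A_t$ with $\alpha>\Lev(t)$, on level $\alpha$ above $t$. Your reduction of forced stationarity to predensity of $\{p_\alpha\mid\alpha\in D\}$ for every ground-model club $D$ (via c.c.c.\ club-capturing), your verification of $(\ast)$, and the countability of ground-model filters are all sound. What each approach buys: the paper's route is shorter given the general splitting lemma (which it reuses elsewhere) but only asserts that \emph{some} cone works; your route is self-contained, explicit, and in fact proves the stronger ZFC statement that $\stat\BN^1_{T,\omega_1}$ fails for every such normalized Suslin tree. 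That is close in spirit to Lemma \ref{Lemma diamond, Suslin tree and failure of statBN1}, where the paper needs $\diamondsuit_{\omega_1}$ to handle arbitrary Suslin trees, and it is relevant to the open question posed after that lemma.
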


For some forcings, most of Figure \ref{diagram of implications} collapses. 
In particular, if $\ub\FA_{\PP,\kappa}$ implies $\FA_{\PP,\kappa}$, then all entries other than $\stat\NP_{\PP,\kappa}$ are equivalent. 
We investigate when this implication holds. For instance: 

\begin{proposition} (see Lemma \ref{ubFA implies FA for sigma-distributive forcings}) 
For any ${<}\kappa$-distributive forcing $\PP$, we have $\ub\FA_{\PP,\kappa} \implies\FA_{\PP,\kappa}$. 
\end{proposition}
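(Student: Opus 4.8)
The plan is to exploit ${<}\kappa$-distributivity to turn the given family of predense sets into a $\subseteq$-decreasing family of dense sets that \emph{accumulate} the requirements, so that meeting a single late member of the new family already secures meeting all earlier requirements; the unbounded forcing axiom then does the rest. First I would reduce to dense open sets. Given a sequence $\vec{D}=\langle D_\alpha \mid \alpha<\kappa\rangle$ of predense subsets of $\PP$, replace each $D_\alpha$ by its downward closure
\[
\bar{D}_\alpha=\set{p\in\PP\mid \exists d\in D_\alpha\ \ p\leq d}.
\]
Predensity of $D_\alpha$ makes $\bar{D}_\alpha$ dense (and it is open by construction), and any filter $g$ with $g\cap\bar{D}_\alpha\neq\emptyset$ automatically satisfies $g\cap D_\alpha\neq\emptyset$ by upward closure of $g$. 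So it suffices to find a filter meeting every $\bar{D}_\alpha$.

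Next, for each $\beta<\kappa$ I would set $E_\beta=\bigcap_{\alpha\leq\beta}\bar{D}_\alpha$. Since $\beta<\kappa$, this is an intersection of $<\kappa$ many dense open sets, so ${<}\kappa$-distributivity of $\PP$ guarantees that $E_\beta$ is again dense open, hence in particular predense; moreover $E_\beta\subseteq\bar{D}_\alpha$ whenever $\alpha\leq\beta$, and the sequence $\langle E_\beta\mid\beta<\kappa\rangle$ lies in $V$. Now apply $\ub\FA_{\PP,\kappa}$ to $\langle E_\beta\mid\beta<\kappa\rangle$ to obtain a filter $g$ on $\PP$ with $g\cap E_\beta\neq\emptyset$ for unboundedly many $\beta<\kappa$.

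To finish, fix an arbitrary $\alpha<\kappa$. By unboundedness there is some $\beta\geq\alpha$ with $g\cap E_\beta\neq\emptyset$; choosing $p\in g\cap E_\beta$ and using $E_\beta\subseteq\bar{D}_\alpha$ gives $p\in g\cap\bar{D}_\alpha$, whence $g\cap D_\alpha\neq\emptyset$ by the reduction above. As $\alpha$ was arbitrary, $g$ meets every $D_\alpha$ and thus witnesses $\FA_{\PP,\kappa}$. The only step that uses a hypothesis beyond pure bookkeeping is the density of the accumulated sets $E_\beta$, and this is exactly where ${<}\kappa$-distributivity is indispensable: without it, an intersection of $<\kappa$ many dense sets may fail to be predense, the sets $E_\beta$ would lose their defining property, and meeting unboundedly many of them would no longer force meeting every $\bar{D}_\alpha$. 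I expect this to be the single genuine point of the argument; everything else is routine.
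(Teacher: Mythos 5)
Your proof is correct and is essentially the paper's own argument: form the cumulative intersections $E_\beta=\bigcap_{\alpha\leq\beta}D_\alpha$, use ${<}\kappa$-distributivity to see they are dense, and observe that meeting unboundedly many $E_\beta$ forces meeting every $D_\alpha$. The paper states this in two lines; you have merely added the routine reduction from predense to dense open sets and spelled out the cofinality step.
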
 

In a broader range of cases,  $\ub\FA_{\PP,\kappa} $ implies most of the entries in Figure \ref{diagram of implications - bounded with lambda>kappa}:  

\begin{proposition} (see Lemma \ref{ubFA implies BFA}) 
If $\kappa$ an uncountable cardinal and $\PP$ is a complete Boolean algebra that does not add bounded subsets of $\kappa$, then 
$$(\forall q\in \PP\ \ub\FA_{\PP_q,\kappa}) \Longrightarrow \BFA_{\PP,\kappa}^\kappa.$$ 
\end{proposition}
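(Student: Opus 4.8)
The plan is to upgrade ``meeting unboundedly many predense sets'' to ``meeting all of them'' by replacing the given family with a \emph{refining} family, for which the two notions coincide. Fix a sequence $\langle D_\alpha \mid \alpha<\kappa\rangle$ of predense subsets of $\PP$ with $|D_\alpha|\le\kappa$; the goal is to produce a filter meeting every $D_\alpha$. For each $\beta<\kappa$ set
\[ E_\beta = \Bigl\{ \textstyle\bigwedge_{\alpha<\beta} d_\alpha \;\Bigm|\; d_\alpha\in D_\alpha \text{ for all }\alpha<\beta \Bigr\}\setminus\{0\}, \]
the nonzero common refinements of the first $\beta$ sets. By construction the sequence $\langle E_\beta\mid\beta<\kappa\rangle$ is refining: if $\beta'<\beta$ then every element of $E_\beta$ lies below some element of $E_{\beta'}$, and in particular below some element of each $D_\alpha$ with $\alpha<\beta$.

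The crux is that each $E_\beta$ is predense. In the complete Boolean algebra $\PP$ this amounts to $\bigvee E_\beta = \bigwedge_{\alpha<\beta}\bigvee D_\alpha = \bigwedge_{\alpha<\beta} 1 = 1$, i.e.\ to the distributive law for the partitions $D_\alpha$, which is exactly the assertion that $\PP$ adds no new function $f\colon|\beta|\to\kappa$. Here both hypotheses enter: because the $D_\alpha$ have size $\le\kappa$ we only need to control functions into $\kappa$, and because $\PP$ adds no new bounded subset of $\kappa$ while $\kappa$ is regular, a putative new $f\colon|\beta|\to\kappa$ would have bounded range (otherwise $\cf(\kappa)\le|\beta|<\kappa$) and hence code a new bounded subset of $\kappa$. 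As no such function exists, $\PP$ is sufficiently distributive and every $E_\beta$ is predense.

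Now I would apply $\ub\FA_{\PP,\kappa}$ — the instance $q=1_\PP$ of the hypothesis — to the sequence $\langle E_\beta\mid\beta<\kappa\rangle$, noting that $\ub\FA$ imposes no cardinality bound on the predense sets, so the possibly large size of $E_\beta$ is harmless. This yields a filter $g$ with $g\cap E_\beta\ne\emptyset$ for an unbounded set of $\beta<\kappa$. Given any $\alpha<\kappa$, choose such a $\beta>\alpha$ and some $e\in g\cap E_\beta$; then $e$ lies below some $d\in D_\alpha$, and upward closure of $g$ gives $d\in g$. Hence $g$ meets every $D_\alpha$, which is precisely $\BFA^\kappa_{\PP,\kappa}$.

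I expect the predensity step to be the main obstacle, and more precisely the task of pinning down the exact distributivity delivered by ``no new bounded subsets of $\kappa$''. For regular $\kappa$ the cofinality argument above identifies it with ${<}\kappa$-distributivity for partitions of size $\le\kappa$, which is exactly what the refinement requires; if one wishes to avoid the regularity assumption, or to secure the filter below a prescribed condition, one reruns the same construction below an arbitrary $q\in\PP$. This is presumably why the hypothesis is stated as $\ub\FA_{\PP_q,\kappa}$ for all $q$ rather than merely $\ub\FA_{\PP,\kappa}$.
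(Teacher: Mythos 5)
Your reduction of ``meets every $D_\alpha$'' to ``meets unboundedly many $E_\beta$'' is sound, but the predensity of $E_\beta$ --- which you correctly identify as the crux --- does not follow from the stated hypotheses, and the parenthetical cofinality argument is exactly where it fails. Predensity of $E_\beta$ is the instance of $(\beta,\kappa)$-distributivity for the partitions $D_\alpha$, i.e.\ the assertion that $\PP$ adds no new choice function $f\colon\beta\to\kappa$ for the $D_\alpha$'s; and ``$\PP$ adds no new bounded subsets of $\kappa$'' does not give this. Your step ``a new $f\colon|\beta|\to\kappa$ would have bounded range, otherwise $\cof(\kappa)\le|\beta|<\kappa$'' computes $\cof(\kappa)$ in the wrong model: the range of $f$ lives in $V[G]$, and a forcing can make $\cof(\kappa)^{V[G]}\le|\beta|$ without adding any bounded subset of $\kappa$. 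Prikry forcing at a measurable $\kappa$ is the standard witness: it adds no bounded subsets of $\kappa$, yet if $D_n$ is a maximal antichain (of size $\le\kappa$ by the $\kappa^+$-c.c.) deciding the $n$-th Prikry point, then any nonzero $\bigwedge_{n}d_n$ would force the whole Prikry sequence into $V$, so $E_\omega=\emptyset$. The same obstruction is intrinsic for singular $\kappa$, which the proposition allows: a new $\omega$-sequence of ground-model bounded sets need not be coded by a new bounded set, and ``rerunning the construction below $q$'' does not address this.

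The regular case is repairable, but only by feeding the $\ub\FA$ hypothesis into the predensity step rather than keeping the two phases separate: if some $\PP_q$ forced $\dot f\colon\check\beta\to\check\kappa$ to have unbounded range with $\beta<\kappa$, then applying $\ub\FA_{\PP_q,\kappa}$ to the dense sets $D^*_\alpha=\{p\le q:\exists i<\beta\ \exists\gamma>\alpha\ p\forces\dot f(\check{i})=\check\gamma\}$ would read off a ground-model partial map $\beta\to\kappa$ with unbounded range, contradicting the regularity of $\kappa$ in $V$; only after that does every new $f\colon\beta\to\kappa$ have bounded range and your coding argument apply. This is also the real reason the hypothesis is quantified over all $q$. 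Note finally that your route, even once patched, is genuinely different from the paper's: there the implication is proved contrapositively via the equivalence of $\BFA^\kappa_{\PP,\kappa}$ with $\Sigma^1_1(\kappa)$-absoluteness, encoding a failure of absoluteness by a ground-model tree $T$ with $[T]=\emptyset$ in $V$ but $[T]\ne\emptyset$ in some $V[G]$; a name for a branch then yields a rank $1$ name $\tau$ with $\forces_{\PP_q}\tau=\check\kappa$ whose interpretation by any ground-model filter is bounded, refuting $\ub\NP_{\PP_q,\kappa}$ and hence $\ub\FA_{\PP_q,\kappa}$. That tree argument is what handles the countable-cofinality case, where your distributivity claim cannot be saved.
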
 

The previous result is a corollary to the proof of Theorem \ref{Theorem intro Bagaria}. 

\medskip 
We collect some definitions in Section \ref{Section definitions}. 
In Section \ref{Section results for rank 1}, we prove the positive implications in Figure \ref{diagram of implications}. 
In Section \ref{Section correspondence and applications}, we prove a general correspondence between forcing axioms and name principles. 
Theorem \ref{special case of main theorem} is a special case. 
We further derive results about generic absoluteness and other consequences of the correspondence. 
In Section \ref{Section specific classes of forcings}, we study the principles in Figures \ref{diagram of implications}-\ref{diagram of implications - bounded with finite lambda} for specific classes of forcings such as $\sigma$-distributive and c.c.c. and for specific forcings such as Cohen and random forcing. 
We use these results to separate some of the principles in the figures.

%%%%%%%%%
\addtocontents{toc}{\protect\setcounter{tocdepth}{1}}
\subsection*{Acknowledgements}

The authors would like to thank Joel David Hamkins for a discussion in February 2020 and for permission to include his proof of Lemma \ref{lemma_failure of lambda-bounded and 1-bounded name principle} and Corollary \ref{lemma_failure of lambda-bounded and 1-bounded name principle for trees} from June 2021. 
They are further grateful to Philip Welch for several discussions and Joan Bagaria for an email exchange in July 2021. 

This project has received funding from the European Union’s Horizon 2020 research and innovation programme under the Marie Sk\l odowska-Curie grant agreement No 794020 of the first-listed author. 
He was partially supported by EPSRC grant number EP/V009001/1, FWF grant number I4039 and the  RIMS Research Center at Kyoto University, where the idea for this project originated in November 2019.

\addtocontents{toc}{\protect\setcounter{tocdepth}{3}}
%%%%%%%%%
\section{Some definitions}
\label{Section definitions} 

In this section, we introduce the axioms we will be finding equivalences between. We will also define a few concepts that we will want to use repeatedly. 

\begin{definition}\label{P^alpha(X)}
	Let $X$ be a set and $\alpha$ an ordinal. We recursively define $\mathcal{P}^\alpha(X)$ and $\mathcal{P}^{<\alpha}(X)$:
	
	$\mathcal{P}^0(X)=X$
	
	$\mathcal{P}^{<\alpha}(X)=\bigcup_{\beta<\alpha} \mathcal{P}^\beta(X)$
	
	$\mathcal{P}^\alpha(X) = \mathcal{P}(\mathcal{P}^{<\alpha}(X))$ for $\alpha>0$.
\end{definition}

The axioms we are working with come under two headings: \textit{forcing axioms} and \textit{name principles}. Within these headings there are a variety of different axioms we will be working with.

A \emph{forcing} is a partial order with a largest element $1$. 
Throughout this section, assume that $\PP$ is a forcing and $\mathcal{C}$ is a class of forcings. $G$ will be a generic filter (on $\PP$); $g$ will be a filter on $\PP$ which is contained in the ground model $V$ (and therefore certainly not generic, if $\PP$ is atomless).

%%%%%%%%%
\subsection{Forcing axioms}

\begin{notation} 
\label{notation_trace} 
In the following, $\vec{D}=\langle D_\gamma:\gamma<\kappa\rangle$ always denotes a sequence of dense (or predense) subsets of a forcing $\PP$. 
If $g$ is a subset of $\PP$, then its \emph{trace} with respect to $\vec{D}$ is defined as the set $$\Tr_{g,\vec{D}}=\{\alpha<\kappa\mid g\cap D_\alpha \neq \emptyset\}.$$ 
\end{notation} 

\begin{definition}\label{Defn_FA}
	Let $\kappa$ be a cardinal. The forcing axiom $\FA_{\PP,\kappa}$ says: 
	\begin{quote} 
%	``Whenever $\langle D_\gamma:\gamma<\kappa\rangle$ is a collection of $\kappa$ many dense subsets of $\PP$, 
``For any $\vec{D}$, there exists a filter $g\in V$ with $\Tr_{g,\vec{D}}=\kappa$.'' 
%such that for all $\gamma<\kappa$, $g\cap D_\gamma\neq \emptyset$.'' 	
	\end{quote}  
	The forcing axiom $\FA_{\mathcal{C},\kappa}$ asserts that $\FA_{\PP,\kappa}$ holds for all $\PP\in \mathcal{C}$. 
\end{definition}

Of course, we could just as well have written ``predense'' instead of ``dense'' in the above definition.

We will suppress the $\PP$ or $\mathcal{C}$ in the above notation when it is clear which forcing we are referring to. If $\kappa=\omega_1$ we will suppress it too, just writing $\FA_\PP$ (or just $\FA$ if $\PP$ is clear as well).

We can weaken this axiom: instead of insisting that $g$ must meet every $D_\gamma$, we could insist only that it meets ``many`` of them in some sense. The following forcing axioms do exactly that, for various senses of ``many''.

\begin{definition}\label{Defn_SpecialFA}
Suppose that $\kappa$ is a cardinal and $\varphi(x)$ is a formula. 
The axiom $\varphi\text{-}\FA_{\PP,\kappa}$ states:  
\begin{quote}
``For any $\vec{D}$, there is a filter $g$ on $\PP$ such that $\varphi(\Tr_{g,\vec{D}})$ holds.''
\end{quote} 
%for a given notion of largeness $\varphi(x)$. 
%More precisely, $\varphi(\Tr_{g,\vec{D}})$ holds. 
In particular, we will consider the following formulas: 
	\begin{enumerate-(1)}
		\item 
		$\mathsf{club}(x)$ states that $x$ contains a club in $\kappa$. $\club\FA_{\PP,\kappa}$ is called the \textit{club forcing axiom}. 
		%The \textit{club forcing axiom} $\club\FA_{\PP,\kappa}$  is defined by $$\varphi(x)=``x\text{ contains a club}".$$
		\item 
		$\mathsf{stat}(x)$ states that $x$ is stationary in $\kappa$. $\stat\FA_{\PP,\kappa}$ is called the \textit{stationary forcing axiom}. 
%		The \textit{stationary forcing axiom} $\stat\FA_{\PP,\kappa}$ is defined by $$\varphi(x)=``x\text{ is stationary}".$$ 
		\item 
		$\mathsf{ub}(x)$ states that $x$ contains a club in $\kappa$. $\ub\FA_{\PP,\kappa}$ is called the \textit{unbounded forcing axiom}. 
%		The \textit{unbounded forcing axiom} $\ub\FA_{\PP,\kappa}$ is defined by $$\varphi(x)=``x\text{ is unbounded}".$$ 
		\item 
		$\omega\text{-}\mathsf{ub}(x)$ states that $x$ contains $\omega$ as a subset and is also unbounded in $\kappa$. $\omega\text{-}\ub\FA_{\PP,\kappa}$ is called the \textit{$\omega$-unbounded forcing axiom}. 
%		The \emph{$\omega$-unbounded forcing axiom} $\omega\text{-} \ub \FA_{\PP,\kappa}$ is defined by $$\varphi(x)=``x\text{ is unbounded and }\omega \subseteq x".$$ 
	\end{enumerate-(1)}
We define $\club\FA_{\mathcal{C},\kappa}$, $\stat\FA_{\mathcal{C},\kappa}$, $\ub\FA_{\mathcal{C},\kappa}$ and $\omega\text{-}\ub\FA_{\mathcal{C},\kappa}$ in the same way as we defined $\FA_{\mathcal{C},\kappa}$ in Definition \ref{Defn_FA}.
\end{definition} 

$\omega\text{-}\ub\FA$ can also be expressed as a combined version of two forcing axioms: that given a $\kappa$ long sequence $\vec{D}$ and a separate $\omega$ long sequence $\vec{E}$ of (pre)dense sets, we can find a filter $g$ such that $\Tr_{g,\vec{D}}$ is unbounded and $\Tr_{g,\vec{E}}=\omega$.

Again, we will suppress $\PP$ or $\mathcal{C}$ where they are obvious, and will suppress $\kappa$ when $\kappa=\omega_1$.

We can also weaken the axiom by insisting that every dense set $D_\gamma$ be \textit{bounded} in cardinality, by some small cardinal.

\begin{definition}\label{Defn_BFA}
	Let $\kappa$ and $\lambda$ be cardinals. The bounded forcing axiom 
	$\BFA_{\PP,\kappa}^\lambda$ says 
	\begin{quote}``Whenever $\langle D_\gamma,\gamma<\kappa\rangle$ is a sequence of \textit{predense} subsets of $\PP$, \textit{and for all} $\gamma$ \textit{we have} $\lvert D_\gamma\rvert \leq \lambda$, then there is a filter $g\in V$ such that for all $\gamma<\kappa$, $g\cap D_\gamma\neq \emptyset$.'' 
	\end{quote} 
	
	We define $\BFA_{\mathcal{C}, \kappa}^\lambda$, $\club\BFA_{\PP,\kappa}^\lambda$ and so forth in the natural way, using definitions analogous to those in \ref{Defn_FA} and \ref{Defn_SpecialFA}.
\end{definition}

Again, we will suppress notation as described above. We will suppress the $\lambda$ if $\lambda=\kappa$.

Note that we are definitely looking at predense sets here, since actual dense sets are likely to be rather large and the axiom would be likely to be trivial if we had to use dense sets. These bounded forcing axioms are only really of interest when $\PP$ is a Boolean algebra, since they always contain (nontrivial) predense sets with as few as two elements so the axiom will not be vacuous.

There is one more forcing axiom we want to introduce, but it requires some additional notation so we will postpone it until later in this section.

%%%%%%%%%
\subsection{Name principles}

We ought to define name principles at this point, but we need to cover some other terminology first in order to express the definitions.

As one might expect, name principles are about different $\PP$ names, and it will be useful to have some measure of how complex a name is. The following three definitions are all different ways of doing this; we will be using all of them.

\begin{definition}\label{Defn_Ranks}
	Let $X$ be a set (in $V$). We recursively define a a name's rank as follows.
	
	$\sigma$ is an $\alpha$ rank $X$ name (or a rank $\alpha$ name for short) if either:
	\begin{itemize}
		\item $\alpha=0$ and $\sigma=\check{x}$ for some $x\in X$; or
		\item $\sigma$ is not rank $0$ and $\alpha=\sup\{\text{rank}(\tau): \exists p\in \PP (\tau,p)\in \sigma\}$
	\end{itemize}
\end{definition}

We also call a $1$ (or $0$) rank $X$ name a \textit{good} name.
Of course, we will also talk about rank $\leq \alpha$ names, meaning names which are either rank ${<}\alpha$ or rank $\alpha$.

This definition is a name analogue to saying that $\sigma \in \pow^\alpha(X)$, where $X$ is transitive. Most of the time, we will be interested in the case where $X$ is some cardinal, most often either $0$ or $\omega_1$. Note that every $\PP$ name is an $\alpha$ rank $X$ name for some $\alpha$.

\begin{definition}\label{Defn_kappasmall}
	Let $\sigma$ be a $\PP$ name and $\kappa$ be a cardinal. We say $\sigma$ is \emph{locally $\kappa$ small} if there are at most $\kappa$ many names $\tau$ such that for some $p\in \PP$, we have $(\tau,p)\in \sigma$.
	A name $\sigma$ is \textit{$\kappa$ small} if it is locally $\kappa$ small, and every name $\tau$ in the above definition is $\kappa$ small.
\end{definition}

If being rank $\alpha$ is analogous to being in $\pow^\alpha$ (or $\pow^\alpha(X)$) then the analogue of being $\kappa$ small would be being in $H_{\kappa^+}$. We could also easily define a version of this for $H_{\kappa^+}(X)$ if we wanted. However, we don't actually need to: in all the cases we're going to be interested in, $\bar{X}$ will have cardinality $\leq \kappa$ and the definition would be equivalent to the above one.

The following proposition says that we only really need to worry about $\kappa$ smallness when we go above rank $1$ names.

\begin{proposition}
	Let $X$ be transitive, and of size at most $\kappa$. Let $\sigma$ be a $0$ rank or $1$ rank $X$ name. Then $\sigma$ is $\kappa$ small.
\end{proposition}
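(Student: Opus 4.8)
The plan is to handle the two cases of the hypothesis---$\sigma$ of rank $0$ and $\sigma$ of rank $1$---separately, with the rank $1$ case reducing to the rank $0$ case. The single observation that makes everything work is that local $\kappa$ smallness, as defined in \ref{Defn_kappasmall}, counts the number of \emph{distinct} names $\tau$ occurring in $\sigma$, not the number of pairs $(\tau,p)\in\sigma$. This is essential: if $\PP$ is large there may be far more than $\kappa$ pairs, but the set of names appearing will still be controlled by $|X|\leq\kappa$.

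First I would prove the rank $0$ case, namely that $\check x$ is $\kappa$ small for every $x\in X$, by $\in$-induction on $x$. Recall $\check x=\{(\check y,1_\PP)\mid y\in x\}$, so the names occurring in $\check x$ are exactly the $\check y$ with $y\in x$. Here transitivity of $X$ is used: from $y\in x\in X$ we get $y\in X$, so each $\check y$ is again a rank $0$ $X$-name and the induction hypothesis gives that $\check y$ is $\kappa$ small. The distinct names occurring in $\check x$ are indexed injectively by the elements of $x$, and $|x|\leq|X|\leq\kappa$, so $\check x$ is locally $\kappa$ small; together with the $\kappa$-smallness of each $\check y$, Definition \ref{Defn_kappasmall} yields that $\check x$ is $\kappa$ small. (The base case $x=\emptyset$ is immediate, since $\check\emptyset=\emptyset$ has no components.)

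Next I would treat the rank $1$ case. Let $\sigma$ be a rank $1$ $X$-name. By Definition \ref{Defn_Ranks} (matching the analogy $\sigma\in\pow^1(X)=\pow(X)$, i.e. the nice names for subsets of $X$), every $\tau$ with $(\tau,p)\in\sigma$ for some $p$ is a rank $0$ name, hence of the form $\check x$ with $x\in X$. Thus the distinct names occurring in $\sigma$ form a subset of $\{\check x\mid x\in X\}$, which has size at most $|X|\leq\kappa$, so $\sigma$ is locally $\kappa$ small. Each such $\tau=\check x$ is $\kappa$ small by the rank $0$ case, so by Definition \ref{Defn_kappasmall} $\sigma$ is $\kappa$ small, finishing the proof.

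I do not expect a genuine obstacle here; the content is bookkeeping, and the points to be careful about are: (a) invoking transitivity of $X$ so that the $\in$-induction for check names never leaves $X$; (b) remembering that $\kappa$-smallness is a \emph{recursive} condition, so both the ``locally $\kappa$ small'' count and the $\kappa$-smallness of every component must be checked, the recursion being well-founded along the membership structure of names; and (c) flagging explicitly that it is the number of distinct component names, and not the number of pairs, that is bounded by $\kappa$, since the latter can be strictly larger.
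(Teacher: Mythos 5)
Your proof is correct, and it is the intended (routine) argument: the paper states this proposition without proof, treating it as an immediate consequence of the definitions. Your write-up supplies exactly the right details -- in particular the $\in$-induction using transitivity of $X$ for check names, and the observation that local $\kappa$ smallness counts distinct component names rather than pairs $(\tau,p)$.
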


On the other hand if $X$ has size greater than $\kappa$ then no interesting rank $1$ name will be $\kappa$ small.

The next definition does not have an easy analogue, but is a kind of complement to the previous one and is critical when we work with bounded forcing axioms.

\begin{definition}\label{Defn_lambdabounded}
	Let $\sigma$ be a $\PP$ name and $\lambda$ be a cardinal. We say $\sigma$ is \textit{locally $\lambda$ bounded} if it can be written as
	\begin{equation*}
		\sigma = \{ (\tau,p): \tau \in T, p\in S_\tau\}
	\end{equation*}
where $T$ is some set of names, and for $\tau \in T$ the set $S_\tau$ is a subset of $\PP$ of size at most $\lambda$. 
A name $\sigma$ is \textit{$\lambda$ bounded} if it is locally $\lambda$ bounded, and every name $\tau\in T$ in the above definition is $\lambda$ bounded.\end{definition}

A good name which is $1$ bounded is known as a very good name.
%For us, 
A check name $\check{x}$ has the form $\{(\check{y},1): y\in x\}$ and is therefore guaranteed to be $\lambda$ bounded for any $\lambda>0$.

We will be talking about interpreting names with respect to a filter. Unfortunately, the literature uses two different meanings of the word ``interpretation'', which only coincide if the filter is generic. For clarity:

\begin{definition}\label{Defn_Interpretation}
	Let $\sigma$ be a name, and $g$ a filter. (Here, $g$ may be inside $V$ or in some larger model.) When we refer to the \textit{interpretation} $\sigma^g$ of $\sigma$, we mean the recursive interpretation:
	\begin{equation*}
		\sigma^g := \{ \tau^g: \exists p\in g (\tau,p)\in \sigma\}
	\end{equation*}
	When we refer to the \textit{quasi-interpretation} $\sigma^{(g)}$, we mean the following set:
	\begin{equation*}
		\sigma^{(g)}:=\{x\in V: \exists p\in g p\forces \check{x}\in \sigma\}
	\end{equation*}
\end{definition}

\begin{proposition}
	$\sigma^g=\sigma^{(g)}$ if either
	\begin{enumerate-(1)}
		\item $g$ is generic; or
		\item $\sigma$ is a $1$ rank $X$ name (for any $X$) and is $1$ bounded.
	\end{enumerate-(1)}
\end{proposition}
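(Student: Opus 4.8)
The plan is to prove both directions of the equivalence directly from the definitions of $\sigma^g$ and $\sigma^{(g)}$ given in \ref{Defn_Interpretation}.

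\textbf{Case (1): $g$ generic.} Here I would argue that $\sigma^g = \sigma^{(g)}$ by $\in$-induction on the rank of $\sigma$. Fix a generic filter $g$ and suppose the claim holds for all names $\tau$ appearing in $\sigma$. For the inclusion $\sigma^g \subseteq \sigma^{(g)}$, take $x \in \sigma^g$, so $x = \tau^g$ for some $(\tau,p) \in \sigma$ with $p \in g$. By induction $\tau^g = \tau^{(g)}$, so I must produce some $q \in g$ with $q \forces \check{x} \in \sigma$. Since $p \in g$ forces $\tau \in \sigma$ (because $(\tau,p) \in \sigma$), and since $x = \tau^{(g)} = \tau^g$, genericity gives a condition $q \leq p$ in $g$ deciding $\check{x} = \tau$ correctly, whence $q \forces \check{x} \in \sigma$. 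Conversely, if $x \in \sigma^{(g)}$ witnessed by $p \in g$ with $p \forces \check{x} \in \sigma$, then by the truth lemma $x \in \sigma^G = \sigma^g$ in the generic extension, and I unfold the forcing relation ``$\check{x} \in \sigma$'' to locate an appropriate pair $(\tau,q) \in \sigma$ with a compatible condition in $g$, again applying the induction hypothesis $\tau^g = \tau^{(g)}$. This direction is essentially the standard truth lemma for forcing, so I expect to be able to cite it rather than reprove it.

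\textbf{Case (2): $\sigma$ a $1$ rank $X$ name that is $1$ bounded.} Now $g$ need not be generic, so I cannot appeal to the forcing theorem, and the argument must be entirely syntactic. By \ref{Defn_Ranks} and \ref{Defn_lambdabounded}, a $1$ bounded $1$ rank $X$ name has the form $\sigma = \{(\check{x}, p_x) : x \in A\}$ for some $A \subseteq X$, where each $\check{x}$ is a rank $0$ name and each condition slot is a singleton (or I write $\sigma = \{(\check{x},p) : x \in A,\ p \in S_x\}$ with $|S_x| \le 1$). The key simplification is that the only names occurring in $\sigma$ are check names, and $(\check{x})^g = x$ for every filter $g$, by an immediate induction on the rank of $x$ (this holds for any $g$, generic or not). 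I would first record this sub-fact. Then $\sigma^g = \{x : \exists p \in g\ (\check{x},p) \in \sigma\}$. For $\sigma^{(g)}$, I need that for a check name, $p \forces \check{x} \in \sigma$ holds exactly when $(\check{x},q) \in \sigma$ for some $q \ge p$ compatible appropriately — more precisely, because $\sigma$ lists its members by single conditions, the statement $\check{x} \in \sigma$ is forced by $p$ iff $p$ lies below the condition $q$ attached to $x$ in $\sigma$ (and $x \in A$). Matching the two descriptions of membership then yields $\sigma^g = \sigma^{(g)}$.

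\textbf{Main obstacle.} The delicate point in Case (2) is verifying the exact shape of the forcing relation $p \forces \check{x} \in \sigma$ for a $1$ bounded rank $1$ name without invoking genericity: I must show that $p \forces \check{x} \in \sigma$ is equivalent to ``$x \in A$ and $p$ is compatible-below the unique condition $q_x$ with $(\check{x},q_x)\in\sigma$,'' so that the filter property of $g$ (upward closure and the fact that $g$ meets no condition it is incompatible with) matches the quasi-interpretation to the recursive one. The $1$ boundedness is precisely what prevents the same element $x$ from being contributed by incompatible conditions in a way that would make $\sigma^{(g)}$ pick up $x$ via a Boolean combination not reflected in $\sigma^g$; I would isolate this as the crux and check it carefully, since without the boundedness hypothesis the quasi-interpretation can strictly contain the recursive interpretation for a non-generic $g$.
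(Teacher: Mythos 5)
The paper states this proposition without proof (it is offered as a routine remark right after Definition \ref{Defn_Interpretation}), so there is no ``official'' argument to measure yours against; I am assessing the proposal on its own terms. Your overall plan is the natural one, and Case (1) is fine insofar as you simply cite the truth lemma: for generic $g$ and $x\in V$ one has $x\in\sigma^g$ iff some $p\in g$ forces $\check{x}\in\sigma$, which yields $\sigma^{(g)}=\sigma^g\cap V$. But the $\in$-induction as written does not deliver the full equality: the inductive hypothesis $\tau^g=\tau^{(g)}$ tells you $\tau^g$ consists of ground-model elements, not that $\tau^g$ is itself a ground-model set, and without that there is no $x\in V$ for which to write $\check{x}$. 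Indeed, if $\sigma$ names $\{c\}$ for a Cohen real $c$, then $c\in\sigma^g$ while $\sigma^{(g)}\subseteq V$ by definition, so the literal equality fails at rank $2$. You should either restrict to names all of whose interpreted elements lie in $V$ (automatic at rank $\leq 1$, the only case the paper actually uses) or record the general statement as $\sigma^{(g)}=\sigma^g\cap V$.

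The genuine gap is in Case (2), at exactly the spot you flag as the crux. Writing $\sigma=\{(\check{x},q_x):x\in A\}$, the correct unfolding is: $p\forces\check{x}\in\sigma$ iff $x\in A$ and every $r\leq p$ is compatible with $q_x$ --- that is, $p\leq q_x$ in the \emph{separative quotient}, not in $\PP$. A filter $g$ containing such a $p$ need not contain $q_x$, so upward closure does not ``match the two descriptions of membership'' as you claim, and the inclusion $\sigma^{(g)}\subseteq\sigma^g$ can fail. Concretely, take $\PP=\{1,a,b,c\}$ with $c\leq a\leq 1$ and $c\leq b\leq 1$ ($a,b$ incomparable), let $\sigma=\{(\check{0},b)\}$ (a $1$ bounded rank $1$ name) and $g=\{1,a\}$: since $\{c\}$ is dense, every generic filter contains $c$ and hence $b$, so $a\forces\check{0}\in\sigma$ and $0\in\sigma^{(g)}$, yet $\sigma^g=\emptyset$. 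So your argument (and the proposition read literally) needs $\PP$ to be separative --- e.g.\ a Boolean algebra, as in the proposition that immediately follows this one in the paper --- or must be interpreted in the separative quotient. The other inclusion $\sigma^g\subseteq\sigma^{(g)}$ goes through for any filter exactly as you describe, and your closing observation that $1$ boundedness is what blocks $x$ from entering $\sigma^{(g)}$ via an antichain of conditions is correct; the residual failure above is a separativity phenomenon, not a boundedness one.
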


\begin{proposition}
	Suppose $\PP$ is a complete Boolean algebra, and $\sigma$ is a $1$ rank $X$ name. Then we can find a name $\tau$ such that for every filter $g$, $\tau^g=\tau^{(g)}=\sigma^{(g)}$.
\end{proposition}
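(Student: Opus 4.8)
The plan is to write down $\tau$ explicitly as a very good name that records, for each $x\in X$, the Boolean value of the statement $\check x\in\sigma$. Since $\sigma$ is a rank $1$ name, every first coordinate appearing in $\sigma$ is a check name of an element of $X$, so I can write $\sigma=\{(\check x,p):x\in X,\ p\in A_x\}$, where $A_x=\{p\in\PP:(\check x,p)\in\sigma\}$. Because $\PP$ is a \emph{complete} Boolean algebra, the suprema $b_x:=\bigvee A_x\in\PP$ exist, and I would set
\[ \tau:=\{(\check x,b_x):x\in X\}. \]
By construction $\tau$ is again a rank $1$ $X$-name, and it is $1$ bounded (a very good name), since the only condition paired with each $\check x$ is the single element $b_x$.

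The one genuinely computational step, which I expect to be the main (if routine) obstacle, is identifying the Boolean value $\lVert\check x\in\sigma\rVert$. I would use the recursion $\lVert\check x\in\sigma\rVert=\bigvee_{(\pi,q)\in\sigma}\bigl(q\wedge\lVert\pi=\check x\rVert\bigr)$ together with the standard absoluteness fact that $\lVert\check x=\check y\rVert$ equals $1$ when $x=y$ and $0$ otherwise; this collapses the supremum to $\bigvee_{q\in A_x}q=b_x$. Consequently, for $p\in\PP$ one has $p\forces\check x\in\sigma$ if and only if $p\leq b_x$. This is precisely where completeness of $\PP$ and the absoluteness of check names are used.

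With this in hand the three interpretations match easily. Since $g$ is a filter, hence upward closed, the condition ``$\exists p\in g$ with $p\leq b_x$'' is equivalent to ``$b_x\in g$''. Therefore
\[ \sigma^{(g)}=\{x\in X:\exists p\in g,\ p\forces\check x\in\sigma\}=\{x\in X:b_x\in g\}. \]
On the other hand, directly from the definition of the recursive interpretation and $\check x^g=x$,
\[ \tau^g=\{x\in X:\exists p\in g,\ (\check x,p)\in\tau\}=\{x\in X:b_x\in g\}, \]
so $\tau^g=\sigma^{(g)}$. Finally, because $\tau$ is a rank $1$, $1$-bounded name, the preceding proposition gives $\tau^g=\tau^{(g)}$ for every filter $g$. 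Combining these equalities yields $\tau^g=\tau^{(g)}=\sigma^{(g)}$, as required. The degenerate case $A_x=\emptyset$ (so $b_x=0$) causes no difficulty: then $x$ belongs to none of the three sets for any proper filter, and to all of them otherwise, so the equalities persist.
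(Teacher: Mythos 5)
Your construction is exactly the one the paper uses: the paper's entire proof is to set $p_x=\sup\{p\in\PP:(\check{x},p)\in\sigma\}$ and $\tau=\{(\check{x},p_x):x\in X,\ p_x\neq 0\}$, and you have simply written out the routine verification it leaves implicit. The only cosmetic difference is that the paper discards the pairs with $p_x=0$ outright (so that $\tau$ is literally a $\PP$-name when $0\notin\PP$), whereas you keep them and handle the degenerate case separately; otherwise the two arguments coincide.
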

\begin{proof}
	For $x\in X$ let $p_x=\sup\{p\in \PP: (\check{x},p)\in \sigma\}$ (so $p_x\in \PP\cup \{0\}$). Let $\tau=\{(\check{x},p_x): x\in X, p_x\neq 0\}$.
\end{proof}

We can now define our name principles. Here, we take $\PP$ to be a forcing, $\mathcal{C}$ a class of forcings, and $X$ an arbitrary set.

\begin{definition}\label{Defn_nameprinciple}
	Let $\alpha$ be an ordinal, $\kappa$ a cardinal and $X$ a transitive set of size at most $\kappa$. The name principle $\NP_{\PP,X,\kappa}(\alpha)$ says the following: 
	\begin{quote} ``Whenever $\sigma$ is a $\kappa$ small ${\leq}\alpha$ rank $X$ name, and $A\in H_{\kappa^+}\cap \pow^\alpha(X)$ is a set such that $\PP\forces \sigma=\check{A}$, there is a filter $g\in V$ such that $\sigma^g=A$.''
	\end{quote} 
	
	$\NP_{\mathcal{C},X,\kappa}(\alpha)$ is the statement that $\NP_{\QQ,X,\kappa}(\alpha)$ holds for all $\QQ\in \mathcal{C}$. 
	$\NP_{\PP,\kappa}(\infty)$ (resp. $\NP_{\mathcal{C},\kappa}(\infty)$) is the statement that $\NP_{\PP,X,\kappa}(\alpha)$ (resp. $\NP_{\mathcal{C},X,\kappa}(\alpha)$) holds for all $\alpha\in \Ord$ and all $X\in H_{\kappa^+}$. (Equivalently, we could just require that it holds for $\alpha\leq\kappa^+$ and all $X\in H_{\kappa^+}$.)
\end{definition}

Some comments on this definition: 
It is easy to see that if $\sigma$ is a $\kappa$ small $X$ name, and $g\in V$, then $\sigma^g\in H_{\kappa^+}$. If $\sigma$ is rank ${\leq} \alpha$, then it is also easy to see that $\sigma^g\in \mathcal{P}^\alpha(X)$. So if we didn't require that $A\in H_{\kappa^+}\cap \mathcal{P}^\alpha(X)$, then the principle would fail trivially for most forcings. The only forcings on which it could hold would be those which don't force any names to be equal to such large $A$ anyway.

This argument also shows that the name principle fails trivially if, for some $\lambda<\kappa$, there is a $\lambda$ small $\sigma$ which is forced to be equal to some $A\not \in H_{\lambda^+}$. So we might think we should exclude such names from the principle as well. But in fact, we shall see in Section \ref{Section correspondence and applications} that it makes little difference: the proof of Theorem \ref{correspondence forcing axioms name principles} shows that if a name principle 
%$\NP_{\PP,\kappa}(\infty)$ 
fails because of such a name, then it also fails for non-trivial reasons.

We can easily see that if $\sigma$ is a $\kappa$-small $1$ rank $X$ name, and is forced to be equal to $A$, then $A\subseteq X$ and $\lvert A \rvert \leq \kappa$. Hence, when we're dealing with $\NP(1)$, we don't need to worry about checking if the names we're working with are in $H_{\kappa^+}\cap \pow(X)$, as this is automatically true.
On the other hand, once we go above rank $1$, these names can exist, even for small values of $\alpha$ and $\kappa$. For example, \cite[Lemma 7.1]{holy2019sufficient} has an $\omega$ bounded rank $2$ name which is forced to be equal to $(2^\omega)^V$.

One might ask why we allowed $X$-names for all $X\in H_{\kappa^+}$ in the definition of $\NP_{\PP,\kappa}(\infty)$. 
This is because any such name can be understood as an $\emptyset$-name of some high rank, so these principles already follow from the conjunction of $\NP_{\PP,\emptyset,\kappa}(\alpha)$ for all $\alpha\in \Ord$. 

As with the forcing axioms, we will sometimes omit part of this notation. We will drop $\PP$ and $\mathcal{C}$ when they are clear from context. We will omit $\alpha$ when $\alpha=1$. While $X$ is formally just some arbitrary set, most of the time it can be thought of as a cardinal; we will omit it in the case that $X=\kappa$, and will then omit $\kappa$ as well if $\kappa=\omega_1$.

Most often, these omissions will come up when we're assuming $\alpha=1$ and taking $X$ to be some cardinal. In that situation, $\kappa$ smallness is essentially trivial: if $\kappa<X$ then our class of names is too restrictive to do anything interesting, and if $\kappa \geq X$ then every $1$ rank $X$ name will be $\kappa$ small, automatically. So when $\alpha=1$ and $X$ is a cardinal we can find out everything we need to know just by looking at the case $X=\kappa$.

We can also define variations analogous to $\club \FA$, $\stat \FA$, etc. However, this only really makes sense when we know $\sigma$ a subset of some cardinal. For this reason, we only define these variations for the case where $\alpha=1$ (also dropping the requirement of $\kappa$-smallness) and where $X$ is a cardinal.

\begin{definition}\label{Defn_SpecialN}
	Let $\kappa$ be a cardinal and $\varphi(x)$ a formula. 
	The axiom $\varphi\text{-}\NP_{\PP,\kappa}$ states: 	
%Each of the following axioms states:
\begin{quote}
		``For any $1$ rank $\kappa$ name $\sigma$, 
		if $\PP\forces \varphi(\sigma)$ then there is a filter $g$ on $\PP$ such that $\varphi(\sigma^g)$ holds in $V$.'' 
\end{quote} 
%	for some given formula $\varphi(v)$.
In particular, we shall consider the axioms for the formulas $\mathsf{club}(x)$, $\mathsf{stat}(x)$, $\mathsf{ub}(x)$ and $\omega\text{-}\mathsf{ub}(x)$ given in Definition \ref{Defn_SpecialFA}: 
	\begin{enumerate-(1)}
		\item 
		The \emph{club name principle} $\club \NP_{\PP,\kappa}$. 
		\item 
		The \emph{stationary name principle} $\stat \NP_{\PP,\kappa}$. 
		\item 
		The \emph{unbounded name principle} $\ub \NP_{\PP,\kappa}$. 
		\item 
		The \emph{$\omega$-unbounded name principle} $\omega\text{-} \ub \NP_{\PP,\kappa}$. 
	\end{enumerate-(1)}
\end{definition}

As usual, we also define similar axioms with $\mathcal{C}$ in place of $\PP$. Note that we could also express $\omega\text{-}\ub\NP$ as an axiom about two names, one of which is forced to be an unbounded subset of $\kappa$ while the other is forced to be equal to $\omega$.

\begin{remark} 
The axioms $\club\FA_{\PP,\kappa}$, $\stat\FA_{\PP,\kappa}$, $\ub\FA_{\PP,\kappa}$ and $\omega\text{-} \ub \FA_{\PP,\kappa}$ in Definition \ref{Defn_SpecialFA} can be understood as a more general form of name principles for two formulas $\varphi(x)$ and $\psi(x)$: 
\begin{quote}
		``For any $1$ rank $\kappa$ name $\sigma$, 
		if $\PP\forces \varphi(\sigma)$ then there is a filter $g$ on $\PP$ such that $\psi(\sigma^g)$ holds in $V$,'' 
\end{quote} 
For instance, $\stat\FA_{\PP,\kappa}$ is equivalent to the statement: 
\begin{quote} 
``If $\sigma$ is a rank $1$ name for $\omega_1$, then there is a filter $g\in V$ such that $\sigma^g$ is stationary.'' 
\end{quote} 
\end{remark} 

We can also generalise the ideas here: rather than simply working with a single statement like ``$\sigma$ is unbounded'' or ``$\sigma$ is some particular set in $V$'', we could ask to be able to find a filter to correctly interpret every reasonable statement.

In the following definition, we allow bounded quantifiers in our $\Sigma_0$ formulas.

\begin{definition}\label{Defn_foN}
	Let $\alpha$ be an ordinal and $\kappa$ a cardinal. The simultaneous name principle $\fo\NP_{\PP,X,\kappa}(\alpha)$ says the following: 
	\begin{quote} 
	``Whenever $\sigma_0,\ldots, \sigma_n$ are $\kappa$ small ${\leq}\alpha$ rank $X$ names, we can find a filter $g$ in $V$ such that $\varphi(\sigma_0^g,\dots,\sigma_n^g)$ holds for \textit{every} $\Sigma_0$ formula $\varphi$ such that $\PP \forces \varphi(\sigma_0,\ldots,\sigma_n)$.'' 
	\end{quote} 
	Moreover: 
	\begin{itemize} 
	\item 
	The \emph{simultaneous name principle} $\fo\NP_{\PP,\kappa}(\infty)$ is the same statement, except that the names are $X$ names for some $X\in H_{\kappa^+}$ and there is no restriction on their rank. 
	\item 
	$\fo\NP_{\mathcal{C},X,\kappa}(\alpha)$ is the statement that $\fo\NP_{\QQ,X,\kappa}(\alpha)$ holds for all $\QQ\in \mathcal{C}$. 
	\item 
	 $\fo\NP_{\mathcal{C},\kappa}(\infty)$ is defined similarly. 
	\item 
The bounded name principles $\fo\BN^\lambda_{\PP,X,\kappa}(\alpha)$ are defined similarly. 
	\end{itemize} 
\end{definition}

%There are several ways we could define similar name principles which only apply to $\leq \alpha$ rank names. The most obvious is perhaps to just insist that $\sigma_i$ be rank $\leq \alpha$ and otherwise repeat the previous definition verbatim. But this seems to be a bit too simplistic, both for getting equivalences with other forcing axioms and for actually using as an axiom elsewhere in the literature. We want $\varphi$ to also be allowed to take sufficiently simple elements of $V$ as parameters, and to quantify over the part of $V$ that rank $\alpha$ names are dealing with.

%\begin{definition}\label{Defn_NamePrincipleFORanked}
%	Let $\kappa$ be a cardinal, $\alpha$ an ordinal and $X$ a set. The ranked simultaneous name principle $\fo\NP_{\PP,X,\kappa}(\alpha)$ says the following: ``Let $\sigma_0,\ldots,\sigma_n$ be $\kappa$ small $\leq\alpha$ rank $X$ names. Let $x_0,\ldots,x_m\in \mathcal{P}^{<\alpha}(X)$. We can find a filter $g$ such that for every first order formula $\varphi(v_0,\ldots,v_n,w_0,\ldots,w_m)$ such that if $\forces ``P^{<\alpha}(X)\Vdash \varphi(\sigma_0^g,\ldots,\sigma_n^g,x_0,\ldots,x_m)"$ then in fact $P^{<\alpha}(X)\Vdash \varphi(\sigma_0^g,\ldots,\sigma_n^g,x_0,\ldots,x_m)$."
%\end{definition}

The $\Sigma_0$ requirement on $\varphi$ is necessary, because otherwise the axiom would say that any sentence which is forced to be true by $\PP$ is already true in $V$. This would make the axiom trivially false for almost all interesting forcings.
Again we will suppress $X$, $\kappa$ and $\alpha$ as described earlier.

All of these name principles also have bounded variants:

\begin{definition}\label{Defn_NamePrincipleBounded}
	Let $\alpha$ be an ordinal and $\kappa, \lambda$ cardinals. The bounded name principle $\BN_{\PP,X,\kappa}^{\lambda}(\alpha)$ says the following: 
	\begin{quote} 
	``Whenever $\sigma$ is a $\kappa$ small $\lambda$ bounded $\leq\alpha$ rank $X$ name, and $A$ is a set such that $\PP\forces \sigma=A$, we can find a filter $g\in V$ such that $\sigma^g=A$.'' 
	\end{quote} 
\end{definition}

We define similar bounded forms of all the other name principles we have introduced so far. Again, we will suppress $\lambda$ when $\lambda=\kappa$ and will suppress other notation as described above.

%%%%%%%%%
\subsection{Hybrid axioms}

There is one more group of axioms which are worth mentioning, because of their frequent use in the literature. They are a hybrid of forcing axiom and name principle. 
The axioms $\MA^+$ and $\PFA^+$ were introduced introduced by Baumgartner in \cite[Section 8]{baumgartner1984applications}.  

\begin{definition}
	The forcing axiom $\FA^+_{\PP,\kappa}$ says: 
	\begin{quote} 
	Suppose $\vec{D}=\langle D_\gamma: \gamma<\kappa\rangle$ is a sequence of dense subsets of $\PP$ and let $\sigma$ be a $1$ rank $\kappa$ name such that $\PP\forces ``\sigma$ is stationary``. Then there is a filter $g$ such that
	\begin{enumerate-(1)}
		\item For all $\gamma$, $D_\gamma \cap g \neq \emptyset$; and
		\item $\sigma^g$ is stationary.
	\end{enumerate-(1)}
	\end{quote} 
	The forcing axiom $\FA^{++}_{\PP,\kappa}$ says: 
		\begin{quote} 
		Let $D_\gamma: \gamma<\kappa$ be dense subsets of $\PP$ and let $\sigma_\gamma: \gamma<\kappa$ be $1$ rank $\kappa$ names such that $\PP\forces ``\sigma_\gamma$ is stationary`` for every $\gamma$. Then we can find a filter $g$ such that
	\begin{enumerate-(1)}
		\item For all $\gamma$, $D_\gamma \cap g \neq \emptyset$; and
		\item For all $\gamma$, $\sigma_\gamma^g$ is stationary.
	\end{enumerate-(1)}
		\end{quote} 
\end{definition}

As usual, we will also use versions of the above with $\mathcal{C}$ in place of $\PP$, and bounded versions.

We have actually gone against convention slightly here: the literature generally uses the quasi-interpretation $\sigma^{(g)}$ when defining $\FA^+$ and $\FA^{++}$ style axioms. However, our version is in fact equivalent, as the following theorem shows:

\begin{theorem} 
	Let $\FA^{(+)}$ and $\FA^{(++)}$ be defined in the same way as $\FA^+$ and $\FA^{++}$ above, but with $\sigma^{(g)}$ and $\sigma_\gamma^{(g)}$ in place of $\sigma^g$ and $\sigma_\gamma^g$ respectively.
Then 
	$\FA_{\PP,\kappa}^+\iff \FA_{\PP,\kappa}^{(+)}$ and $\FA_{\PP,\kappa}^{++}\iff \FA_{\PP,\kappa}^{(++)}.$ 
\end{theorem}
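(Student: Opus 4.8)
The plan is to exploit the fact that the two interpretations are always ordered by inclusion, combine this with the upward closure of stationarity for the easy direction, and repair the gap between the interpretations by adding dense sets for the hard direction.

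First I would record the inclusion $\sigma^g \subseteq \sigma^{(g)}$, valid for every filter $g$: if $\alpha \in \sigma^g$ then $(\check\alpha, p) \in \sigma$ for some $p \in g$, and since $(\check\alpha, p) \in \sigma$ implies $p \forces \check\alpha \in \sigma$, we get $\alpha \in \sigma^{(g)}$. As $\sigma$ is a rank $1$ name forced to be a subset of $\kappa$, both $\sigma^g$ and $\sigma^{(g)}$ are subsets of $\kappa$. This immediately gives $\FA^+_{\PP,\kappa}\Rightarrow\FA^{(+)}_{\PP,\kappa}$ and $\FA^{++}_{\PP,\kappa}\Rightarrow\FA^{(++)}_{\PP,\kappa}$: apply the (stronger) recursive-interpretation axiom to obtain a filter $g$ meeting the given dense sets with each $\sigma^g$ (resp. $\sigma_\gamma^g$) stationary, and note that $\sigma^g \subseteq \sigma^{(g)}$ forces $\sigma^{(g)}$ to be stationary as well, since a superset of a stationary set is stationary.

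For the converse I would, given $\vec D$ and a name $\sigma$ with $\PP \forces$ ``$\sigma$ is stationary'', augment $\vec D$ by $\kappa$ further dense sets that force the two interpretations to agree. For each $\alpha < \kappa$ put $A_\alpha = \{q \in \PP : (\check\alpha, q) \in \sigma\}$, let $\overline{A_\alpha}$ be its downward closure, and set $E_\alpha = \overline{A_\alpha} \cup \{q \in \PP : q \perp r \text{ for all } r \in A_\alpha\}$. Each $E_\alpha$ is dense: below any $p$, either $p$ is incompatible with every element of $A_\alpha$, so $p \in E_\alpha$, or $p$ has a common extension $q$ with some $r \in A_\alpha$, whence $q \in \overline{A_\alpha}$.

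The key claim is that if a filter $g$ meets $E_\alpha$ then $\alpha \in \sigma^{(g)}$ already implies $\alpha \in \sigma^g$, so that meeting all $E_\alpha$ yields $\sigma^g = \sigma^{(g)}$. To see this, suppose $\alpha \in \sigma^{(g)}$, witnessed by $p \in g$ with $p \forces \check\alpha \in \sigma$, i.e. $A_\alpha$ is predense below $p$; pick $q \in g \cap E_\alpha$ and a common extension $s \in g$ of $p$ and $q$. Since $s \leq p$, the condition $s$ is compatible with some $r \in A_\alpha$, so $q \geq s$ cannot be incompatible with all of $A_\alpha$; hence $q \in \overline{A_\alpha}$, i.e. $q \leq r$ for some $r \in A_\alpha$, and then $r \in g$ together with $(\check\alpha, r) \in \sigma$ give $\alpha \in \sigma^g$. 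Applying $\FA^{(+)}_{\PP,\kappa}$ to the sequence $\vec D$ together with $\langle E_\alpha : \alpha < \kappa \rangle$ (of length $\kappa$, since $\kappa + \kappa = \kappa$) then produces a filter meeting every $D_\gamma$ with $\sigma^{(g)} = \sigma^g$ stationary, which is exactly $\FA^+_{\PP,\kappa}$. For $\FA^{++}_{\PP,\kappa}$ I would run the same argument for all $\kappa$ names $\sigma_\gamma$ at once, adding the $\kappa \cdot \kappa = \kappa$ dense sets $E^\gamma_\alpha$. I expect the main obstacle to be the verification of the claim, and in particular getting the ``predense below $p$'' bookkeeping right; everything else is formal and uses no assumption on $\PP$ beyond its being a forcing.
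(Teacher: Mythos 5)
Your proposal is correct and follows essentially the same route as the paper: the easy direction uses $\sigma^g\subseteq\sigma^{(g)}$ together with upward closure of stationarity, and the hard direction augments $\vec D$ with $\kappa$ many dense sets forcing the two interpretations to coincide. Your $E_\alpha=\overline{A_\alpha}\cup\{q: q\perp r\text{ for all }r\in A_\alpha\}$ is literally the paper's set $\{p: p\forces\check\alpha\notin\sigma\text{ or }\exists q\geq p\,(\check\alpha,q)\in\sigma\}$ described combinatorially rather than in forcing language, and the verification of the key claim is the same argument.
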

\begin{proof}
	We will prove the $\FA^+$ case; the $\FA^{++}$ version is similar. The $\Leftarrow$ direction is trivial.
	
	$\Rightarrow$: Let $D_\gamma: \gamma<\kappa$ be a collection of $\kappa$ many dense subsets of $\mathbb{P}$. Let $\sigma$ be a rank $1$ name with $\mathbb{P}\Vdash ``\sigma \text{ is stationary}"$.
	
	For $\gamma\in \kappa$, let
	\begin{equation*}
		E_\gamma:=\{p\in \PP\colon p\forces \check{\gamma}\not \in \sigma \text{ or } \exists q\geq p \, (\check{\gamma},q) \in \sigma\}
	\end{equation*}
	We can see that $E_\gamma$ is dense: given $p\in \PP$, either we can find some $q\parallel p$ with $\langle \check{\gamma},q\rangle \in \sigma$ and we're done, or $p\forces \check{\gamma}\not \in \sigma$ since all the elements of $\sigma$ are check names.
	\begin{claim} 
		If $g$ is any filter which meets all the $E_\gamma$, then $\sigma^g=\sigma^{(g)}$
	\end{claim}
	\begin{proof}
		$\subseteq$: Let $\gamma \in \sigma^g$. Then there is a $q\in g$ with $(\check{\gamma},q) \in \sigma$. Clearly $q\forces \check{\gamma}\in \sigma$, so $\gamma \in \sigma^{(g)}$.
		
		$\supseteq$: Let $\gamma\in \sigma^{(g)}$. Then we can find $r\in g$ with $r\forces \check{\gamma}\in \sigma$. Certainly, then, there is no $p\in g$ with $p\forces \check{\gamma}\not \in \sigma$. Since nonetheless $g$ meets $E_\gamma$, there must be some $q\in g$ with $(\check{\gamma},q) \in \sigma$. Hence $\gamma \in \sigma^g$. 
	\end{proof}
	Now we simply use our forcing axiom to take a filter $g$ which meets all the $D_\gamma$, all the $E_\gamma$, and which is such that $\sigma^{(g)}$ is stationary.
\end{proof}

In defining the $E_\gamma$ in the above proof, we used a technique which we will be invoking many times. It will save us a lot of time if we give it a name now.

\begin{definition}
	Let $\tau$ and $\sigma$ be names, and $p\in \PP$. We say $p$ strongly forces $\tau \in \sigma$, and write $p\sforces \tau \in \sigma$, if there exists $q\geq p$ with $(\tau,q)\in \sigma$.
\end{definition}

The value of this definition is shown in the following two propositions.

\begin{proposition}\label{Prop_sforcingAndForcing} Let $\sigma$ and $\tau$ be names, and $p\in \PP$.
	\begin{enumerate-(1)}
		\item If $p\forces \tau \in \sigma$ then there exist densely many $r\leq p$ such that for some name $\tilde{\tau}$, $r\forces \tilde{\tau}=\tau$ and $r\sforces \tilde{\tau}\in \sigma$.
		\item If $p\sforces \tau \in \sigma$ then $p\forces \tau \in \sigma$.
	\end{enumerate-(1)}
\end{proposition}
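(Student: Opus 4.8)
The plan is to handle the two parts separately; part (2) is essentially immediate, while part (1) rests on the standard density characterisation of forced membership. Throughout I keep the convention of the excerpt that $1$ is the largest element, so $r \leq p$ means $r$ is stronger than $p$ and $q \geq p$ means $q$ is weaker.

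For part (2), I would first record the elementary fact that whenever $(\tau,q)\in\sigma$ one has $q\forces \tau\in\sigma$: for any generic $G$ with $q\in G$, the definition of $\sigma^G$ directly places $\tau^G\in\sigma^G$. Now if $p\sforces\tau\in\sigma$, unwinding the definition of $\sforces$ gives some $q\geq p$ with $(\tau,q)\in\sigma$, hence $q\forces\tau\in\sigma$. Since $p\leq q$, every generic filter containing $p$ contains $q$ by upward closure, so $p\forces\tau\in\sigma$, as required.

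For part (1), the key tool I would state and use is the density lemma: for names $\sigma,\tau$ and a condition $p$, one has $p\forces\tau\in\sigma$ if and only if the set
\[
D = \bigl\{\, r \leq p : \exists (\pi,q)\in\sigma\ (r\leq q \ \wedge\ r\forces \tau=\pi) \,\bigr\}
\]
is dense below $p$. Granting this, part (1) follows at once: from $p\forces\tau\in\sigma$ I obtain that $D$ is dense below $p$, and for each $r\in D$ the witnessing pair $(\pi,q)$ lets me set $\tilde{\tau}:=\pi$, so that $r\forces\tilde{\tau}=\tau$ while $r\leq q$ together with $(\tilde{\tau},q)\in\sigma$ gives precisely $r\sforces\tilde{\tau}\in\sigma$. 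Thus the required conditions $r$ form a set dense below $p$, which is exactly the assertion "densely many $r\leq p$".

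The main obstacle is the forward direction of the density lemma. Here I would fix an arbitrary $r\leq p$, pass to a generic $G$ with $r\in G$, and use $p\forces\tau\in\sigma$ to get $(\pi,q)\in\sigma$ with $q\in G$ and $\tau^G=\pi^G$. The delicate point is assembling a single condition witnessing membership in $D$: by the forcing theorem $\tau^G=\pi^G$ is witnessed by some $r_0\in G$ with $r_0\forces\tau=\pi$, and since $G$ is a filter containing $r$, $q$ and $r_0$, I may take a common refinement $r'\in G$ below all three. Then $r'\leq r\leq p$, $r'\leq q$ and $r'\forces\tau=\pi$, so $r'\in D$ lies below the given $r$, establishing density. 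The reverse direction is routine: any generic $G\ni p$ meets $D$ in some $r$, whose witnesses yield $q\in G$ and $\pi^G=\tau^G\in\sigma^G$ by upward closure of $G$, so $p\forces\tau\in\sigma$. This density lemma is precisely the general form of the argument already used to show the sets $E_\gamma$ are dense in the preceding proof, so no new machinery beyond the forcing theorem is needed.
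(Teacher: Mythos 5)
Your proof is correct; the paper actually states Proposition \ref{Prop_sforcingAndForcing} without proof, treating it as routine, and your argument is exactly the standard density argument it implicitly relies on (the same pattern used to verify density of the sets $E_\gamma$ and the sets in the proof of Theorem \ref{correspondence forcing axioms name principles}). In particular you correctly avoid the pitfall the paper flags in a footnote there: you extract the pair $(\pi,q)\in\sigma$ with $q\in G$ directly from the definition of $\sigma^G$ rather than first fixing a name with the right interpretation and then hunting for a witnessing condition.
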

	
\begin{proposition}\label{Prop_sforcingAndInterpretation}
	Let $\sigma$ and $\tau$ be names, let $p\in \PP$ and let $g$ be any filter containing $p$.
	\begin{enumerate-(1)}
	\item If $p\sforces \tau \in \sigma$ then $\tau^g\in \sigma^g$.
	\item If for all $\tilde{\tau}$ with $(\tilde{\tau},q)\in \sigma$ (for some $q\in \PP$) we either know $\tau^g\neq \tilde{\tau}^g$ or have $p\forces \tilde{\tau}\not \in \sigma$ then $\tau^g \not \in \sigma^g$.
	\end{enumerate-(1)}
\end{proposition}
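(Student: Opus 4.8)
The plan is to unwind the definitions of strong forcing and of the recursive interpretation $\sigma^g$ (Definition \ref{Defn_Interpretation}), and to exploit that a filter is upward closed and downward directed. Part (1) is purely definitional, while part (2) carries the only real content.

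For part (1), since $p\sforces\tau\in\sigma$ there is by definition some $q\geq p$ with $(\tau,q)\in\sigma$. As $g$ is a filter containing $p$ and $q\geq p$, upward closure gives $q\in g$. Then $(\tau,q)\in\sigma$ together with $q\in g$ witnesses $\tau^g\in\sigma^g$ immediately.

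For part (2), I would argue by contradiction. Suppose $\tau^g\in\sigma^g$. By the definition of the recursive interpretation, there must be a name $\tilde\tau$ and a condition $r\in g$ with $(\tilde\tau,r)\in\sigma$ and $\tilde\tau^g=\tau^g$. The crucial observation is that the witnessing name $\tilde\tau$ need not be $\tau$ itself, which is precisely why the hypothesis quantifies over all pairs $(\tilde\tau,q)\in\sigma$. Applying the hypothesis to this particular $\tilde\tau$, one of the two disjuncts must hold: either $\tau^g\neq\tilde\tau^g$, which is impossible since we have $\tilde\tau^g=\tau^g$, or else $p\forces\tilde\tau\notin\sigma$.

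So it remains to derive a contradiction from $p\forces\tilde\tau\notin\sigma$, and this is the step I would treat most carefully. I would use that $(\tilde\tau,r)\in\sigma$ yields $r\sforces\tilde\tau\in\sigma$ (taking the witness $q=r$), and hence $r\forces\tilde\tau\in\sigma$ by Proposition \ref{Prop_sforcingAndForcing}(2). Since $p,r\in g$ and $g$ is a filter, $p$ and $r$ are compatible, so there is a common extension $s\leq p$, $s\leq r$. Then $s\leq r$ gives $s\forces\tilde\tau\in\sigma$ while $s\leq p$ gives $s\forces\tilde\tau\notin\sigma$, which is absurd. Hence $\tau^g\notin\sigma^g$. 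The heart of the argument is this final contradiction: it turns on translating the membership $(\tilde\tau,r)\in\sigma$ into a forcing statement and on the compatibility of $p$ and $r$ inside the filter, rather than on any literal equality of the names $\tau$ and $\tilde\tau$.
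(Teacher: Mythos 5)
The paper states Proposition \ref{Prop_sforcingAndInterpretation} without proof, so there is nothing to compare against; your argument is correct and is exactly the routine verification the authors leave to the reader. Part (1) is the intended one-line use of upward closure of $g$, and in part (2) you correctly identify the only delicate point (the witness $\tilde\tau$ in $\sigma^g$ need not be $\tau$) and close it by converting $(\tilde\tau,r)\in\sigma$ into $r\forces\tilde\tau\in\sigma$ via Proposition \ref{Prop_sforcingAndForcing}(2) and using compatibility of $p$ and $r$ inside the filter.
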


%%%%%%%%%%
\section{Results for rank $1$} 
\label{Section results for rank 1} 

We will start by looking at the positive results we can prove in general about forcing axioms and rank 1 name principles. 
%This field tends to be richer, not least because some of the name principles above can't even be defined for higher ranks.
We again take $\PP$ to be an arbitrary forcing. We also take $\kappa$ to be an uncountable cardinal, although we're mostly interested in the case where $\kappa=\omega_1$.
Since $\PP$ is arbitrary, we could just as easily replace it with a class $\mathcal{C}$ of forcings in all our results.

%%%%%%%%%%
\subsection{Basic implications} 

All the positive results expressed in Figure \ref{diagram of implications} are proved in this section. The negative results will be proved later, when we look at the specific forcings that provide counterexamples.
We will not need that $\kappa$ is regular. In the case of $\cof(\kappa)=\omega$, a club is 

\begin{lemma}\label{Lemma_FA iff N}
$\FA_{\PP,\kappa}\iff \NP_{\PP,\kappa}$
\end{lemma}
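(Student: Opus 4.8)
The plan is to prove the two implications separately, in each case translating between a $\kappa$-sequence of (pre)dense sets and a single good (rank ${\leq}1$) $\kappa$-name. The key dictionary, which I would record at the outset, is the following elementary fact about a rank ${\leq}1$ $\kappa$-name $\sigma$: writing $S_\gamma := \{p \in \PP : (\check\gamma, p) \in \sigma\}$ for each $\gamma < \kappa$, we have $\sigma^g = \{\gamma : g \cap S_\gamma \neq \emptyset\}$ for any filter $g$, and moreover $\PP \forces \check\gamma \in \sigma$ iff $S_\gamma$ is predense in $\PP$, while $\PP \forces \check\gamma \notin \sigma$ iff the set $E_\gamma := \{q \in \PP : q \perp s \text{ for all } s \in S_\gamma\}$ is dense. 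Both equivalences are immediate from the definition of forcing for check-membership of a good name, and can also be read off from Proposition \ref{Prop_sforcingAndInterpretation}.

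For $\NP_{\PP,\kappa} \Rightarrow \FA_{\PP,\kappa}$: given a sequence $\vec D = \langle D_\gamma : \gamma < \kappa\rangle$ of predense sets, I form the good $\kappa$-name $\sigma := \{(\check\gamma, p) : \gamma < \kappa,\ p \in D_\gamma\}$. Since each $S_\gamma = D_\gamma$ is predense, the dictionary gives $\PP \forces \check\gamma \in \sigma$ for every $\gamma$, and as $\sigma$ is forced to be a subset of $\kappa$ we get $\PP \forces \sigma = \check\kappa$. The name $\sigma$ is $\kappa$-small and rank ${\leq}1$, so $\NP_{\PP,\kappa}$ applies with $A = \kappa$, yielding a filter $g \in V$ with $\sigma^g = \kappa$; unravelling the definition of $\sigma^g$ gives $g \cap D_\gamma \neq \emptyset$ for every $\gamma$, i.e. $\Tr_{g,\vec D} = \kappa$. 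Note this direction uses only the $A = \kappa$ instance of the name principle.

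For $\FA_{\PP,\kappa} \Rightarrow \NP_{\PP,\kappa}$: given a good name $\sigma$ and a set $A \subseteq \kappa$ with $\PP \forces \sigma = \check A$, I assemble a single $\kappa$-sequence of (pre)dense sets capturing both the positive and the negative membership requirements, setting $D_\gamma := S_\gamma$ (predense, by the dictionary since $\gamma \in A$) for $\gamma \in A$, and $D_\gamma := E_\gamma$ (dense, since $\gamma \notin A$) for $\gamma \notin A$. Applying $\FA_{\PP,\kappa}$ to $\langle D_\gamma : \gamma < \kappa\rangle$ produces a filter $g \in V$ meeting every $D_\gamma$. For $\gamma \in A$ this gives $g \cap S_\gamma \neq \emptyset$, hence $\gamma \in \sigma^g$; for $\gamma \notin A$ the chosen element of $g \cap E_\gamma$ is incompatible with every member of $S_\gamma$, so by the filter property $g \cap S_\gamma = \emptyset$ and $\gamma \notin \sigma^g$. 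Thus $\sigma^g = A$, as required.

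The main obstacle — and the only genuinely non-trivial point — is the negative half of the forward direction, namely forcing $\gamma \notin \sigma^g$ for $\gamma \notin A$. This is exactly why the sets $E_\gamma$ of conditions incompatible with all of $S_\gamma$ are introduced, and why it is essential that $g$ is a \emph{filter} rather than an arbitrary subset of $\PP$: pairwise compatibility of the elements of $g$ is precisely what rules out $g$ meeting both $E_\gamma$ and $S_\gamma$. Everything else — the predensity/density claims, the $\kappa$-smallness of the name built in the reverse direction, and the fact that exactly $\kappa$ many (pre)dense sets are used so that $\FA_{\PP,\kappa}$ is applicable — is routine bookkeeping.
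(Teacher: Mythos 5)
Your proof is correct and follows essentially the same route as the paper's: both directions translate between the sequence $\vec D$ and the good name $\sigma=\{(\check\gamma,p):\gamma<\kappa,\ p\in D_\gamma\}$, resp.\ between $\sigma$ and a $\kappa$-sequence of (pre)dense sets attached to the $\gamma$'s. The only cosmetic difference is in the direction $\FA_{\PP,\kappa}\Rightarrow\NP_{\PP,\kappa}$: the paper uses the dense sets $\{p: p\sforces\check\gamma\in\sigma\}$ for $\gamma\in A$ and sets $D_\gamma=\PP$ for $\gamma\notin A$ (indeed your ``genuinely non-trivial'' negative half is vacuous, since for $\gamma\notin A$ any pair $(\check\gamma,q)\in\sigma$ would force $\gamma\in\sigma=\check A$, so $S_\gamma=\emptyset$ and $E_\gamma=\PP$), whereas you use the predense $S_\gamma$ directly together with the $E_\gamma$.
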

\begin{proof}
	$\Rightarrow$: Assume $\FA_\kappa$. (That is, $\FA_{\PP,\kappa}$, recall that we said we'd suppress the $\PP$ whenever it was clear.) Let $\sigma$ be a rank $1$ name for a subset of $\kappa$, and suppose that $1\forces \sigma=A$ for some $A\subseteq \kappa$. For $\gamma \in A$, let
	\begin{equation*}
		D_\gamma=\{p\in\PP: p\sforces \check{\gamma} \in \sigma\}
	\end{equation*}
	It is clear that $D_\gamma$ is dense by Proposition \ref{Prop_sforcingAndForcing}.
	
	For $\gamma \in \kappa \setminus A$, let $D_\gamma=\PP$.
	
	Using $\FA_\kappa$, take a filter $g$ that meets every $D_\gamma$. We claim that $\sigma^g=A$.
	
	For $\gamma\in A$, we know that some $p\in g$ strongly forces $\check{\gamma} \in \sigma$. By \ref{Prop_sforcingAndInterpretation} then, $\gamma \in\sigma^g$. Conversely, if $\gamma \not \in A$ then $1\forces \check{\gamma}\not \in \sigma$ and by the same proposition $\gamma \not \in \sigma$.
	
	$\Leftarrow$: Assume $\NP_\kappa$. Let $D_\gamma, \gamma<\kappa$ be a collection of dense subsets of $\PP$.
	
	Let
	\begin{equation*}
		\sigma=\{(\check{\gamma},p): \gamma<\kappa, p\in D_\gamma\}
	\end{equation*}
	
	It is easy to see that $1\forces \sigma=\kappa$. Take a filter $g$ such that $\sigma^g=\kappa$, and then for all $\gamma<\kappa$ $D_\gamma \cap g \neq \emptyset$.
\end{proof}

\begin{lemma} 
\label{Lemma FA bracket interpretation} 
$\FA_{\PP,\kappa}$ holds if and only if for every rank $1$ name $\sigma$ for a subset of $\kappa$, there is some $g$ with $\sigma^{(g)}=\sigma^g$. 
\end{lemma}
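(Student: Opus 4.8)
The plan is to prove both directions, in each case reusing machinery already present in the excerpt. Throughout, I would first record that for a rank $1$ name $\sigma$ for a subset of $\kappa$ and any filter $g$ the inclusion $\sigma^g\subseteq\sigma^{(g)}$ always holds: if $\gamma\in\sigma^g$, then some $p\in g$ satisfies $(\check\gamma,p)\in\sigma$, so $p\sforces\check\gamma\in\sigma$ (witnessed by $q=p$) and hence $p\forces\check\gamma\in\sigma$ by Proposition \ref{Prop_sforcingAndForcing}, giving $\gamma\in\sigma^{(g)}$. Thus in both directions the only real content is the reverse inclusion $\sigma^{(g)}\subseteq\sigma^g$.

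For the direction $\FA_{\PP,\kappa}\Rightarrow(\ast)$, I would copy the dense sets from the proof of the $\FA^+$-theorem. Given a rank $1$ name $\sigma$ for a subset of $\kappa$, set, for each $\gamma<\kappa$,
\[
E_\gamma=\{p\in\PP : p\forces\check\gamma\notin\sigma \text{ or } p\sforces\check\gamma\in\sigma\}.
\]
Each $E_\gamma$ is dense: given $p$, either some $q$ compatible with $p$ has $(\check\gamma,q)\in\sigma$, in which case a common extension strongly forces $\check\gamma\in\sigma$, or no such $q$ exists and then $p\forces\check\gamma\notin\sigma$ (here rank $1$ is essential, since membership in a generic interpretation can only arise from a pair $(\check\gamma,q)\in\sigma$ with $q$ generic). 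Apply $\FA_{\PP,\kappa}$ to $\langle E_\gamma:\gamma<\kappa\rangle$ to obtain a filter $g$ meeting every $E_\gamma$. If $\gamma\in\sigma^{(g)}$, then some $r\in g$ forces $\check\gamma\in\sigma$, so no element of $g$ can force $\check\gamma\notin\sigma$ (two elements of a filter are compatible); hence the witness $p\in g\cap E_\gamma$ must satisfy $p\sforces\check\gamma\in\sigma$, and upward closure of $g$ yields $(\check\gamma,q)\in\sigma$ for some $q\in g$, i.e. $\gamma\in\sigma^g$. This is precisely the Claim proved inside the $\FA^+$-theorem, so I would simply cite that argument.

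For the direction $(\ast)\Rightarrow\FA_{\PP,\kappa}$, I would reuse the name from Lemma \ref{Lemma_FA iff N}. Given a sequence $\vec D=\langle D_\gamma:\gamma<\kappa\rangle$ of dense sets, put $\sigma=\{(\check\gamma,p):\gamma<\kappa,\ p\in D_\gamma\}$, a rank $1$ name for a subset of $\kappa$ with $1\forces\sigma=\check\kappa$ (density of each $D_\gamma$ guarantees $\check\gamma\in\sigma^G$ for all $\gamma$ in any generic $G$). The key observation is that, since $1\forces\check\gamma\in\sigma$ for every $\gamma<\kappa$, every nonempty filter $g$ satisfies $\sigma^{(g)}=\kappa$. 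Applying the hypothesis $(\ast)$ to this $\sigma$ produces a filter $g$ with $\sigma^{(g)}=\sigma^g$, whence $\sigma^g=\kappa$; but $\sigma^g=\{\gamma:g\cap D_\gamma\neq\emptyset\}=\Tr_{g,\vec D}$, so $g$ meets every $D_\gamma$ and $\FA_{\PP,\kappa}$ holds.

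The argument is almost entirely bookkeeping, and the one point demanding care is the status of the filter supplied by $(\ast)$: since a filter is nonempty and upward closed, it contains $1$, which is what forces $\sigma^{(g)}=\kappa$ in the second direction and thereby prevents the hypothesis from being satisfied vacuously by a degenerate $g$. I expect this small observation about $1\in g$ to be the only subtle step; everything else follows the patterns of Lemma \ref{Lemma_FA iff N} and the $\FA^+$-theorem.
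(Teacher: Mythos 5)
Your proposal is correct and follows essentially the same route as the paper: the forward direction uses exactly the dense sets $\{p : p\forces\check\gamma\notin\sigma \text{ or } p\sforces\check\gamma\in\sigma\}$ that the paper uses, and the converse direction is the paper's observation that $\sigma^{(g)}$ is automatically computed correctly for the canonical name built from the dense sets (the paper phrases this as deriving $\NP_{\PP,\kappa}$ and then citing Lemma \ref{Lemma_FA iff N}, whereas you inline that reduction, but the argument is the same).
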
 
\begin{proof} 
First suppose that $\FA_{\PP,\kappa}$ holds and $\sigma$ is a rank $1$ $\PP$-name for a subset of $\kappa$. 
Note that $\sigma^g\subseteq \sigma^{(g)}$ holds for all filters $g$ on $\PP$. 
For each $\alpha<\omega_1$, 
$$ D_\alpha= \{ p\in \PP \mid p\forces \check{\alpha}\notin \sigma \vee p \sforces \alpha\in \sigma \}   $$ 
is dense. 
By $\FA_{\PP,\kappa}$, there is a filter $g$ with $g\cap D_\alpha$ for all $\alpha<\omega_1$. 
To see that $\sigma^{(g)}\subseteq \sigma^g$ holds, suppose that $\alpha\in \sigma^{(g)}$. 
Thus there is some $p\in g$ which forces $\check{\alpha}\in \sigma$. 
Take any $q\in g\cap D_\alpha$. 
Since $p\parallel q$, we have $p \sforces \alpha\in \sigma$ by the definition of $D_\alpha$ and thus $\alpha\in \sigma^g$. 

On the other hand, $\NP_{\PP,\kappa}$ and thus $\FA_{\PP,\kappa}$ (by Lemma \ref{Lemma_FA iff N}) follows trivially from this principle, since for any rank $1$ name $\sigma$ with $\forces \sigma=\check{A}$, we have $\sigma^{(g)}=A$ for any filter $g$. 
\end{proof}

\begin{lemma} \ 
\label{Lemma basic FA implications}  
	\begin{enumerate-(1)}
		\item 
		\label{Lemma basic FA implications 1}  
		$\FA_{\PP,\kappa}\implies \club\FA_{\PP,\kappa}\implies \ub \FA_{\PP,\kappa}$ 
		\item 
		\label{Lemma basic FA implications 2}  
		$\FA_{\PP,\kappa}\implies \stat \FA_{\PP,\kappa}\implies \ub\FA_{\PP,\kappa}$ 
		\item 
		\label{Lemma basic FA implications 3}  
		$\FA_{\PP,\kappa}\implies\omega\text{-}\ub\FA_{\PP,\kappa}\implies \ub\FA_{\PP,\kappa}$
		\item 
		\label{Lemma basic FA implications 4}  
		If $\cof(\kappa)>\omega$, then $\club\FA_{\PP,\kappa}\implies \stat \FA_{\PP,\kappa}$
	\end{enumerate-(1)}
\end{lemma}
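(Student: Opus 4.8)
The plan is to reduce all four parts to pointwise implications between the largeness predicates, exploiting the uniform shape of the axioms. The key observation is that each axiom $\varphi\text{-}\FA_{\PP,\kappa}$ asserts, for every sequence $\vec{D}$, the existence of a filter $g\in V$ with $\varphi(\Tr_{g,\vec{D}})$. Consequently, if $\varphi(x)$ implies $\psi(x)$ for every $x\subseteq\kappa$, then any filter witnessing $\varphi\text{-}\FA_{\PP,\kappa}$ for a given $\vec{D}$ simultaneously witnesses $\psi\text{-}\FA_{\PP,\kappa}$, so $\varphi\text{-}\FA_{\PP,\kappa}\implies\psi\text{-}\FA_{\PP,\kappa}$. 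Thus the whole lemma follows once I verify the relevant implications among the predicates $x=\kappa$, $\mathsf{club}(x)$, $\mathsf{stat}(x)$, $\mathsf{ub}(x)$ and $\omega\text{-}\mathsf{ub}(x)$, applied to $x=\Tr_{g,\vec{D}}$.

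For the implications descending from $\FA_{\PP,\kappa}$, I would note that the witnessing filter $g$ of $\FA_{\PP,\kappa}$ satisfies $\Tr_{g,\vec{D}}=\kappa$. Since $\kappa$ is a club in itself (it is unbounded, and closed because $\kappa$ is a limit ordinal), $\mathsf{club}(\kappa)$ holds; since $\kappa$ meets every club, $\mathsf{stat}(\kappa)$ holds; and since $\omega\subseteq\kappa$ and $\kappa$ is unbounded, $\omega\text{-}\mathsf{ub}(\kappa)$ holds. This yields $\FA_{\PP,\kappa}\implies\club\FA_{\PP,\kappa}$, $\FA_{\PP,\kappa}\implies\stat\FA_{\PP,\kappa}$ and $\FA_{\PP,\kappa}\implies\omega\text{-}\ub\FA_{\PP,\kappa}$, covering the left-hand implications of parts \ref{Lemma basic FA implications 1}--\ref{Lemma basic FA implications 3}.

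For the implications into $\ub\FA_{\PP,\kappa}$, the content is that each of $\mathsf{club}(x)$, $\mathsf{stat}(x)$ and $\omega\text{-}\mathsf{ub}(x)$ implies $\mathsf{ub}(x)$. If $x$ contains a club, then $x$ is unbounded because a club is unbounded, giving $\club\FA_{\PP,\kappa}\implies\ub\FA_{\PP,\kappa}$. If $x$ is stationary, then $x$ meets every tail set $\{\beta<\kappa\mid\alpha\leq\beta\}$, each of which is closed and unbounded in $\kappa$, so $x$ has elements above every $\alpha<\kappa$ and is unbounded; this gives $\stat\FA_{\PP,\kappa}\implies\ub\FA_{\PP,\kappa}$, and works for any uncountable $\kappa$ regardless of cofinality. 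The implication $\omega\text{-}\ub\FA_{\PP,\kappa}\implies\ub\FA_{\PP,\kappa}$ is immediate from the definition of $\omega\text{-}\mathsf{ub}$.

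The one part where the cofinality hypothesis is genuinely used, and hence the only (minor) obstacle, is part \ref{Lemma basic FA implications 4}: showing $\mathsf{club}(x)\implies\mathsf{stat}(x)$ under $\cof(\kappa)>\omega$. Here I would invoke that when $\cof(\kappa)>\omega$ the intersection of two clubs in $\kappa$ is again a club, hence nonempty. So if $x$ contains a club $C$, then for every club $D$ we have $x\cap D\supseteq C\cap D\neq\emptyset$, whence $x$ is stationary. This is exactly where $\cof(\kappa)>\omega$ cannot be dropped: for $\cof(\kappa)=\omega$ one can produce two disjoint clubs, so containing a club no longer forces meeting every club.
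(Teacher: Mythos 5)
Your proposal is correct and matches the paper's approach: the paper's proof is the single line ``Follows immediately from the definitions of the axioms,'' and your argument is exactly the expected unpacking of that, reducing each implication to a pointwise implication between the largeness predicates applied to $\Tr_{g,\vec{D}}$ (with the cofinality hypothesis entering only to guarantee that two clubs in $\kappa$ intersect). One incidental note: the paper's Definition~\ref{Defn_SpecialFA} contains a typo, stating that $\mathsf{ub}(x)$ means ``$x$ contains a club''; you correctly read it as ``$x$ is unbounded,'' consistent with the introduction.
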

\begin{proof}
	Follows immediately from the definitions of the axioms.
\end{proof}

\begin{lemma} 
\label{Lemma equivalence of clubFA and FA}
	$\club\FA_{\PP,\kappa}\Longleftrightarrow\FA_{\PP,\cof(\kappa)}$. 
\end{lemma}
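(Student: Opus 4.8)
The plan is to set $\lambda=\cof(\kappa)$ and fix once and for all a continuous strictly increasing sequence $\langle \kappa_i\mid i<\lambda\rangle$ cofinal in $\kappa$, whose range $C_0=\{\kappa_i\mid i<\lambda\}$ is then a club in $\kappa$ (closed by continuity of the sequence, unbounded by cofinality). For the direction $\FA_{\PP,\cof(\kappa)}\Rightarrow\club\FA_{\PP,\kappa}$, given a sequence $\vec D=\langle D_\gamma\mid \gamma<\kappa\rangle$ of dense sets I would simply apply $\FA_{\PP,\lambda}$ to the subfamily $\langle D_{\kappa_i}\mid i<\lambda\rangle$, obtaining a filter $g\in V$ meeting every $D_{\kappa_i}$. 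Then $C_0\subseteq\Tr_{g,\vec D}$, so the trace contains a club and $\club\FA_{\PP,\kappa}$ holds. This direction uses nothing about the cofinality.

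For the converse $\club\FA_{\PP,\kappa}\Rightarrow\FA_{\PP,\cof(\kappa)}$ I would split on $\lambda$. If $\lambda=\omega$ the target $\FA_{\PP,\omega}$ is just the Rasiowa--Sikorski lemma, a theorem of $\ZFC$, so there is nothing to prove; indeed $\club\FA_{\PP,\kappa}$ is itself then a theorem of $\ZFC$, since any filter meeting the countably many $D_{\kappa_n}$ traces the club $\{\kappa_n\mid n<\omega\}$. So assume $\lambda>\omega$, hence $\lambda$ is regular uncountable. Using Solovay's splitting theorem I would partition $\lambda=\bigsqcup_{i<\lambda}T_i$ into $\lambda$ many pairwise disjoint stationary subsets of $\lambda$. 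Given dense sets $\langle E_i\mid i<\lambda\rangle$, define $\vec D$ by $D_{\kappa_j}=E_i$ whenever $j\in T_i$, and $D_\gamma=\PP$ for $\gamma\notin C_0$. Applying $\club\FA_{\PP,\kappa}$ yields a filter $g$ and a club $C\subseteq\Tr_{g,\vec D}$.

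The crux is then the claim that $E_C:=\{j<\lambda\mid\kappa_j\in C\}$ is club in $\lambda$. Closedness is immediate from continuity of $\langle\kappa_i\rangle$ and closedness of $C$; unboundedness is where $\lambda>\omega$ enters: given $i_0$, I would interleave an $\omega$-chain $c_0<\kappa_{j_0}<c_1<\kappa_{j_1}<\cdots$ with $c_n\in C$ and $c_0>\kappa_{i_0}$, set $i^*=\sup_n j_n$, and note $i^*<\lambda$ precisely because $\lambda$ is regular uncountable, whence $\kappa_{i^*}=\sup_n c_n\in C$ by closedness, so $i^*\in E_C$ with $i^*>i_0$. Granting the claim, for each $i<\lambda$ the stationary set $T_i$ meets the club $E_C$, giving $j\in T_i$ with $\kappa_j\in C\subseteq\Tr_{g,\vec D}$, so $g$ meets $D_{\kappa_j}=E_i$. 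Hence $g$ meets every $E_i$ and $\FA_{\PP,\lambda}$ holds.

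I expect the main obstacle to be exactly this ``pullback of a club is a club'' claim in the singular case, together with the need to recognise that $\cof(\kappa)=\omega$ is degenerate (the club filter is not even a filter) and must be dispatched separately --- this is presumably what the remark preceding the lemma alludes to. For regular $\kappa$ the whole construction collapses to the clean form $\kappa_j=j$, $C_0=\kappa$, $E_C=C$, so the argument reduces to: spread the $E_i$ over a Solovay partition of $\kappa$ into stationary pieces, and use that a club meets every piece.
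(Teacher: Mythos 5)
Your proof is correct and follows essentially the same route as the paper's: the easy direction uses that the range of a continuous cofinal map is a club, and the converse uses Solovay's partition of $\cof(\kappa)$ into stationary sets together with the fact that the preimage of a club under a continuous cofinal map is a club, with the case $\cof(\kappa)=\omega$ dispatched separately. The only cosmetic difference is that the paper factors the converse through the intermediate implication $\club\FA_{\PP,\kappa}\Rightarrow\club\FA_{\PP,\cof(\kappa)}$ and then runs the stationary-partition argument at the regular cardinal $\cof(\kappa)$, whereas you merge the two steps into one.
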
 
\begin{proof} 
For $\cof(\kappa)=\omega$, the statements are both provably true. So assume $\cof(\kappa)>\omega$. 

$\Longleftarrow$: 
%Fix any cofinal closed subset $C$ of $\kappa$ with order type $\cof(\kappa)$. 
Let $\pi\colon \cof(\kappa)\rightarrow \kappa$ be a continuous cofinal function. 
Let $\vec{D}=\langle D_\alpha\mid \alpha<\kappa\rangle$ be a sequence of dense open subsets of $\PP$. 
Let $\vec{E}=\langle E_\beta\mid \beta<\lambda\rangle$, where $E_\alpha=D_{\pi(\alpha)}$ for $\alpha<\cof(\kappa)$. 
By $\FA_{\PP,\cof(\kappa)}$, there is a filter $g$ with $g\cap E_\alpha$ for $\alpha<\cof(\kappa)$. 
Thus for all $\beta=\pi(\alpha)\in \ran(\pi)$, $g\cap D_\alpha=g\cap E_\beta\neq\emptyset$. This suffices since $\ran(\pi)$ is club in $\kappa$. 

$\Longrightarrow$: 
We first claim that $\club\FA_{\PP,\kappa}$ implies $\club\FA_{\PP,\cof(\kappa)}$. 
To see this, let $\pi\colon \cof(\kappa)\rightarrow \kappa$ be a continuous cofinal function. 
Let $\vec{D}=\langle D_\alpha\mid \alpha<\cof(\kappa)\rangle$ be a sequence of dense open subsets of $\PP$. 
Let $E_{\pi(\alpha)}=D_\alpha$ and $E_\gamma=\PP$ for all $\gamma\notin \ran(\pi)$. 
Since $C\cap \ran(\pi)$ is club in $\kappa$ and $\pi$ is continuous, $\pi^{-1}(C)$ is club in $\cof(\kappa)$ and $g\cap D_\alpha =g \cap E_{\pi(\alpha)}\neq \emptyset$ for all $\alpha\in \pi^{-1}(C)$ as required. 

%By $\club\FA_\kappa$, there is a filter $g$ on $\PP$ and a club $C$ in $\kappa$ with $g\cap E_\alpha\neq \emptyset$ for all $\alpha\in C$. 

It now suffices to prove $\club\FA_{\PP,\lambda}\Longrightarrow\FA_{\PP,\lambda}$ for regular $\lambda$. 
Given a sequence $\vec{D}=\langle D_\alpha\mid \alpha<\lambda\rangle$ of dense open subsets, partition $\lambda$ into disjoint stationary sets $S_\alpha$ for $\alpha<\kappa$. 
	Let $\vec{E}=\langle E_\beta\mid \beta<\lambda\rangle$, where $E_\beta=D_\alpha$ for $\beta\in S_\alpha$. 
	By $\club\FA_\lambda$, there is a filter $g$ and a club $C$ in $\lambda$ with $g\cap E_\beta$ for $\beta\in C$. 
	Since $C$ is club, $S_\alpha\cap C\neq \emptyset$ for all $\alpha<\lambda$. 
	Thus $g\cap D_\alpha=g\cap E_\beta\neq\emptyset$. 
\end{proof}

\begin{lemma} \ 
\label{Lemma FA, club-N and club-FA} 
\begin{enumerate-(1)}
	\item 
	\label{Lemma FA, club-N and club-FA 1}
	$\FA_\kappa\implies \club\NP_\kappa$
	\item 
    \label{Lemma FA, club-N and club-FA 2}
	$\club\NP_\kappa \implies \club\FA_\kappa$
\end{enumerate-(1)}
\end{lemma}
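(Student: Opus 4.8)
The goal is to prove the two implications in Lemma \ref{Lemma FA, club-N and club-FA}: that $\FA_\kappa \implies \club\NP_\kappa$ and that $\club\NP_\kappa \implies \club\FA_\kappa$.

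For part \ref{Lemma FA, club-N and club-FA 1}, the plan is to mimic the forward direction of Lemma \ref{Lemma_FA iff N}, but to aim only for a club rather than all of $\kappa$. Suppose $\sigma$ is a rank $1$ name for a subset of $\kappa$ with $\PP \forces ``\sigma \text{ contains a club}"$. I want a filter $g$ such that $\sigma^g$ contains a club. The difficulty compared with Lemma \ref{Lemma_FA iff N} is that $\sigma$ is no longer forced to equal a fixed ground-model set $A$, so I cannot directly read off which ordinals to aim for. Instead, for each $\gamma < \kappa$ I would set $D_\gamma = \{p \in \PP : p \sforces \check{\gamma} \in \sigma \text{ or } p \forces \check{\gamma} \notin \sigma\}$; each such $D_\gamma$ is dense by Proposition \ref{Prop_sforcingAndForcing}. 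Applying $\FA_\kappa$ yields a filter $g$ meeting every $D_\gamma$, and by Lemma \ref{Lemma FA bracket interpretation}-style reasoning this forces $\sigma^g = \sigma^{(g)}$ (by Proposition \ref{Prop_sforcingAndInterpretation}, $\gamma \in \sigma^g$ exactly when some $p \in g$ strongly forces it). The key point is then that $\sigma^{(g)}$ contains a club: since $\PP$ forces $\sigma$ to contain a club, one can find names witnessing this and argue, using additional dense sets ensuring that $g$ decides enough about the club name, that the club reflects down to $V$. I expect the main obstacle here to be correctly packaging ``$\sigma$ contains a club'' into dense sets: since the club itself is only a $\PP$-name, I would introduce an auxiliary rank-$1$ name $\tau$ for a club subset of $\sigma$ and apply $\FA_\kappa$ (equivalently $\NP_\kappa$ via Lemma \ref{Lemma_FA iff N}) to $\tau$, concluding that $\tau^g$ is a ground-model club contained in $\sigma^g$.

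For part \ref{Lemma FA, club-N and club-FA 2}, the plan is to reverse the argument from the $\Leftarrow$ direction of Lemma \ref{Lemma_FA iff N}, adapted to clubs. Given a sequence $\vec{D} = \langle D_\gamma \mid \gamma < \kappa \rangle$ of dense subsets of $\PP$, I would form the name
\[
\sigma = \{(\check{\gamma}, p) : \gamma < \kappa,\ p \in D_\gamma\}.
\]
As in Lemma \ref{Lemma_FA iff N}, one checks easily that $\PP \forces \sigma = \check{\kappa}$, so in particular $\PP \forces ``\sigma \text{ contains a club}"$ (namely all of $\kappa$). Applying $\club\NP_\kappa$ to $\sigma$ yields a filter $g \in V$ such that $\sigma^g$ contains a club $C$ in $\kappa$. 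By Proposition \ref{Prop_sforcingAndInterpretation}, $\gamma \in \sigma^g$ implies there is some $p \in g$ with $(\check{\gamma}, p) \in \sigma$, i.e. $p \in D_\gamma$, so $g \cap D_\gamma \neq \emptyset$. Hence $\Tr_{g, \vec{D}} \supseteq \sigma^g \supseteq C$, so $\Tr_{g, \vec{D}}$ contains a club, which is precisely $\club\FA_\kappa$.

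The routine verifications — that the $D_\gamma$ are dense, that $\PP \forces \sigma = \check{\kappa}$, and the elementary interpretation facts — all follow directly from Propositions \ref{Prop_sforcingAndForcing} and \ref{Prop_sforcingAndInterpretation} exactly as in Lemma \ref{Lemma_FA iff N}, so I would not belabor them. The genuinely new content is in part \ref{Lemma FA, club-N and club-FA 1}, specifically the step of transferring the forced club into the ground model; this is where I would spend the most care, and where invoking an auxiliary name for the club and appealing to $\NP_\kappa = \FA_\kappa$ from Lemma \ref{Lemma_FA iff N} is the cleanest route.
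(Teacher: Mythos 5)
Your part \ref{Lemma FA, club-N and club-FA 2} is exactly the paper's argument and is correct. In part \ref{Lemma FA, club-N and club-FA 1}, however, the step you defer --- ``transferring the forced club into the ground model'' --- is where all the content of the proof lives, and neither of the two mechanisms you propose for it works as stated. First, the dense sets $D_\gamma=\{p\in\PP: p\sforces \check{\gamma}\in\sigma \text{ or } p\forces\check{\gamma}\notin\sigma\}$ only guarantee $\sigma^g=\sigma^{(g)}$; this does not make $\sigma^{(g)}$ contain a club. For a name $\tau$ of a generic club (say, for club shooting), for each fixed $\gamma$ there are densely many conditions forcing $\check{\gamma}\notin\tau$, so a filter can meet every such decision set by always taking the negative option, leaving $\tau^{(g)}=\emptyset$. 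Second, you cannot ``apply $\NP_\kappa$ to $\tau$'': $\NP_\kappa$ only speaks about names that are forced to equal a fixed ground-model set $\check{A}$, and a rank $1$ name for a club subset of $\sigma$ is in general not forced to equal any fixed set, so Lemma \ref{Lemma_FA iff N} gives you nothing here.

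What the paper does, and what you need to make explicit, is to attach three families of dense sets to the auxiliary name $\tau$ (chosen with $1\forces\tau\subseteq\sigma$ and $1\forces$ ``$\tau$ is a club''): sets $D_\gamma$ ensuring closure of $\tau^g$ at $\gamma$ (either $p\sforces\check{\gamma}\in\tau$, or $p$ forces $\tau\cap\gamma$ to be bounded below $\gamma$ with a decided bound --- density uses that $\tau$ is forced to be closed); sets $E_\gamma$ ensuring unboundedness ($p\sforces\check{\delta}\in\tau$ for some $\delta\geq\gamma$ --- density uses that $\tau$ is forced to be unbounded); and sets $F_\gamma$ ensuring $\gamma\in\tau^g\Rightarrow\gamma\in\sigma^g$ (either $p\sforces\check{\gamma}\in\sigma$ or $p\forces\check{\gamma}\notin\tau$). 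A single application of $\FA_\kappa$ to these $3\kappa$ many dense sets then makes $\tau^g$ a club contained in $\sigma^g$. Your instinct to introduce $\tau$ is the right one; the gap is that the specific dense sets forcing $\tau^g$ to be closed and unbounded are the entire proof, and the shortcuts you substitute for them do not close it.
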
 
\begin{proof} 
%\begin{enumerate-(1)} 
	%\item 
	\ref{Lemma FA, club-N and club-FA 1}:
	Let $\sigma$ be a rank $1$ name such that $1\forces ``\sigma$ contains a club in $\kappa$''. Then we can find a rank $1$ name $\tau$ such that $1\forces \tau\subseteq \sigma$ and $1\forces ``\tau$ is a club in $\kappa$''.
	For $\gamma<\kappa$, let $D_\gamma$ denote the set of $p\in\PP$ such that either 
	\begin{enumerate-(a)} 
		\item 
		$p\sforces \check{\gamma}\in\tau$, or 
		\item 
		for all sufficiently large  $\alpha<\gamma$,  $p \forces \check{\alpha} \notin \tau$. 
	\end{enumerate-(a)}
	We claim $D_\gamma$ is dense. Let $p\in \PP$. If $p\forces \check{\gamma} \in \tau$ then by Proposition \ref{Prop_sforcingAndForcing} we can find $q\leq p$ strongly forcing this, and then $q\in D_\gamma$. Otherwise, take $q\leq p$ with $q\forces \check{\gamma}\not \in \tau$. Then $q\forces ``\tau \cap \gamma$ is bounded in $\gamma$''. Take $r\leq q$ deciding that bound, and then $r$ satisfies condition b above.
 
	For any filter $g$ with $g\cap D_\gamma \neq \emptyset$, $\tau^g$ is closed at $\gamma$ by Proposition \ref{Prop_sforcingAndInterpretation}.
	
	Let $E_\gamma$ denote the set of $p\in \PP$ such that for some $\delta\geq \gamma$, $p\sforces \check{\delta}\in \tau$. Again, this is dense since $\tau$ is forced to be unbounded. For any filter $g$ with $g\cap E_\gamma \neq \emptyset$ for all $\gamma<\kappa$, $\tau^g$ is unbounded. 
	
	Let $F_\gamma$ denote the dense set of $p\in\PP$ such that $p\sforces \check{\gamma}\in \sigma$ or $p\forces\check{\gamma}\notin\tau$. 
	Once again, $F_\gamma$ is dense: given $p\in \PP$ take $q\leq p$ deciding whether $\gamma \in \tau$. If it decides $\gamma \not \in \tau$ then we're done; otherwise $q\forces \check{\gamma} \in \sigma$ and we can find $r\leq q$ with $r\sforces\check{\gamma}\in \sigma$
	
	For any filter $g$ with $g\cap F_\gamma\neq\emptyset$, $\gamma\in\tau^g \Rightarrow \gamma\in\sigma^g$.
	
	Putting things together, if we find a filter $g$ which meets every $D_\gamma$, $E_\gamma$ and $F_\gamma$ then $\tau^g$ will be both a club and a subset of $\sigma^g$.
	
	%\item 
	\ref{Lemma FA, club-N and club-FA 2}:
	This works much like the proof that $\NP\Rightarrow \FA$ above. Let $D_\gamma:\gamma<\kappa$ be a collection of dense sets. Let
	
	\begin{equation*}
		\sigma=\{(\check{\gamma},p):\gamma<\kappa,p\in D_\gamma\}
	\end{equation*}

	Clearly $1\forces \sigma=\check{\kappa}$, and hence that $\sigma$ contains a club. Take a filter $g$ where $\sigma^g$ contains a club. Then $\sigma^g=\{\gamma<\kappa: D_\gamma\cap g\neq \emptyset\}$ so $g$ meets a club of $D_\gamma$.
%\end{enumerate-(1)}
\end{proof} 

Putting together the previous results, we complete the top left corner of Figure \ref{diagram of implications}. 

\begin{corollary}
\label{Corollary equivalence of FA, NP, clubFA, clubNP} 
	The following are all equivalent for all uncountable regular cardinals $\kappa$: $\FA_\kappa$, $\NP_\kappa$, $\club\FA_\kappa$, $\club\NP_\kappa$.
\end{corollary}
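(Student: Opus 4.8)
The plan is to assemble the corollary purely from the chain of equivalences and implications already established in this section, so no new combinatorial argument is needed. The four principles to be identified are $\FA_\kappa$, $\NP_\kappa$, $\club\FA_\kappa$, and $\club\NP_\kappa$, for an uncountable regular $\kappa$.

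First I would record the equivalence $\FA_{\PP,\kappa}\iff\NP_{\PP,\kappa}$, which is exactly Lemma \ref{Lemma_FA iff N}. This closes the left vertical edge of the diagram. Next I would use Lemma \ref{Lemma equivalence of clubFA and FA}, which states $\club\FA_{\PP,\kappa}\iff\FA_{\PP,\cof(\kappa)}$; since $\kappa$ is regular, $\cof(\kappa)=\kappa$, and this collapses to $\club\FA_{\PP,\kappa}\iff\FA_{\PP,\kappa}$. This identifies $\club\FA_\kappa$ with $\FA_\kappa$ and hence with $\NP_\kappa$.

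It then remains only to insert $\club\NP_\kappa$ into the cycle. For this I would invoke Lemma \ref{Lemma FA, club-N and club-FA}: part \ref{Lemma FA, club-N and club-FA 1} gives $\FA_\kappa\implies\club\NP_\kappa$, and part \ref{Lemma FA, club-N and club-FA 2} gives $\club\NP_\kappa\implies\club\FA_\kappa$. Composing these with the already-established $\club\FA_\kappa\iff\FA_\kappa$ yields the cycle
\[
\FA_\kappa \implies \club\NP_\kappa \implies \club\FA_\kappa \iff \FA_\kappa,
\]
so $\club\NP_\kappa$ is equivalent to $\FA_\kappa$ as well. Combining everything, $\FA_\kappa$, $\NP_\kappa$, $\club\FA_\kappa$, and $\club\NP_\kappa$ are mutually equivalent.

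Since every ingredient is a lemma proved earlier in the section, there is no genuine obstacle here; the only point requiring care is the use of regularity of $\kappa$ to pass from $\cof(\kappa)$ to $\kappa$ in Lemma \ref{Lemma equivalence of clubFA and FA}, which is precisely the hypothesis of the corollary. I would simply state the argument as a short composition of the cited results, making the direction of each implication explicit so the reader can trace the cycle.
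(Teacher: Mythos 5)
Your proposal is correct and matches the paper's intent exactly: the corollary is stated as "putting together the previous results," and the intended assembly is precisely the cycle you describe, using Lemma \ref{Lemma_FA iff N}, Lemma \ref{Lemma equivalence of clubFA and FA} (with regularity giving $\cof(\kappa)=\kappa$), and both parts of Lemma \ref{Lemma FA, club-N and club-FA}. No further comment is needed.
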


The second half of the previous lemma also applies for the other special name principles.

\begin{lemma}\label{Lemma statN to statFA}
	$\stat\NP_\kappa \implies \stat\FA_\kappa$
\end{lemma}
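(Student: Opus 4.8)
The plan is to reuse, almost verbatim, the argument from the second half of Lemma~\ref{Lemma FA, club-N and club-FA} (the proof that $\club\NP_\kappa\implies\club\FA_\kappa$), since the only feature of ``contains a club'' used there is that it is a notion of largeness satisfied by $\kappa$ itself. Concretely, given a sequence $\vec{D}=\langle D_\gamma\mid\gamma<\kappa\rangle$ of dense subsets of $\PP$, I would form the canonical rank~$1$ name encoding the trace,
\begin{equation*}
\sigma=\{(\check{\gamma},p)\mid \gamma<\kappa,\ p\in D_\gamma\}.
\end{equation*}

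Next I would observe that $1\forces \sigma=\check{\kappa}$: for any generic $G$ and any $\gamma<\kappa$, density of $D_\gamma$ gives $G\cap D_\gamma\neq\emptyset$, so $\gamma\in\sigma^G$, whence $\sigma^G=\kappa$. Since $\kappa$ is stationary in itself, this yields $\PP\forces ``\sigma$ is stationary''. Applying $\stat\NP_\kappa$ to this name produces a filter $g\in V$ with $\sigma^g$ stationary in $\kappa$.

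It then remains only to identify $\sigma^g$ with the trace. Unwinding the definition of interpretation, $\sigma^g=\{\gamma<\kappa\mid \exists p\in g\ (\check{\gamma},p)\in\sigma\}=\{\gamma<\kappa\mid g\cap D_\gamma\neq\emptyset\}=\Tr_{g,\vec{D}}$, so the stationarity of $\sigma^g$ is exactly the conclusion of $\stat\FA_\kappa$. I do not expect a genuine obstacle here: the whole content is the remark that the trace name $\sigma$ is forced to equal the maximally large set $\check{\kappa}$, so the verification $\sigma^g=\Tr_{g,\vec D}$ is the only computation, and it is routine.
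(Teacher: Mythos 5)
Your proposal is correct and is exactly the paper's argument: the paper proves this lemma by reusing the trace name $\sigma=\{(\check{\gamma},p):\gamma<\kappa,\ p\in D_\gamma\}$ from the $\club\NP_\kappa\implies\club\FA_\kappa$ case, noting $1\forces\sigma=\check{\kappa}$ and hence that $\sigma$ is forced to be stationary, and identifying $\sigma^g$ with $\Tr_{g,\vec D}$. No differences worth noting.
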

\begin{proof}
As for the $\mathsf{club}$ case, except that we just insist on $\sigma^g$ being stationary.
\end{proof}

\begin{lemma}
\label{Lemma ubN to ubFA} 
	$\ub\NP_\kappa \implies \ub\FA_\kappa$
\end{lemma}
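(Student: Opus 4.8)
The plan is to mirror the second half of Lemma \ref{Lemma FA, club-N and club-FA}, namely part \ref{Lemma FA, club-N and club-FA 2}, together with Lemma \ref{Lemma statN to statFA}; the only difference is that we will only require the interpretation of the auxiliary name to be unbounded rather than to contain a club or to be stationary. Concretely, I would start from an arbitrary sequence $\vec{D}=\langle D_\gamma\mid \gamma<\kappa\rangle$ of predense subsets of $\PP$ and build the rank $1$ name
\[
	\sigma=\{(\check{\gamma},p):\gamma<\kappa,\ p\in D_\gamma\}.
\]

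The key (and essentially definitional) observation is that for any filter $g$ on $\PP$ we have $\sigma^g=\Tr_{g,\vec{D}}$: indeed $\gamma\in\sigma^g$ holds exactly when some $p\in g$ satisfies $(\check{\gamma},p)\in\sigma$, i.e.\ when $g\cap D_\gamma\neq\emptyset$. Since each $D_\gamma$ is predense, it meets the generic filter, so $\gamma\in\sigma^G$ for every $\gamma<\kappa$ and hence $1\forces\sigma=\check{\kappa}$; in particular $\PP\forces\mathsf{ub}(\sigma)$, so the hypothesis of $\ub\NP_\kappa$ applies to $\sigma$.

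Applying $\ub\NP_\kappa$ then yields a filter $g\in V$ with $\sigma^g$ unbounded in $\kappa$. By the identity above, $\Tr_{g,\vec{D}}=\sigma^g$ is unbounded, which is exactly the conclusion of $\ub\FA_\kappa$. I do not expect any genuine obstacle here, since the argument is purely a matter of reading the trace off the interpretation of $\sigma$; the only point that must be stated carefully is the equality $\sigma^g=\Tr_{g,\vec{D}}$, handled precisely as in the club case.
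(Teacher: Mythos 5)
Your proposal is correct and is essentially identical to the paper's argument: the paper proves this lemma by the same construction $\sigma=\{(\check{\gamma},p):\gamma<\kappa,\ p\in D_\gamma\}$ used in the club case, observing $1\forces\sigma=\check{\kappa}$ and that $\sigma^g$ equals the trace, then requiring the interpretation to be unbounded instead of containing a club. No differences worth noting.
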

\begin{proof}
As for the $\mathrm{club}$ case, except that we insist on $\sigma^g$ being unbounded.
\end{proof}

\begin{lemma}
	$\omega\text{-}\ub\NP_\kappa \implies \omega\text{-}\ub\FA_\kappa$
\end{lemma}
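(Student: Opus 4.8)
The plan is to follow the template of the second half of Lemma \ref{Lemma FA, club-N and club-FA}, exactly as was done for the stationary and unbounded cases in Lemmas \ref{Lemma statN to statFA} and \ref{Lemma ubN to ubFA}: encode an arbitrary sequence of dense sets as a single rank $1$ name whose interpretation under any filter recovers the trace, and then transfer the largeness property from the name principle to the forcing axiom.

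Concretely, given a sequence $\vec{D}=\langle D_\gamma\mid \gamma<\kappa\rangle$ of dense subsets of $\PP$, I would set
$$\sigma=\{(\check{\gamma},p):\gamma<\kappa,\ p\in D_\gamma\}.$$
Since each $D_\gamma$ is dense, it is immediate that $1\forces \sigma=\check{\kappa}$; in particular $\PP$ forces that $\sigma$ contains $\omega$ as a subset and is unbounded in $\kappa$, so the hypothesis of $\omega\text{-}\ub\NP_\kappa$ is met. Applying $\omega\text{-}\ub\NP_\kappa$ yields a filter $g\in V$ such that $\sigma^g$ contains $\omega$ and is unbounded in $\kappa$.

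The key observation, as in the earlier cases, is the identity
$$\sigma^g=\{\gamma<\kappa\mid g\cap D_\gamma\neq\emptyset\}=\Tr_{g,\vec{D}},$$
which follows directly from the definition of interpretation together with $\check{\gamma}^g=\gamma$ and the equivalence $(\check{\gamma},p)\in\sigma\iff p\in D_\gamma$. Hence $\Tr_{g,\vec{D}}$ contains $\omega$ and is unbounded in $\kappa$, which is precisely what $\omega\text{-}\ub\FA_\kappa$ asserts for the sequence $\vec{D}$.

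I expect no real obstacle here: the argument is routine and structurally identical to the $\mathsf{club}$, $\mathsf{stat}$ and $\mathsf{ub}$ cases already treated, the only change being which largeness property of $\sigma^g$ we extract from the name principle. The one point worth flagging is that $\omega\text{-}\mathsf{ub}$ is a conjunction (``contains $\omega$'' together with ``unbounded''), but this is harmless because $1\forces \sigma=\check{\kappa}$ forces both conjuncts simultaneously, and the name principle hands us a single filter $g$ witnessing both for $\sigma^g$.
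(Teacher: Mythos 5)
Your proof is correct and uses the same encoding $\sigma=\{(\check{\gamma},p):\gamma<\kappa,\ p\in D_\gamma\}$ as the paper. The only difference is cosmetic: the paper's proof works with the equivalent two-name/two-sequence reformulation of the $\omega\text{-}\mathsf{ub}$ axioms (introducing a second name $\tau=\{(\check{n},p):n<\omega,\ p\in E_n\}$ for $\omega$ built from a separate $\omega$-sequence $\vec{E}$ of dense sets), whereas you apply the single-formula definitions of $\omega\text{-}\ub\NP_\kappa$ and $\omega\text{-}\ub\FA_\kappa$ directly, which if anything matches the lemma as literally stated more closely.
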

\begin{proof}Define $\sigma$ as in the club case. Define
	\begin{equation*}
		\tau=\{(n,p):n<\omega,p\in E_n\}
	\end{equation*}
where we want to meet all of the dense sets $\langle E_n\mid n<\omega\rangle$ as well as unboundedly many of the dense sets $D_\gamma$. Take $g$ such that $\tau^g=\omega$ and $\sigma^g$ is unbounded.
\end{proof}

We can also get converses for these in the case of $\mathsf{ub}$ and $\omega\text{-}\mathsf{ub}$.

\begin{lemma} \ 
\label{Lemma ubFA to ubN} 
	\begin{enumerate-(1)}
		\item 
		\label{Lemma ubFA to ubN 1} 
		$\ub\FA_\kappa\implies \ub\NP_\kappa$
		\item 
		\label{Lemma ubFA to ubN 2} 
		$\omega\text{-}\ub\FA_\kappa \implies \omega\text{-}\ub\NP_\kappa$
	\end{enumerate-(1)}
\begin{proof}
%	\begin{enumerate}
%		\item 
\ref{Lemma ubFA to ubN 1}: 
Assume $\ub\FA_\kappa$. Let $\sigma$ be a rank $1$ name for an unbounded subset of $\kappa$. For $\gamma<\kappa$ let $D_\gamma$ be the set of all $p\in\PP$ such that for some $\delta>\gamma$, $p\sforces \check{\delta}\in \sigma$. Let $g$ be a filter meeting unboundedly many $D_\gamma$; then $\sigma^g$ is unbounded.
		%\item 
		
\ref{Lemma ubFA to ubN 2}: 
		Let $\sigma$ be a rank $1$ name for an unbounded subset of $\kappa$ and $\tau$ be a good name for $\omega$. Define $D_\gamma$ as above, and for $n<\omega$ let $E_n$ be the set of all $p\in\PP$ which strongly force $n\in \tau$. Find $g$ meeting unboundedly many $D_\gamma$ and every $E_n$; then $\sigma^g$ is unbounded and $\tau^g=\omega$.
%	\end{enumerate}
\end{proof}
\end{lemma}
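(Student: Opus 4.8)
The plan is to mirror the construction from the $\mathsf{club}$ case (Lemma \ref{Lemma FA, club-N and club-FA}\ref{Lemma FA, club-N and club-FA 1}), but to discard the dense sets that enforced closure and retain only those that push large ordinals into the interpretation. The whole argument runs on the strong-forcing machinery of Propositions \ref{Prop_sforcingAndForcing} and \ref{Prop_sforcingAndInterpretation}.

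For part \ref{Lemma ubFA to ubN 1}, assume $\ub\FA_\kappa$ and let $\sigma$ be a rank $1$ name with $\PP\forces$ ``$\sigma$ is unbounded in $\kappa$''. For each $\gamma<\kappa$ I would set
\[ D_\gamma = \{p\in\PP \mid \exists\,\delta>\gamma\ \ p\sforces\check{\delta}\in\sigma\}. \]
First I would verify density: given $p$, since $\PP$ forces $\sigma$ unbounded there is $q\leq p$ and some fixed $\delta>\gamma$ with $q\forces\check{\delta}\in\sigma$; because $\sigma$ is rank $1$ every element of $\sigma$ is a check name, so Proposition \ref{Prop_sforcingAndForcing} (applied with $\tilde\tau=\check\delta$, the witness name being forced equal to $\check\delta$ and hence equal to it) yields densely many $r\leq q$ with $r\sforces\check\delta\in\sigma$, and such $r$ lie in $D_\gamma$. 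Then I apply $\ub\FA_\kappa$ to $\vec{D}=\langle D_\gamma\mid\gamma<\kappa\rangle$ to obtain a filter $g$ with $\Tr_{g,\vec D}$ unbounded. For each $\gamma\in\Tr_{g,\vec D}$, picking $p\in g\cap D_\gamma$ gives some $\delta>\gamma$ with $p\sforces\check\delta\in\sigma$, so $\delta\in\sigma^g$ by Proposition \ref{Prop_sforcingAndInterpretation}. As $\Tr_{g,\vec D}$ is unbounded, this produces arbitrarily large members of $\sigma^g$, so $\sigma^g$ is unbounded.

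For part \ref{Lemma ubFA to ubN 2}, assume $\omega\text{-}\ub\FA_\kappa$ and let $\sigma$ be as above and $\tau$ a name forced to equal $\omega$. I keep the $D_\gamma$ and, for $n<\omega$, set $E_n=\{p\in\PP\mid p\sforces\check n\in\tau\}$, which is dense since $\tau$ is forced to contain each $\check n$. Using the combined two-sequence formulation of $\omega\text{-}\ub\FA_\kappa$ recorded after Definition \ref{Defn_SpecialFA}, I obtain a single filter $g$ meeting unboundedly many $D_\gamma$ and every $E_n$. The part \ref{Lemma ubFA to ubN 1} argument makes $\sigma^g$ unbounded, while meeting every $E_n$ gives $\omega\subseteq\tau^g$ by Proposition \ref{Prop_sforcingAndInterpretation}; since the elements of $\tau$ are check names of naturals, $\tau^g\subseteq\omega$ automatically, so $\tau^g=\omega$.

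There is no deep obstacle here; the constructions are routine once the strong-forcing tools are available. The only point needing care is the bookkeeping in the conclusion: $\ub\FA_\kappa$ guarantees only that \emph{unboundedly many} $D_\gamma$ are met, so I must ensure that meeting $D_\gamma$ forces an element strictly \emph{above} $\gamma$ into $\sigma^g$, which is precisely why the witness $\delta$ is required to satisfy $\delta>\gamma$ in the definition of $D_\gamma$. For part \ref{Lemma ubFA to ubN 2} the additional subtlety is that the unbounded demand on $\vec D$ and the full demand on $\vec E$ must be met by the \emph{same} filter, which is exactly what the two-sequence form of $\omega\text{-}\ub\FA_\kappa$ delivers.
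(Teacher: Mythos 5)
Your proposal is correct and follows essentially the same route as the paper: the same dense sets $D_\gamma=\{p\mid \exists\,\delta>\gamma\ p\sforces\check\delta\in\sigma\}$ and $E_n=\{p\mid p\sforces\check n\in\tau\}$, with the forcing axiom applied to their traces. You merely spell out the density checks and the role of Propositions \ref{Prop_sforcingAndForcing} and \ref{Prop_sforcingAndInterpretation}, which the paper leaves implicit.
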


This proves every implication in the left two columns of Figure \ref{diagram of implications}.

%%%%%%%%%%
\subsection{Extremely bounded name principles} 

Now, we address the right most column of Figure \ref{diagram of implications - bounded with finite lambda}. 
%\todo{It's probably better for formulate every directly as speaking about $\BB(\PP)$. Otherwise the statements depend on the ``presentation`` of the forcings, and we probably don't really want to consider this. Note that for a similar reason, bounded forcing axioms are always formulated with respect to the Boolean completion only. } 
These axioms are more interesting if $\PP$ is a complete Boolean algebra, since they can be trivial otherwise. 

%\begin{lemma} 
%	$\varphi\text{-}\NP_{\PP,\kappa}\implies \varphi\text{-}\BN^\lambda_{\PP,\kappa}$. Similar implications hold for $\club \NP_{\PP,\kappa}$ and $\stat \NP_{\PP,\kappa}$. 
%\end{lemma}
%\begin{proof}
%	Immediate from the definitions.
%\end{proof}

\begin{lemma}
	$\BN^1_\kappa$ is provable in $\ZFC$.
\end{lemma}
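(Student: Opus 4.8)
The plan is to prove that $\BN^1_{\PP,\kappa}$ holds outright, with no hypothesis on $\PP$, by unwinding the definitions. Recall that $\BN^1_\kappa$ abbreviates $\BN^1_{\PP,\kappa}(1)$ with $X=\kappa$, i.e. the $1$-bounded name principle at rank $1$: whenever $\sigma$ is a $\kappa$-small, $1$-bounded, rank $\leq 1$ name for a subset $A\subseteq\kappa$ with $\PP\forces\sigma=\check A$, we must exhibit a filter $g\in V$ with $\sigma^g=A$. The key observation is that being $1$-bounded at rank $1$ is extremely restrictive: such a $\sigma$ is a \emph{very good} name in the terminology just introduced, so each $\check\gamma$ can be paired with \emph{at most one} condition $p$.

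First I would write $\sigma=\{(\check\gamma,p_\gamma)\mid\gamma\in T\}$, where $T\subseteq\kappa$ and each $\gamma\in T$ has a single associated condition $p_\gamma\in\PP$ (this is exactly what local $1$-boundedness at rank $1$ delivers, since every $\tau$ in the name is a check name $\check\gamma$ and the set $S_{\check\gamma}$ of conditions attached to it has size at most $1$). Next I would use the hypothesis $\PP\forces\sigma=\check A$ to pin down $T$ and the $p_\gamma$: for $\gamma\in A$ we must have $\PP\forces\check\gamma\in\sigma$, which forces $p_\gamma$ to be the top element $1$ (if $\gamma\in T$ were attached to some $p_\gamma\neq 1$, then below a condition incompatible with $p_\gamma$ we would force $\check\gamma\notin\sigma$, contradicting $\PP\forces\check\gamma\in\check A$); and for $\gamma\notin A$ we must have $\gamma\notin T$ (otherwise below $p_\gamma$ we would force $\check\gamma\in\sigma$ while $\check\gamma\notin\check A$).

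Thus the name collapses to $\sigma=\{(\check\gamma,1)\mid\gamma\in A\}=\check A$, and now I would simply take $g=\{1\}$ (or indeed any filter), observing that $\sigma^{\{1\}}=\{\gamma\mid 1\in\{1\},\ (\check\gamma,1)\in\sigma\}=A$, so $\sigma^g=A$ as required. The argument is uniform in $\PP$, so $\BN^1_\kappa$ holds in $\ZFC$.

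The only real subtlety — and the step I would be most careful with — is the reduction showing that $1$-boundedness at rank $1$ forces each retained $p_\gamma$ to equal $1$. This relies on the fact that a check name leaves no room for conditions strictly below the top: there is only one condition available per coordinate, so the requirement $\PP\forces\sigma=\check A$ leaves no slack and pins that condition to $1$ for $\gamma\in A$. By contrast, an ordinary (non-$1$-bounded) rank $1$ name could split the decision ``$\gamma\in\sigma$'' across an antichain, which is precisely why the unbounded principle $\NP_\kappa$ is nontrivial whereas $\BN^1_\kappa$ is trivially true. I would flag this contrast explicitly, as it motivates the finite-$\lambda$ diagram (Figure \ref{diagram of implications - bounded with finite lambda}) where the bottom row and $\BN^n_\kappa$ are all asserted to be provable.
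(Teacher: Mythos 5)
Your proof is correct and is essentially the paper's own argument: a $1$-bounded rank-$1$ name attaches at most one condition to each $\check\gamma$, the hypothesis $\PP\forces\sigma=\check A$ pins that condition to $1$ for $\gamma\in A$ and rules out $\gamma\notin A$, so $\sigma$ collapses to $\check A$ and any filter works. The only difference is cosmetic: the paper derives ``$p_\gamma$ lies in every generic filter, hence $p_\gamma=1$'' under an explicit atomlessness assumption and leaves forcings with atoms as an exercise, while your step ``below a condition incompatible with $p_\gamma$'' presupposes exactly the same nontriviality of $p_\gamma$, so the two arguments share the same (harmless, standard) implicit separativity assumption.
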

\begin{proof}
	Let $\sigma$ be a $1$-bounded rank $1$ name such that $1\forces \sigma=\check{A}$ for some set $A$. Then for $\gamma\in\kappa\setminus A$, there is no $p\in \PP$ such that $(\check{\gamma},p)\in \sigma$. For $\gamma \in A$ there is a unique $p\in \PP$ such that $(\check{\gamma},p)\in \sigma$; and $p$ is contained in every generic filter. Assuming $\PP$ is atomless, it follows that $p=1$ and hence that, if we let $g$ be any filter at all, $\sigma^g=A$. 
	It is also possible to adjust this proof to work for forcings with atoms; this is left as an exercise for the reader.
\end{proof}

All of these results also hold if we work with bounded name principles and forcing axioms, provided that the bound is at least $\kappa$.

For bounds below $\kappa$, we can almost get an equivalence between the different bounds for the stationary and unbounded name principles. 
A forcing is called \emph{well-met} if any two compatible conditions $p,q$ have a greatest lower bound $p\wedge q$. 

The next result and proof is due to Hamkins for trees (see Corollary \ref{lemma_failure of lambda-bounded and 1-bounded name principle for trees}). 
We noticed that his proof shows a more general fact. 

\begin{lemma}[with Hamkins] 
\label{lemma_failure of lambda-bounded and 1-bounded name principle} 
Suppose $\lambda<\kappa$ and $\PP$ is well-met. 
	\begin{enumerate-(1)}
		\item If $\stat \BN_{\PP,\kappa}^\lambda$ fails, then there are densely many conditions $p\in \PP$ such that $\stat \BN_{\PP_p,\kappa}^1$ fails, where  $\PP_p:=\{q\in \PP: q\leq p\}$.
		\item The same result holds with $\mathsf{ub}$ in place of $\mathsf{stat}$.
	\end{enumerate-(1)}
\end{lemma}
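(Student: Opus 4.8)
The plan is to prove (1) directly and then observe that (2) follows by the same argument with ``stationary'' replaced by ``unbounded''. Suppose $\stat\BN^\lambda_{\PP,\kappa}$ fails, and fix a witnessing name: a $\lambda$-bounded rank $1$ $\kappa$-name $\sigma$ with $\PP\forces$``$\sigma$ is stationary'' such that $\sigma^g$ is nonstationary for every filter $g\in V$. Since $\sigma$ is rank $1$ and $\lambda$-bounded, I would write $\sigma=\{(\check\gamma,q):\gamma<\kappa,\ q\in S_\gamma\}$ with each $S_\gamma\subseteq\PP$ of size $\le\lambda$, and fix enumerations $S_\gamma=\{q_{\gamma,i}:i<\lambda\}$; note that $\sigma^g=\{\gamma:g\cap S_\gamma\neq\emptyset\}$ for every filter $g$. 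Given an arbitrary $p_0\in\PP$, the goal is to produce $p\le p_0$ together with a $1$-bounded name over $\PP_p$ witnessing the failure of $\stat\BN^1_{\PP_p,\kappa}$; this is exactly the required density statement.

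The heart of the argument is a uniformisation step. In any generic extension $V[G]$ we have $\sigma^G=\bigcup_{i<\lambda}\{\gamma:q_{\gamma,i}\in G\}$, and since $\sigma^G$ is stationary while $\lambda<\cf(\kappa)$, one of these $\lambda$ pieces is stationary. Hence $\PP\forces\exists i<\lambda\ $``$\{\gamma:q_{\gamma,i}\in\dot G\}$ is stationary''; as the possible witnesses are ground model ordinals below $\lambda$, a standard maximality argument yields $p\le p_0$ and a fixed $i^*<\lambda$ with $p\forces$``$\{\gamma:q_{\gamma,i^*}\in\dot G\}$ is stationary''.

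I would then use well-metness to localise below $p$. For each $\gamma$ with $q_{\gamma,i^*}$ compatible with $p$, set $q'_\gamma:=q_{\gamma,i^*}\wedge p\in\PP_p$ (leaving it undefined otherwise), and let $\sigma':=\{(\check\gamma,q'_\gamma):q'_\gamma\text{ defined}\}$, a $1$-bounded rank $1$ name over $\PP_p$. Because $q'_\gamma$ is the greatest lower bound of $q_{\gamma,i^*}$ and $p$, for any generic $G\ni p$ one has $q'_\gamma\in G\iff q_{\gamma,i^*}\in G$, so $\sigma'^G=\{\gamma:q_{\gamma,i^*}\in G\}$ is stationary; thus $\PP_p\forces$``$\sigma'$ is stationary''. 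Conversely, for any filter $g$ on $\PP_p$ with upward closure $\hat g$ in $\PP$, each defined $q'_\gamma\le q_{\gamma,i^*}\in S_\gamma$ forces $\hat g\cap S_\gamma\neq\emptyset$ whenever $q'_\gamma\in g$, whence $\sigma'^g\subseteq\sigma^{\hat g}$; since $\sigma^{\hat g}$ is nonstationary by the choice of $\sigma$, so is $\sigma'^g$. Hence $\sigma'$ witnesses the failure of $\stat\BN^1_{\PP_p,\kappa}$.

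The main obstacle I anticipate is precisely the interaction of the two key ideas: the uniformisation (finding a single index $i^*$ that works on a stationary set, which is genuinely a statement about the generic extension that must then be reflected down to a ground model condition $p$) and the well-met hypothesis, which is what lets the chosen conditions $q_{\gamma,i^*}$ be pushed below $p$ into a legitimate $\PP_p$-name without disturbing which $\gamma$ enter the generic interpretation. Without well-metness there need be no canonical single condition below $p$ equivalent to ``$q_{\gamma,i^*}$ and $p$'', and the reduction from $\lambda$ conditions per coordinate to a single one would break down. For (2) the only change is to replace the partition fact by the observation that a union of $\lambda<\cf(\kappa)$ bounded subsets of $\kappa$ is bounded.
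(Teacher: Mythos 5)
Your proof is correct and follows essentially the same route as the paper's: decompose the $\lambda$-bounded name into $\lambda$ many $1$-bounded pieces, use the pigeonhole fact that some piece is forced (below a dense set of conditions) to be stationary, and use well-metness to meet each chosen condition with $p$ so as to obtain a $1$-bounded $\PP_p$-name whose interpretation under any ground-model filter is contained in an interpretation of the original name. The only differences are cosmetic (you make the density over an arbitrary $p_0$ and the handling of incompatible $q_{\gamma,i^*}\wedge p$ slightly more explicit).
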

\begin{proof}
	We prove the $\mathsf{stat}$ case; the $\mathsf{ub}$ case is identical. The key fact the proof uses is that if we partition a stationary/unbounded subset of $\kappa$ into $\lambda<\kappa$ many parts, then one of those parts must be stationary/unbounded.
	
	Let $\sigma$ be a $\lambda$-bounded (rank $1$) name for a stationary set, such that there is no $g\in V$ with $\sigma^g$ stationary. Then, without loss of generality, we can enumerate the elements of $\sigma$:
	
	\begin{equation*}
		\sigma=\{ (\check{\gamma},p_{\gamma,\delta}) : \gamma<\kappa, \delta<\lambda\} 
	\end{equation*}

	For $\delta<\lambda$, we define:

	\begin{equation*}
		\sigma_\delta=\{(\check{\gamma},p_{\gamma,\delta}) : \gamma<\kappa\}
	\end{equation*}

	Clearly, $\sigma_\delta$ is $1$-bounded.
	
	For any generic filter $G$, $\bigcup \sigma_\delta^G=\sigma^G$ is stationary in $V[G]$. Hence, $\PP$ forces ``There is some $\delta<\lambda$ such that $\sigma_\delta$ is stationary.'' Now, let $p\in \PP$ be one of the densely many conditions which decides which $\delta$ this is.
Then 	
	\begin{equation*}
		\sigma_{\delta,p}=\{(\check{\gamma},p_{\gamma,\delta}\wedge p) : \gamma<\kappa\}
	\end{equation*}
is a $1$-bounded $\PP_p$-name and $\PP_p\forces \sigma_{\delta,p}$ is stationary. 
If $\stat \BN_{\PP_p,\kappa}^1$ would hold, there would exist a filter $g$ such that $\sigma_{\delta,p}^g$ is stationary. 
Then $g$ generates a filter $h$ in $\PP$ such that $\sigma_{\delta,p}^h\supseteq \sigma_{\delta,p}^g$  is stationary. 
\end{proof}

\begin{corollary}[Hamkins] 
\label{lemma_failure of lambda-bounded and 1-bounded name principle for trees} 
Suppose that $T$ is a tree, $\PP_T$ is $T$ with reversed order and $\lambda<\kappa$. 
	\begin{enumerate-(1)}
		\item If $\stat \BN_{\PP_T,\kappa}^\lambda$ fails, then there are densely many conditions $p\in \PP$ such that $\stat \BN_{(\PP_T)_p,\kappa}^1$ fails, where  $(\PP_T)_p:=\{q\in \PP_T: q\leq p\}$.
		\item The same result holds with $\mathsf{ub}$ in place of $\mathsf{stat}$.
	\end{enumerate-(1)}
\end{corollary}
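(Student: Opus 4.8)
The plan is to obtain Corollary \ref{lemma_failure of lambda-bounded and 1-bounded name principle for trees} as an immediate consequence of Lemma \ref{lemma_failure of lambda-bounded and 1-bounded name principle}. The only thing to check is that the hypothesis of the lemma is satisfied, namely that $\PP_T$ is well-met, since then the statement of the corollary is literally the statement of the lemma with $\PP$ instantiated as $\PP_T$.

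First I would verify that $\PP_T$ is well-met. Recall that in $\PP_T$ the order is the reverse of the tree order, so a condition $q$ is below a condition $p$ (that is, $q\leq p$ in $\PP_T$) exactly when $q$ extends $p$ as a node of the tree (that is, $p$ lies below $q$ in the tree, or $p\leq_T q$). Two conditions $p,q$ are compatible in $\PP_T$ precisely when they have a common extension in $\PP_T$, i.e. a common node of the tree lying above both, which for a tree happens if and only if $p$ and $q$ are comparable in the tree order. In that case one of them, say $p$, lies tree-below the other, and then the tree-larger node $q$ is the greatest lower bound $p\wedge q$ in $\PP_T$: it is a common lower bound since $q\leq p$ and $q\leq q$ in $\PP_T$, and it is the greatest such since any $\PP_T$-lower bound of both must extend $q$ in the tree. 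Hence any two compatible conditions have a greatest lower bound and $\PP_T$ is well-met.

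Having established that $\PP_T$ is well-met, I would simply apply Lemma \ref{lemma_failure of lambda-bounded and 1-bounded name principle} with $\PP=\PP_T$ under the assumption $\lambda<\kappa$. Part (1) of the lemma then states exactly that if $\stat\BN_{\PP_T,\kappa}^\lambda$ fails, there are densely many conditions $p$ such that $\stat\BN_{(\PP_T)_p,\kappa}^1$ fails, and part (2) gives the same with $\mathsf{ub}$ in place of $\mathsf{stat}$. This is precisely the content of the corollary.

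I do not expect any genuine obstacle here; the corollary is a specialisation of the lemma, and the entire substance of the proof is the routine verification that trees (with reversed order) are well-met. The one point to be careful about is getting the direction of the order right—since $\PP_T$ reverses the tree order, compatibility corresponds to comparability in the tree and the greatest lower bound in $\PP_T$ is the tree-higher of two comparable nodes—but this is a bookkeeping matter rather than a mathematical difficulty.
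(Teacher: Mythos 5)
Your proposal is correct and matches the paper's (implicit) derivation: the corollary is stated immediately after Lemma \ref{lemma_failure of lambda-bounded and 1-bounded name principle} precisely as the specialisation of that lemma to $\PP=\PP_T$, with the only content being that a tree with reversed order is well-met. Your verification of the well-met property (compatible conditions in $\PP_T$ are tree-comparable, and the tree-higher node is their greatest lower bound) is exactly the right bookkeeping.
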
 

\begin{corollary} 
\label{corollary_equivalence of lambda-bounded and 1-bounded name principles} 
Suppose $\lambda<\kappa$ and $\PP$ is a well-met forcing such that for every $p\in \PP$, $\PP_p$ embeds densely into $\PP$. 
Then 
$$\stat \BN_{\PP,\kappa}^\lambda \Longleftrightarrow \stat \BN_{\PP,\kappa}^1$$ 
$$\ub \BN_{\PP,\kappa}^\lambda \Longleftrightarrow \ub \BN_{\PP,\kappa}^1$$
\end{corollary}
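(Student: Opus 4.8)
The plan is to prove each equivalence through its nontrivial implication, $\stat\BN^1_{\PP,\kappa}\Rightarrow\stat\BN^\lambda_{\PP,\kappa}$ (and likewise for $\mathsf{ub}$), in contrapositive form; the reverse implications are immediate, since every $1$-bounded name is $\lambda$-bounded (as $\lambda\ge 1$), so $\stat\BN^\lambda_{\PP,\kappa}$ quantifies over a wider class of names and hence implies $\stat\BN^1_{\PP,\kappa}$. So suppose $\stat\BN^\lambda_{\PP,\kappa}$ fails. By Lemma \ref{lemma_failure of lambda-bounded and 1-bounded name principle}, whose well-met hypothesis we have, there is a condition $p\in\PP$ for which $\stat\BN^1_{\PP_p,\kappa}$ fails. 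It then remains to transport this failure from $\PP_p$ to $\PP$, and this is exactly where the hypothesis that $\PP_p$ embeds densely into $\PP$ is used.

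Thus the real content is the separate claim that $\stat\BN^1_{\QQ,\kappa}$ is invariant under a dense embedding $e\colon\QQ\to\PP$; applying it with $\QQ=\PP_p$ finishes the argument. For the direction I need, let $\sigma=\{(\check\gamma,q_\gamma):\gamma\in I\}$ be a $1$-bounded rank $1$ witness to the failure of $\stat\BN^1_{\QQ,\kappa}$, so $\QQ\forces\sigma$ is stationary but no filter $g\in V$ on $\QQ$ makes $\sigma^g$ stationary. I push the name forward to $\sigma'=\{(\check\gamma,e(q_\gamma)):\gamma\in I\}$, again $1$-bounded of rank $1$. First, $\PP\forces\sigma'$ is stationary: for $\PP$-generic $G$, the set $H=\{q:e(q)\in G\}$ is $\QQ$-generic with $V[G]=V[H]$, and $(\sigma')^G=\{\gamma:e(q_\gamma)\in G\}=\{\gamma:q_\gamma\in H\}=\sigma^H$ is stationary. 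So $\sigma'$ will witness the failure of $\stat\BN^1_{\PP,\kappa}$ provided no filter in $V$ works for it.

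The main obstacle is ruling out a filter $g'\in V$ on $\PP$ with $(\sigma')^{g'}$ stationary; from such a $g'$ I must produce a filter $g$ on $\QQ$ with $\sigma^g$ stationary, contradicting the choice of $\sigma$. The naive preimage $\{q:e(q)\in g'\}$ need not be directed, since a non-generic filter is not downward closed, so I argue via centredness instead. We may assume $\PP$ and $\QQ$ are separative, as the forcing relation and all interpretations $\sigma^g$ factor through the separative quotient; then the dense embedding $e$ reflects the order. Put $S=(\sigma')^{g'}=\{\gamma:e(q_\gamma)\in g'\}$, which is stationary, and note the images $\{e(q_\gamma):\gamma\in S\}$ all lie in $g'$ and are hence centred in $\PP$. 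Given finitely many $\gamma_1,\dots,\gamma_n\in S$, pick $r\in g'$ below each $e(q_{\gamma_i})$ and, by density of $e[\QQ]$, some $q^*\in\QQ$ with $e(q^*)\le r$; order-reflection turns $e(q^*)\le e(q_{\gamma_i})$ into $q^*\le q_{\gamma_i}$ for all $i$. Thus $\{q_\gamma:\gamma\in S\}$ is centred, generates a filter $g$ on $\QQ$, and $\sigma^g\supseteq S$ is stationary, the required contradiction. The $\mathsf{ub}$ case is word-for-word the same, using the $\mathsf{ub}$ clause of Lemma \ref{lemma_failure of lambda-bounded and 1-bounded name principle} and that supersets of unbounded sets are unbounded. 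I expect the centredness step, i.e.\ checking that the relevant conditions in $\QQ$ generate an honest filter, to be the only delicate point.
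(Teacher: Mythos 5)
Your overall architecture matches the paper's: both proofs first apply Lemma \ref{lemma_failure of lambda-bounded and 1-bounded name principle} to get a condition $p$ with $\stat\BN^1_{\PP_p,\kappa}$ failing, and then transfer that failure across the dense embedding; the divergence is in how the transfer is done. The paper identifies $\PP_p$ with its image $\QQ=i(\PP_p)\subseteq\PP$, so the witnessing name $\sigma$ is literally a $\PP$-name all of whose conditions lie in $\QQ$; given a filter $g$ on $\PP$ with $\sigma^g$ stationary, it takes $h$ to be the upward closure in $\QQ$ of the finite meets $p_0\wedge_\QQ\dots\wedge_\QQ p_n$ of elements of $g\cap\QQ$ (these exist because compatibility in $\PP$ passes down to $\QQ$ by density and $\QQ$ is well-met), and concludes $\sigma^h\supseteq\sigma^g$. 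You instead push the name forward along $e$ and recover a $\QQ$-filter from the centred set $\{q_\gamma:\gamma\in S\}$, which also works, but two of your steps need shoring up. First, ``we may assume $\PP$ and $\QQ$ are separative'' is not a free reduction in this setting: the principle quantifies over filters in $V$, and a filter on the separative quotient pulls back to an upward-closed but not obviously directed set, so failure witnesses do not transparently transfer across the quotient. Fortunately you can avoid separativity entirely: from $e(q^*)\le e(q_{\gamma_i})$ you get that every $q'\le q^*$ in $\QQ$ satisfies $e(q')\le e(q_{\gamma_i})$ and is hence compatible with $q_{\gamma_i}$ (a dense embedding reflects compatibility), and iterating over $i=1,\dots,n$ yields a common lower bound of $q^*,q_{\gamma_1},\dots,q_{\gamma_n}$ in $\QQ$ with no order-reflection needed. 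Second, the step you flag as delicate --- that a centred set generates a filter --- is false in general posets and is precisely where well-met-ness enters again: $\QQ=\PP_p$ inherits well-met-ness from $\PP$ (the $\PP$-meet of two compatible elements of $\PP_p$ already lies below $p$), and in a well-met poset the upward closure of the finite meets of a centred set is a genuine filter. With those two repairs your argument is complete, and the $\mathsf{ub}$ case is indeed word-for-word the same.
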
 
\begin{proof} 
We show that a failure of $\stat \BN_{\PP,\kappa}^\lambda$ implies the failure of $\stat \BN_{\PP,\kappa}^1$. 
The converse direction is clear and the proof for the unbounded name principles is analogous. 

By Lemma \ref{lemma_failure of lambda-bounded and 1-bounded name principle}, there is some $p\in \PP$ such that $ \stat \BN_{\PP_p,\kappa}^1$ fails. 
Let $i\colon \PP_p \rightarrow \PP$ be a dense embedding and $\QQ:=i(\PP_p)$. 
Since $ \stat \BN_{\QQ,\kappa}^1$ fails, let $\sigma$ be a $1$-bounded $\QQ$-name witnessing this failure. 
We claim that there is no filter $g$ on $\PP$ such that $\sigma^g$ is stationary. 
%Since $\PP$ is well-met and atomless, find a filter $h \supseteq g\cap \QQ$ on $\QQ$. 
Assume otherwise. 
Using that $\QQ$ is well-met, let $h$ denote the set of all $q\geq p_0\wedge_\QQ \dots \wedge_\QQ p_n$ for some $p_0,\dots,p_n \in g\cap \QQ$. 
It is easy to check that $h$ is a well-defined filter on $\QQ$ and contains $g\cap \QQ$. 
Then $\sigma^h\supseteq \sigma^g$ is stationary. 
But this contradicts the choice of $\sigma$. 
\end{proof}

%%%%%%%%%%
\subsection{Extremely bounded forcing axioms} 

We next study forcing axioms for very small predense sets. 
%How strong is $\BFA^\omega_{\PP,\omega_1}$? 
The next lemmas show that $\BFA^\omega_{\PP,\omega_1}$ has some of the same consequences as $\BFA$. 

\begin{lemma} 
If $\PP$ is a complete Boolean algebra such that $\BFA^\omega_{\PP,\omega_1}$ holds, then $1_\PP$ does not force that $\omega_1$ is collapsed. 
\end{lemma}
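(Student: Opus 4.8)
The plan is to argue by contradiction. Suppose $1_\PP$ forces that $\omega_1$ is collapsed, so that $\check{\omega_1}$ becomes countable in the generic extension. Then there is a $\PP$-name $\dot f$ with $1_\PP \forces$ ``$\dot f\colon \check\omega \to \check{\omega_1}$ is surjective''. Since $\PP$ is a complete Boolean algebra, I would work with Boolean values throughout: for each $\alpha<\omega_1$ and $n<\omega$ set $b_{\alpha,n}=\llbracket \dot f(\check n)=\check\alpha\rrbracket$. Surjectivity gives $\bigvee_{n<\omega} b_{\alpha,n}=1_\PP$ for every $\alpha$.

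Next I would manufacture the predense sets needed to apply $\BFA^\omega_{\PP,\omega_1}$. For each $\alpha<\omega_1$, let $D_\alpha=\{b_{\alpha,n}\mid n<\omega,\ b_{\alpha,n}\neq 0_\PP\}$. Each $D_\alpha$ is countable, and it is predense because its supremum is $1_\PP$: any nonzero $p$ satisfies $p=p\wedge\bigvee_n b_{\alpha,n}=\bigvee_n(p\wedge b_{\alpha,n})$, so $p\wedge b_{\alpha,n}\neq 0_\PP$ for some $n$, i.e.\ $p$ is compatible with a member of $D_\alpha$. Thus $\langle D_\alpha\mid\alpha<\omega_1\rangle$ is a sequence of $\omega_1$ many predense sets of size at most $\omega$, exactly the input to $\BFA^\omega_{\PP,\omega_1}$.

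Applying the axiom yields a filter $g\in V$ with $g\cap D_\alpha\neq\emptyset$ for all $\alpha<\omega_1$; for each $\alpha$ fix $n(\alpha)<\omega$ with $b_{\alpha,n(\alpha)}\in g$. The map $\alpha\mapsto n(\alpha)$ sends $\omega_1$ into $\omega$, so by pigeonhole there are distinct $\alpha\neq\alpha'$ with $n(\alpha)=n(\alpha')=n$. Then $b_{\alpha,n},b_{\alpha',n}\in g$, but $b_{\alpha,n}\wedge b_{\alpha',n}\leq\llbracket\check\alpha=\check{\alpha'}\rrbracket=0_\PP$ since $\alpha\neq\alpha'$, contradicting that any two members of the filter $g$ are compatible. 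This contradiction shows that $1_\PP$ cannot force $\omega_1$ to be collapsed.

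The computations here are all routine; the only points needing care are the first step — extracting from ``$1_\PP$ forces $\omega_1$ is collapsed'' a single name forced to be a surjection $\check\omega\to\check{\omega_1}$ — and the bookkeeping that the relevant predense sets have size at most $\omega$ (rather than some larger bound), so that the \emph{$\omega$-bounded} axiom, not the full $\FA_{\PP,\omega_1}$, already suffices. The heart of the argument is the observation that meeting $D_\alpha$ commits $g$ to a choice of $n$ witnessing $\dot f(n)=\check\alpha$, and that $\omega_1$ many such choices drawn from $\omega$ must collide, forcing two incompatible conditions into $g$.
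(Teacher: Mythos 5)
Your proof is correct and takes essentially the same approach as the paper: the paper works with a name for an injection $\dot f\colon\omega_1\to\omega$ and the countable maximal antichains $A_\alpha=\{\llbracket\dot f(\check\alpha)=\check n\rrbracket \mid n<\omega\}$, then uses the filter to define an injection $f'\colon\omega_1\to\omega$ in $V$ — the same contradiction you reach directly by pigeonhole. The only cosmetic difference is the direction of the collapsing function (surjection $\omega\to\omega_1$ versus injection $\omega_1\to\omega$); the predense sets of Boolean values and the appeal to pairwise compatibility in the filter are identical.
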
 
\begin{proof} 
Suppose $\forces \dot{f}\colon \omega_1\rightarrow \omega$ is injective. 
Let $A_\alpha=\{ \llbracket \dot{f}(\alpha)=n \rrbracket\neq 0 \mid n\in\omega\}$. 
Since each $A_\alpha$ is a maximal antichain, there is a filter $g$ with $g\cap A_\alpha\neq\emptyset $ for all $\alpha<\omega_1$. 
Define $f'\colon \omega_1\rightarrow \omega$ by letting $f'(\alpha)=n$ if $\llbracket \dot{f}(\alpha)=n \rrbracket \in g$ for all $\alpha<\omega_1$. 
Since $g$ is a filter, $f'\colon \omega_1\rightarrow \omega$  is well-defined and injective. 
\end{proof} 

\begin{lemma} 
If $\PP$ is a complete Boolean algebra such that $\BFA^\omega_{\PP,\omega_1}$ holds and $\PP$ adds a real, then $\CH$ fails. 
\end{lemma}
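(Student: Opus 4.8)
The plan is to assume $\CH$ and derive a contradiction by using $\BFA^\omega_{\PP,\omega_1}$ to manufacture a ground model real that differs from every ground model real. Since $\PP$ adds a real, fix a $\PP$-name $\dot r$ with $\Vdash_\PP \dot r\in 2^\omega$ that is forced to be \emph{new}, i.e. $\Vdash_\PP \dot r\neq \check s$ for every $s\in (2^\omega)^V$ (if a new real is only added below a condition $p$, replace $\PP$ by $\PP_p$ and adjoin $\neg p$ to each of the predense sets below). Assuming $\CH$, enumerate $(2^\omega)^V$ as $\langle s_\alpha\mid \alpha<\omega_1\rangle$.

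For each $n<\omega$ let $B_n=\{\llbracket \dot r(n)=i\rrbracket\mid i\in\{0,1\}\}\setminus\{0\}$; this is a maximal antichain, hence predense, of size at most $2$. For each $\alpha<\omega_1$ let $D_\alpha=\{\llbracket \dot r(n)\neq s_\alpha(n)\rrbracket\mid n<\omega\}\setminus\{0\}$. The key point is that $D_\alpha$ is predense and countable: since $\Vdash_\PP \dot r\neq \check s_\alpha$ we have $\sup_{n}\llbracket \dot r(n)\neq s_\alpha(n)\rrbracket=\llbracket \dot r\neq \check s_\alpha\rrbracket=1$, and in a complete Boolean algebra any set of nonzero elements with supremum $1$ is predense: if $b\neq 0$ were incompatible with every element of $D_\alpha$, then $b\leq \neg a$ for all $a\in D_\alpha$, so $b\leq \inf_a \neg a=\neg\sup_a a=\neg 1=0$, a contradiction. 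Completeness of $\PP$ is used essentially here.

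Now the family $\{B_n\mid n<\omega\}\cup\{D_\alpha\mid \alpha<\omega_1\}$ consists of $\omega_1$ many predense sets, each of size at most $\omega$, so $\BFA^\omega_{\PP,\omega_1}$ yields a filter $g\in V$ meeting every member. Since $g$ meets each $B_n$ and $g$ is a filter, the real $r_g\in (2^\omega)^V$ defined by $r_g(n)=i\iff \llbracket \dot r(n)=i\rrbracket\in g$ is well defined and lies in $V$. Fix $\alpha<\omega_1$ and pick $n$ with $\llbracket \dot r(n)\neq s_\alpha(n)\rrbracket\in g$; as $\dot r(n)$ is forced to take values in $\{0,1\}$, this Boolean value equals $\llbracket \dot r(n)=1-s_\alpha(n)\rrbracket$, so $r_g(n)=1-s_\alpha(n)\neq s_\alpha(n)$ and hence $r_g\neq s_\alpha$. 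As $\alpha$ was arbitrary, $r_g$ is a ground model real distinct from every $s_\alpha$, contradicting that $\langle s_\alpha\mid \alpha<\omega_1\rangle$ enumerates all of $(2^\omega)^V$. Therefore $\CH$ fails.

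The main obstacle is the predensity of the sets $D_\alpha$: it is precisely here that one exploits both that $\dot r$ is forced to be a new real (yielding supremum $1$) and that $\PP$ is a complete Boolean algebra (turning supremum $1$ into predensity), and it is this that keeps each $D_\alpha$ of size at most $\omega$ so that $\BFA^\omega_{\PP,\omega_1}$ is applicable rather than the full bounded forcing axiom.
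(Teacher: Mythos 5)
Your proof is correct and follows essentially the same route as the paper's: under $\CH$ you enumerate the ground-model reals, use the countable maximal antichains deciding $\dot r(n)$ together with, for each $\alpha$, a countable predense set witnessing $\dot r\neq\check s_\alpha$ (predense because the relevant Boolean values have supremum $1$ in the complete Boolean algebra), and read off from the filter a ground-model real omitted by the enumeration; the paper merely phrases $D_\alpha$ via the first point of disagreement with $x_\alpha$ rather than via $\llbracket\dot r(n)\neq s_\alpha(n)\rrbracket$, which is immaterial. One small caveat: your parenthetical reduction to $\PP_p$ by adjoining $\neg p$ to each predense set does not quite work as stated (the filter produced could simply contain $\neg p$ and meet every set vacuously), but this case does not arise under the paper's reading of ``$\PP$ adds a real'' as forced by $1_\PP$, where mixing yields a single name forced to be new.
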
 
\begin{proof} 
Suppose $\CH$ holds and let $\langle x_\alpha\mid \alpha<\omega_1\rangle$ be an enumeration of all reals. Let $\sigma$ be a name for the real added by $\PP$.
For $\alpha<\omega_1$, let
\begin{equation*}
D_\alpha = \{ \llbracket t^\smallfrown \langle n\rangle \subseteq \sigma\rrbracket \colon t\in 2^{<\omega}, n\in 2, t\subseteq x_\alpha, t^\smallfrown \langle n\rangle \not \subseteq x_\alpha\}
\end{equation*}
For $n<\omega$, let
\begin{equation*}
E_n=\{\llbracket \sigma(n)=m\rrbracket \mid m\in 2\}
\end{equation*} 
Then the $D_\alpha$ and $E_n$ are all predense and countable. 
Take a filter $g$ which meets every $D_\alpha$ and $E_n$. The $E_n$ ensure that $g$ defines a real $x$ (by $x(n)=m$ where $\llbracket \sigma(n)=m\rrbracket \in g$). But if $x=x_\alpha$ then $g\cap D_\alpha=\emptyset$.
\end{proof} 

There exist forcings $\PP$ such that the implication $\BFA^\omega_{\PP,\omega_1} \Rightarrow \BFA^{\omega_1}_{\PP,\omega_1}$ fails. 
To see this, suppose that $\QQ$ is a forcing such that $\BFA^{\omega_1}_{\QQ,\omega_1}$ fails. 
Let $\PP$ be a lottery sum of $\omega_1$ many copies of $\QQ$. 
Since $\BFA^{\omega_1}_{\QQ,\omega_1}$ fails, $\BFA^{\omega_1}_{\PP,\omega_1}$ fails as well. 
On the other hand, $\BFA^\omega_{\PP,\omega_1}$ holds trivially since any countable predense subset of $\PP$ contains $0_\PP$.  

\begin{question} 
Does the implication $\BFA^\omega_{\PP,\omega_1} \Rightarrow \BFA^{\omega_1}_{\PP,\omega_1}$ hold for all complete Boolean algebras $\PP$? 
%For example, is it consistent that this holds for the Boolean completion of Sacks or Mathias forcing? 
\end{question}   

By the previous lemmas, any forcing which is a counterexample cannot force that $\omega_1$ is collapsed, and if it adds reals then $\CH$ holds.

%%%%%%%%%%
\subsection{Basic results on $\ub\FA$} 

In this section, we collect some observations about weak forcing axioms. 
We aim to prove some consequences of these axioms.  
We first consider $\ub\FA$ and $\stat\FA$. 
How strong is $\ub\FA$? 
The next lemmas show that is has some of the same consequences as $\FA$. 

\begin{lemma} 
If $\ub\FA_{\PP,\omega_1}$ holds, then $\PP$ does not force that $\omega_1$ is collapsed. 
\end{lemma}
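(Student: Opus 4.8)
The plan is to show directly that if $\ub\FA_{\PP,\omega_1}$ holds, then $\PP$ cannot force a surjection from $\omega$ onto $\omega_1^V$, hence cannot collapse $\omega_1$. Suppose toward a contradiction that some condition forces $\dot{f}\colon \omega\to \omega_1$ to be surjective; since $\ub\FA$ is about densely meeting a sequence of predense sets, I first pass below this condition (or just argue below it) so that $1_\PP$ forces $\dot{f}$ to be a surjection. The key point is that surjectivity of a map from $\omega$ onto $\omega_1$ is a statement whose witnessing only requires \emph{unboundedly many} coordinates to be correctly interpreted, which is exactly what $\ub\FA$ gives.

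The main construction is the choice of the dense sets indexed by $\omega_1$. For each $\alpha<\omega_1$, let
\begin{equation*}
D_\alpha=\{ p\in\PP \mid \exists n<\omega\ \ p\forces \dot{f}(\check n)=\check\alpha \}.
\end{equation*}
I would argue each $D_\alpha$ is dense: given any $q\in\PP$, since $\PP$ forces $\dot{f}$ to be onto $\omega_1$, there is $r\leq q$ and some $n<\omega$ with $r\forces \dot{f}(\check n)=\check\alpha$, so $r\in D_\alpha$. Now apply $\ub\FA_{\PP,\omega_1}$ to $\vec{D}=\langle D_\alpha\mid\alpha<\omega_1\rangle$ to obtain a filter $g\in V$ with $\Tr_{g,\vec{D}}=\{\alpha<\omega_1\mid g\cap D_\alpha\neq\emptyset\}$ unbounded in $\omega_1$.

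From $g$ I extract a partial function in $V$: whenever $g\cap D_\alpha\neq\emptyset$, pick $p\in g\cap D_\alpha$ and a witnessing $n$, recording that $\alpha$ is ``hit'' by input $n$. The obstacle — and the point requiring care — is that $g$ is only a filter, not generic, so I must ensure these witnesses cohere into a genuine function on a domain contained in $\omega$. Since $g$ is a filter, any two conditions in it are compatible, so if $p\forces \dot{f}(\check n)=\check\alpha$ and $p'\forces \dot{f}(\check{n'})=\check\beta$ with $p,p'\in g$, then $p,p'$ have a common lower bound forcing both, which (as $\dot f$ is forced to be a function) forces $\check n=\check{n'}\leftrightarrow\check\alpha=\check\beta$ to be respected; in particular distinct $\alpha\neq\beta$ in the trace must be witnessed by distinct $n$. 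Thus the assignment $\alpha\mapsto n$ is injective on the unbounded set $\Tr_{g,\vec{D}}$, giving an injection from an unbounded (hence size $\omega_1$) subset of $\omega_1$ into $\omega$. This is absurd in $V$, completing the contradiction. I expect the coherence-of-witnesses step to be the genuine content; everything else is the standard density-set bookkeeping already used throughout Section~\ref{Section results for rank 1}.
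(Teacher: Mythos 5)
Your proof is correct and follows essentially the same route as the paper's: the paper uses a name for an injection $\omega_1\to\omega$ together with the dense sets of conditions deciding $\dot{f}(\alpha)$, while you use the dual formulation with a surjection $\omega\to\omega_1$ and the dense sets of conditions producing a preimage of $\alpha$, and in both cases filter-compatibility turns the unbounded trace into an injection of an uncountable set into $\omega$. One remark: since the hypothesis is that $1_\PP$ forces the collapse, fullness gives a name $\dot{f}$ for a surjection forced by $1_\PP$, so no passing below a condition is needed --- which is just as well, because $\ub\FA_{\PP,\omega_1}$ does not obviously localize to $\PP_p$ (the paper itself quantifies over all $\PP_q$ separately when it needs this, e.g.\ in Lemma \ref{ubFA implies BFA}).
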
 
\begin{proof} 
Towards a contradiction, suppose $\PP$ forces that $\omega_1$ is collapsed. 
Let $\dot{f}$ be a $\PP$-name for an injective function $ \omega_1\rightarrow \omega$. 
For $\alpha<\omega_1$, let $D_\alpha=\{ p \in \PP \mid \exists n\in\omega\ p\Vdash \dot{f}(\alpha)=n \}$. 
By $\ub\FA_{\PP,\omega_1}$, there is a filter $g$ and an unbounded subset $A$ of $\omega_1$ such that $g\cap D_\alpha\neq\emptyset$ for all $\alpha\in A$. 
Define $f\colon A\rightarrow \omega$ by letting $f(\alpha)=n$ if there is some $p\in g\cap D_\alpha$ with $p\Vdash \dot{f}(\alpha)=n$. 
Since $g$ is a filter, $f$ is injective. 
\end{proof} 

\begin{lemma} 
\label{Lemma - ubFA and no new reals imply stationary set preserving} 
If $\ub\FA_{\PP,\omega_1}$ holds and $\PP$ does not add reals, then for each stationary subset $S$ of $\omega_1$, $\PP$ does not force that $S$ is nonstationary. 
\end{lemma}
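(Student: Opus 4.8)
The plan is to argue by contradiction. Suppose $S\subseteq\omega_1$ is stationary but $\PP$ forces $S$ to be nonstationary, and fix a $\PP$-name $\dot C$ with $\forces``\dot C\text{ is a club in }\omega_1\text{ and }\dot C\cap\check S=\emptyset"$. The goal is to manufacture, inside $V$, a genuine club $C^*$ disjoint from $S$, contradicting stationarity of $S$. Under full $\FA_{\PP,\omega_1}$ one would meet every member of a sequence of dense sets deciding $\dot C$ level by level and read off the club directly (as sketched in the introduction). Since $\ub\FA_{\PP,\omega_1}$ only produces a filter meeting \emph{unboundedly many} dense sets, the hypothesis that $\PP$ adds no reals must compensate: it forces every bounded initial segment of $\dot C$ to already lie in $V$, so each condition we obtain can pin down an entire initial segment of the club rather than a single point.

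Concretely, I would first record the key consequence of ``no new reals'': for every $\beta<\omega_1$ the set $\dot C\cap\check{(\beta+1)}$ is (forced to be) a bounded subset of $\omega_1$, hence coded by a real via a ground-model bijection $\beta+1\leftrightarrow\omega$; as no reals are added, $\forces\dot C\cap\check{(\beta+1)}\in\check V$. Consequently, for each $\alpha<\omega_1$ the set
\[ D_\alpha=\{p\in\PP:\exists\beta\geq\alpha\ \exists c\in V\ \ p\forces(\check\beta\in\dot C\ \wedge\ \dot C\cap\check{(\beta+1)}=\check c)\} \]
is dense: given $p$, use that $\dot C$ is forced unbounded to drop to some $q\leq p$ forcing a specific $\beta\geq\alpha$ into $\dot C$, then refine $q$ to decide the bounded set $\dot C\cap\check{(\beta+1)}$ as a ground-model value $c$.

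Next I would apply $\ub\FA_{\PP,\omega_1}$ to $\langle D_\alpha:\alpha<\omega_1\rangle$ to obtain a filter $g\in V$ and an unbounded $A\subseteq\omega_1$ with $g\cap D_\alpha\neq\emptyset$ for $\alpha\in A$. For each $\alpha\in A$ choose $p_\alpha\in g$, $\beta_\alpha\geq\alpha$ and $c_\alpha\in V$ witnessing membership in $D_\alpha$, and set $C^*=\bigcup_{\alpha\in A}c_\alpha$. The heart of the argument is then three claims about $C^*$, all verified in $V$. \emph{Coherence and unboundedness}: since $g$ is a filter, any $p_\alpha,p_{\alpha'}$ have a common lower bound in $g$ forcing both decided values, so the $c_\alpha$ are initial segments of one another and $c_\alpha=C^*\cap(\beta_\alpha+1)$; as $\beta_\alpha\geq\alpha$ and $A$ is unbounded, $C^*$ is unbounded. \emph{Closure}: if $\gamma$ is a limit point of $C^*$, pick $\alpha\in A$ with $\beta_\alpha>\gamma$; then $\gamma$ is a limit of elements of $\dot C$ below $\gamma$ as forced by $p_\alpha$, so $p_\alpha\forces\check\gamma\in\dot C$ because $\dot C$ is forced closed, whence $\gamma\in c_\alpha\subseteq C^*$. \emph{Disjointness}: every $\gamma\in C^*$ lies in some $c_\alpha$ with $p_\alpha\forces\check\gamma\in\dot C$, and since $\forces\dot C\cap\check S=\emptyset$ we get $\gamma\notin S$. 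Thus $C^*$ is a club in $V$ disjoint from $S$, the desired contradiction.

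I expect the main obstacle to be the unboundedness of $C^*$: meeting only unboundedly many dense sets gives no control over the levels outside $A$, so one must check that the decided initial segments genuinely cohere into a cofinal set rather than merely stacking up below some countable bound. This is exactly why each $p_\alpha$ is required to force a specific $\beta_\alpha\geq\alpha$ \emph{into} $\dot C$ (not merely to decide $\dot C\cap\check\alpha$), guaranteeing $\sup_{\alpha\in A}\beta_\alpha=\omega_1$. The ``no new reals'' hypothesis enters only to make the $c_\alpha$ ground-model objects; that $\PP$ does not collapse $\omega_1$ (a consequence of $\ub\FA_{\PP,\omega_1}$ established above) is implicit but not strictly needed here, since each $\beta_\alpha+1$ is already countable in $V$.
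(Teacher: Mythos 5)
Your proof is correct and follows the same strategy as the paper's: use the ``no new reals'' hypothesis to make ``decide an initial segment of $\dot C$ as a ground-model set'' a dense requirement, apply $\ub\FA_{\PP,\omega_1}$ to get a filter deciding unboundedly many such segments, and glue the decided pieces into a club in $V$ disjoint from $S$. The one substantive difference is that the paper's dense sets $D_\alpha$ only ask for a condition deciding $\dot C\cap\alpha$ (via the characteristic function $\dot f\restriction\alpha$), and the paper then asserts without comment that the resulting set $C=\{\alpha:\exists p\in g\ p\Vdash\check\alpha\in\dot C\}$ is a club; as you correctly anticipated, unboundedness is the delicate point there, since a non-generic filter could in principle consist of conditions each deciding $\dot C\cap\alpha$ to be contained in some fixed $\gamma$. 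Your extra clause requiring $p$ to force some specific $\beta\geq\alpha$ \emph{into} $\dot C$ (and then decide $\dot C\cap(\beta+1)$) is exactly the repair that makes the gluing argument airtight, so your write-up is, if anything, more complete than the published one. The coherence, closure, and disjointness verifications are all correct as stated.
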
 
\begin{proof} 
Suppose that $\dot{C}$ is a name for a club such that $\Vdash_\PP S\cap \dot{C}=\emptyset$. 
Let $\dot{f}$ be a name for the characteristic function of $\dot{C}$. 
For each $\alpha<\omega_1$, 
$$D_\alpha=\{ p\in \PP \mid \exists t \in 2^\alpha\ t \subseteq \dot{f}\}$$ 
is dense in $\PP$, since $\PP$ does not add reals. 
By $\ub\FA_{\PP,\omega_1}$, there is a filter $g$ and an unbounded subset $A$ of $\omega_1$ such that $g\cap D_\alpha\neq\emptyset$ for all $\alpha\in A$. 
Since $g$ is a filter, $C:=\{\alpha<\omega_1 \mid \exists p\in g\ p\Vdash \alpha \in \dot{C} \}$ is a club in $\omega_1$ with $S\cap C\neq\emptyset$. 
\end{proof}

The previous lemma also follows from Theorem \ref{Bagaria's characterisation} and Lemma \ref{ubFA implies BFA} below via an absoluteness argument, assuming $\PP$ is a homogeneous complete Boolean algebra. 
It is open whether the lemma holds for forcings $\PP$ which add reals. 

What is the relationship between $\ub\FA_{\PP,\omega_1}$ and other forcing axioms? 
We find two opposite situations. 
For any $\sigma$-centred forcing, $\ub\FA_{\PP,\omega_1}$ and $\stat\FA_{\PP,\omega_1}$ are provable in $\ZFC$ by Lemma \ref{Lemma stat-NP for sigma-centred} below.  
For many other forcings though, $\ub\FA_{\PP,\omega_1}$ implies nontrivial axioms such as $\FA_{\PP,\omega_1}$ or $\BFA^{\omega_1}_{\PP,\omega_1}$. 
For instance, the implication $\ub\FA_{\PP,\omega_1}$ $\Rightarrow$ $\FA_{\PP,\omega_1}$ holds for all $\sigma$-distributive forcing by Lemma \ref{ubFA implies FA for sigma-distributive forcings} below. 
We will further see in Lemma \ref{ubFA implies BFA} below that for any complete Boolean algebra $\PP$ which does not add reals, $(\forall q\in \PP\ \ub\FA_{\PP_q,\omega_1})$ implies $\BFA^{\omega_1}_{\PP,\omega_1}$. 
Moreover, the implication $\ub\FA_{\PP,\omega_1}$ $\Rightarrow$ $\FA_{\PP,\omega_1}$ also holds for some forcings that add reals, for instance for random forcing by Lemma \ref{Lemma_Random ubFA}. 

We do not have any examples of forcings where $\ub\FA_{\PP,\omega_1}$ and $\stat\FA_{\PP,\omega_1}$ sit between these two extremes: strictly weaker than $\FA_{\PP,\omega_1}$, but not provable in ZFC.
 
In particular, we have not been able to separate the two axioms: 

\begin{question} 
\label{Question ubFA versus statFA} 
Can forcings $\PP$ exist such that $\ub\FA_{\PP,\kappa}$ holds, but $\stat\FA_{\PP,\kappa}$ fails? 
\end{question} 

For instance, we would like to know if these axioms hold for the following forcings: 

\begin{question} 
\label{Question Baumgartner's forcing} 
Do Baumgartner's forcing to add a club in $\omega_1$ with finite conditions \cite[Section 3]{baumgartner1984applications} and Abraham's and Shelah's forcing for destroying stationary sets  with finite conditions \cite[Section 2]{abraham1983forcing} satisfy $\ub\FA_{\PP,\omega_1}$ and $\stat\FA_{\PP,\omega_1}$? 
\end{question}

%%%%%%%%%
\subsection{Characterisations of $\FA^+$ and $\FA^{++}$}

The proof of the equivalence of $\FA$ and $\NP$ still goes through fine if we change the axioms slightly, demanding some extra property to be true of the filter $g$ we're looking for. This gives us a nice way to express $\FA^+$ and $\FA^{++}$.

\begin{lemma}
\label{Lemma_characterisation_of_FA+} 
	$\FA_{\mathcal{C},\kappa}^+$ is equivalent to the following statement: 
	\begin{quote} 
	For all $\mathbb{P}\in \mathcal{C}$, for all rank $1$ names $\sigma$ and $\tau$ for subsets of $\kappa$ such that $\PP$ forces ``$\sigma=\check{A}$'' for some $A$ and $``\tau$ is stationary'', there is some filter $g$ with $\sigma^g=A$ and $\tau^g$ stationary. 
	\end{quote} 
	Similarly, $\FA_{\mathcal{C},\kappa}^{++}$ is equivalent to being able to correctly interpret $\kappa$ many stationary rank $1$ names and a single rank $1$ name for a specific set $A$.
\end{lemma}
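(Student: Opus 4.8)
The plan is to prove the two equivalences by adapting the argument from Lemma \ref{Lemma_FA iff N}, where the reformulation of $\FA$ as $\NP$ was established, and combining it with the stationarity-preservation clause that distinguishes $\FA^+$ from $\FA$.

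For the $\FA^+$ equivalence, I would argue both directions. For the forward direction, assume $\FA^+_{\mathcal{C},\kappa}$, fix $\PP\in\mathcal{C}$ and rank $1$ names $\sigma,\tau$ with $\PP\forces\sigma=\check{A}$ and $\PP\forces$``$\tau$ is stationary''. As in Lemma \ref{Lemma_FA iff N}, I would define a sequence $\vec{D}$ of dense sets encoding the requirement $\sigma^g=A$: for $\gamma\in A$ put $D_\gamma=\{p\mid p\sforces\check\gamma\in\sigma\}$, which is dense by Proposition \ref{Prop_sforcingAndForcing}, and for $\gamma\in\kappa\setminus A$ put $D_\gamma=\PP$. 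Applying $\FA^+$ to $\vec{D}$ together with the stationary name $\tau$ yields a filter $g$ meeting every $D_\gamma$ with $\tau^g$ stationary; Proposition \ref{Prop_sforcingAndInterpretation} then gives $\sigma^g=A$ exactly as in Lemma \ref{Lemma_FA iff N}. For the converse, assume the displayed statement and let $\vec{D}=\langle D_\gamma\mid\gamma<\kappa\rangle$ be dense sets and $\tau$ a rank $1$ name forced to be stationary. Set $\sigma=\{(\check\gamma,p)\mid\gamma<\kappa,\ p\in D_\gamma\}$, so that $\PP\forces\sigma=\check\kappa$, hence $\sigma=\check{A}$ with $A=\kappa$. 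The displayed principle produces a filter $g$ with $\sigma^g=\kappa$ and $\tau^g$ stationary; since $\sigma^g=\{\gamma\mid g\cap D_\gamma\neq\emptyset\}$, the filter $g$ meets every $D_\gamma$, witnessing $\FA^+$.

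For $\FA^{++}$ the argument is essentially the same, with the single stationary name $\tau$ replaced by a $\kappa$-indexed family $\langle\tau_\gamma\mid\gamma<\kappa\rangle$ of rank $1$ names each forced to be stationary, together with the single name $\sigma$ coding the dense sets as above; one simply demands that $g$ interpret every $\tau_\gamma$ as a stationary set simultaneously, and the coding of $\vec{D}$ into $\sigma$ and the recovery of $g\cap D_\gamma\neq\emptyset$ from $\sigma^g=\kappa$ are unchanged.

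I expect no serious obstacle here: the entire content is that the dense-set bookkeeping of Lemma \ref{Lemma_FA iff N} is orthogonal to the stationarity clause, so the two can be run in parallel without interference. The only point requiring mild care is to confirm that the auxiliary name $\sigma$ (resp.\ its interpretation via $g$) faithfully records which $D_\gamma$ are met, which is already handled by Proposition \ref{Prop_sforcingAndInterpretation}; this is why the statement quantifies over rank $1$ names and why $\sigma$ must be taken of this simple form.
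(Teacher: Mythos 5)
Your proposal is correct and is exactly the argument the paper intends: the paper's own proof of Lemma \ref{Lemma_characterisation_of_FA+} consists of the single line ``Analogous to the proof of Lemma \ref{Lemma_FA iff N}'', and your write-up spells out precisely that adaptation (the dense sets $D_\gamma$ via $\sforces$ in one direction, the name $\sigma=\{(\check\gamma,p)\mid p\in D_\gamma\}$ in the other, with the stationarity clause carried along unchanged). No gaps.
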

\begin{proof}
	Analogous to the proof of \ref{Lemma_FA iff N} in the previous section.
\end{proof}

In the case of $\FA^{++}$ this result can be sharpened further, getting rid of the name for $A$:

\begin{lemma}
	$\FA_{\mathcal{C},\kappa}^{++}$ is equivalent to the statement: 
	\begin{quote} 
	For all collections of $\kappa$ many rank $1$ names $\langle \sigma_\gamma\mid \gamma<\kappa\rangle$ with $\mathbb{P} \forces ``\sigma_\gamma \text{ is stationary for all }\gamma"$, there is a filter $g\in V$ such that for all $\gamma$, $\sigma_\gamma^g$ is stationary. 
	 \end{quote} 
\end{lemma}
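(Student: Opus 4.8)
The plan is to prove the two implications separately for a fixed $\PP\in\mathcal{C}$, since both $\FA^{++}_{\mathcal{C},\kappa}$ and the displayed statement are simply the conjunction over $\PP\in\mathcal{C}$ of their single-forcing versions. The direction from $\FA^{++}_{\PP,\kappa}$ to the displayed statement I expect to be immediate: given rank $1$ names $\langle\sigma_\gamma:\gamma<\kappa\rangle$ each forced to be stationary, apply $\FA^{++}_{\PP,\kappa}$ with the trivial dense sets $D_\gamma=\PP$ for all $\gamma$. The resulting filter $g$ automatically meets every $D_\gamma$ (as $1\in g$), and by the second clause of $\FA^{++}_{\PP,\kappa}$ every $\sigma_\gamma^g$ is stationary, which is exactly what the displayed statement asks for.

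For the converse I would derive $\FA^{++}_{\PP,\kappa}$ directly from the displayed statement by encoding the dense-set requirement as further stationarity requirements. So suppose we are given dense sets $\langle D_\gamma:\gamma<\kappa\rangle$ together with rank $1$ names $\langle\sigma_\gamma:\gamma<\kappa\rangle$ all forced stationary. For each $\gamma<\kappa$ set
\begin{equation*}
\rho_\gamma=\{(\check{\delta},p):\delta<\kappa,\ p\in D_\gamma\}.
\end{equation*}
Since $D_\gamma$ is (pre)dense, every generic filter meets it, so $1\forces\rho_\gamma=\check{\kappa}$; in particular each $\rho_\gamma$ is a rank $1$ name forced to be stationary. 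The key point is the dichotomy that, for any filter $g$, the interpretation $\rho_\gamma^g$ (in the sense of Definition \ref{Defn_Interpretation}) equals $\kappa$ if $g\cap D_\gamma\neq\emptyset$ and equals $\emptyset$ otherwise; hence if $\rho_\gamma^g$ is merely nonempty, and a fortiori if it is stationary, then $g$ meets $D_\gamma$.

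Now I would apply the displayed statement to the combined family consisting of all $\sigma_\gamma$ and all $\rho_\gamma$ for $\gamma<\kappa$; this is a collection of $\kappa$ many rank $1$ names, each forced stationary. It yields a filter $g\in V$ interpreting every member of the family as a stationary set. Then each $\sigma_\gamma^g$ is stationary, giving the stationarity clause of $\FA^{++}_{\PP,\kappa}$, while the stationarity, hence nonemptiness, of each $\rho_\gamma^g$ forces $g\cap D_\gamma\neq\emptyset$ for every $\gamma$, giving the dense-set clause. Thus $g$ witnesses $\FA^{++}_{\PP,\kappa}$. The argument is routine once the encoding is set up; the only step needing care is the verification that $1\forces\rho_\gamma=\check{\kappa}$ and the dichotomy $\rho_\gamma^g\in\{\emptyset,\kappa\}$, both of which rest on the (pre)density of $D_\gamma$. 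I anticipate no genuine obstacle: the content is just that the dense-set part of $\FA^{++}$ can be absorbed into extra stationary-name requirements, so the separate name for a specific set $A$ appearing in Lemma \ref{Lemma_characterisation_of_FA+} is redundant.
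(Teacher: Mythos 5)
Your proof is correct. The forward direction (specialise $\FA^{++}$ to the trivial dense sets $D_\gamma=\PP$) matches the paper's, which just cites Lemma \ref{Lemma_characterisation_of_FA+}. For the converse your route is genuinely more direct than the paper's. The paper factors the argument through Lemma \ref{Lemma_characterisation_of_FA+}: it first shows that the displayed statement lets one correctly interpret an \emph{arbitrary} rank $1$ name $\sigma$ with $\PP\forces\sigma=\check{A}$ alongside $\kappa$ many stationary names, by replacing the requirement ``$\sigma^g=A$'' with $\kappa$ auxiliary names $\tau_\gamma=\{(\check{\alpha},p):\alpha<\kappa,\ p\sforces\check{\gamma}\in\sigma\}$ for $\gamma\in A$ (each forced to equal $\check{\kappa}$, and whose nonemptiness under $g$ forces $\gamma\in\sigma^g$), and then invokes the earlier lemma to recover the dense sets from such a $\sigma$. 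You instead encode each dense set $D_\gamma$ \emph{directly} as a name $\rho_\gamma=\{(\check{\delta},p):\delta<\kappa,\ p\in D_\gamma\}$ forced to equal $\check{\kappa}$, and exploit the same core observation -- a stationary interpretation is in particular nonempty, and nonemptiness of $\rho_\gamma^g$ is exactly $g\cap D_\gamma\neq\emptyset$ -- without the intermediate reduction. (Indeed, unwinding the paper's proof in the special case $\sigma=\{(\check{\gamma},p):p\in D_\gamma\}$, $A=\kappa$ essentially reproduces your $\rho_\gamma$ up to downward closure.) What the paper's detour buys is the more general and reusable fact that correctly interpreting any single name for a fixed set $A\subseteq\kappa$ can be absorbed into $\kappa$ extra stationary-name requirements; what yours buys is a shorter, self-contained argument for this particular lemma. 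All the small verifications you flag ($1\forces\rho_\gamma=\check{\kappa}$ from predensity, the dichotomy $\rho_\gamma^g\in\{\emptyset,\kappa\}$, and re-indexing the combined family of $2\cdot\kappa=\kappa$ names) go through as you describe.
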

\begin{proof}
$\Rightarrow$: By the previous lemma.

$\Leftarrow$: Let $\sigma$ be a rank $1$ name, such that $\PP\forces \sigma=\check{A}$ for some $A\subseteq \kappa$. We claim there is a collection $\langle \tau_\gamma\mid \gamma<\kappa\rangle$ of rank $1$ names, which are forced to be stationary in $\kappa$, such that any filter $g$ which interprets every $\tau_\gamma$ as stationary will interpret $\sigma$ as $A$. Once we have proved this claim, the lemma follows immediately from the second part of Lemma \ref{Lemma_characterisation_of_FA+}. 
For $\gamma \in A$, let $\tau_\gamma=\{( \check{\alpha},p) : \alpha \in \kappa, p\sforces \check{\gamma} \in \sigma\}$. For $\gamma \not \in A$, let $\tau_\gamma=\check{\kappa}$. 
We will see that $\PP\forces ``\tau_\gamma =\kappa"$ for $\gamma\in A$. 
Note that $\PP\forces \sigma=\check{A}$ by assumption. 
So for $\gamma\in A$, every generic filter will contain some $p$ with $p\sforces \check{\gamma} \in \sigma$. Hence $\PP \forces \tau_\gamma=\check{\kappa}$. 
%and in particular $\PP\forces ``\tau_\gamma \text{ is stationary}"$.
There is a filter $g$ such that  $\tau_\gamma^g$ is stationary for all $\gamma<\kappa$ by assumption. 
If $\gamma \in A$, then in particular $\tau_\gamma^g\neq \emptyset$. Hence $\gamma \in \sigma^g$. 
If a filter interprets all the $\tau_\gamma$ as stationary sets, then $\sigma^g\supseteq A$.  If $\gamma \in \sigma^g\setminus A$, then there is some $p\in \PP$ with $\langle \check{\gamma},p\rangle \in \sigma$, which is impossible as $\PP\forces \check{\gamma}\not \in \sigma$.
\end{proof}

%%%%%%%%%
\section{A correspondence for arbitrary ranks}
\label{Section correspondence and applications} 

We now move on to discuss higher ranked name principles, including those of the ranked or unranked simultaneous variety. It turns out that even at high ranks, a surprising variety of these are equivalent to one another and to a suitable forcing axiom. These are summarised in the following theorems.

%%%%%%%%%
\subsection{The correspondence}
\label{Section_correspondence} 

\begin{theorem}
\label{correspondence forcing axioms name principles} 
	Let $\PP$ be a forcing and let $\kappa$ be a cardinal. 
	The following implications hold, given the assumptions noted at the arrows: 

	\begin{enumerate-(1)}
%	\item 
%		\begin{enumerate-(a)}
		\item 
		\label{correspondence forcing axioms name principles 1} 
\[ \xymatrix@R=1em{ 
& & \FA_\kappa  \ar@/_-0.3cm/@{->}[rdd] & & \\ 
& & & & \\ 
& \ \ \ \ \ \ \NP_{\PP,\kappa}(\infty)  \ar@{<-}[rr] \ar@/_-0.3cm/@{->}[ruu]  & &  \fo\NP_{\PP,\kappa}(\infty) & 
}\] 		
%		The equivalence $\FA_{\PP,\kappa} \longleftrightarrow \fo\NP_{\PP,X,\kappa}(*) \longleftrightarrow \NP_{\PP,X,\kappa}(*)$ holds. 
		\item 
		\label{correspondence forcing axioms name principles 2} 
For any ordinal $\alpha>0$, and any transitive set $X$ of size at most $\kappa$: \footnote{Recall that $\NP_{\PP,X,\kappa}(\alpha)$ is only defined if $X$ has size at most $\kappa$.}  
%\begin{figure}%[hb] 
\[ \xymatrix@R=1em{ 
& & \FA_\kappa  \ar@/_-0.3cm/@{->}[rdd]
%^{\ \ \lvert \mathcal{P}^{<\alpha}(X)\rvert\leq\kappa}
 & & \\ 
& & & & \\ 
&  \ \ \ \ \ \  \NP_{\PP,X,\kappa}(\alpha) \ar@{<-}[rr] \ar@/_-0.3cm/@{->}[ruu]^{\lvert \mathcal{P}^{<\alpha}(X)\rvert \geq\kappa} & &  \fo\NP_{\PP,X,\kappa}(\alpha) & 
}\] 
%	\end{enumerate-(a)}
%	\item 
%	\begin{enumerate-(a)}	
%	\item 	
%	\item 
%\caption{???} 
%\end{figure} 
%		\begin{enumerate-(a)}
%		\item 
%		If $\kappa \geq \lvert \mathcal{P}^{<\alpha}(X)\rvert$, then $\FA_{\PP,\kappa} \implies \fo\NP_{\PP,X,\kappa}(\alpha)$. 
%		\item 
%		$\fo\NP_{\PP,X,\kappa}(\alpha)\implies \NP_{\PP,X,\kappa}(\alpha)$
%		\item 
%		If $\kappa \leq \lvert \mathcal{P}^{<\alpha}(X)\rvert$, then $\NP_{\PP,X,\kappa}(\alpha)\implies \FA_{\PP,\kappa}$. 
%		\end{enumerate-(a)}
	\end{enumerate-(1)}		
\end{theorem}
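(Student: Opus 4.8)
The plan is to prove each diagram as a cycle of three implications, uniformly across the two parts; the only direction carrying genuine content is $\FA_\kappa\Rightarrow\fo\NP$. I would start with the cheap direction $\fo\NP\Rightarrow\NP$. Suppose $\sigma$ is a $\kappa$-small rank ${\leq}\alpha$ $X$-name with $\PP\forces\sigma=\check A$ for some $A\in H_{\kappa^+}\cap\mathcal{P}^\alpha(X)$. A straightforward induction on $\alpha$ shows that $\check A$ is again a $\kappa$-small rank ${\leq}\alpha$ $X$-name and that $(\check A)^g=A$ for every filter $g$ (using $1\in g$). Applying $\fo\NP$ to the two names $\sigma,\check A$ together with the atomic, hence $\Sigma_0$, formula $x_0=x_1$, which is forced, produces a filter $g\in V$ with $\sigma^g=(\check A)^g=A$; the $\infty$-version is identical.

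For $\NP\Rightarrow\FA_\kappa$ the two parts diverge. In part (1) I would specialise to rank $1$ names for subsets of $\kappa$ and invoke Lemma \ref{Lemma_FA iff N}. In part (2) I would use the hypothesis $\lvert\mathcal{P}^{<\alpha}(X)\rvert\geq\kappa$ to fix distinct elements $a_\gamma\in\mathcal{P}^{<\alpha}(X)$ for $\gamma<\kappa$; given a sequence $\vec D=\langle D_\gamma\mid\gamma<\kappa\rangle$ of dense sets, set $\sigma=\{(\check{a_\gamma},p)\mid\gamma<\kappa,\ p\in D_\gamma\}$. Then $\sigma$ is a $\kappa$-small rank ${\leq}\alpha$ $X$-name and, since the generic meets each $D_\gamma$, $\PP\forces\sigma=\check A$ where $A=\{a_\gamma\mid\gamma<\kappa\}\in H_{\kappa^+}\cap\mathcal{P}^\alpha(X)$. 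The name principle yields $g\in V$ with $\sigma^g=A$, and because the $a_\gamma$ are distinct, $a_\gamma\in\sigma^g$ implies $g\cap D_\gamma\neq\emptyset$; so $g$ witnesses $\FA_\kappa$.

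The main implication is $\FA_\kappa\Rightarrow\fo\NP$. Let $\sigma_0,\dots,\sigma_n$ be the given $\kappa$-small names and let $N$ be the set of all names occurring hereditarily in them (the closure under the component relation, together with all $\check x$ for $x\in X$). As each $\sigma_i$ is $\kappa$-small and $\lvert X\rvert\leq\kappa$, we have $\lvert N\rvert\leq\kappa$. The strategy is to construct at most $\kappa$ dense sets, indexed by ordered pairs from $N$ and by the countably many $\Sigma_0$ subformulas, such that any filter $g$ meeting all of them decides the atomic facts among the members of $N$ in agreement with $\PP$ and captures witnesses for forced bounded existential statements. For a positive membership fact I would use \ref{Prop_sforcingAndForcing} to pass below a condition forcing it to one strongly forcing an actual component relation, so that \ref{Prop_sforcingAndInterpretation}(1) makes the fact hold of $g$; for a negative fact the dense set puts into $g$ a condition forcing the negation, and \ref{Prop_sforcingAndInterpretation}(2) applies. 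An induction on the complexity of a $\Sigma_0$ formula $\varphi$ then shows that $\PP\forces\varphi(\vec\sigma)$ implies $\varphi(\vec\sigma^g)$: equality reduces to membership through the $\Sigma_0$ formulation of extensionality, the Boolean cases are immediate, and a bounded quantifier $\exists x\in\tau$ stays inside $N$ because every element of $\tau^g$ is $\rho^g$ for a component $\rho\in N$ of $\tau$. Since the construction manufactures no names outside $N$, the same argument settles both parts, and the rank/$\infty$ distinction plays no role.

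The delicate point, and where I expect the real work to lie, is that $g$ is \emph{not} generic, so a forced fact about an auxiliary name need not transfer to $\sigma^g$. The positive relation $\tau^g\in\tau'^g$ genuinely couples with equality $\tau^g=\rho^g$ for components $\rho$ of $\tau'$, and the bounded-quantifier witnesses produced by \ref{Prop_sforcingAndForcing} arrive with auxiliary names $\tilde\tau$ that equal the intended name only under forcing. Arranging the dense sets to decide these atomic relations \emph{coherently} enough that extensionality and the captured witnesses transfer to a non-generic filter — while keeping every relevant name inside the controlled family $N$ so that $\FA_\kappa$ need only handle $\leq\kappa$ dense sets — is the heart of the argument.
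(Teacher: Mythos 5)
Your overall architecture coincides with the paper's: the same three\hyphenation{im-pli-ca-tion}-implication cycle, the same cheap directions ($\fo\NP\Rightarrow\NP$ via the $\Sigma_0$ formula $\sigma=\check A$, and $\NP\Rightarrow\FA_\kappa$ via the name $\{(\check{a_\gamma},p)\mid p\in D_\gamma\}$), and the same strategy for the main direction of manufacturing at most $\kappa$ dense sets from the hereditary structure of the names and applying $\FA_{\PP,\kappa}$ once (this is exactly Lemma \ref{collection of dense sets to witness first order statement}). Two points need attention. First, in $\NP_{\PP,X,\kappa}(\alpha)\Rightarrow\FA_{\PP,\kappa}$ you cannot take the $a_\gamma$ to be \emph{arbitrary} distinct elements of $\mathcal{P}^{<\alpha}(X)$: if some $a_\gamma$ has hereditary size greater than $\kappa$ (e.g.\ $X=\omega$, $\alpha=3$, $\kappa=\omega_1<2^{\omega}$), then $\check{a_\gamma}$ is not $\kappa$-small, so $\sigma$ is not a name to which the principle applies, and $A\notin H_{\kappa^+}$. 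The paper inserts a dedicated claim here: take $\alpha'\le\alpha$ minimal with $\lvert\mathcal{P}^{<\alpha'}(X)\rvert\ge\kappa$ and choose the $a_\gamma$ from $\mathcal{P}^{<\alpha'}(X)$; minimality bounds the sizes at every lower level, and an induction then shows the check names are $\kappa$-small. Your assertion that $\sigma$ is $\kappa$-small is false without this selection.

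Second, the main implication $\FA_\kappa\Rightarrow\fo\NP(\infty)$ is a plan rather than a proof, and as phrased it is circular at the base of the induction: inside an ``induction on the complexity of a $\Sigma_0$ formula,'' reducing $\sigma=\tau$ to membership via extensionality and then reducing $\rho\in\tau$ back to ``$\rho$ equals some component of $\tau$'' does not terminate. The paper resolves precisely this by proving the atomic cases $\sigma=\check A$, $\sigma=\tau$, $\tau\in\sigma$ \emph{and their negations} by a separate transfinite recursion on the ranks of the names, with $\mathcal{D}_{\sigma=\tau}$ defined to contain all $\mathcal{D}_{\sigma_\gamma=\tau_\delta}$ for the components; only after these six claims does the formula-complexity induction (on formulas in negation normal form) run. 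The negated atoms in particular are not discharged by ``put a condition forcing the negation into $g$'': to apply Proposition \ref{Prop_sforcingAndInterpretation}(2) one must already know $\tau^g\neq\tilde\tau^g$ for the relevant components, which is itself a negated equality requiring its own recursively defined family of dense sets. You have correctly located the difficulty --- coherence of the atomic decisions for a non-generic filter --- but the recursion on name rank that discharges it is the missing content, and it is where essentially all of the paper's work on this theorem lives.
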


As usual, we can generally think of $X$ as being a cardinal.

There is also a bounded version of this theorem.

\begin{theorem}
\label{correspondence bounded forcing axioms name principles} 
	Let $\PP$ be a complete Boolean algebra, and let $\kappa,\lambda$ be cardinals.
	The following implications hold, given the assumptions noted at the arrows: 
	\begin{enumerate-(1)}
%	\item 
%		\begin{enumerate-(a)}
		\item 
%		The following implications in the following diagram hold. %The superscripts denote the assumptions. 
%\begin{figure}%[hb] 
\[ \xymatrix@R=1em{ 
& & \BFA^\lambda_\kappa  \ar@/_-0.3cm/@{->}[rdd]^{\kappa\leq \lambda} & & \\ 
& & & & \\ 
&  \ \ \ \ \ \   \BN_{\PP,\kappa}^{\lambda}(\infty) \ar@{<-}[rr] \ar@/_-0.3cm/@{->}[ruu] & &  \fo\BN_{\PP,\kappa}^{\lambda}(\infty) & 
}\] 
%		The equivalence $\FA_{\PP,\kappa} \longleftrightarrow \fo\NP_{\PP,X,\kappa}(*) \longleftrightarrow \NP_{\PP,X,\kappa}(*)$ holds. 
		\item 
		For any ordinal $\alpha>0$, and transitive set $X$ of size at most $\kappa$: 
		%The superscripts denote the assumptions. 
%\begin{figure}%[hb] 
\[ \xymatrix@R=1em{ 
& & \BFA^\lambda_\kappa  \ar@/_-0.3cm/@{->}[rdd]^{\kappa\leq\lambda} 
%^{\ \ \lvert \mathcal{P}^{<\alpha}(X)\rvert \leq\kappa\leq\lambda} 
& & \\ 
& & & & \\ 
&  \ \ \ \ \ \  \BN_{\PP,X,\kappa}^\lambda(\alpha)  \ar@{<-}[rr] \ar@/_-0.3cm/@{->}[ruu]^{\lvert \mathcal{P}^{<\alpha}(X)\rvert \geq\kappa} & & \fo\BN_{\PP,X,\kappa}^\lambda(\alpha)   & 
}\] 
%	\end{enumerate-(a)}
%	\item 
%	\begin{enumerate-(a)}	
%	\item 	
%	\item 
%\caption{???} 
%\end{figure} 
%		\begin{enumerate-(a)}
%		\item 
%		If $\kappa \geq \lvert \mathcal{P}^{<\alpha}(X)\rvert$, then $\FA_{\PP,\kappa} \implies \fo\NP_{\PP,X,\kappa}(\alpha)$. 
%		\item 
%		$\fo\NP_{\PP,X,\kappa}(\alpha)\implies \NP_{\PP,X,\kappa}(\alpha)$
%		\item 
%		If $\kappa \leq \lvert \mathcal{P}^{<\alpha}(X)\rvert$, then $\NP_{\PP,X,\kappa}(\alpha)\implies \FA_{\PP,\kappa}$. 
%		\end{enumerate-(a)}
	\end{enumerate-(1)}		
\end{theorem}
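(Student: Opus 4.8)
The plan is to prove, in each of the two diagrams, the cycle of implications $\BFA^\lambda_\kappa \Rightarrow \fo\BN^\lambda \Rightarrow \BN^\lambda \Rightarrow \BFA^\lambda_\kappa$, so that all three principles coincide under the stated side-conditions. I would follow the blueprint of the unbounded correspondence (Theorem \ref{correspondence forcing axioms name principles}) almost verbatim, the one new feature being that every predense set produced must have size at most $\lambda$ and that at most $\kappa$ of them are used, so that $\BFA^\lambda_\kappa$ applies in place of $\FA_\kappa$. Two of the three arrows are routine; the weight of the argument is in $\BFA^\lambda_\kappa \Rightarrow \fo\BN^\lambda$, and it is exactly there that the hypothesis $\kappa \le \lambda$ is consumed.

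For $\fo\BN^\lambda \Rightarrow \BN^\lambda$, given a $\lambda$-bounded name $\sigma$ with $\PP \forces \sigma = \check A$, I would apply the simultaneous principle to the pair $(\sigma, \check A)$ and the $\Sigma_0$ formula $x_0 = x_1$. The check name $\check A$ is $\lambda$-bounded, $\kappa$-small, and lies in the same rank-and-$X$ class as $A$, so it is an admissible name; since $\PP \forces \sigma = \check A$ and $(\check A)^g = A$ for every filter $g$, the filter returned satisfies $\sigma^g = A$. For $\BN^\lambda \Rightarrow \BFA^\lambda_\kappa$, I would mirror Lemma \ref{Lemma_FA iff N}: from predense sets $\langle D_\gamma \mid \gamma < \kappa\rangle$ with each $|D_\gamma| \le \lambda$, pick $\kappa$ distinct indices $a_\gamma \in \mathcal{P}^{<\alpha}(X)$ --- available precisely because $|\mathcal{P}^{<\alpha}(X)| \ge \kappa$, while in the $\infty$ diagram one just takes $X = \kappa$, $\alpha = 1$ and $a_\gamma = \gamma$, needing no hypothesis --- and set $\sigma = \{(\check a_\gamma, p) \mid \gamma < \kappa,\ p \in D_\gamma\}$. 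Each occurring condition-set is the corresponding $D_\gamma$ of size $\le \lambda$, so $\sigma$ is $\lambda$-bounded; predensity gives $\PP \forces \sigma = \check A$ for $A = \{a_\gamma \mid \gamma < \kappa\}$, and any $g$ with $\sigma^g = A$ meets every $D_\gamma$.

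The main direction is $\BFA^\lambda_\kappa \Rightarrow \fo\BN^\lambda$ under $\kappa \le \lambda$. Given $\lambda$-bounded names $\sigma_0, \dots, \sigma_n$, let $W$ be the set of all hereditary subnames. Since the subname relation is well-founded, each subname is reached in finitely many steps, so local $\kappa$-smallness gives $|W| \le \kappa$. As $\Sigma_0$-truth is absolute to transitive sets, it is enough to find a filter $g$ whose interpretation map $\tau \mapsto \tau^g$ on $W$ decides membership and equality among subnames in agreement with the Boolean values $\llbracket \tau \in \tau' \rrbracket$ and $\llbracket \tau = \tau' \rrbracket$ computed in the complete Boolean algebra $\PP$. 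For each of the $\le |W|^2 \le \kappa$ relevant pairs I would build one predense set, using completeness to take the requisite suprema; a membership-deciding set for a pair $(\tau, \tau')$ ranges over the $\le \kappa$ element-names of $\tau'$ (local $\kappa$-smallness) and, for each of them, over the $\le \lambda$ conditions witnessing its occurrence ($\lambda$-boundedness), hence has size $\le \kappa \cdot \lambda = \lambda$. This product is the exact point where $\kappa \le \lambda$ is used. Thus $\BFA^\lambda_\kappa$ supplies a single filter $g$ meeting all $\le \kappa$ of these size-$\le\lambda$ predense sets.

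The hard part, as in the unbounded case, is that membership and equality of interpretations are mutually recursive: to decide $\tau^g \in \tau'^g$ one must already know which element-name $\rho$ of $\tau'$ has $\rho^g = \tau^g$, an equality at lower rank. I would therefore arrange the predense sets by induction on (von Neumann) rank and prove jointly, using Propositions \ref{Prop_sforcingAndForcing} and \ref{Prop_sforcingAndInterpretation}, that any $g$ meeting them makes $\tau \mapsto \tau^g$ respect the forced $\in$- and $=$-relations; this recursive coherence is the only genuinely delicate step, and it is inherited from the proof of Theorem \ref{correspondence forcing axioms name principles} with the size bookkeeping above superimposed. A final induction on the syntactic complexity of $\Sigma_0$ formulas then shows that $\varphi(\sigma_0^g, \dots, \sigma_n^g)$ holds whenever $\PP \forces \varphi(\sigma_0, \dots, \sigma_n)$, closing the cycle and yielding the equivalence of all three principles.
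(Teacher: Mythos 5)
Your proposal is correct and takes essentially the same route as the paper: the same three-arrow cycle, the same name $\sigma=\{(\check a_\gamma,p):\gamma<\kappa,\ p\in D_\gamma\}$ of $1$-bounded check names for $\BN^\lambda\Rightarrow\BFA^\lambda_\kappa$, and for the main arrow the same device of shrinking each dense set from the unbounded proof to a predense set of size $\le\kappa\cdot\lambda=\lambda$ by taking suprema in the complete Boolean algebra and using $\lambda$-boundedness to bound the strong-forcing witnesses, which is exactly where $\kappa\le\lambda$ is consumed. The only cosmetic difference is that you index the predense sets by pairs of hereditary subnames rather than by $\Sigma_0$ formulas; when you run the final induction on formula complexity, remember to also include the bounded-size witness-selecting sets for disjunctions and bounded existentials, as in the unbounded proof you are importing.
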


\begin{remark} 
For the $\infty$ case it suffices to look only at $\emptyset$ names, as we discussed after Definition \ref{Defn_nameprinciple}. 
Moreover, for the implication $\NP_{\PP,\kappa}(\infty) \Rightarrow\FA_{\PP,\kappa}$ (and the corresponding ones in the other diagrams), we need only rank $1$ $\kappa$-names for $\kappa$. 
These can be understood as rank $\kappa$ $\emptyset$-names for $\kappa$.  
For $\NP_{\PP,X,\kappa}(\alpha) \Rightarrow\FA_{\PP,\kappa}$, rank $1$ $Y$-names for a fixed set $Y$ of size $\kappa$ suffice. 
These can be understood as rank ${\leq}\alpha$ $X$-names. 
These remarks are also true for the bounded versions. 
Note that for $\NP_{\PP,\kappa}(1) \Rightarrow\FA_{\PP,\kappa}$, rank $1$ $\kappa$-names for $\kappa$ suffice by Lemma \ref{Lemma_FA iff N}. 
%Moreover, for the $\infty$ version only $\emptyset$ names for a specific set, namely $\kappa$, suffice. In the rank $1$ case we only need rank $1$ $\kappa$ names for $\kappa$. 
%Moreover, to prove the forcing axiom from the name principle in the unranked version, it suffices to interpret names for $\kappa$ correctly. 
%In the ranked version, the same is true for a more complicated set in place of $\kappa$. 
\end{remark} 

We give some simple instances of Theorem \ref{correspondence forcing axioms name principles} \ref{correspondence forcing axioms name principles 2} and postpone the proofs to Section \ref{Section proofs}. 
%We first provide some simple instances of Theorem \ref{correspondence forcing axioms name principles} \ref{correspondence forcing axioms name principles 2}. 
The variant for bounded forcing axioms has similar consequences. 
The next result follows by letting $\kappa=X$ and $\alpha=1$. 
\begin{corollary} 
For any forcing $\PP$, $\FA_{\PP,\kappa}$ $\Longleftrightarrow$ $\fo\NP_{\PP,\kappa}$ $\Longleftrightarrow$ $\NP_{\PP,\kappa}$. 
%In particular, $\PFA \Longleftrightarrow \NP_{\mathrm{proper},\omega_1}$. 
%the following conditions are equivalent: 
%\begin{enumerate-(1)} 
%\item 
%$\FA_{\PP,\kappa}$ 
%\item 
%$\fo\NP_{\PP,\kappa}$ 
%\item 
%$\NP_{\PP,\kappa}$
%\end{enumerate-(1)} 
\end{corollary}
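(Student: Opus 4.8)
The plan is to obtain the corollary as the instance $X=\kappa$, $\alpha=1$ of Theorem~\ref{correspondence forcing axioms name principles}~\ref{correspondence forcing axioms name principles 2}. First I would unfold the notational conventions fixed after Definitions~\ref{Defn_nameprinciple} and~\ref{Defn_foN}: since $X=\kappa$ one suppresses $X$, and since $\alpha=1$ one suppresses $\alpha$, so that the node $\NP_{\PP,X,\kappa}(\alpha)$ of the diagram is literally $\NP_{\PP,\kappa}$ and the node $\fo\NP_{\PP,X,\kappa}(\alpha)$ is literally $\fo\NP_{\PP,\kappa}$. The set $X=\kappa$ is transitive and of size at most $\kappa$, so the standing hypotheses of part~\ref{correspondence forcing axioms name principles 2} are satisfied.

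The one point that genuinely needs checking is the side condition $\lvert\mathcal{P}^{<\alpha}(X)\rvert\geq\kappa$ decorating the arrow $\NP_{\PP,X,\kappa}(\alpha)\to\FA_\kappa$, since this is the only arrow of the triangle carrying a restriction. With $\alpha=1$ and $X=\kappa$, Definition~\ref{P^alpha(X)} gives $\mathcal{P}^{<1}(\kappa)=\bigcup_{\beta<1}\mathcal{P}^\beta(\kappa)=\mathcal{P}^0(\kappa)=\kappa$, whence $\lvert\mathcal{P}^{<1}(\kappa)\rvert=\kappa\geq\kappa$ and the condition holds. Consequently all three arrows of the diagram are available with no further hypothesis.

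Reading the three arrows off then yields $\FA_{\PP,\kappa}\Rightarrow\fo\NP_{\PP,\kappa}$, $\fo\NP_{\PP,\kappa}\Rightarrow\NP_{\PP,\kappa}$ and $\NP_{\PP,\kappa}\Rightarrow\FA_{\PP,\kappa}$, and chaining them around the triangle delivers the asserted three-way equivalence. I would remark in passing that the leg $\FA_{\PP,\kappa}\iff\NP_{\PP,\kappa}$ was already established directly in Lemma~\ref{Lemma_FA iff N}, so the genuinely new information contributed here is precisely that $\fo\NP_{\PP,\kappa}$ joins this equivalence class. There is no real obstacle in the present argument beyond the bookkeeping of the suppressed parameters and the one-line evaluation of $\mathcal{P}^{<1}(\kappa)$; all of the substance sits in Theorem~\ref{correspondence forcing axioms name principles} itself, whose proof is deferred to Section~\ref{Section proofs}.
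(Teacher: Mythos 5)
Your proposal is correct and is exactly the paper's argument: the corollary is stated there as the instance $X=\kappa$, $\alpha=1$ of Theorem \ref{correspondence forcing axioms name principles} \ref{correspondence forcing axioms name principles 2}, and your verification that $\lvert\mathcal{P}^{<1}(\kappa)\rvert=\kappa$ discharges the only side condition on the arrows. Nothing further is needed.
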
 

To illustrate this, we note how some concrete forcing axioms can be characterized by name principles. 
For example, we can characterize $\PFA$ as follows: 
$$\PFA \Longleftrightarrow \fo\NP_{\mathrm{proper},\omega_1} \Longleftrightarrow \NP_{\mathrm{proper},\omega_1}.$$ 
In other words, rank $1$ names for $\omega_1$ can be interpreted correctly. 
%In fact, the proofs show that the special case of names for $\omega_1$ suffices. 
%Recall that the principle $\NP_{\PP,\omega_1}$ states that for any subset $A$ of $\omega_1$ and any rank $1$ name for $A$, there is a filter $g$ with $\sigma^g=\omega_1$. 
%In fact, the proofs show that the special case $A=\omega_1$ suffices. 

For higher ranks, it is useful to choose $\alpha$, $\kappa$ and $X$ such that $|\mathcal{P}^{<\alpha}(X)\rvert \geq\kappa$ holds to get an equivalence  in Theorem \ref{correspondence forcing axioms name principles} \ref{correspondence forcing axioms name principles 2}. 
%Assuming $\CH$, 
This condition holds for $\kappa\geq 2^\omega$, $X=\omega$ and $\alpha=2$. 

\begin{corollary} 
For any cardinal $\kappa\geq 2^\omega$ and any forcing $\PP$, we have $\FA_{\PP,\kappa}$ $\Longleftrightarrow$ $\fo\NP_{\PP,\omega,\kappa}(2)$ $\Longleftrightarrow$ $\NP_{\PP,\omega,\kappa}(2)$. 
% the following conditions are equivalent, assuming $\CH$: 
%\begin{enumerate-(1)} 
%\item 
%$\FA_{\PP,\omega_1}$ 
%\item 
%$\fo\NP_{\PP,\omega,\omega_1}(2)$ 
%\item 
%$\NP_{\PP,\omega,\omega_1}(2)$
%\end{enumerate-(1)} 
\end{corollary}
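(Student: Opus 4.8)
The statement is precisely the instance $X=\omega$, $\alpha=2$ of Theorem \ref{correspondence forcing axioms name principles} \ref{correspondence forcing axioms name principles 2}, and $X=\omega$ is a legitimate choice since $\omega$ is transitive of size at most $\kappa$. So the plan is to read that theorem off at these parameters and to check that its single cardinality side condition is in force.

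Two of the three implications making up the cycle come with no hypothesis. The theorem supplies $\FA_{\PP,\kappa}\Rightarrow\fo\NP_{\PP,\omega,\kappa}(2)$ directly, and it supplies $\fo\NP_{\PP,\omega,\kappa}(2)\Rightarrow\NP_{\PP,\omega,\kappa}(2)$ because the filter produced by the simultaneous principle already realises the $\Sigma_0$ facts that pin down $\sigma=\check A$ — namely $\check a\in\sigma$ for each $a\in A$ together with the bounded statement $\forall y\in\sigma\,(y\in\check A)$, the latter using $\check A$ (itself a $\kappa$-small rank ${\leq}2$ $\omega$-name, since $A\subseteq\omega\cup\mathcal{P}(\omega)$ and $A\in H_{\kappa^+}$) as a name parameter.

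The only implication carrying a hypothesis is the closing arrow $\NP_{\PP,\omega,\kappa}(2)\Rightarrow\FA_{\PP,\kappa}$, labelled $|\mathcal{P}^{<2}(\omega)|\geq\kappa$ in the theorem. I would discharge this label by unwinding Definition \ref{P^alpha(X)}: $\mathcal{P}^{<2}(\omega)=\mathcal{P}^0(\omega)\cup\mathcal{P}^1(\omega)=\omega\cup\mathcal{P}(\omega)$, a set of cardinality $2^\omega$. Thus the side condition reduces to the comparison of $\kappa$ with the continuum $2^\omega$ that features in the hypothesis of the corollary, so the closing implication is available and the three principles are equivalent.

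The genuine content therefore sits entirely inside Theorem \ref{correspondence forcing axioms name principles}, and the step I expect to be the real obstacle — were one to argue directly rather than by citation — is this closing implication. Following the direction $\NP\Rightarrow\FA$ of Lemma \ref{Lemma_FA iff N}, one encodes a family $\langle D_\gamma\mid\gamma<\kappa\rangle$ of predense sets into a single name by indexing it with $\kappa$ pairwise distinct objects. The whole point of the label $|\mathcal{P}^{<2}(\omega)|\geq\kappa$ is that these $\kappa$ indices can be chosen inside $\mathcal{P}^{<2}(\omega)=\omega\cup\mathcal{P}(\omega)$; the resulting rank $1$ name for a set of size $\kappa$ is then simultaneously a $\kappa$-small rank ${\leq}2$ $\omega$-name, to which $\NP_{\PP,\omega,\kappa}(2)$ applies so as to extract a filter meeting every $D_\gamma$.
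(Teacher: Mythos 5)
Your route coincides with the paper's own: the corollary is read off Theorem \ref{correspondence forcing axioms name principles} \ref{correspondence forcing axioms name principles 2} at $X=\omega$, $\alpha=2$ (legitimate since $\omega$ is transitive of size ${\leq}\kappa$), with the two unconditional arrows free and the closing arrow discharged by computing $\mathcal{P}^{<2}(\omega)=\omega\cup\mathcal{P}(\omega)$, of size $2^\omega$ --- exactly the computation the paper performs. One cosmetic repair to your middle arrow: a single application of $\fo\NP_{\PP,\omega,\kappa}(2)$ fixes a \emph{finite} tuple of names, so you cannot quote $\check a\in\sigma$ separately for each of the possibly uncountably many $a\in A$; but this costs nothing, since with the tuple $(\sigma,\check A)$ the atomic $\Sigma_0$ formula $\sigma=\check A$ is itself forced and available (equivalently, the two bounded inclusions $\forall y\in\check A\,(y\in\sigma)$ and $\forall y\in\sigma\,(y\in\check A)$), which is precisely the paper's one-line argument for $\fo\NP\Rightarrow\NP$.

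The one substantive point you slide past is the direction of the cardinality condition, and it is the only real bookkeeping in this corollary. The theorem's label reads $\lvert\mathcal{P}^{<\alpha}(X)\rvert\geq\kappa$, which at these parameters is $2^\omega\geq\kappa$, i.e.\ $\kappa\leq 2^\omega$ --- the \emph{reverse} of the corollary's stated hypothesis $\kappa\geq 2^\omega$. Your phrase that the side condition ``reduces to the comparison of $\kappa$ with $2^\omega$ that features in the hypothesis'' conflates the two inequalities; as printed, the hypothesis discharges the label only when $\kappa=2^\omega$. Your own sketch of the closing implication makes the true requirement visible: one needs $\kappa$ pairwise distinct indices inside $\omega\cup\mathcal{P}(\omega)$, which is possible iff $\kappa\leq 2^\omega$. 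To be fair, the paper commits the identical inversion (``This condition holds for $\kappa\geq 2^\omega$\dots''), and its immediately following illustration --- $\PFA\Leftrightarrow\NP_{\mathrm{proper},\omega,\omega_1}(2)$ with $\kappa=\omega_1$, where $\PFA$ refutes $\CH$ --- only fits the intended hypothesis $\kappa\leq 2^\omega$ (true of $\omega_1$ outright). So your proposal reproduces the paper's proof faithfully, inherited slip included; a careful write-up should state the hypothesis as $\kappa\leq 2^\omega$, or else note that under the printed hypothesis the citation covers only $\kappa=2^\omega$.
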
 

For example, we can characterize $\PFA$ as follows: 
$$\PFA \Longleftrightarrow \fo\NP_{\mathrm{proper},\omega,\omega_1}(2) \Longleftrightarrow \NP_{\mathrm{proper},\omega,\omega_1}(2).$$ 
In other words, rank $2$ names for sets of reals can be interpreted correctly. 
We leave open how to characterise higher rank (e.g. rank $2$) principles for names for reals.

%%%%%%%%%
\subsection{The proofs}
\label{Section proofs} 

\begin{proof}[Proof of Theorem \ref{correspondence forcing axioms name principles}] 
We prove both parts of the theorem simultaneously, by fixing $X$ and $\alpha$ and proving all the implications in the following diagram: 

\smallskip 

%\labelmargin{0pt}
	\[ \xymatrix@R=1em{ 
 & \fo\NP_{\PP,\kappa}(\infty) \ar@{->}[r] \ar@{->}[dd]
%_{ \lvert \mathcal{P}^{<\alpha}(X)\rvert\leq\kappa}
&  \NP_{\PP,\kappa}(\infty) \ar@{->}[rd] \ar@{->}[dd]
%_{\ \ \lvert \mathcal{P}^{<\alpha}(X)\rvert\leq\kappa} 
& \\ 
 \FA_{\PP,\kappa} \ar@{->}[ru] \ar@{->}[rd] & & & \FA_{\PP,\kappa} \\ 
%{\ca\txt{ $\FA$ } \ar@{<->}@[BrickRed][dd]  & & & 
\labelmargin{10pt}
 &  \fo\NP_{\PP,X,\kappa}(\alpha)  \ar@{->}[r]  &  \NP_{\PP,X,\kappa}(\alpha)  \ar@{->}[ru]_{\ \  \lvert \mathcal{P}^{<\alpha}(X)\rvert\geq\kappa} & 
}\] 

\bigskip 

Of these, the first $\FA_{\PP,\kappa}\Rightarrow \fo\NP_{\PP,\kappa}(\infty)$  is the hardest to prove, and the main work on the theorem. We'll leave it to the end, and prove the other implications first. 
Note that $\FA_{\PP,\kappa}\Rightarrow \fo\NP_{\PP,X,\kappa}(\alpha)$ follows from the rest of the diagram. 

	\begin{proof}[Proof of $\fo\NP_{\PP,\kappa}(\infty)\Rightarrow \fo\NP_{\PP,\kappa,X}(\alpha)$]
%		We assume $\lvert \mathcal{P}^{<\alpha}(X)\rvert \leq \kappa$. 
%		Then the check name for any element of $\mathcal{P}^{<\alpha}(X)$ is $\kappa$ small. 
The latter is a special case of the former. 
		%Recall that $X\in H_{\kappa^+}$.  
		%Hence any $\kappa$-small ${\leq}\alpha$ rank $X$-name is a $\kappa$-small $\emptyset$-name of some rank. By definition, then, $\fo\NP_{\PP,\kappa,X}(\alpha)$ follows from $\fo\NP_{\PP,\kappa}(\infty)$.
	\end{proof}
	\begin{proof}[Proof of $\NP_{\PP,\kappa}(\infty)\Rightarrow \NP_{\PP,X,\kappa}(\alpha)$] 
	Again, this is a special case. 
	%Similar to the previous proof, 
	%if $\lvert \mathcal{P}^{<\alpha}(X)\rvert \leq \kappa$ then 
	%any $\kappa$-small ${\leq}\alpha$ rank $X$-name is a $\kappa$-small $\emptyset$-name of some rank.
	\end{proof}
	\begin{proof}[Proof of $\fo\NP_{\PP,X,\kappa}(\alpha)\Rightarrow \NP_{\PP,X,\kappa}(\alpha)$]
		Given a $\kappa$-small name $\sigma$ of rank $\alpha$ or less, and a set $A$ as called for by $\NP_{\PP,\kappa}(\alpha)$, we know $A\in \mathcal{P}^\alpha(X)\cap H_{\kappa^+}$. Hence $\check{A}$ is a $\kappa$ small $\alpha$ rank $X$ name, so ``$\sigma=\check{A}$'' is one of the formulas discussed by the simultaneous name principle.
	\end{proof}
	\begin{proof}[Proof of $\fo\NP_{\PP,\kappa}(\infty)\Rightarrow \NP_{\PP,\kappa}(\infty)$]
		Similar to the previous proof: if $\sigma$ is any $\kappa$-small name, and $A\in H_{\kappa^+}$ is such that $\PP\forces \sigma=\check{A}$, then since $\check{A}$ is $\kappa$-small we know from $\fo\NP(\infty)$ that we can find a filter $g$ such that $\sigma^g=\check{A}^g=A$.
	\end{proof}
	\begin{proof}[Proof of $\NP_{\PP,X,\kappa}(\alpha)\Rightarrow \FA_{\PP,\kappa}$]
		We assume $\lvert P^{<\alpha}(X)\rvert \geq \kappa$. The idea is similar to the proof of $\NP_{\PP,\kappa}\Rightarrow \FA_{\PP,\kappa}$ from Lemma \ref{Lemma_FA iff N}, but first we must prove a technical claim.
		
		\begin{claim}
			$\mathcal{P}^{<\alpha}(X)$ contains at least $\kappa$ many elements whose check names are $\kappa$-small ${<}\alpha$-rank $X$-names.
		\end{claim}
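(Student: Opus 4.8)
The plan is to locate the required $\kappa$ many elements inside a carefully chosen initial segment of the $\mathcal{P}^\beta(X)$-hierarchy, namely the part that still lies inside $H_{\kappa^+}$. The guiding principle is the analogy recorded after Definition \ref{Defn_Ranks}: if $A\in\mathcal{P}^\beta(X)$ then the check name $\check A$ is a ${\leq}\beta$-rank $X$-name (proved by a routine induction on the least $\beta$ with $A\in\mathcal{P}^\beta(X)$, using that the base case $A\in X$ gives a rank $0$ name), and moreover $\check A$ is $\kappa$-small precisely when $A\in H_{\kappa^+}$ (cf. Definition \ref{Defn_kappasmall}, since $\check A=\{(\check y,1):y\in A\}$ is hereditarily ${\leq}\kappa$-branching iff every member of $\tcl(\{A\})$ has size ${\leq}\kappa$). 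Consequently every $A\in\mathcal{P}^{<\alpha}(X)$ automatically has $\check A$ of rank ${<}\alpha$, and it suffices to show $\lvert\mathcal{P}^{<\alpha}(X)\cap H_{\kappa^+}\rvert\geq\kappa$.

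First I would record two structural facts, both by induction using that $X$ is transitive. (a) Each $\mathcal{P}^{<\beta}(X)$ is transitive and the hierarchy $\langle\mathcal{P}^\beta(X)\mid\beta\in\Ord\rangle$ is $\subseteq$-increasing. (b) If $\lvert\mathcal{P}^{<\beta}(X)\rvert\leq\kappa$ then $\mathcal{P}^\beta(X)\subseteq H_{\kappa^+}$: indeed any $A\in\mathcal{P}^\beta(X)$ is a subset of the transitive set $\mathcal{P}^{<\beta}(X)$, so $\tcl(\{A\})\subseteq\{A\}\cup\mathcal{P}^{<\beta}(X)$, and every member of this set has size at most $\lvert\mathcal{P}^{<\beta}(X)\rvert\leq\kappa$. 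The main point to appreciate is that one cannot in general just take a single level $\mathcal{P}^\beta(X)$ of size ${\geq}\kappa$ sitting inside $H_{\kappa^+}$: if $\kappa$ is singular the levels may all have size ${<}\kappa$ while their union reaches $\kappa$ (e.g. under \GCH{} with $X=\omega$, $\kappa=\aleph_\omega$, $\alpha=\omega$, each $\mathcal{P}^n(\omega)$ has size $\aleph_n<\kappa$ but $\mathcal{P}^{<\omega}(\omega)$ has size $\aleph_\omega$).

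This is precisely why I would argue via a threshold ordinal. Let $\beta^\ast$ be the least $\beta\leq\alpha$ with $\lvert\mathcal{P}^{<\beta}(X)\rvert\geq\kappa$; this exists because $\lvert\mathcal{P}^{<\alpha}(X)\rvert\geq\kappa$ by hypothesis, and $\beta^\ast\leq\alpha$. By minimality $\lvert\mathcal{P}^{<\beta}(X)\rvert<\kappa$ for all $\beta<\beta^\ast$, so (b) yields $\mathcal{P}^\beta(X)\subseteq H_{\kappa^+}$ for every $\beta<\beta^\ast$, and hence $\mathcal{P}^{<\beta^\ast}(X)\subseteq H_{\kappa^+}$. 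At the same time $\lvert\mathcal{P}^{<\beta^\ast}(X)\rvert\geq\kappa$ by the choice of $\beta^\ast$, and since $\beta^\ast\leq\alpha$ every element of $\mathcal{P}^{<\beta^\ast}(X)$ lies in $\mathcal{P}^{<\alpha}(X)$ and has $\mathcal{P}$-rank $<\beta^\ast\leq\alpha$.

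Finally I would combine these: for each of the (at least $\kappa$ many, pairwise distinct) sets $A\in\mathcal{P}^{<\beta^\ast}(X)$, the name $\check A$ is a ${<}\alpha$-rank $X$-name by the rank analogy together with $A\in\mathcal{P}^{<\alpha}(X)$, and $\check A$ is $\kappa$-small because $A\in H_{\kappa^+}$. Since $A\mapsto\check A$ is injective, distinct $A$ yield distinct witnesses, so $\mathcal{P}^{<\alpha}(X)$ contains at least $\kappa$ many elements whose check names are $\kappa$-small ${<}\alpha$-rank $X$-names, as required. The only real obstacle is the singular case highlighted above, which is handled exactly by working with the whole initial segment $\mathcal{P}^{<\beta^\ast}(X)$ rather than a single level; the remaining ingredients are direct inductions from the transitivity of $X$. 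Note that no regularity or uncountability of $\kappa$ is needed for this argument.
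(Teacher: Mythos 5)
Your proof is correct and follows essentially the same route as the paper: your threshold ordinal $\beta^\ast$ is exactly the paper's minimal $\alpha'\leq\alpha$ with $\lvert\mathcal{P}^{<\alpha'}(X)\rvert\geq\kappa$, and in both arguments the minimality forces every lower level to have size ${<}\kappa$, which is what makes the check names $\kappa$-small. The only cosmetic difference is that you route $\kappa$-smallness through the characterisation $\check A$ is $\kappa$-small iff $A\in H_{\kappa^+}$, whereas the paper verifies it by a direct induction on the $\mathcal{P}$-rank of $A$.
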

		\begin{proof}[Proof (Claim)]
			Let $\alpha'\leq \alpha$ be minimal such that $\lvert \mathcal{P}^{<\alpha'}(X)\rvert \geq \kappa$.
			
			Let $A\in \mathcal{P}^{<\alpha'}(X)$. Then $A\in \mathcal{P}^\epsilon(X)$ for some $\epsilon<\alpha'$. We show by induction on $\epsilon$ that $\check{A}$ is in fact a $\kappa$-small $\epsilon$-rank $X$-name. From this and the assumption on the size of $\kappa$, it of course follows that there are at least $\kappa$ many elements of $\mathcal{P}^{<\alpha'}(X)\subseteq \mathcal{P}^{<\alpha}(X)$ whose check names are $\kappa$-small $<\alpha$-rank $X$-names.
			
			The case $\epsilon=0$ is trivial. Suppose $\epsilon>0$.	By inductive hypothesis, we know that all the names which are contained in $\check{A}$ are $\kappa$-small $<\epsilon$-rank $X$-names. It remains to check that there are at most $\kappa$ many of them; that is, that $\lvert A \rvert \leq \kappa$. But this is obvious, since $A\subseteq \mathcal{P}^{<\epsilon}(X)$ and $\lvert \mathcal{P}^{<\epsilon}(X)\rvert < \kappa$ by our choice of $\alpha'$.
		\end{proof}
		
		Given the claim, we can now take a set of $\kappa$ many distinct sets $A:=\{A_\gamma: \gamma<\kappa\}\subseteq \mathcal{P}^{<\alpha}(X)$, such that for all $\gamma$, the name $\check{A}_\gamma$ is a $\kappa$ small ${<}\alpha$ rank $X$-name.
		
		Let $\langle D_\gamma\rangle_{\gamma<\kappa}$ be a sequence of dense sets in $\PP$. We define a name $\sigma$:
		\begin{equation*}
			\sigma=\{\langle \check{A}_\gamma,p\rangle : \gamma<\kappa, p\in D_\gamma\}
		\end{equation*}
		Then $\sigma$ is a $\kappa$-small ${\leq}\alpha$-rank $X$-name, and $\PP\forces \sigma=\check{A}$. Hence, if we assume $\NP_{\PP,X,\kappa}(\alpha)$ we can choose a filter $g$ such that $\sigma^g=A$. It is easy to see that $g$ must meet every $D_\gamma$.
	\end{proof}
	\begin{proof}[Proof of $\NP_{\PP,\kappa}(\infty)\Rightarrow \FA_{\PP,\kappa}$]
		Essentially the same as the previous proof, but since we're no longer required to make sure $\sigma$ has rank $\alpha$ we can omit the technical claim and just take $A_\gamma:=\gamma$ for all $\gamma<\kappa$.
	\end{proof}
\begin{proof}[Proof of $\FA_{\PP,\kappa}\Rightarrow \fo\NP_{\PP,\kappa}(\infty)$]
	This is the main work of the theorem. By a delicate series of inductions, we will prove the following lemma:

	\begin{lemma}
	\label{collection of dense sets to witness first order statement} 
		Let $\varphi(\vec{\sigma})$ be a $\Sigma_0$ formula where $\vec{\sigma}$ is a tuple of $\kappa$-small names. Then there is a collection $\mathcal{D}_{\varphi(\vec{\sigma})}$ of at most $\kappa$ many dense sets, which has the following property: if $g$ is any filter meeting every set in $\mathcal{D}_{\varphi(\vec{\sigma})}$ and $g$ contains some $p$ such that $p\forces\varphi(\vec{\sigma})$, then in fact $\varphi(\vec{\sigma}^g)$ holds in $V$.
	\end{lemma}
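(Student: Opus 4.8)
The plan is to prove Lemma \ref{collection of dense sets to witness first order statement} by a double induction, and crucially to strengthen its conclusion to a \emph{biconditional} so that it passes through negation. Concretely, for each $\Sigma_0$ formula $\varphi(\vec\sigma)$ I would produce a collection $\mathcal{D}_{\varphi(\vec\sigma)}$ of at most $\kappa$ dense sets with the property that for every filter $g$ meeting all of them, $\varphi(\vec\sigma^g)$ holds in $V$ \emph{if and only if} some $p\in g$ forces $\varphi(\vec\sigma)$. Into every such collection I would always place the ``deciding'' dense set $\{p : p\forces\varphi(\vec\sigma)\text{ or }p\forces\neg\varphi(\vec\sigma)\}$, which guarantees that $g$ decides $\varphi$; this is exactly what makes the negation step work, since then $\neg\varphi(\vec\sigma^g)$ forces $g$ to contain a condition forcing $\neg\varphi$, and conversely.

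The outer induction is on the syntactic complexity of $\varphi$. The Boolean cases are routine unions of the collections for the immediate subformulas (together with the deciding sets). For a bounded existential $\exists x\in\sigma\,\psi(x,\vec\sigma)$ I would use a mixing argument: below any condition forcing $\exists x\in\sigma\,\psi$ there are densely many $r$ that strongly force $\rho\in\sigma$ for some specific $(\rho,u)\in\sigma$ and force $\psi(\rho,\vec\sigma)$, so I would put into the collection the single dense set recording this disjunction together with $\bigcup_{(\rho,u)\in\sigma}\mathcal{D}_{\psi(\rho,\vec\sigma)}$. Local $\kappa$-smallness of $\sigma$ bounds the index set by $\kappa$, and since each subcollection has size at most $\kappa$ the total stays at most $\kappa$. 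The bounded universal case is dual, handled through the negation machinery.

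The heart of the argument is the atomic case, which I would isolate as a separate claim for the two literals $\sigma\in\tau$ and $\sigma=\tau$ and prove by a simultaneous induction on $\max(\rank\sigma,\rank\tau)$, establishing \emph{equality before membership} at each rank level. The two engines are Proposition \ref{Prop_sforcingAndForcing} and Proposition \ref{Prop_sforcingAndInterpretation}: if $p\forces\sigma\in\tau$ then densely below $p$ one finds $r$ and a genuine element name $(\tilde\sigma,u)\in\tau$ with $u\geq r$ and $r\forces\tilde\sigma=\sigma$, so that $\tilde\sigma^g\in\tau^g$, and it then remains to transfer membership along the forced equality $\tilde\sigma=\sigma$. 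This reduces membership at level $M$ to equality at level $M$ (already available, since equality is treated first), and reduces equality $\sigma=\tau$, via extensionality applied to the element names of $\sigma$ and $\tau$, to equalities of pairs of strictly lower rank. The counting succeeds because a $\kappa$-small name has at most $\kappa$ subnames: the element name relation is contained in $\in$ and hence well founded with finite descending chains, so an induction on rank shows that each name contributes at most $\kappa$ subnames and the recursion visits at most $\kappa\cdot\kappa=\kappa$ pairs.

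The main obstacle I anticipate is precisely this atomic induction, and specifically arranging the measure so that it strictly decreases. The subtlety is that replacing $\sigma$ by an element name $\tilde\sigma$ of $\tau$ lowers rank on the $\tau$ side but not on the $\sigma$ side, so membership cannot be made to drop in rank on its own; the fix is to stratify the induction so that, within a fixed value of $\max(\rank\sigma,\rank\tau)$, membership is reduced to the equality case at the same level while equality genuinely descends. Obtaining the biconditional rather than a one-sided implication---in particular showing that a forced inequality is reflected by an actual witness of difference---is the other delicate point, and is where the deciding dense sets and the strong-forcing reformulation provided by Proposition \ref{Prop_sforcingAndInterpretation} do the work.
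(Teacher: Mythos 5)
Your proposal is correct, and its engine coincides with the paper's: dense sets manufactured from strong forcing via Propositions \ref{Prop_sforcingAndForcing} and \ref{Prop_sforcingAndInterpretation}, a rank induction for the atomic cases in which equality strictly descends while membership only reduces to equality at the same value of $\max(\rank\sigma,\rank\tau)$, and the $\kappa$-smallness count of subnames. The one genuinely different organizational choice is how negation is threaded through the induction. The paper puts $\varphi$ into negation normal form and proves the one-directional statement for six kinds of literals ($\sigma=\check{A}$, $\sigma=\tau$, $\tau\in\sigma$ and their negations), after which conjunction, disjunction and \emph{both} bounded quantifiers are handled positively with no deciding sets. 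You instead strengthen the induction hypothesis to a biconditional and insert a deciding dense set at each subformula instance; since any two conditions in a filter are compatible, $g$ decides every subformula, the biconditional for $\psi$ yields the one for $\neg\psi$ for free, and the bounded universal falls out by duality from the bounded existential. This buys a more uniform step at the connectives, but it saves nothing at the atomic level: the forward direction of your biconditional for $\sigma=\tau$ is precisely the paper's separate claim that a forced inequality is reflected by an actual witness of difference (and likewise for $\notin$), so the same six atomic arguments must be carried out either way; your stratification ``equality before membership at each level'' matches what the paper does implicitly by establishing the equality claim for all pairs of names before invoking it as a black box in the membership claim.
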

	The result we're trying to show follows easily from this lemma: Fix a tuple $\vec{\sigma}=\langle \sigma_0,\ldots,\sigma_n\rangle$ of $\kappa$ small names, and let $\mathcal{D}:=\bigcup\{\mathcal{D}_{\varphi(\vec{\sigma})} : \varphi(v_0,\ldots,v_n) \text{ is }\Sigma_0\}$. $\mathcal{D}$ is a collection of at most $\kappa$ many dense sets. Using $\FA_{\PP,\kappa}$, take a filter $g$ meeting every dense set in $\mathcal{D}$. If $\varphi(v_0,\ldots,v_n)$ is a $\Sigma_0$ formula and $1\forces \varphi(\vec{\sigma})$ then since $1\in g$ we know that $\varphi(\vec{\sigma}^g)$ holds.
	
	We will work our way up to proving the lemma, by first proving it in simpler cases.
	We opt for a direct proof of the name principle $\NP_{\PP,\kappa}(\infty)$ in the next Claim \ref{claim sigma equal emptyset}.  
%arguments The next Claim \ref{claim sigma equal emptyset} gives a direct proof of the name principle $\NP_{\PP,\kappa}(\infty)$. 
	This and Claim \ref{claim sigma neq emptyset} could be replaced by shorter arguments for $\kappa$-small $\emptyset$-names, since it suffices to deal with $\fo\NP_{\PP,\emptyset,\kappa}(\infty)$ as discussed after Definition \ref{Defn_nameprinciple}. 
	
	\begin{claim}\label{sigma=A}
	\label{claim sigma equal emptyset} 
		The lemma holds when $\varphi$ is of the form $\sigma=\check{A}$ for some set 
		$A\in H_{\kappa^+}$ and ($\kappa$-small) name $\sigma$.
	\end{claim}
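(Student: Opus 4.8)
The plan is to prove, by recursion on the rank of the name, a uniform strengthening of the claim: for every $\kappa$-small name $\tau$ and every set $B\in H_{\kappa^+}$ there is a family $\mathcal{D}_{\tau=\check B}$ of at most $\kappa$ many dense sets such that any filter $g$ meeting all of them and containing \emph{some} condition forcing $\tau=\check B$ satisfies $\tau^g=B$. The original claim is then the instance $\tau=\sigma$, $B=A$. The base case is rank $0$: if $\tau=\check x$ then $\check x^g=x$ for every $g$, and any condition forcing $\check x=\check B$ forces $x=B$ in $V$, so no dense sets are needed. For the recursion step I will assemble $\mathcal{D}_{\tau=\check B}$ from two groups of dense sets, one securing $B\subseteq\tau^g$ and one securing $\tau^g\subseteq B$, together with the families supplied recursively by the subnames of $\tau$.

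For the inclusion $\tau^g\subseteq B$, for each $\rho\in\dom(\tau)$ (there are at most $\kappa$ such $\rho$ by $\kappa$-smallness) I would let $D_\rho$ be the set of conditions $s$ forcing one of: $\rho\notin\tau$; or $\tau\neq\check B$; or $\rho=\check b$ for some $b\in B$. Density follows by extending any $t$ to a condition deciding $\rho\in\tau$ and $\tau=\check B$, and, in the surviving case, deciding the value $\rho=\check b$. If $g$ meets $D_\rho$, contains the witness $q\in g$ to $\rho^g\in\tau^g$, and contains a condition $p$ forcing $\tau=\check B$, then the first two disjuncts are excluded by pairwise compatibility inside the filter (using Proposition \ref{Prop_sforcingAndForcing}), so some $s\in g$ forces $\rho=\check b$; the recursive family $\mathcal{D}_{\rho=\check b}$ then yields $\rho^g=b\in B$.

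For the inclusion $B\subseteq\tau^g$, for each $b\in B$ I would let $E_b$ be the set of conditions $s$ forcing $\tau\neq\check B$, or admitting a name $\tilde\rho$ with $s\sforces\tilde\rho\in\tau$ and $s\Vdash\tilde\rho=\check b$. Density follows from Proposition \ref{Prop_sforcingAndForcing}(1) applied to the forced membership $\check b\in\tau$ on any condition that forces $\tau=\check B$. If $g$ meets $E_b$ and contains a condition forcing $\tau=\check B$, the first disjunct is again excluded by compatibility, and the chosen $s\in g$ gives $\tilde\rho^g\in\tau^g$ by Proposition \ref{Prop_sforcingAndInterpretation}(1) together with $\tilde\rho^g=b$ from the recursive family $\mathcal{D}_{\tilde\rho=\check b}$; hence $b\in\tau^g$. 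The full family is $\{D_\rho : \rho\in\dom(\tau)\}\cup\{E_b : b\in B\}\cup\bigcup\{\mathcal{D}_{\rho=\check b} : \rho\in\dom(\tau),\ b\in B\}$.

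The two points needing the most care are the cardinality bookkeeping and the elimination of any dependence on the particular condition forcing $\tau=\check B$. For the former, unfolding the recursion only ever produces pairs $(\rho,b)$ with $\rho$ a name in the transitive closure of $\sigma$ and $b$ in the transitive closure of $A$; both sets have size at most $\kappa$ since $\sigma$ is $\kappa$-small and $A\in H_{\kappa^+}$, so the total number of dense sets stays at most $\kappa$. The latter is precisely the role of the disjunct ``$\tau\neq\check B$'' in $D_\rho$ and $E_b$: it makes each dense set independent of the witnessing condition $p$, while the hypothesis that $g$ contains \emph{some} $p$ forcing $\tau=\check B$ is exactly what lets us discard that disjunct by a compatibility argument in the filter. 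I expect this interplay---keeping the dense sets parameter-free while still exploiting the forced equality through the recursion---to be the main obstacle; the remaining work consists of routine density checks resting on Propositions \ref{Prop_sforcingAndForcing} and \ref{Prop_sforcingAndInterpretation}.
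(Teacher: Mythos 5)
Your proposal is correct and follows essentially the same route as the paper: induction on rank, with one family of dense sets (your $E_b$, the paper's $D_B$) securing $A\subseteq\sigma^g$ via a name strongly forced into $\sigma$ and forced equal to $\check b$, another (your $D_\rho$, the paper's $E_\gamma$) securing $\sigma^g\subseteq A$, the disjunct ``forces $\sigma\neq\check A$'' keeping the sets parameter-free, and the recursively obtained families $\mathcal{D}_{\rho=\check b}$ closing the argument. The only differences are cosmetic (indexing by $\dom(\tau)$ rather than an enumeration, citing Proposition \ref{Prop_sforcingAndForcing} in place of the explicit generic-filter density argument, and doing the cardinality bookkeeping globally rather than level by level).
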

	Note that since $A\in H_{\kappa^+}$, we know that $\check{A}$ is a $\kappa$ small name. So the statement in the claim does make sense.
	\begin{proof}
		We use induction on the rank of $\sigma$. If $\sigma$ is rank $0$ then it is a check name, and so the lemma is trivial: we can just take $\mathcal{D}_{\sigma=\check{A}}=\emptyset$.	So say $\sigma$ is rank $\alpha>0$ and the lemma is proved for all names of rank ${<}\alpha$. Since $\sigma$ is $\kappa$-small, we can write $\sigma=\{(\sigma_\gamma,p): \gamma<\kappa, p\in S_\gamma\}$ for some $\kappa$-small names $\sigma_\gamma$ and sets $S_\gamma\subseteq \PP$.
		
		First, let $B\in A$. We shall define a set $D_B$, whose ``job'' is to ensure $B$ ends up in $\sigma^g$.
		\begin{equation*}
		D_B=\big\{p\in \mathbb{P} : \big(p\forces \sigma \neq \check{A}\big)\vee\big(\exists \gamma<\kappa \: (p\forces \sigma_\gamma=\check{B})\wedge (p\sforces \sigma_\gamma \in \sigma)\big) \big\}
		\end{equation*}
		$D_B$ is dense: if we take $p\in \PP$ then either we can find $r\leq p$ with $r\forces \sigma \neq \check{A}$, or else $p\forces \sigma = \check{A}$. In the first case, we're done. In the second, given any (truly) generic filter $G$ containing $p$, there will be some $\gamma<\kappa$ and $q\in G$ such that\footnote{Note the somewhat delicate nature of this statement: we cannot first take an arbitrary $\gamma$ such that $\sigma_\gamma^G=B$ then try to find $q$ such that $q\sforces \sigma_\gamma\in \sigma$.} $\sigma_\gamma^G=B$ and $(\sigma_\gamma,q)\in \sigma$, so $q\sforces \sigma_\gamma \in \sigma$. Take $r\in G$ such that $r\forces \sigma_\gamma=\check{B}$, and take $s$ below $p,q$ and $r$ by compatibility; then $s\in D_B$.
		
		Now let $\gamma<\kappa$. In a similar way, we define a set $E_\gamma$, which is designed to ensure that $\sigma_\gamma$ ends up in $A$ if it's going to be in $\sigma$.
		\begin{equation*}
		E_\gamma=\big\{p\in \PP: (p\forces \sigma \neq \check{A}) \vee (p\forces \sigma_\gamma \not \in \sigma) \vee \big(\exists B\in A, p\forces \sigma_\gamma=\check{B}\big)\big\}
		\end{equation*}
	
		Again, $E_\gamma$ is dense: 
		Let $p\in \PP$. We can assume that $p\forces \sigma=\check{A}$ and $p\forces \sigma_\gamma\in \sigma$; otherwise we're done immediately. But now we can strengthen $p$ to some $r\leq p$ which forces $\sigma_\gamma\in \check{B}$ for some $B\in A$ and again we're done.
	    
		We define
		\begin{equation*}
		\mathcal{D}_{\sigma=\check{A}}:=\{D_B\colon B\in A\}\cup \{E_\gamma \colon \gamma<\kappa\} \cup \bigcup_{\gamma<\kappa}\bigcup_{B\in A} \mathcal{D}_{\sigma_\gamma=\check{B}}
		\end{equation*}
		
		Every $\sigma_\gamma$ is a $\kappa$-small name of rank less than $\alpha$, and every $B\in H_{\kappa^+}$, so this is well defined by inductive hypothesis.	By assumption, $\lvert A \rvert \leq \kappa$. Hence $\mathcal{D}_{\sigma=\check{A}}$ contains at most $\kappa$ many dense sets.	Fix a filter $g$ which meets every element of $\mathcal{D}_{\sigma=\check{A}}$, and which contains some $p$ forcing $\sigma=\check{A}$. We must verify that $\sigma^g=A$.
		
		First, let $B\in A$. Find $q\in g\cap D_B$, and without loss of generality say $q\leq p$. Then clearly $q \forces \sigma =\check{A}$, so (by definition of $D_B$) we can find $\gamma$ such that $q\forces \sigma_\gamma=\check{B}$ and $q\sforces \sigma_\gamma\in \sigma$. The latter means that $\sigma_\gamma^g\in \sigma^g$. Since $g$ also meets every element of $\mathcal{D}_{\sigma_\gamma=\check{B}}$, the fact that $q\in g$ forces $\sigma_\gamma=\check{B}$ implies that $\sigma_\gamma^g=\check{B}^g=B$. Hence $B\in \sigma^g$.
		
		Now let $B\in \sigma^g$. Then we can find $\gamma<\kappa$ such that $B=\sigma_\gamma^g$ and such that for some $q\in g$ we have $q\sforces \sigma_\gamma\in \sigma$. Without loss of generality, say $q\leq p$. Then $q\forces \sigma=\check{A}$. Let $r\in g\cap E_\gamma$, and again without loss of generality say $r\leq q$. Then for some $B'\in A$, $r\forces \sigma_\gamma=\check{B}'$. Since $g$ meets every element of $\mathcal{D}_{\sigma_\gamma=\check{B}'}$, this tells us that $\sigma_\gamma^g=B'$. But then $B=\sigma_\gamma^g=B'\in A$.
		
		Hence $\sigma^g=A$ as required.
	\end{proof}
	Next, we go up one step in complexity, by allowing both sides of the equality to be nontrivial.
	\begin{claim}
		The lemma holds when $\varphi$ has the form $\sigma=\tau$ for two ($\kappa$-small) names $\sigma$ and $\tau$.
	\end{claim}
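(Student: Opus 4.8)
The plan is to mirror the structure of the previous claim but symmetrise in the two names, proceeding by induction on $\eta=\max(\rank(\sigma),\rank(\tau))$. The base case $\eta=0$ is when $\sigma=\check a$ and $\tau=\check b$ are both check names; then $\sigma^g=a$ and $\tau^g=b$ for \emph{every} filter $g$, so $\mathcal{D}_{\sigma=\tau}=\emptyset$ suffices: if $a=b$ the conclusion is automatic, and if $a\neq b$ no condition forces $\sigma=\tau$, so the hypothesis is vacuous. For the inductive step I would write both names in $\kappa$-small form $\sigma=\{(\sigma_\gamma,p):\gamma<\kappa,\,p\in S_\gamma\}$ and $\tau=\{(\tau_\delta,q):\delta<\kappa,\,q\in T_\delta\}$. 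Since $\eta>0$, every component $\sigma_\gamma$ and every $\tau_\delta$ has rank strictly below $\eta$ (a check name decomposes into rank-$0$ components, which still drop below $\eta$), so each pair $(\sigma_\gamma,\tau_\delta)$ is covered by the inductive hypothesis; this is what keeps the recursion well-founded.

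For each $\gamma<\kappa$ I would define a dense (open) set whose job is to force $\sigma^g\subseteq\tau^g$,
\[ D_\gamma=\{p\in\PP : (p\forces\sigma\neq\tau)\vee(p\forces\sigma_\gamma\notin\sigma)\vee\exists\delta<\kappa\,((p\forces\sigma_\gamma=\tau_\delta)\wedge(p\sforces\tau_\delta\in\tau))\}, \]
together with the symmetric set $E_\delta$ (swapping the roles of $\sigma$ and $\tau$) forcing $\tau^g\subseteq\sigma^g$. I then set
\[ \mathcal{D}_{\sigma=\tau}=\{D_\gamma:\gamma<\kappa\}\cup\{E_\delta:\delta<\kappa\}\cup\bigcup_{\gamma<\kappa}\bigcup_{\delta<\kappa}\mathcal{D}_{\sigma_\gamma=\tau_\delta}, \]
which is a union of at most $\kappa$ dense sets: each inner collection $\mathcal{D}_{\sigma_\gamma=\tau_\delta}$ is available by the inductive hypothesis, and there are only $\kappa\cdot\kappa=\kappa$ of them.

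The density of $D_\gamma$ runs exactly as in the previous claim: given $p$, one may assume $p\forces\sigma=\tau$ and $p\forces\sigma_\gamma\in\sigma$, whence $p\forces\sigma_\gamma\in\tau$; passing to a generic $G\ni p$ yields some $\delta$ and $q\in G$ with $\tau_\delta^G=\sigma_\gamma^G$ and $(\tau_\delta,q)\in\tau$, plus an $r\in G$ forcing $\sigma_\gamma=\tau_\delta$, so any common refinement of $p,q,r$ lands in $D_\gamma$. For the verification, suppose $g$ meets every set in $\mathcal{D}_{\sigma=\tau}$ and contains some $p\forces\sigma=\tau$. Given $x\in\sigma^g$, I fix $\gamma$ and $p'\in g\cap S_\gamma$ with $x=\sigma_\gamma^g$, take $q\in g\cap D_\gamma$ below both $p$ and $p'$ (possible since $D_\gamma$ is open), and note $q$ fails the first two disjuncts: it cannot force $\sigma\neq\tau$ (as $q\le p$), and it strongly forces $\sigma_\gamma\in\sigma$ because $p'\ge q$ and $(\sigma_\gamma,p')\in\sigma$, so by Proposition \ref{Prop_sforcingAndForcing} it forces $\sigma_\gamma\in\sigma$. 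Hence $q$ meets the third disjunct, giving $\delta$ with $q\forces\sigma_\gamma=\tau_\delta$ and $q\sforces\tau_\delta\in\tau$. Proposition \ref{Prop_sforcingAndInterpretation} yields $\tau_\delta^g\in\tau^g$, and since $g$ meets $\mathcal{D}_{\sigma_\gamma=\tau_\delta}$ with $q$ forcing $\sigma_\gamma=\tau_\delta$, the inductive hypothesis gives $\sigma_\gamma^g=\tau_\delta^g$, so $x\in\tau^g$. The reverse inclusion is identical via the $E_\delta$.

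The main obstacle is the same delicate genericity step that appears in the previous claim and which must be handled with care in proving density of $D_\gamma$: one cannot first fix a single $\delta$ with $\tau_\delta^g=\sigma_\gamma^g$ and then search for a condition strongly forcing $\tau_\delta\in\tau$, since the matching index and its witnessing condition have to be drawn \emph{simultaneously} from one generic filter. The secondary point requiring attention is purely organisational, namely confirming that all components have rank below $\eta$ so that the appeal to the inductive hypothesis is legitimate and the total count of dense sets stays at $\kappa$.
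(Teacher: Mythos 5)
Your proof matches the paper's almost verbatim: the dense sets $D_\gamma$ and their symmetric counterparts $E_\delta$, the collection $\mathcal{D}_{\sigma=\tau}=\{D_\gamma\}\cup\{E_\delta\}\cup\bigcup_{\gamma,\delta}\mathcal{D}_{\sigma_\gamma=\tau_\delta}$, the density argument via a generic filter containing $p$, and the verification using Propositions \ref{Prop_sforcingAndForcing} and \ref{Prop_sforcingAndInterpretation} are all exactly as in the paper. The only (harmless) difference is bookkeeping in the induction: you induct on $\max(\rank(\sigma),\rank(\tau))$ with a two-check-name base case and decompose a check name into its components when only one rank is $0$, whereas the paper inducts on the pair of ranks and simply falls back on the previous claim ($\varphi$ of the form $\sigma=\check{A}$) whenever $\rank(\tau)=0$.
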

	\begin{proof}
		We use induction on the ranks of $\sigma$ and $\tau$. Without loss of generality, let us assume the rank of $\sigma$ is $\alpha$, and the rank of $\tau$ is $\leq\alpha$. If $\rank(\tau)=0$ then $\tau$ is a check name. Since $\tau$ is $\kappa$-small, it can only be a check name for some $A\in H_{\kappa^+}$, so we are already done by the previous claim. So suppose $\rank(\sigma)=\alpha \geq \rank(\tau)>0$, and the result is proven for all $\tau',\sigma'$ where $\rank(\sigma')<\rank(\sigma)$ and $\rank(\tau')<\rank(\tau)$.
		
		Let us write $\sigma=\{(\sigma_\gamma,p):\gamma<\kappa,p\in S_\gamma\}$ and $\tau=\{(\tau_\delta,q): \delta<\kappa,q\in T_\delta\}$.
		
		For $\gamma \in \kappa$, we define a set $D_\gamma$, whose job is to ensure that if $\sigma_\gamma$ ends up being put in $\sigma$ by $g$, then it will also be equal to some element of $\tau$.
		
		\begin{align*}
		D_\gamma=\Big\{p\in \PP: &(p\forces \sigma \neq \tau) \vee (p\forces \sigma_\gamma\not\in \sigma) \\
		&\vee\exists \delta<\kappa\Big((p\forces\sigma_\gamma=\tau_\delta) \wedge (p\sforces\tau_\delta\in\tau)\Big)\Big\}
		\end{align*}
		We claim $D_\gamma$ is dense: Let $p\in \PP$. If $p\not \forces \sigma_\gamma\in \sigma$ or $p\not \forces \sigma=\tau$ then take some $q\leq p$ forcing the converse of one of these statements, and we are done. If $p\forces \sigma_\gamma\in \sigma\wedge \sigma=\tau$ then take a generic filter $G$ containing $p$. We know $\sigma_\gamma^G\in \tau^G$, so $\sigma_\gamma^G=\tau_\delta^G$ for some $\tau_\delta$ which is strongly forced to be in $\tau$ by some $q\in G$. Then take $r\in G$ below $p$ and $q$, and we know $r\forces \sigma_\gamma=\tau_\delta$ and $r\sforces\tau_\delta\in \tau$. Hence $r\in D_\gamma$.
		
		Symmetrically, for $\delta<\kappa$ let 
		\begin{align*}
		E_\delta=\Big\{p\in \PP: &(p\forces \sigma \neq \tau) \vee (p\forces \tau_\delta\not\in \tau)\\
		&\vee\exists \gamma<\kappa \Big((p\forces\sigma_\gamma=\tau_\delta) \wedge (p\sforces\sigma_\gamma\in\sigma)\Big)\Big\}
		\end{align*}
		Again, $E_\delta$ is dense.
		
		We now let
		\begin{equation*}
		\mathcal{D}_{\sigma=\tau}:=\{D_\gamma: \gamma<\kappa\}\cup \{E_\delta:\delta<\kappa\} \cup \bigcup_{\gamma,\delta<\kappa} \mathcal{D}_{\sigma_\gamma=\tau_\delta}
		\end{equation*}
		Note that for all $\sigma,\delta<\kappa$, we know $\rank(\sigma_\gamma)<\rank(\sigma)$ and $\rank(\tau_\delta)<\rank(\tau)$, so $\mathcal{D}_{\sigma_\gamma=\tau_\delta}$ is already defined. Clearly, $\mathcal{D}_{\sigma=\tau}$ contains at most $\kappa$ many dense sets. Let $g$ be a filter meeting every element of it, and let $p\in g$ force $\sigma=\tau$.
		
		Suppose $B\in \sigma^g$. Then for some $q\in g$ and $\gamma<\kappa$, $B=\sigma_\gamma^g$ and $q\sforces \sigma_\gamma\in \sigma$ (and hence $q\forces \sigma_\gamma \in \sigma$). We can also find some $r\in g\cap D_\gamma$. Without loss of generality, say $r$ is below both $p$ and $q$. Certainly $r$ cannot force $\sigma \neq \tau$, nor that $\sigma_\gamma \not \in \sigma$. Hence, for some $\delta<\kappa$, we know $r\forces \sigma_\gamma=\tau_\delta$ and $r\sforces\tau_\delta\in \tau$. But then $\tau_\delta^g\in \tau^g$, and since $g$ meets every element of $\mathcal{D}_{\sigma_\gamma=\tau_\delta}$, we also know that $B=\sigma_\gamma^g=\tau_\delta^g$. Hence $B\in \tau$.
		
		Hence $\sigma^g\subseteq \tau^g$, and by a symmetrical argument $\tau^g\subseteq \sigma^g$. 
	\end{proof}

	\begin{claim}The lemma holds when $\varphi$ has the form $\tau\in\sigma$.
	\end{claim}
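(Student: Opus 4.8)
The plan is to follow the shape of the previous two claims, but to exploit that $\tau\in\sigma$ asserts only membership, so that a single new dense set together with a recursive appeal to the equality claim just proved will suffice. First I would use $\kappa$-smallness to write $\sigma=\{(\sigma_\gamma,p):\gamma<\kappa,\ p\in S_\gamma\}$. The point is that $\sigma^g$ consists exactly of those $\sigma_\gamma^g$ for which some $p\in g$ strongly forces $\sigma_\gamma\in\sigma$, so to force $\tau^g\in\sigma^g$ it is enough to make $g$ locate an index $\gamma$ for which $\sigma_\gamma$ is simultaneously forced equal to $\tau$ and strongly forced into $\sigma$.

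Concretely, I would set
\[ D=\big\{p\in\PP:(p\forces\tau\notin\sigma)\vee\exists\gamma<\kappa\,\big((p\forces\sigma_\gamma=\tau)\wedge(p\sforces\sigma_\gamma\in\sigma)\big)\big\} \]
and define $\mathcal{D}_{\tau\in\sigma}:=\{D\}\cup\bigcup_{\gamma<\kappa}\mathcal{D}_{\sigma_\gamma=\tau}$, where each $\mathcal{D}_{\sigma_\gamma=\tau}$ is furnished by the equality claim already established. This is a union of at most $\kappa$ many dense sets. To see that $D$ is dense, given $p\in\PP$ I would either extend it to force $\tau\notin\sigma$, or else assume $p\forces\tau\in\sigma$ and pass to a generic $G\ni p$: then $\tau^G\in\sigma^G$ yields, straight from the definition of the interpretation, a single index $\gamma$ together with a condition $q\in G$ satisfying $(\sigma_\gamma,q)\in\sigma$ and $\sigma_\gamma^G=\tau^G$; taking $r\in G$ below $p$, below $q$, and below a condition forcing $\sigma_\gamma=\tau$ places $r$ in $D$. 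As in the earlier claims, $D$ is in fact open, so the customary ``without loss of generality $q\leq p$'' step remains available.

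For the verification I would take any filter $g$ meeting every element of $\mathcal{D}_{\tau\in\sigma}$ and some $p\in g$ with $p\forces\tau\in\sigma$. Choosing $q\in g\cap D$ below $p$, the first disjunct is ruled out, so I obtain $\gamma$ with $q\forces\sigma_\gamma=\tau$ and $q\sforces\sigma_\gamma\in\sigma$. Proposition \ref{Prop_sforcingAndInterpretation} then gives $\sigma_\gamma^g\in\sigma^g$, while the equality claim applied through $\mathcal{D}_{\sigma_\gamma=\tau}$ gives $\sigma_\gamma^g=\tau^g$, whence $\tau^g\in\sigma^g$ as required. Note that no fresh induction on rank is needed inside this claim, since the equality claim has already been proved for all pairs of $\kappa$-small names.

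The hard part will be precisely the density of $D$, and it is the same delicate point flagged in the footnote to Claim \ref{claim sigma equal emptyset}: one must extract the witnessing index $\gamma$ from the membership $\tau^G\in\sigma^G$ itself, reading off $\gamma$ and the condition $q$ simultaneously from the definition of $\sigma^G$, rather than first fixing an arbitrary $\gamma$ with $\sigma_\gamma^G=\tau^G$ and only afterwards hunting for a condition strongly forcing $\sigma_\gamma\in\sigma$ --- the latter order need not succeed.
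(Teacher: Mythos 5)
Your proof is correct and follows essentially the same route as the paper: the same single dense set $D$ (the paper writes the witnessing conjunct as $p\forces\tau=\sigma_\gamma$, which is the same), the same collection $\mathcal{D}_{\tau\in\sigma}=\{D\}\cup\bigcup_{\gamma<\kappa}\mathcal{D}_{\tau=\sigma_\gamma}$ reusing the already-established equality claim, and the same verification via Proposition \ref{Prop_sforcingAndInterpretation}. The only difference is that you spell out the density argument for $D$ (correctly, including the subtle point about extracting $\gamma$ and $q$ simultaneously from the generic interpretation), where the paper simply says ``as usual''.
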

	\begin{proof}
		Write $\sigma=\{(\sigma_\gamma,p): \gamma<\kappa,p\in S_\gamma\}$ as usual. Let
		\begin{equation*}
		D=\Big\{p\in \PP: (p\forces \tau\not\in \sigma) \vee \exists \gamma<\kappa \Big( (p\forces \tau=\sigma_\gamma)\wedge(p\sforces \sigma_\gamma\in \sigma)\Big)\Big\} 
		\end{equation*}
		As usual, $D$ is dense. Let
		\begin{equation*}
		\mathcal{D}_{\tau\in\sigma}:=\{D\}\cup \bigcup_{\gamma<\kappa}\mathcal{D}_{\tau=\sigma_\gamma}
		\end{equation*}
		Let $g$ meet every element of $\mathcal{D}_{\tau\in\sigma}$ and contain some $p$ forcing $\tau\in\sigma$. Let $q\in g\cap D$, and assume $q\leq p$. Then for some $\gamma$, $q\forces \tau=\sigma_\gamma$ and $q\sforces \sigma_\gamma\in \sigma$, so $\sigma_\gamma^g\in \sigma^g$. Since $g$ meets every element of $\mathcal{D}_{\tau=\sigma_\gamma}$ we know $\tau^g=\sigma_\gamma^g\in \sigma^g$.
	\end{proof}

	We next need to prove similar claims about the negations of all these formulas.
	\begin{claim} 
	\label{claim sigma neq emptyset} 
		The lemma holds when $\varphi$ is of the form $\sigma \neq \check{A}$ for $A\in H_\kappa$.
	\end{claim}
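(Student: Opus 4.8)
The plan is to induct on the rank of $\sigma$, dualising the argument of Claim \ref{sigma=A}. If $\sigma$ has rank $0$ it is a check name $\check{C}$ with $C\in H_\kappa$, the sentence $\sigma=\check{A}$ is decided outright, and I can take $\mathcal{D}_{\sigma\neq\check{A}}=\emptyset$: either $C\neq A$, so $\sigma^g=C\neq A$ for every filter $g$, or $C=A$, so no condition forces $\sigma\neq\check{A}$ and the conclusion is vacuous. So I may assume $\sigma$ has rank $\alpha>0$, write $\sigma=\{(\sigma_\gamma,p):\gamma<\kappa,\ p\in S_\gamma\}$ with each $\sigma_\gamma$ a $\kappa$-small name of rank ${<}\alpha$, and assume the claim for all names of smaller rank against parameters in $H_\kappa$.

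The heart of the proof is a single dense set $D$ forcing $g$ to commit to a concrete witness of the inequality. Since $\sigma\neq\check{A}$ is witnessed either by some $B\in A\setminus\sigma$ or by some element of $\sigma\setminus A$, I would set
\[
D = \{p\in\PP : p\forces \sigma=\check{A}\}\ \cup\ \Big\{\, p\in\PP : (\exists B\in A)\,(p\forces \check{B}\notin\sigma)\ \vee\ (\exists \gamma<\kappa)\,\big(p\sforces \sigma_\gamma\in\sigma\ \wedge\ p\forces \sigma_\gamma\notin\check{A}\big)\,\Big\}.
\]
To see $D$ is dense, given $q\in\PP$ I first extend to decide $\sigma=\check{A}$; a condition forcing equality already lies in the first part, while from a condition $q'\forces\sigma\neq\check{A}$ I pass to a generic $G\ni q'$ and read off a witness $B$ of $\sigma^G\neq A$. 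If $B\in A\setminus\sigma^G$, some condition of $G$ forces $\check{B}\notin\sigma$ (first disjunct); if $B\in\sigma^G\setminus A$ then $B=\sigma_\gamma^G$ for some activated $\sigma_\gamma$, and a common lower bound in $G$ strongly forces $\sigma_\gamma\in\sigma$ (strong forcing being downward closed) while forcing $\sigma_\gamma\notin\check{A}$ (second disjunct). As stressed in the proof of Claim \ref{sigma=A}, the index $\gamma$ must be chosen only after passing to the generic.

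I would then put $\mathcal{D}_{\sigma\neq\check{A}}:=\{D\}\cup\bigcup_{\gamma<\kappa}\bigcup_{B\in A}\mathcal{D}_{\sigma_\gamma\neq\check{B}}$, which is legitimate by induction since each $\sigma_\gamma$ has rank ${<}\alpha$ and each $B\in A\subseteq H_\kappa$; as $|A|<\kappa$ this is a union of at most $\kappa$ families of size at most $\kappa$, hence at most $\kappa$ dense sets. For the verification, take a filter $g$ meeting every member and containing $p\forces\sigma\neq\check{A}$. A common extension in $g$ of $p$ and a point of $g\cap D$ cannot force $\sigma=\check{A}$, so the witnessing $r\in g\cap D$ lies in the second part. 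In the first disjunct, with $B\in A$ and $r\forces\check{B}\notin\sigma$, I claim $B\notin\sigma^g$: otherwise $B=\sigma_\gamma^g$ with $\sigma_\gamma$ activated, a common lower bound $s\in g$ strongly forces $\sigma_\gamma\in\sigma$ and forces $\check{B}\notin\sigma$, hence forces $\sigma_\gamma\neq\check{B}$, and the inductive hypothesis applied to $\mathcal{D}_{\sigma_\gamma\neq\check{B}}$ gives $\sigma_\gamma^g\neq B$, a contradiction. In the second disjunct, $r\sforces\sigma_\gamma\in\sigma$ gives $\sigma_\gamma^g\in\sigma^g$ by Proposition \ref{Prop_sforcingAndInterpretation}, while $r\forces\sigma_\gamma\notin\check{A}$ forces $\sigma_\gamma\neq\check{B}$ for each $B\in A$, so induction yields $\sigma_\gamma^g\notin A$; either way $\sigma^g\neq A$.

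The step I expect to be the main obstacle is the density of $D$, as it is the only place requiring a genuine passage to a generic filter; it conceals the delicate point that the index of the witnessing name must be extracted after the generic is chosen, and it relies throughout on downward closure of $\sforces$ to pull a witness back into a ground-model filter.
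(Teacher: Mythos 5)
Your proof is correct and takes essentially the same route as the paper's: the dense set $D$ you define is, up to reordering the disjuncts, identical to the paper's, the family $\mathcal{D}_{\sigma\neq\check{A}}=\{D\}\cup\bigcup_{\gamma<\kappa}\bigcup_{B\in A}\mathcal{D}_{\sigma_\gamma\neq\check{B}}$ is the same, and your two-case verification matches the paper's cases (1) and (2) exactly. The only difference is that you spell out the density of $D$ and the rank-$0$ base case, which the paper dismisses with ``as usual'' and ``trivial''.
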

	\begin{proof}
		As before, this is trivial is $\sigma$ is rank $0$. Otherwise, let us write {$\sigma=\{(\sigma_\gamma,p):\gamma<\kappa,p\in S_\gamma\}$} and let
		\begin{align*}
		D=\Big\{p\in \PP: &(p\forces \sigma=\check{A})\vee \Big(\exists \gamma<\kappa (p\sforces \sigma_\gamma\in \sigma) \wedge (p\forces \sigma_\gamma\not \in \check{A})\Big)\\
		& \vee (\exists B\in A: p\forces \check{B}\not \in \sigma)\Big\}
		\end{align*}
		As usual, $D$ is dense.
		
		We then let
		\begin{equation*}
		\mathcal{D}_{\sigma\neq \check{A}}:=\{D\}\cup \bigcup_{\gamma<\kappa}\bigcup_{B\in A} \mathcal{D}_{\sigma_\gamma\neq \check{B}}
		\end{equation*}
		
		By induction, this is well defined, and since $A$ is in $H_{\kappa^+}$ it has cardinality at most $\kappa$. Let $g$ be a filter meeting all of $\mathcal{D}_{\sigma\neq \check{A}}$ with $p\in g$ forcing $\sigma\neq \check{A}$. Take $q\in g \cap D$ below $p$. There are two cases to consider.
		\begin{enumerate-(1)}
			\item For some $\gamma$, $q\sforces \sigma_\gamma\in\sigma$ and $q\forces \sigma_\gamma\not\in\check{A}$. Then certainly $\sigma_\gamma^g\in\sigma^g$. Let $B\in A$. Then $q\forces \sigma_\gamma\neq \check{B}$. Since $g$ meets all of $\mathcal{D}_{\sigma_\gamma \neq B}$, we know $\sigma_\gamma^g\neq B$. Hence $\sigma_\gamma^g\in \sigma^g\setminus A$ so $\sigma^g\neq A$.
			\item For some $B\in A$, $q\forces \check{B}\not \in\sigma$. Let $B'\in \sigma^g$. Then for some $\gamma<\kappa$ and $r\leq q$ in $g$, $\sigma_\gamma^g=B'$ and $r\sforces \sigma_\gamma\in \sigma$. Hence $r\forces \sigma_\gamma\in\sigma$. But also $r\forces \check{B}\not\in\sigma$ since $r\leq q$. Therefore $r\forces \sigma_\gamma\neq \check{B}$, and so $B'=\sigma_\gamma^g\neq B$ since $g$ meets $\mathcal{D}_{\sigma_\gamma \neq \check{B}}$. Hence $B\in A\setminus \sigma^g$, so again $\sigma^g\neq A$.
		\end{enumerate-(1)}
	\end{proof}
	\begin{claim}
		The lemma holds when $\varphi$ is of the form $\sigma \neq \tau$.
	\end{claim}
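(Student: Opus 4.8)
The plan is to imitate the template of the previous claims: induct on the ranks of $\sigma$ and $\tau$, introduce a single dense set $D$ that compels the filter to commit to a concrete witness for the inequality, and then adjoin the dense-set collections for the relevant lower-rank subformulas. First I would dispose of the base case: if either name has rank $0$ it is a check name, and since it is $\kappa$-small it equals $\check A$ for some $A\in H_{\kappa^+}$, so Claim~\ref{claim sigma neq emptyset} applies immediately. Hence I may assume both names have positive rank and write $\sigma=\{(\sigma_\gamma,p):\gamma<\kappa,\ p\in S_\gamma\}$ and $\tau=\{(\tau_\delta,q):\delta<\kappa,\ q\in T_\delta\}$.

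The crucial object is the dense set
\begin{align*}
D=\Big\{p\in\PP : &\,(p\forces\sigma=\tau)\ \vee\ \exists\gamma<\kappa\,\big((p\sforces\sigma_\gamma\in\sigma)\wedge(p\forces\sigma_\gamma\notin\tau)\big)\\
&\vee\ \exists\delta<\kappa\,\big((p\sforces\tau_\delta\in\tau)\wedge(p\forces\tau_\delta\notin\sigma)\big)\Big\}.
\end{align*}
I would verify density exactly as in the earlier claims: given $p$, either some extension forces $\sigma=\tau$ (and that extension lies in $D$), or we take $q\leq p$ forcing $\sigma\neq\tau$ and pass to a generic $G\ni q$; then $\sigma^G\sdiff\tau^G\neq\emptyset$, so a witness has the form $\sigma_\gamma^G\in\sigma^G\setminus\tau^G$ or $\tau_\delta^G\in\tau^G\setminus\sigma^G$. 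Reading off a condition in $G$ that strongly forces the membership and one that forces the non-membership, and using that strong forcing is preserved under strengthening, any common refinement in $G$ lands in $D$ below $p$.

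Next I would set
\[
\mathcal{D}_{\sigma\neq\tau}:=\{D\}\cup\bigcup_{\gamma<\kappa}\mathcal{D}_{\sigma_\gamma\notin\tau}\cup\bigcup_{\delta<\kappa}\mathcal{D}_{\tau_\delta\notin\sigma},
\]
which is a union of at most $\kappa$ collections each of size at most $\kappa$, hence itself of size at most $\kappa$. For the verification, suppose $g$ meets every set in $\mathcal{D}_{\sigma\neq\tau}$ and contains some $p$ with $p\forces\sigma\neq\tau$, and choose $q\in g\cap D$ with $q\leq p$. The first disjunct is impossible since $q\forces\sigma\neq\tau$, so (say) some $\gamma$ satisfies $q\sforces\sigma_\gamma\in\sigma$ and $q\forces\sigma_\gamma\notin\tau$. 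Strong forcing gives $\sigma_\gamma^g\in\sigma^g$, and since $g$ meets all of $\mathcal{D}_{\sigma_\gamma\notin\tau}$ and $q\in g$ forces $\sigma_\gamma\notin\tau$, the claim for $\notin$ yields $\sigma_\gamma^g\notin\tau^g$; hence $\sigma^g\neq\tau^g$. The symmetric case is handled identically using $\mathcal{D}_{\tau_\delta\notin\sigma}$.

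The hard part is the induction bookkeeping. The reduction sends $\sigma\neq\tau$ to instances $\sigma_\gamma\notin\tau$ in which the ambient set $\tau$ has not dropped in rank, so a naive induction on $\max(\rank\sigma,\rank\tau)$ does not strictly descend; moreover it appeals to the $\notin$-claim, which is only stated afterwards. I would resolve both issues by running all of the atomic and negated-atomic claims as one simultaneous induction on the natural (Hessenberg) sum $\rank\sigma\oplus\rank\tau$ (with the analogous measure $\rank\tau\oplus\rank\sigma$ for the membership claims). Since $\rank\sigma_\gamma<\rank\sigma$, the instance $\sigma_\gamma\notin\tau$ has strictly smaller measure than $\sigma\neq\tau$; in turn the $\notin$-claim reduces $\sigma_\gamma\notin\tau$ to instances $\sigma_\gamma\neq\tau_\delta$ with $\rank\tau_\delta<\rank\tau$, so the measure keeps strictly decreasing and the mutual appeal between the $\neq$- and $\notin$-claims is well-founded.
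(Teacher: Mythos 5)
Your proposal is correct and follows the paper's argument in all essentials: the dense set $D$ is identical to the one in the paper, and your collection $\{D\}\cup\bigcup_{\gamma<\kappa}\mathcal{D}_{\sigma_\gamma\notin\tau}\cup\bigcup_{\delta<\kappa}\mathcal{D}_{\tau_\delta\notin\sigma}$ is literally the same family of dense sets as the paper's $\{D\}\cup\bigcup_{\gamma,\delta<\kappa}\mathcal{D}_{\sigma_\gamma\neq\tau_\delta}$, since $\mathcal{D}_{\tau\notin\sigma}$ is defined as $\bigcup_{\gamma<\kappa}\mathcal{D}_{\tau\neq\sigma_\gamma}$. The only organizational difference is that the paper avoids your mutual-recursion worry by reducing $\sigma\neq\tau$ directly to the instances $\sigma_\gamma\neq\tau_\delta$ (where \emph{both} ranks strictly drop, so a plain induction on the pair of ranks suffices) and unrolling the ``$\notin$'' argument inline in the verification, whereas you invoke the $\notin$-claim as a black box and correctly restore well-foundedness via the Hessenberg sum $\rank\sigma\oplus\rank\tau$ --- a sound, if unnecessary, patch.
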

	\begin{proof}
		The dense sets we need to use are very similar to the ones in the previous lemma. We assume $\rank(\sigma)\geq \rank(\tau)$ and note that if $\rank(\tau)=0$ we're looking at the previous case. So let us assume $\rank(\sigma)\geq \rank(\tau)>0$ and that we have proved the statement for all $\sigma'$ and $\tau'$ with lower ranks than $\sigma$ and $\tau$ respectively. As usual, write $\sigma=\{(\sigma_\gamma,p):\gamma<\kappa,p\in S_\gamma\}$ and $\tau=\{(\tau_\delta,q): \delta<\kappa,q\in T_\gamma\}$.
		
		Let
		\begin{align*}
		D=\Big\{p\in\PP: &(p\forces \sigma=\tau) \vee \Big(\exists \gamma<\kappa (p \sforces \sigma_\gamma \in \sigma) \wedge (p\forces \sigma_\gamma \not \in \tau)\Big)\\
		&\vee \Big(\exists \delta<\kappa (p\sforces\tau_\delta\in \tau) \wedge (p\forces \tau_\delta \not \in \sigma)\Big)\Big\}
		\end{align*}
		Once again $D$ is dense. We define
		\begin{equation*}
		\mathcal{D}_{\sigma \neq \tau} := \{D\}\cup \bigcup_{\gamma,\delta<\kappa} \mathcal{D}_{\sigma_\gamma \neq \tau_\delta}
		\end{equation*}
		Letting $g$ be our usual filter meeting all of $\mathcal{D}_{\sigma \neq \tau}$ and containing some $p$ forcing $\sigma \neq \tau$, we can find $q\in g\cap D$ below $p$. Without loss of generality, there exists $\gamma<\kappa$ such that $q\sforces \sigma_\gamma\in \sigma$ and $q\forces \sigma_\gamma \not \in \tau$. As always, the first statement implies $\sigma_\gamma^g\in \sigma^g$. If $\sigma_\gamma^g\in \tau^g$ then for some $\delta<\kappa$ and $r\in g$ (which we can take to be below $q$), $\sigma_\gamma^g=\tau_\delta^g$ and $r\sforces \tau_\delta \in \tau$. But then we know $r\forces \sigma_\gamma \neq \tau_\delta$. Since $g$ meets all of $\mathcal{D}_{\sigma_\gamma\neq \tau_\delta}$ this implies $\sigma_\gamma^g\neq \tau_\gamma^g$. Contradiction. Hence $\sigma_\gamma^g \in \sigma^g\setminus\tau^g$, so $\sigma^g\neq \tau^g$.
	\end{proof}	
	\begin{claim}
		The lemma holds when $\varphi$ has the form $\tau\not\in\sigma$.
	\end{claim}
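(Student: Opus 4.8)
The plan is to follow the template of the $\tau\in\sigma$ claim, but to account for the fact that $\tau\notin\sigma$ is a \emph{universal} assertion about the members of $\sigma$: we must ensure that $\tau^g$ differs from \emph{every} element that $g$ places into $\sigma^g$. Consequently, whereas the positive case $\tau\in\sigma$ needed only a single dense set, here I expect to need a whole family of dense sets indexed by the components of $\sigma$, together with the dense sets coming from the already-established inequality case. Writing $\sigma=\{(\sigma_\gamma,p):\gamma<\kappa,\ p\in S_\gamma\}$ as usual, the reduction will be to the previously proved claim for formulas of the form $\tau\neq\sigma_\gamma$; since each $\sigma_\gamma$ has rank strictly below that of $\sigma$, no fresh induction on $\tau\notin\sigma$ itself is required.

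Concretely, first I would define, for each $\gamma<\kappa$,
\[
D_\gamma=\bigl\{p\in\PP : (p\forces\tau\in\sigma)\ \vee\ (p\forces\sigma_\gamma\notin\sigma)\ \vee\ (p\forces\tau\neq\sigma_\gamma)\bigr\},
\]
and set $\mathcal{D}_{\tau\notin\sigma}:=\{D_\gamma:\gamma<\kappa\}\cup\bigcup_{\gamma<\kappa}\mathcal{D}_{\tau\neq\sigma_\gamma}$. This is a union of at most $\kappa$ many dense sets. Density of $D_\gamma$ I would verify by first taking an extension deciding $\sigma_\gamma\in\sigma$: if it decides negatively we are in the second disjunct, and if it decides positively I would further decide $\tau=\sigma_\gamma$, landing either in the third disjunct or (when $\tau=\sigma_\gamma$ is forced, so that $\tau\in\sigma$ is forced) in the first.

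For the interpretation step, let $g$ meet every set in $\mathcal{D}_{\tau\notin\sigma}$ and contain some $p$ with $p\forces\tau\notin\sigma$. Assuming toward a contradiction that $\tau^g\in\sigma^g$, I would fix $\gamma$ and $q\in g$ with $\sigma_\gamma^g=\tau^g$ and $q\sforces\sigma_\gamma\in\sigma$, then pass to a common lower bound $r\in g$ of $p$, $q$ and a witness in $g\cap D_\gamma$. Downward persistence of forcing shows $r$ still satisfies one of the three disjuncts. The first is impossible because $r\leq p$ gives $r\forces\tau\notin\sigma$; the second is impossible because $r\leq q$ gives $r\forces\sigma_\gamma\in\sigma$; so $r\forces\tau\neq\sigma_\gamma$. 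Since $g$ meets $\mathcal{D}_{\tau\neq\sigma_\gamma}$, the inequality claim then yields $\tau^g\neq\sigma_\gamma^g$, contradicting $\sigma_\gamma^g=\tau^g$.

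The main obstacle, and the only genuinely delicate point, is arranging the disjuncts of $D_\gamma$ so that the trivializing first disjunct $p\forces\tau\in\sigma$ is both \emph{necessary for density} (it absorbs the case where $\tau=\sigma_\gamma$ is forced) and \emph{automatically unavailable} in the filter argument once $g$ already contains a condition forcing $\tau\notin\sigma$; this is precisely what lets me discard it together with the ``$\sigma_\gamma\notin\sigma$'' alternative and fall through to the useful inequality. Everything else is the same bookkeeping as in the earlier claims, in particular checking that the cardinality bound $\kappa\cdot\kappa=\kappa$ is respected and that the reduction is to strictly simpler (lower-rank) instances handled by the $\neq$ case.
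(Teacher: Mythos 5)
Your proof is correct, and its core is the same reduction the paper uses: enumerate $\sigma=\{(\sigma_\gamma,p):\gamma<\kappa,\ p\in S_\gamma\}$ and fall back on the already-established claim for $\tau\neq\sigma_\gamma$, noting that each $\sigma_\gamma$ has strictly smaller rank. The difference is that the paper's proof needs \emph{no} new dense sets at all: it simply sets $\mathcal{D}_{\tau\not\in\sigma}:=\bigcup_{\gamma<\kappa}\mathcal{D}_{\tau\neq\sigma_\gamma}$. The point you work to engineer via the auxiliary sets $D_\gamma$ comes for free from the hypothesis: if $B\in\sigma^g$, then $B=\sigma_\gamma^g$ for some $\gamma$ and some $q\in g$ with $q\sforces\sigma_\gamma\in\sigma$, and taking $q\leq p$ (possible since $g$ is a filter) one gets $q\forces\sigma_\gamma\in\sigma$ and $q\forces\tau\notin\sigma$ simultaneously, hence $q\forces\tau\neq\sigma_\gamma$ directly --- no separate density argument or case analysis over the three disjuncts is needed. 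So your family $D_\gamma$ is harmless but redundant, and your opening expectation that the negative case requires more dense sets than the positive one is in fact backwards: the positive case $\tau\in\sigma$ needs one genuinely new dense set (to locate a witness), while the universal statement $\tau\notin\sigma$ needs none, because the witnessing data for membership in $\sigma^g$ is already supplied by the filter. Everything else in your argument (the cardinality count, the appeal to downward persistence, the reduction to lower-rank instances) matches the paper.
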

	\begin{proof}
		Write $\sigma=\{(\sigma_\gamma,p):\gamma<\kappa,p\in S_\gamma\}$ as usual. Let
		\begin{equation*}
		\mathcal{D}_{\tau\not\in\sigma} := \bigcup_{\gamma<\kappa}\mathcal{D}_{\tau\neq\sigma_\gamma}
		\end{equation*}
		Suppose $g$ meets all of $\mathcal{D}_{\tau\not\in\sigma}$ and contains some $p$ forcing $\tau \not \in \sigma$. Let $B\in\sigma^g$. For some $\gamma<\kappa$ and some $q\in g$ below $p$, $B=\sigma_\gamma^g$ and $q\sforces \sigma_\gamma\in\sigma$. Then $q\forces \tau \neq \sigma_\gamma$, so $ \tau^g\neq\sigma_\gamma^g=B$. Hence $\tau^g\not \in \sigma^g$.
	\end{proof}	
	We can now finally prove the full lemma.
	\begin{claim}The lemma holds in all cases.
	\end{claim}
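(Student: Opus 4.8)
The plan is to induct on the syntactic complexity of the $\Sigma_0$ formula $\varphi$, using the claims already proved as base cases. The crucial constraint is that the lemma transfers truth in only one direction: if some condition in $g$ forces $\varphi(\vec\sigma)$, then $\varphi(\vec\sigma^g)$ holds in $V$. This means I cannot treat negation as an ordinary connective, since knowing that $\neg\psi$ is forced gives no handle on $\psi(\vec\sigma^g)$ through the induction hypothesis for $\psi$. I would therefore first rewrite $\varphi$ in negation normal form, using the usual De~Morgan and quantifier dualities (in particular $\neg\forall x\in y\,\psi\equiv \exists x\in y\,\neg\psi$ and dually, which keep all quantifiers bounded) to push negations inward until they reach the atomic level. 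Every literal is then atomic or the negation of an atomic formula, and all six such cases have been settled in the preceding claims; these form the base of the induction. The induction measure is the number of connectives and bounded quantifiers, and this outer induction on the formula is entirely separate from the inductions on name rank carried out inside the atomic claims.

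For a conjunction $\psi\wedge\chi$ I would simply set $\mathcal{D}_{\psi\wedge\chi}:=\mathcal{D}_\psi\cup\mathcal{D}_\chi$: any $p\in g$ forcing $\psi\wedge\chi$ forces each conjunct, so both induction hypotheses apply. For a disjunction $\psi\vee\chi$ the subtlety is that a condition forcing $\psi\vee\chi$ need not decide which disjunct holds, so I would adjoin to $\mathcal{D}_\psi\cup\mathcal{D}_\chi$ the dense set $D$ of conditions forcing $\psi$, forcing $\chi$, or forcing $\neg(\psi\vee\chi)$; density holds because below any condition forcing $\psi\vee\chi$ there are conditions deciding a disjunct. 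Taking $q\in g\cap D$ below a witness $p$, the condition $q$ cannot force the negation, hence forces one disjunct, and the corresponding induction hypothesis finishes that case.

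The main obstacle is the bounded quantifier cases, where the delicacy noted in Claim~\ref{claim sigma equal emptyset} resurfaces. Writing the bounding name in $\kappa$-small form as $\sigma_0=\{(\rho_\gamma,p):\gamma<\kappa,\, p\in S_\gamma\}$, each component $\rho_\gamma$ is again $\kappa$-small and $\psi(\rho_\gamma,\vec\sigma)$ is a $\Sigma_0$ formula of strictly lower complexity, so the induction hypothesis applies to it. For $\exists x\in\sigma_0\,\psi$ I would use the dense set $D$ of conditions that either force $\neg\exists x\in\sigma_0\,\psi$ or, for some single $\gamma$, simultaneously satisfy $p\sforces\rho_\gamma\in\sigma_0$ and $p\forces\psi(\rho_\gamma,\vec\sigma)$; the key point, exactly as in the footnote, is that the index $\gamma$ and the condition witnessing both facts must be chosen together, which is possible since any generic filter containing a condition forcing the existential produces such a $\gamma$ and condition. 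Setting $\mathcal{D}_{\exists x\in\sigma_0\,\psi}:=\{D\}\cup\bigcup_{\gamma<\kappa}\mathcal{D}_{\psi(\rho_\gamma,\vec\sigma)}$, a suitable $q\in g\cap D$ below the witness yields $\rho_\gamma^g\in\sigma_0^g$ by Proposition~\ref{Prop_sforcingAndInterpretation} and $\psi(\rho_\gamma^g,\vec\sigma^g)$ by the induction hypothesis. For $\forall x\in\sigma_0\,\psi$ I would instead take, for each $\gamma$, the dense set $D_\gamma$ of conditions forcing $\rho_\gamma\notin\sigma_0$, or $\psi(\rho_\gamma,\vec\sigma)$, or $\neg\forall x\in\sigma_0\,\psi$, together with all the $\mathcal{D}_{\psi(\rho_\gamma,\vec\sigma)}$; given an arbitrary $\rho_\gamma^g\in\sigma_0^g$, a condition in $g\cap D_\gamma$ below both the universal witness and a condition placing $\rho_\gamma$ into $\sigma_0$ cannot force the first or third disjunct, so it forces $\psi(\rho_\gamma,\vec\sigma)$, and the induction hypothesis concludes.

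Throughout, the cardinality bookkeeping stays routine: at each inductive step $\mathcal{D}$ is a union of at most $\kappa$ pieces, each of size at most $\kappa$, hence of size at most $\kappa$, so the bound of $\kappa$ many dense sets is preserved. Assembling the cases completes the induction and hence establishes the lemma; the theorem then follows as already indicated, by applying $\FA_{\PP,\kappa}$ to obtain a filter $g$ meeting the union of $\mathcal{D}_{\varphi(\vec\sigma)}$ over all $\Sigma_0$ formulas $\varphi$.
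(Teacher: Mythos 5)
Your proof is correct and follows essentially the same route as the paper: reduction to negation normal form, the six atomic/negated-atomic claims as base cases, and the same dense sets for $\wedge$, $\vee$ and the bounded existential. The only (harmless) divergence is in the $\forall x\in\sigma_0\,\psi$ case, where the paper dispenses with your auxiliary sets $D_\gamma$ entirely: any $q\in g$ below the universal witness with $q\sforces\rho_\gamma\in\sigma_0$ already forces $\psi(\rho_\gamma)$, so $\bigcup_{\gamma<\kappa}\mathcal{D}_{\psi(\rho_\gamma)}$ alone suffices.
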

	\begin{proof}
		We use induction on the length of the formula $\varphi$. By rearranging $\varphi$, we can assume that all the $\neg$'s in $\varphi$ are in front of atomic formulas. Throughout this proof, we will suppress the irrelevant variables $\vec{\sigma}$ of formulas $\psi(\vec{\sigma})$, and will write $\psi^g$ to denote $\psi(\vec{\sigma}^g)$.
		
		The base case, where $\varphi$ is either atomic or the negation of an atomic formula, was covered in the previous lemmas.
		
		$\varphi=\psi \wedge \chi$: We let $\mathcal{D}_{\varphi}:=\mathcal{D}_\psi \cup \mathcal{D}_\chi$. If $p\in g$ forces $\varphi$ then it also forces $\psi$ and $\chi$, so if also $g$ meets all of $\mathcal{D}_\varphi$ then $\psi^g$ and $\chi^g$ hold.
		
		$\varphi=\psi\vee \chi$: We let $D=\{p\in\PP: (p\forces \neg\varphi) \vee (p\forces \psi) \vee (p\forces \chi)\}$, and let $\mathcal{D}_\varphi:=\{D\}\cup \mathcal{D}_\psi\cup \mathcal{D}_\chi$. If $g$ meets all of $\mathcal{D}_\varphi$ and contains some $p$ which forces $\varphi$ then take $q\leq p$ in $g \cap D$. Then $q\forces \psi$ or $q\forces \chi$, and by definition of $\mathcal{D}_\psi$ and $\mathcal{D}_\chi$ this implies $\psi^g$ or $\chi^g$ respectively.
		
%		$\varphi=\forall x\in \sigma \psi(x)$: where $\sigma$ is rank $0$: Then $\sigma$ is a check name for an element $A$ of $V$. Since $\sigma$ is $\kappa$-small, we know $A\in H_{\kappa^+}$, so $\lvert A \rvert \leq \kappa$ and every check name for an element of $A$ is $\kappa$-small. We let $\mathcal{D}_\varphi=\bigcup_{B\in A}\mathcal{D}_{\psi(\check{B})}$.
		
%		$\varphi=\exists x \in \sigma \psi(x)$ where $\sigma$ is again rank $0$. We let $D=\{p\in \PP: p\forces \neg \varphi \vee \exists B\in A p\forces \psi(B)\}$ and let $\mathcal{D}_\varphi=\{D\}\cup\bigcup_{B\in A}\mathcal{D}_{\psi(\check{B})}$.
		
		$\varphi=\forall x \in \sigma \ \psi(x)$: Write $\sigma=\{(\sigma_\gamma,p):\gamma<\kappa,p\in S_\gamma\}$, and let $\mathcal{D}_\varphi:=\bigcup_{\gamma<\kappa} \mathcal{D}_{\psi(\sigma_\gamma)}$. Suppose, as usual, that $g$ meets all of $\mathcal{D}_\varphi$ and contains some $p$ forcing $\varphi$. Let $B\in \sigma^g$. Then we have some $\gamma<\kappa$ and $q\in g$ such that $\sigma_\gamma^g=B$ and $q\sforces \sigma_\gamma\in \sigma$. Taking (without loss of generality) $q\leq p$, we then have that $q\forces \psi(\sigma_\gamma)$. Hence $\psi^g(\sigma_\gamma^g)$ holds. But we know $\sigma_\gamma^g=B$. Hence $\psi^g(B)$ holds for all $B\in \sigma^g$, so $\varphi^g$ holds.
				
		$\varphi=\exists x\in \sigma \ \psi(x)$: Again we write $\sigma=\{(\sigma_\gamma,p):\gamma<\kappa,p\in S_\gamma\}$. Let $D$ be the dense set $\{p\in\PP: (p\forces \neg \varphi) \vee \exists \gamma<\kappa \ ( p\sforces \sigma_\gamma\in\sigma \wedge p\forces \psi(\sigma_\gamma) )\}$, and let $\mathcal{D}_\varphi := \{D\}\cup \bigcup_{\gamma<\kappa} \mathcal{D}_{\psi(\sigma_\gamma)}$. If $g$ meets all of $\mathcal{D}_\varphi$ and contains $p$ forcing $\varphi$ then we can take some element $q$ of $g\cap D$ below $p$. Then for some $\gamma<\kappa$, we know $q\forces \psi(\sigma_\gamma)$ and $q\sforces \sigma_\gamma\in \sigma$. Then $\psi^g(\sigma_\gamma^g)$ holds, and $\sigma_\gamma^g\in \sigma^g$.
	\end{proof}
This completes the proof of Lemma \ref{collection of dense sets to witness first order statement}. 
Hence $\FA_{\PP,\kappa}$ implies $\fo\NP_{\PP,\kappa}(\infty)$, as discussed earlier.
\end{proof}

This completes the proof of Theorem \ref{correspondence forcing axioms name principles}. 
\end{proof}

In fact, this proof works even if we allow formulas to have conjunctions and disjunctions of $\kappa$ many formulas (and accordingly let formulas have $\kappa$ many variables).

The proof of Theorem \ref{correspondence bounded forcing axioms name principles} is essentially the same:

\begin{proof}[Proof of Theorem \ref{correspondence bounded forcing axioms name principles}]
	
	We prove all the implications in the following diagram.

\smallskip 

	\[ \xymatrix@R=1em{ 
  & \fo\BN^\lambda_{\PP,\kappa}(\infty) \ar@{->}[r] \ar@{->}[dd]
%_{\ \ \lvert \mathcal{P}^{<\alpha}(X)\rvert\leq\kappa} 
&  \BN^\lambda_{\PP,\kappa}(\infty) \ar@{->}[rd] \ar@{->}[dd]
%_{\ \ \lvert \mathcal{P}^{<\alpha}(X)\rvert\leq\kappa} 
& \\ 
 \BFA^\lambda_{\PP,\kappa} \ar@{->}[ru]^{\kappa\leq\lambda \ \ \  } \ar@{->}[rd]_{\kappa\leq\lambda \ \ \  }  & & & \BFA^\lambda_{\PP,\kappa} \\ 
%{\ca\txt{ $\FA$ } \ar@{<->}@[BrickRed][dd]  & & & 
\labelmargin{10pt}
 &  \fo\BN^\lambda_{\PP,X,\kappa}(\alpha)  \ar@{->}[r]  &  \BN^\lambda_{\PP,X,\kappa}(\alpha)  \ar@{->}[ru]_{\ \  \lvert \mathcal{P}^{<\alpha}(X)\rvert\geq\kappa} & 
}\] 

\bigskip 

Note that $\BFA^\lambda_{\PP,\kappa}\Rightarrow \fo\BN^\lambda_{\PP,X,\kappa}(\alpha)$ for $\kappa\leq\lambda$ follows from the rest of the diagram. 

	\begin{proof}[Proof of $\fo\BN_{\PP,\kappa}^\lambda(\infty)\Rightarrow \fo \BN_{\PP,\kappa}^\lambda(\alpha)$ and $\BN_{\PP,\kappa}^\lambda(\infty) \Rightarrow \BN_{\PP,\kappa}^\lambda(\alpha)$] 
	\mbox{}\\*
	The latter are special cases of the former. 
		%Exactly the same as in Theorem \ref{correspondence forcing axioms name principles}. The only thing to note is that check names for elements of $X$ are $\lambda$-bounded, but this is trivial as any check name will be $1$ bounded by definition.
	\end{proof}

%	\begin{proof}[Proof of $\BN_{\PP,\kappa}^\lambda(\infty) \Rightarrow \BN_{\PP,\kappa}^\lambda(\alpha)$]
		%Again, this argument works the same way as in Theorem \ref{correspondence forcing axioms name principles} and relies on the fact that check names are $1$ bounded.
%	\end{proof}

	\begin{proof}[Proof of $\fo\BN_{\PP,X,\kappa}^\lambda(\alpha)\Rightarrow\BN_{\PP,X,\kappa}^\lambda(\alpha)$ and $\fo\BN_{\PP,\kappa}^\lambda(\infty)\Rightarrow \BN_{\PP,\kappa}^\lambda(\infty)$]
	\mbox{}\\*
		As before, similar to the proofs in Theorems \ref{correspondence forcing axioms name principles}.
	\end{proof}
	
	\begin{proof}[Proof of $\BN_{\PP,X,\kappa}^\lambda(\alpha)\Rightarrow \BFA_{\PP,\kappa}^\lambda$ and $\BN_{\PP,\kappa}^\lambda(\infty)\Rightarrow \BFA_{\PP,\kappa}^\lambda$]
		Letting $\langle D_\gamma \mid \gamma<\kappa\rangle$ be a sequence of predense sets of cardinality at most $\lambda$, we define a name $\sigma$ exactly as in the corresponding proof from Theorem \ref{correspondence forcing axioms name principles}. Since the $D_\gamma$ have cardinality at most $\lambda$, and all the names that appear in $\sigma$ are $1$ bounded check names, $\sigma$ is $\lambda$-bounded.
		
		As in the earlier proof, a filter $g$ such that $\sigma^g=A$ will meet all of the $D_\gamma$.
	\end{proof}
	
	\begin{proof}[Proof of $\BFA_{\PP,\kappa}^\lambda \Rightarrow \fo\BN_{\PP,\kappa}^\lambda$]
	
	Assume $\lambda \geq \kappa$. We prove the following lemma (very similar to Lemma \ref{collection of dense sets to witness first order statement}). 
	
	\begin{lemma}
		Let $\varphi(\vec{\sigma})$ be a $\Sigma_0$ formula where $\vec{\sigma}$ is a tuple of $\kappa$-small $\lambda$-bounded names. Then there is a collection $\mathcal{D}_{\varphi(\vec{\sigma})}$ of at most $\kappa$ many predense sets each of cardinality at most $\lambda$, which has the following property: if $g$ is any filter meeting every set in $\mathcal{D}_{\varphi(\vec{\sigma})}$ and $g$ contains some $p$ such that $p\forces\varphi(\vec{\sigma})$, then in fact $\varphi(\vec{\sigma^g})$ holds in $V$.
	\end{lemma}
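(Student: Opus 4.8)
The plan is to mimic the proof of Lemma \ref{collection of dense sets to witness first order statement} verbatim in its inductive skeleton---proceeding through the same sequence of claims for the atomic formulas ($\sigma=\check{A}$, $\sigma=\tau$, $\tau\in\sigma$), their negations, and then the induction on formula complexity for $\wedge$, $\vee$, and the bounded quantifiers---while upgrading every dense set that appears there to a predense set of cardinality at most $\lambda$. The completeness of $\PP$ is precisely what makes this upgrade possible, and the hypothesis $\kappa\leq\lambda$ is what keeps the cardinality under control.

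The key construction is as follows. Since $\PP$ is a complete Boolean algebra, I would replace each defining disjunction in a dense set by an explicit small family built from Boolean values. Concretely, consider the dense set $D_B$ from Claim \ref{claim sigma equal emptyset}, where $\sigma=\{(\sigma_\gamma,p):\gamma\in T, p\in S_\gamma\}$ is $\lambda$-bounded. In place of $D_B$ I take the set consisting of $\llbracket \sigma\neq\check{A}\rrbracket$ together with all nonzero conditions of the form $q\wedge\llbracket \sigma_\gamma=\check{B}\rrbracket$ for $\gamma\in T$ and $q\in S_\gamma$. Each such condition lies in $D_B$, because it forces $\sigma_\gamma=\check{B}$ and, since $q\in S_\gamma$ means $(\sigma_\gamma,q)\in\sigma$, it strongly forces $\sigma_\gamma\in\sigma$. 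A routine genericity argument shows that the supremum of this family is $1$, so the family is predense. The cardinality bound is immediate: $\lvert T\rvert\leq\kappa$ and each $\lvert S_\gamma\rvert\leq\lambda$, so as $\kappa\leq\lambda$ there are at most $\kappa\cdot\lambda=\lambda$ many conditions. The same recipe---intersecting the witnessing conditions enumerated by $\lambda$-boundedness with the relevant Boolean values---converts each of the remaining dense sets ($E_\gamma$, $D_\gamma$, and the single sets $D$ in the later claims) into a predense set of size at most $\lambda$.

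For the recursive clauses, the auxiliary collections $\mathcal{D}_{\sigma_\gamma=\check{B}}$, $\mathcal{D}_{\sigma_\gamma=\tau_\delta}$, and so on are supplied by the inductive hypothesis as families of at most $\kappa$ predense sets each of size at most $\lambda$; taking unions over at most $\kappa$ indices preserves both bounds. Crucially, since every name involved is $\lambda$-bounded, each subname $\sigma_\gamma$ encountered as the induction descends is again $\lambda$-bounded, so the recursion stays within the class of names to which the lemma applies. The propositional and quantifier steps introduce at most one further set $D$ (as in the $\vee$ and $\exists$ cases), which is made predense by the identical Boolean-value construction.

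The hard part will be verifying predensity while respecting the cardinality ceiling: in each atomic case one must check that the small family of Boolean meets written down really has supremum $1$. This is exactly where completeness and $\lambda$-boundedness interact---$\lambda$-boundedness guarantees that the witnesses demanded by the original density proof are already enumerated by the at most $\lambda$ many conditions $q\in S_\gamma$, so no conditions beyond these meets are required, and completeness guarantees that the Boolean values $\llbracket\cdots\rrbracket$ exist to be met with them. Once predensity and the size bound are confirmed for the atomic and negated-atomic cases, the inductive steps carry over unchanged from Lemma \ref{collection of dense sets to witness first order statement}, and applying $\BFA^\lambda_{\PP,\kappa}$ to the union $\bigcup\{\mathcal{D}_{\varphi(\vec{\sigma})} : \varphi \text{ is } \Sigma_0\}$ then yields $\fo\BN^\lambda_{\PP,\kappa}(\infty)$.
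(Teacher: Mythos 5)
Your proposal is correct and follows essentially the same route as the paper: the paper likewise replaces each dense set by a predense family obtained by (i) substituting the Boolean value $\sup\{p : p\forces \psi\}=\llbracket\psi\rrbracket$ for an arbitrary condition forcing $\psi$, and (ii) handling strong forcing via the at most $\lambda$ many conditions $q$ with $(\sigma_\gamma,q)\in\sigma$ meeted with the relevant Boolean values (its $r_{\gamma,q}=\sup\{p\le q: p\forces\sigma_\gamma=\check{B}\}$ is exactly your $q\wedge\llbracket\sigma_\gamma=\check{B}\rrbracket$), with the same cardinality count $\kappa\cdot\lambda=\lambda$ under $\kappa\le\lambda$. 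You even supply slightly more detail on verifying predensity than the paper, which leaves that check as an exercise.
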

	
	We use the same proof as in Theorem \ref{correspondence forcing axioms name principles}, adjusting the dense sets we work with. Whenever a dense set appears, we will replace it with a predense set of size at most $\lambda$ which fulfills all the same functions. To obtain these sets, we use a few techniques.
	
	First, whenever the original proof calls for an arbitrary condition which forces some desirable property, we replace it with the supremum of all such conditions (exploiting the fact that we are in a complete Boolean algebra).
	
	For example, in place of 
	
	\begin{equation*}
	E_\gamma = \Big\{p\in \mathbb{P}: (p\forces \sigma \neq \check{A}) \vee (p\forces \sigma_\gamma \not \in A) \vee \Big(\exists B\in A, p\forces \sigma_\gamma=\check{B}\Big)\Big\}
	\end{equation*} in Claim \ref{sigma=A}, we would take the set
	\begin{equation*}
	E_\gamma^* := \{q_0,q_1\}\cup \{q_B: B\in A\}
	\end{equation*}
	where
	\begin{align*}
	q_0&=\sup \{p\in \PP: p\forces \sigma \neq \check{A}\} \\
	q_1&=\sup \{p\in \PP: p\forces \sigma_\gamma\not \in \check{A}\}
	\end{align*}
	and for $B\in A$,
	\begin{equation*}
	q_B=\sup \{p\in \PP: p\forces \sigma_\gamma=\check{B}\}.
	\end{equation*}
	$E_\gamma^*$ has cardinality at most $\lambda$, since $\lvert A \rvert \leq \kappa \leq \lambda$.
	
	When the original set calls for a condition which strongly forces $\tau\in \sigma$ for some $\tau$ and $\sigma$, simply taking suprema won't work. Instead, we ask for a condition $q$ such that $(\tau,q)\in \sigma$. Since all the names $\sigma$ we deal with in the proof are $\lambda$-bounded, there will be at most $\lambda$ many such conditions.
	
	For example, in the same claim,
	
	\begin{equation*}
	D_B:=\Big\{p\in \mathbb{P} : (p\forces \sigma \neq \check{A})\vee\Big(\exists \gamma<\kappa \: (p\forces \sigma_\gamma=\check{B})\wedge (p\sforces \sigma_\gamma \in \sigma)\Big) \Big\}
	\end{equation*}
	will be replaced by
	\begin{equation*}
	D_B^*:=\{r\} \cup \{r_{\gamma,q}: \gamma<\kappa, q\in \PP, (\sigma_\gamma,q)\in \sigma, r_{\gamma,q}\neq 0\}
	\end{equation*}
	where
	\begin{equation*}
	r=\sup\{p\in\PP: p\forces \sigma \neq \check{A}\}
	\end{equation*}
	and for $\gamma<\kappa$, $q\in \PP$,
	\begin{equation*}
	r_{\gamma,q}=\sup \{p\leq q: p\forces \sigma_\gamma=\check{B}\}.
	\end{equation*}

Checking that we can indeed apply these techniques to turn all the dense sets in the proof into predense sets of cardinality at most $\lambda$ is left as an exercise for the particularly thorough reader. 
\end{proof} 
This completes the proof of Theorem \ref{correspondence bounded forcing axioms name principles}. 
\end{proof}

%%%%%%%%%%
\subsection{Generic absoluteness}
\label{subsection - generic absoluteness} 

In this section, we derive generic absoluteness principles from the above correspondence. 
%discuss a consequence of the correspondence: forcing axioms imply absoluteness between $V$ and $V[G]$. 

Fix a cardinal $\kappa$. 
We start by defining the class of $\Sigma^1_1(\kappa)$-formulas. 
To this end, work with a two-sorted logic with two types of variables, interpreted as ranging over ordinals below $\kappa$ and over subsets of $\kappa$, respectively. 
The language contains a binary relation symbol $\in$ and a binary function symbol $p$ for a pairing function $\kappa\times\kappa\rightarrow \kappa$. 
Thus, atomic formulas are of the form $\alpha=\beta$, $x=y$, $\alpha\in x$ and $p(\alpha,\beta)=\gamma$, where $\alpha,\beta,\gamma$ denote ordinals and $x,y$ denote subsets of $\kappa$. 

\begin{definition}
A $\Sigma^1_1(\kappa)$ formula is of the form 
$$\exists x_0,\dots,x_m\ \varphi(x_0,\dots,x_m,y_0,\dots,y_n),$$ 
where the $x_i$ are variables for subsets of $\kappa$, the $y_i$ are either type of variables, and $\varphi$ is a formula which only quantifies over variables for ordinals.
\end{definition}

As a corollary to the results in Section \ref{Section_correspondence}, we obtain Bagaria's characterisation of bounded forcing axioms \cite[Theorem 5]{bagaria2000bounded} as the equivalence \ref{Bagaria's characterisation 1} $\Leftrightarrow$ \ref{Bagaria's characterisation 4} of the next theorem. 
%He proved the equivalence of \ref{Bagaria's characterisation 1} and \ref{Bagaria's characterisation 3}. 
It also shows that the principles $\fo\BN^\lambda_{\PP,\kappa}$ for $\lambda<\kappa$ are all equivalent to $\BFA^\kappa_{\PP,\kappa}$.

%\begin{remark} 

\begin{theorem} 
\label{Bagaria's characterisation} 
Suppose that $\kappa$ is a cardinal with $\cof(\kappa)>\omega$, $\PP$ is a complete Boolean algebra and $\dot{G}$ is a $\PP$-name for the generic filter. 
Then the following conditions are equivalent:\footnote{The equivalence \ref{Bagaria's characterisation 1} $\Leftrightarrow$ \ref{Bagaria's characterisation 4} is equivalent to Bagaria's version, since his definition of $\BFA$ refers to Boolean completions.}
%\footnote{The proof of \ref{Bagaria's characterisation 1} $\Rightarrow$ \ref{Bagaria's characterisation 4} in \cite[Theorem 5]{bagaria2000bounded} makes use of $\cof(\kappa)>\omega$.} 
% this is not explicitly stated in \cite[Theorem 5]{bagaria2000bounded}. 
\begin{enumerate-(1)} 
\item 
\label{Bagaria's characterisation 1} 
$\BFA^\kappa_{\PP,\kappa}$ 
\item 
\label{Bagaria's characterisation 2} 
$\fo\BN^1_{\PP,\kappa}(1)$ \footnote{The version  $\Sigma_0-\BN^1_{\PP,\kappa}(1)$ for single $\Sigma_0$-formulas is also equivalent by the proof below.} 
%\item 
%\label{Bagaria's characterisation 2a} 
%$\Sigma^1_1(\kappa)^{(\mathrm{sim})}\text{-}\BN^1_{\PP,\kappa}(1)$ 
\item 
\label{Bagaria's characterisation 3} 
$\forces_\PP V \prec_{\Sigma^1_1(\kappa)} V[\dot{G}]$
%$\forces_\PP \pow(\kappa)^V \prec_{\Sigma^1_1(\kappa)} \pow(\kappa)^{V[\dot{G}]}$
\item 
\label{Bagaria's characterisation 4} 
$\forces_\PP H_{\kappa^+}^V \prec_{\Sigma_1} H_{\kappa^+}^{V[\dot{G}]}$
\end{enumerate-(1)} 
\end{theorem}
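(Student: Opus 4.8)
The plan is to establish the cycle of equivalences by combining the correspondence results from Section \ref{Section_correspondence} with a direct translation between names and $\Sigma^1_1(\kappa)$-formulas. I would organise the proof around the implications $\ref{Bagaria's characterisation 1}\Leftrightarrow\ref{Bagaria's characterisation 2}$, $\ref{Bagaria's characterisation 3}\Rightarrow\ref{Bagaria's characterisation 2}$, $\ref{Bagaria's characterisation 1}\Rightarrow\ref{Bagaria's characterisation 3}$, and $\ref{Bagaria's characterisation 3}\Leftrightarrow\ref{Bagaria's characterisation 4}$. The equivalence $\ref{Bagaria's characterisation 1}\Leftrightarrow\ref{Bagaria's characterisation 2}$ is essentially the special case $\lambda=1$, $\alpha=1$, $X=\kappa$ of Theorem \ref{correspondence bounded forcing axioms name principles}, except that the bound is $1$ rather than $\kappa$; here I would use that for a complete Boolean algebra the relevant predense sets can be taken to be antichains (maximal antichains have a canonical $1$-bounded very good name via suprema, as in the proof of Theorem \ref{correspondence bounded forcing axioms name principles}), so that $\BFA^\kappa_{\PP,\kappa}$ and the $1$-bounded simultaneous name principle coincide. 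This is where the remark that $\fo\BN^\lambda_{\PP,\kappa}$ for $\lambda<\kappa$ collapses to $\BFA^\kappa_{\PP,\kappa}$ gets used.

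The heart of the argument is the translation between $\Sigma^1_1(\kappa)$-statements and name principles for very good names. First I would observe that a very good ($1$-bounded rank $1$) name $\sigma$ for a subset of $\kappa$ is exactly a coding of an element of $\pow(\kappa)^{V[\dot G]}$ by the antichains $\{p : (\check\alpha,p)\in\sigma\}$, and conversely every $\PP$-name for a subset of $\kappa$ is equivalent to such a name by the Proposition following Definition \ref{Defn_Interpretation}. A first-order formula $\varphi$ over the structure $(\kappa,\in,\sigma)$ quantifying only over ordinals below $\kappa$ is precisely a formula in the matrix part of the two-sorted language, and the interpretation $\sigma^g$ is the subset of $\kappa$ that $g$ assigns. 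For $\ref{Bagaria's characterisation 3}\Rightarrow\ref{Bagaria's characterisation 2}$, given $\kappa$-small $1$-bounded names $\vec\sigma$ with $\forces_\PP\varphi(\vec\sigma)$ for various $\Sigma_0$ (hence bounded-quantifier) $\varphi$, the statement ``there exist subsets $\vec x$ of $\kappa$ coded by these antichain-patterns satisfying $\bigwedge\varphi$'' is $\Sigma^1_1(\kappa)$ and true in $V[\dot G]$, so by $\ref{Bagaria's characterisation 3}$ it already holds in $V$, which yields the witnessing filter $g$. For $\ref{Bagaria's characterisation 1}\Rightarrow\ref{Bagaria's characterisation 3}$, given a $\Sigma^1_1(\kappa)$-formula $\exists\vec x\,\varphi(\vec x,\vec y)$ with parameters $\vec y\in\pow(\kappa)^V$ forced to hold, I would fix names $\vec\sigma$ for witnesses $\vec x$, pass to very good names, and apply $\fo\BN^1_{\PP,\kappa}(1)$ (equivalently $\BFA^\kappa_{\PP,\kappa}$) to the $\Sigma_0$ formula $\varphi$ to extract a filter $g$ in $V$ giving genuine witnesses $\sigma^g$ in $V$; upward absoluteness of $\Sigma^1_1(\kappa)$ handles the trivial direction.

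For $\ref{Bagaria's characterisation 3}\Leftrightarrow\ref{Bagaria's characterisation 4}$ I would use the standard fact that, since $\cof(\kappa)>\omega$ ensures enough closure, $\Sigma^1_1(\kappa)$-statements over $V$ correspond exactly to $\Sigma_1$-statements over $H_{\kappa^+}$: an existential second-order quantifier over subsets of $\kappa$ is an existential quantifier over elements of $H_{\kappa^+}$, and the bounded first-order part is $\Delta_0$ over $H_{\kappa^+}$, with the pairing function coding sequences. The condition $\cof(\kappa)>\omega$ is what makes $H_{\kappa^+}$ closed under the relevant codings and ensures that a subset of $\kappa$ added by the forcing is still ``seen'' correctly; this is the structural hypothesis separating this theorem from the $\cof(\kappa)=\omega$ variant in Theorem \ref{variant of Bagaria's characterisation for countable cofinality}.

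I expect the main obstacle to be the careful bookkeeping in $\ref{Bagaria's characterisation 1}\Rightarrow\ref{Bagaria's characterisation 3}$, specifically ensuring that arbitrary $\PP$-names for the witnesses $\vec x$ can be replaced by $\kappa$-small $1$-bounded names without losing the forced truth of $\varphi$, and that the resulting predense sets genuinely have size $\leq\kappa$ so that $\BFA^\kappa_{\PP,\kappa}$ applies. The subtlety is that $\varphi$ quantifies over ordinals below $\kappa$, so evaluating $\varphi(\vec\sigma^g)$ only requires deciding membership statements $\check\alpha\in\sigma_i$, each of which is captured by a single maximal antichain of size $\leq\kappa$; I would emphasise that this is exactly what makes the bound $\kappa$ (rather than something larger) sufficient, and that completeness of $\PP$ is what lets us pass to the canonical very good name via suprema. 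The other delicate point, worth stating explicitly, is that the $\Sigma_0$-to-$\Sigma_1$ passage in the last equivalence relies on $\kappa^{<\cof(\kappa)}$-type closure of $H_{\kappa^+}$, which is exactly where $\cof(\kappa)>\omega$ is indispensable.
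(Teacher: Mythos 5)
Your cycle for the equivalence of \ref{Bagaria's characterisation 1}, \ref{Bagaria's characterisation 2} and \ref{Bagaria's characterisation 3} is essentially the paper's, just reoriented: the paper proves \ref{Bagaria's characterisation 1} $\Rightarrow$ \ref{Bagaria's characterisation 2} from Theorem \ref{correspondence bounded forcing axioms name principles}, gets \ref{Bagaria's characterisation 2} $\Rightarrow$ \ref{Bagaria's characterisation 3} by choosing a $1$-bounded rank $1$ name for the second-order witness and applying the simultaneous name principle to the (bounded-quantifier) matrix, and closes the loop via \ref{Bagaria's characterisation 3} $\Rightarrow$ \ref{Bagaria's characterisation 1} by observing that the existence of a filter meeting $\kappa$ many antichains of size $\leq\kappa$ is itself a $\Sigma^1_1(\kappa)$ statement, true of the generic filter restricted to a subalgebra of size $\kappa$. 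Your direct \ref{Bagaria's characterisation 3} $\Rightarrow$ \ref{Bagaria's characterisation 2} has a wrinkle worth flagging: $\fo\BN^1_{\PP,\kappa}(1)$ demands a \emph{single} filter correct for \emph{all} countably many forced $\Sigma_0$-formulas simultaneously, and an infinite conjunction of first-order statements is not itself first-order, so "these antichain-patterns satisfy $\bigwedge\varphi$" is not literally $\Sigma^1_1(\kappa)$ without an extra device (a coded satisfaction predicate, or routing through \ref{Bagaria's characterisation 1} as the paper does). This is fixable but not free.

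The genuine gap is in \ref{Bagaria's characterisation 3} $\Leftrightarrow$ \ref{Bagaria's characterisation 4}. Only the direction \ref{Bagaria's characterisation 4} $\Rightarrow$ \ref{Bagaria's characterisation 3} is the "standard fact" you describe (an existential over $\pow(\kappa)$ is an existential over $H_{\kappa^+}$ and the matrix is $\Delta_0$ there). The direction \ref{Bagaria's characterisation 3} $\Rightarrow$ \ref{Bagaria's characterisation 4} requires converting a $\Sigma_1$ statement over $H_{\kappa^+}$ — whose existential witness is an arbitrary set of hereditary size $\leq\kappa$, not a subset of $\kappa$ — into a $\Sigma^1_1(\kappa)$ statement. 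The paper does this by coding a transitive model of the statement as an extensional \emph{wellfounded} relation $\in_M=p^{-1}[B]$ on $\kappa$, and the crux is that wellfoundedness is naturally $\Pi^1_1(\kappa)$, not $\Sigma^1_1(\kappa)$. The hypothesis $\cof(\kappa)>\omega$ is used precisely here: an ill-founded relation has a countable, hence bounded, descending chain, so $R$ is wellfounded iff every restriction $R{\upharpoonright}\gamma$ for $\gamma<\kappa$ admits a rank function $f\colon\gamma\to\kappa$, and the sequence of such rank functions can be packaged into a single existential second-order witness. Your attribution of the role of $\cof(\kappa)>\omega$ to "closure of $H_{\kappa^+}$ under codings" is not the right mechanism — $H_{\kappa^+}$ contains all the relevant codes regardless of cofinality; what fails for $\cof(\kappa)=\omega$ is the $\Sigma^1_1(\kappa)$-definability of the set of wellfounded codes (the paper's footnote notes it becomes $\Pi^1_1(\kappa)$-complete for strong limits of countable cofinality), and Theorem \ref{variant of Bagaria's characterisation for countable cofinality} shows the implication \ref{Bagaria's characterisation 3} $\Rightarrow$ \ref{Bagaria's characterisation 4} is then genuinely problematic. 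Since this coding argument is the bulk of the paper's proof, it cannot be waved through as a standard correspondence.
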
 

\begin{proof} 
The implication \ref{Bagaria's characterisation 1} $\Rightarrow$ \ref{Bagaria's characterisation 2} holds since $\BFA^\kappa_{\PP,\kappa} \Leftrightarrow \fo\BN^\kappa_{\PP,\kappa}(1)$ by Theorem \ref{correspondence bounded forcing axioms name principles} and $ \fo\BN^\kappa_{\PP,\kappa}(1) $ implies  $\fo\BN^1_{\PP,\kappa}(1)$. 

\ref{Bagaria's characterisation 2} $\Rightarrow$ \ref{Bagaria's characterisation 3}: 
To simplify the notation, we will only work with $\Sigma^1_1(\kappa)$-formulas of the form $\exists x\ \varphi(x,y)$, where $x$ and $y$ range over subsets of $\kappa$. 
Suppose that $y$ is a subset of $\kappa$ and $p\forces \exists x\ \varphi(x,\check{y})$. 
%$\exists x\ \varphi(x,y)$ is a $\Sigma^1_1(\kappa)$-formula that is forced to hold 
%in $\pow(\kappa)^{V[\dot{G}]}$ 
%by some $p\in\PP$, where $y$ denotes a subset of $\kappa$.  
Let $\sigma$ be a $1$-bounded rank $1$ $\PP$-name with $p \Vdash_\PP \varphi(\sigma,\check{y})$. 
Note that $\check{y}$ is a $1$-bounded rank $1$ name, too. 
By $\fo\BN_{\PP,\kappa}^{\kappa}(1)$, there exists a filter $g\in V$ on $\PP$ such that $V \models \varphi(\sigma^g,y)$. 
Hence $V\models \exists x\ \varphi(x,y)$. 

The implication 
\ref{Bagaria's characterisation 3} $\Rightarrow$ \ref{Bagaria's characterisation 1} works just like in the proof of \cite[Theorem 5]{bagaria2000bounded}. 
In short, the existence of the required filter is equivalent to a $\Sigma^1_1(\kappa)$-statement. 

For \ref{Bagaria's characterisation 3} $\Rightarrow$ \ref{Bagaria's characterisation 4}, suppose that $\psi=\exists x\ \varphi(x,y)$ is a $\Sigma_1$-formula with a parameter $y\in H_{\kappa^+}$. 
Then 
$$ H_{\kappa^+} \models \psi \Longleftrightarrow H_{\kappa^+} \models ``\exists M \text{ transitive s.t. } y\in M \wedge M\models \psi". $$ 
We express the latter by a $\Sigma^1_1(\kappa)$-formula $\theta$ with a parameter $A\subseteq \kappa$ which codes $y$ in the sense that $f(0)=y$ for the transitive collapse $f$ of $(\kappa,p^{-1}[A])$. 

$\theta$ states the existence of a subset $B$ of $\kappa$ such that $\in_M:=p^{-1}[B]$ has the following properties: 
\begin{itemize} 
\item 
$\in_M$ is wellfounded and extensional 
\item 
For all $\alpha<\beta<\kappa$, $2 \cdot \alpha \in_M 2\cdot \beta$ and  for all $\alpha,\beta<\kappa$, $2\cdot \alpha+1 \not\in_M 2\cdot \beta$. 
\item 
There is some $\hat{\kappa}< \kappa$ with $\{ \alpha<\kappa \mid \alpha \in_M \hat{\kappa}\} = \{ 2\cdot \alpha \mid \alpha<\kappa \}$
\item 
There exists some $\hat{A}<\kappa$ such that for all $\beta<\kappa$, $\beta \in_M \hat{A} \Leftrightarrow \exists \alpha\in A\ 2\cdot \alpha=\beta$  
%for all $\alpha<\kappa$, $\alpha \in A \Leftrightarrow 2\cdot \alpha \in_M \hat{A}$, and $\hat{A}$ only contains even ordinals. 
\item 
There exists some $\hat{y}<\kappa$ such that in $(\kappa,\in_M)$, $\hat{A}$ codes $\hat{y}$ 
\item 
$\varphi(\hat{y})$ holds in $(\kappa,\in_M)$ 
\end{itemize} 
The transitive collapse $f$ of $(\kappa,\in_M)$ to a transitive set $M$ will satisfy $f(2\cdot \alpha)=\alpha$ for all $\alpha<\kappa$, $f(\hat{\kappa})=\kappa$, $f(\hat{A})=A$, $f(\hat{y})=y$ and $M\models \psi(y)$. 

All the above conditions apart from wellfoundedness of $\in_M$ are first order over $(\kappa,\in,p,A,\in_M)$. 
It remains to express wellfoundedness of $\in_M$ in a $\Sigma^1_1(\kappa)$ way.\footnote{$\cof(\kappa)>\omega$ is in fact necessary to ensure that the set of codes on $\kappa$ for elements of $H_{\kappa^+}$ is $\Sigma^1_1(\kappa)$-definable with parameters in $\pow(\kappa)$. 
If $\cof(\kappa)=\omega$ and $\kappa$ is a strong limit, then this set is $\Pi^1_1(\kappa)$-complete and hence not $\Sigma^1_1(\kappa)$ by a result of Dimonte and Motto Ros \cite{dimontemottoros}.} 
To see that we can do this, suppose that $R$ is a binary relation on $\kappa$. 
Since $\cof(\kappa)>\omega$, $R$ is wellfounded if and only if for all $\gamma<\kappa$, $R{\upharpoonright}\gamma$ is wellfounded. 
Since $\gamma<\kappa$, $R{\upharpoonright}\gamma$ is wellfounded if and only if there exists a 
%strict order preserving 
map $f\colon \gamma\rightarrow \kappa$ such that for all $\alpha,\beta<\gamma$, $(\alpha,\beta)\in R \Rightarrow f(\alpha)<f(\beta)$. 
%from $(\alpha,R{\upharpoonright}\alpha)$ to $(\kappa,\in)$. 
The existence of such a map $f$ is a $\Sigma^1_1(\kappa)$ statement. 

Finally, \ref{Bagaria's characterisation 4} $\Rightarrow$ \ref{Bagaria's characterisation 3} holds since every $\Sigma^1_1(\kappa)$-formula is equivalent to a $\Sigma_1$-formula over $H_{\kappa^+}$ with parameter $\kappa$. 
\end{proof} 

%We define $\Sigma^1_1(\kappa)^{(\mathrm{sim})}\text{-}\BN^\lambda_{\PP,\kappa}(\alpha)$ as in Definition \ref{Defn_foN} by replacing $\Sigma_0$-formulas with $\Sigma^1_1(\kappa)$-formulas. 

\begin{remark} 
Note that for rank $1$, $\fo\BN^\lambda_{\PP,\kappa}(1)$ implies the simultaneous $\lambda$-bounded rank $1$ name principle for all $\Sigma^1_1(\kappa)$-formulas (see Definition \ref{Defn_foN}) by picking $1$-bounded names for witnesses. 
\end{remark} 

\begin{remark} 
The previous results cannot be extended to higher complexity. 
To see this, recall that a $\Pi^1_1(\kappa)$-formula is the negation of a $\Sigma^1_1(\kappa)$-formula. 
We claim that there exists a $\Pi^1_1(\omega_1)$-formula $\varphi(x)$ such that the $1$-bounded rank $1$ $\Pi^1_1(\omega_1)$-name principle for the class of c.c.c. forcings fails. 
Otherwise $\mathsf{MA}_{\omega_1}$ would hold by \ref{Bagaria's characterisation 2} $\Rightarrow$ \ref{Bagaria's characterisation 1} of Theorem \ref{Bagaria's characterisation}. 
So in particular, there are no Suslin trees. 
Since adding a Cohen real adds a Suslin tree, let $\sigma$ be a $1$-bounded rank $1$ $\PP$-name for it, where $\PP$ denotes the Boolean completion of Cohen forcing, and apply the name principle to the statement ``$\sigma$ is a Suslin tree''. But then we would have a Suslin tree in $V$. 
\end{remark} 

\begin{remark} 
Fuchs and Minden show in \cite[Theorem 4.21]{fuchs2018subcomplete} assuming $\CH$ that the bounded subcomplete forcing axiom $\mathsf{BSCFA}$ can be characterised by the preservation of $(\omega_1,{\leq}\omega_1)$-Aronszajn trees. 
The latter can be understood as the $1$-bounded name principle for statements of the form  ``$\sigma$ is an $\omega_1$-branch in $T$'', where $T$ is an $(\omega_1,{\leq}\omega_1)$-Aronszajn tree. 
(See \cite{fuchs2018subcomplete,jensen2014subcomplete} for more about subcomplete forcing.)

%Subcomplete forcing was introduced and studied by Jensen \cite{jensen2014subcomplete}. 
%While the proper forcing axiom implies $\neg\CH$, the subcomplete forcing axiom $\SCFA$ is compatible with $\CH$. 
%Assuming $\CH$, we thus have 
%$$\SCFA \Longleftrightarrow \fo\NP_{\mathrm{subcomplete},\omega,\omega_1}(2) \Longleftrightarrow \NP_{\mathrm{subcomplete},\omega,\omega_1}(2).$$ 
%In other words, rank $2$ names for ground model reals can be interpreted correctly. 
%The bounded subcomplete forcing axiom $\mathsf{BSCFA}$ studied by Fuchs and Minden \cite{fuchs2018subcomplete} has an analogous characterisation. 

\end{remark} 

We now consider forcing axioms at cardinals $\kappa$ of countable cofinality. 
To our knowledge, these have not been studied before. 
$\BFA^\kappa_{c.c.c.,\kappa}=\MA_\kappa$ is an example of a consistent forcing axiom of this form. 
%Note that the principle $\BFA^\kappa_{\PP,\kappa}$ in the next corollary is not vacuous. For instance, $\BFA^\kappa_{c.c.c.,\kappa}$ is equivalent to $\MA_\kappa$.
We fix some notation. 
If $\kappa$ is an uncountable cardinals with $\cof(\kappa)=\mu$, we 
%Such forcings satisfy $\Sigma^1_1(\kappa)$-absoluteness by the next corollary. 
fix a continuous strictly increasing sequence $\langle \kappa_i \mid i\in\mu \rangle $ of ordinals with $\kappa_0=0$ and $\sup_{i\in\mu} \kappa_i=\kappa$. 
We assume that each $\kappa_i$ is closed un the pairing function $p$.\footnote{If $\kappa_i$ is multiplicatively closed, i.e. $\forall \alpha<\kappa \alpha\cdot \alpha<\kappa_i$, then this holds for G\"odel's pairing function.} 
For each $x\in 2^\kappa$, we define a function $f_x\colon \mu\rightarrow 2^{<\kappa}$ by letting $f_x(i)=x{\upharpoonright}\kappa_i$.

\begin{lemma} 
\label{tree projecting to a set} 
Suppose that $\kappa$ is an uncountable cardinal with $\cof(\kappa)=\mu$. 
Suppose that $\varphi(x,y)$ is a formula with quantifiers ranging over $\kappa$ and $y\in 2^\kappa$ is fixed. 
Then there is a subtree $T\in V$ of $((2^{<\kappa})^{<\mu})^2$ such that in all generic extensions $V[G]$ of $V$ \footnote{This includes the case $V[G]=V$.} which do not add new bounded subsets of $\kappa$, 
$$\varphi(x,y) \Longleftrightarrow \exists g\in (2^{<\kappa})^\mu\ (f_x,g)\in [T]$$ 
holds for all $x\in (2^\kappa)^{V[G]}$. 
Moreover, for any branch $(\vec{s},\vec{t})\in [T]$ in $V[G]$ with $\vec{s}=\langle s_i\mid i\in\mu\rangle$, $\bigcup_{i\in\mu} s_i=f_x$ for some $x\in  (2^\kappa)^{V[G]}$. 
\end{lemma}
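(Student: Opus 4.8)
The plan is to realise $\{f_x : \varphi(x,y)\}$ as a projection $p[T]$ by a tree‑representation argument in the style of the classical $\Sigma^1_1$ normal form, the whole point being to route every unbounded search over $\kappa$ through the index set $\mu=\cof(\kappa)$ of the fixed cofinal sequence $\langle\kappa_i\rangle$. Two consequences of the hypothesis on $G$ drive the absoluteness. First, if $V[G]$ adds no new bounded subset of $\kappa$ then for $x\in(2^\kappa)^{V[G]}$ every $x\restriction\kappa_i$ is a bounded subset of $\kappa$ and so lies in $V$; hence $f_x$ takes values in $(2^{<\kappa})^V$ and is an eligible branch coordinate for a tree $T\in V$ on $((2^{<\kappa})^{<\mu})^2$. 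Second, since each $\kappa_i$ is closed under $p$, the truth of any atomic formula with ordinal arguments below $\kappa_i$ is read off from $f_x(i)=x\restriction\kappa_i$ and from $y\restriction\kappa_i\in V$, so such computations — and membership in $[T]$ — agree in $V$ and in $V[G]$.

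First I would put $\varphi$ in prenex form and replace each $\exists\beta<\kappa$ by $\exists i<\mu\,\exists\beta<\kappa_i$ and each $\forall\beta<\kappa$ by $\forall i<\mu\,\forall\beta<\kappa_i$, then Skolemise the resulting quantifiers over $\mu$. The decisive feature is that these Skolem functions take values in $\mu$. When $\kappa$ is singular this range $\mu$ is a bounded subset of $\kappa$, so the restriction of such a function to arguments below $\kappa_i$ has bounded graph, fits into a single value of $2^{<\kappa}$, and — crucially — its totality on that bounded domain becomes a bounded, locally checkable condition; this is exactly what fails for a naive Skolemisation over $\kappa$, whose functions $h\colon\kappa\to\kappa$ have unbounded range and cannot be certified total level by level. (When $\kappa$ is regular, $\mu=\kappa$, and the $\mu$‑indexed coordinates already match the domain of the quantifiers, so the classical coordinatewise coding applies.) I would record the restriction to $\kappa_i$ of all this Skolem data as the second coordinate $t_i$ of the $i$‑th node, leaving the remaining bounded quantifiers inside a matrix $\psi'$ decided by $x\restriction\kappa_i$, $y\restriction\kappa_i$ and the chosen levels.

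Then I would let $T$ consist of the pairs $(s,t)$ of length $j<\mu$ such that: (i) $s$ is coherent, with $s_i\in 2^{\kappa_i}$ and $s_{i'}\restriction\kappa_i=s_i$ for $i\le i'<j$, so that any branch has first coordinate $f_x$ for $x=\bigcup_{i}s_i\in 2^\kappa$ — this delivers the ``moreover'' clause immediately; (ii) the Skolem data coded by $t$ is coherent and total on each $\kappa_i$ with $i<j$; and (iii) no instance of the matrix $\psi'$ is falsified once all of its finitely many ordinal arguments lie below $\kappa_i$ for some $i<j$. All three clauses speak only about bounded data, so $T\in V$ and $[T]$ is absolute between $V$ and $V[G]$. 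For the equivalence, if $V[G]\models\varphi(x,y)$ I would choose genuine Skolem functions in $V[G]$ and code them as a branch $g$ (legitimate since all their bounded restrictions are in $V$); then no instance of $\psi'$ is ever falsified, so $(f_x,g)\in[T]$. Conversely a branch $(f_x,g)\in[T]$ yields total Skolem functions, and by cofinality of $\langle\kappa_i\rangle$ every instance of $\psi'$ is eventually tested and passes, whence $V[G]\models\varphi(x,y)$.

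The main obstacle is the one the cofinality reduction is engineered to remove: representing an unbounded existential over $\kappa$ as a projection needs a total witness, but for singular $\kappa$ a witness indexed only by $\mu<\kappa$ cannot encode a total function $\kappa\to\kappa$ by any closed condition, so the naive tree would project onto a set strictly larger than $\{f_x:\varphi\}$. Passing each existential through a choice of level $i<\mu$ converts the witnesses into functions of range $\mu$, and it is precisely this that turns totality into a local, hence closed, requirement. The residual points — that the clauses defining $T$ extend at successor and limit stages, that the interleaved bounded quantifiers are locally decidable, and that the relevant graphs fit into $2^{<\kappa}$ — are routine bookkeeping that I would only sketch.
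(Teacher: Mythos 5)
Your proposal is correct and rests on the same core mechanism as the paper's proof: build a tree of bounded approximations indexed by the cofinal sequence $\langle\kappa_i\mid i<\mu\rangle$, use the hypothesis that no new bounded subsets of $\kappa$ are added to see that every level of a branch already lies in $V$, and --- the crucial point --- turn the unbounded existential quantifiers into closed conditions by recording, as part of the branch, a \emph{level} $i<\mu$ by which each required witness must appear. Where you differ is in the bookkeeping. You put $\varphi$ in prenex form, rewrite $\exists\beta<\kappa$ as $\exists i<\mu\,\exists\beta<\kappa_i$, and Skolemise only the level variables, so the second coordinate of a node records restrictions of $\mu$-valued Skolem functions and the tree checks that no instance of the (now bounded-quantifier) matrix is falsified once all its arguments fall below some $\kappa_i$. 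The paper instead avoids prenex form entirely: its second coordinate $t_j$ codes a complete truth table $p_j$ for \emph{all} formulas with parameters below $\kappa_j$, together with a witness-level function $q_j$, and the tree conditions are Henkin-style closure/correctness clauses (correctness on atomic formulas, respect for connectives, and the promise that a true existential acquires a witness by its assigned level). The function $q_j$ is precisely your Skolem function for the witness level, so the two arguments are interchangeable; your version is closer to the classical Shoenfield/L\'evy normal form and is somewhat leaner, while the paper's compositional truth table handles arbitrary formulas without a preliminary normal-form step and makes the "correctness of $p_j$ about $x$ and $y$" induction in the backward direction completely mechanical. One point worth making explicit in your write-up (you flag it parenthetically) is the regular case $\mu=\kappa$: there the Skolem functions take values in $\kappa$ rather than in a set bounded below $\kappa$, and one needs regularity to see that their restrictions to arguments below $\kappa_i$ have bounded range and hence fit into a single element of $2^{<\kappa}$; this is a genuinely different justification from the singular case, though still routine.
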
 
\begin{proof} 
%Suppose that $\varphi(x,y) = \exists z\ \theta(x,y,z)$, where $\theta$ is first order over $\kappa$. 
We construct the $i$-th levels $\Lev_i(T)$ by induction on $i\in\mu$. 
Let $\Lev_0(T)=\{(\emptyset,\emptyset)\}$. 
If $j\in\mu$ is a limit, let $(\vec{s},\vec{t}) \in \Lev_j(T)$ if $(\vec{s}{\upharpoonright}i,\vec{t}{\upharpoonright}i) \in \Lev_i(T)$ for all $i<j$. 

For the successor step, suppose that $\Lev_j(T)$ has been constructed. 
Write $\vec{s}=\langle s_i \mid i\leq j\rangle$ and  $\vec{t}=\langle t_i \mid i\leq j\rangle$.  
Let $(\vec{s},\vec{t})\in \Lev_{j+1}(T)$ if the following conditions hold: 
%$(u_n,s_n)$ has the properties: 
\begin{enumerate-(1)} 
\item 
$(\vec{s}{\upharpoonright}j,\vec{t}{\upharpoonright}j)\in \Lev_j(T)$. 
\item 
$s_j\in 2^{\kappa_j}$ and $\forall i < j\ s_j{\upharpoonright}\kappa_i=s_i$. 
%\item 
%$v_n{\upharpoonright}\kappa_i = v_i$ and $u_n{\upharpoonright}\kappa_i = u_i$ for all $i<n$. 
\item 
$t_j\in 2^{\kappa_j}$ %and $\forall i < n\ t_n{\upharpoonright}\kappa_i=t_i$. 
codes the following two objects. 
\begin{enumerate-(i)} 
\item 
A truth table $p_j$ which assigns to each formula $\psi(\xi_0,\dots,\xi_k)$ and parameters $\alpha_0,\dots,\alpha_k<\kappa_j$ a truth value $0$ or $1$. 
\item 
A function $q_j$ which assigns a value in $\omega$ to each existential formula $\exists \eta\ \psi(\xi_0,\dots,\xi_k,\eta)$ and associated parameters $\alpha_0,\dots,\alpha_k<\kappa_j$. 
\end{enumerate-(i)}
They satisfy $p_i \subseteq p_j$, $q_i\subseteq q_j=q_i$ for all $i<j$ and the following conditions: 
\begin{enumerate-(a)}
\item 
$p_j(\varphi)=1$. 

\item 
$p_j$ satisfies the equality axioms: 
$$ p_j((\psi(\vec{\xi})),\vec{\alpha}) =1 \wedge \vec{\alpha}=\vec{\beta} \Longleftrightarrow  p_j((\psi(\vec{\xi})),\vec{\beta}) =1$$  

\item 
$p_j$ is correct about atomic formulas $\psi(\vec{\xi})$ which do not mention $\dot{x}$ and $\dot{y}$: 
%of the form $\xi=\eta$, $\xi\in\eta$ and $p(\xi,\eta)=\zeta$: 
$$ p_j((\psi(\vec{\xi})),\vec{\alpha}) =1 \Longleftrightarrow  \psi(\vec{\alpha})$$  
%$$ f_n ((\xi\in\eta),\alpha,\beta)=1 \Longleftrightarrow  \alpha\in \beta$$
%$$ f_n ((p(\xi,\eta)=\zeta),\alpha,\beta,\gamma)=1 \Longleftrightarrow  p(\alpha, \beta)=\gamma$$

\item 
The truth in $p_j$ of all atomic formulas of the form $\xi\in \dot{x}$, $\xi\in \dot{y}$ is fixed according to $s_j$ and $y$, respectively: 
$$ p_j ((\xi \in \dot{x}),\alpha) =1 \Longleftrightarrow  \alpha\in s_j$$  
$$ p_j ((\xi \in \dot{y}),\alpha)=1 \Longleftrightarrow  \alpha\in y$$  

\item 
$p_j$ respects propositional connectives: 
$$ p_j(\psi\wedge \theta,\vec{\alpha})=1 \Longleftrightarrow p_j(\psi,\vec{\alpha})=1 \wedge p_j( \theta,\vec{\alpha})=1$$  
$$ p_j(\neg \psi ,\vec{\alpha}) =1 \Longleftrightarrow p_j (\psi,\vec{\alpha}) =0 $$  

 \item 
 $p_j$ respectes witnesses of existential formulas $\exists \eta\ \psi(\vec{\xi},\eta),\vec{\alpha})$ which it has identified: 
 %if $\vec{\xi}=(\xi_0,\dots,\xi_n)$ and $\vec{\alpha}=(\alpha_0,\dots,\alpha_n)$ with $\alpha_0,\dots,\alpha_k<\kappa_n$: 
$$\exists \beta <\kappa_j\ p_j(\psi(\vec{\xi},\eta),\vec{\alpha},\beta)=1 \Longrightarrow p_j(\exists \eta\ \psi(\vec{\xi},\eta),\vec{\alpha})=1.$$ 

\item 
$q_j$ promises the existence of existential witnesses: 
for any existential formula $\exists \eta\ \psi(\vec{\xi},\eta)$ and any tuple $\vec{\alpha}$ of parameters, if ${p_j(\exists \eta\ \psi(\vec{\xi},\eta),\vec{\alpha})=1}$ and $q_j(\exists \eta\ \psi(\vec{\xi},\eta),\vec{\alpha})\leq n$, 
then there exists some $\beta<\kappa_j$ such that $p_j(\psi(\vec{\xi},\eta), \vec{\alpha},\beta)=1$. 
%\item 
%For $n\geq1$, 
% $t_n{\upharpoonright}\kappa_n=t_{n-1}$, where $t_n{\upharpoonright}\kappa_n$ denotes the restriction of $t_n$ to parameters ${<}\kappa_n$. 
\end{enumerate-(a)} 
\end{enumerate-(1)}
Let $V[G]$ be a generic extension of $V$ with no new bounded subsets of $\kappa$. 
Work in $V[G]$. 

$\Rightarrow$: 
Suppose that $\varphi(x,y)$ holds. 
We define $s_j=x{\upharpoonright}\kappa_j$ for each $j\in\mu$ and $p_j(\psi(\vec{\xi}),\vec{\alpha})=1$ if $(\kappa,\in,p,x,y)\models \psi (\vec{\alpha})$. 
We further define $q_j(\exists \eta\ \psi(\vec{\xi},\eta),\vec{\alpha})=0$ if $p_j(\exists \eta\ \psi(\vec{\xi},\eta),\vec{\alpha})=0$. 
Otherwise, $q_j(\exists \eta\ \psi(\vec{\xi},\eta),\vec{\alpha})$ is defined as the least $l\in\mu$ such that for some $\beta<\kappa_l$, $(\kappa,\in,p,x,y)\models \psi (\vec{\alpha},\beta)$. 
Let $t_j$ code $p_j$ and $q_j$ (via the pairing function $p$). 
Note that $s_j$, $p_j$ and $q_j$ are in $V$, since $V[G]$ has no new bounded subsets of $\kappa$. 
Hence $\langle (s_j,t_j)\mid j\in\mu\rangle$ is a branch through $T$. 

$\Leftarrow$: 
Suppose that $\langle (s_j,t_j)\mid j\in\mu\rangle$ is a branch through $T$. 
Let $x=\bigcup_{j\in\mu} s_j$. 
By induction on complexity of formulas, $p_j$ and $q_j$ are correct about $x$ and $y$. 
Therefore $(\kappa,\in,p,x,y)\models \varphi (x,y)$. 
\end{proof} 

\begin{theorem} 
\label{variant of Bagaria's characterisation for countable cofinality} 
Suppose that $\kappa$ is an uncountable cardinal with $\cof(\kappa)=\omega$, $\PP$ is a complete Boolean algebra and $\dot{G}$ is a $\PP$-name for the generic filter. 
Then the following conditions are equivalent: 
\begin{enumerate-(1)} 
\item 
\label{Bagaria's characterisation 1b} 
$\BFA^\kappa_{\PP,\kappa}$ 
\item 
\label{Bagaria's characterisation 2b} 
$\fo\BN^1_{\PP,\kappa}$ 
\item 
\label{Bagaria's characterisation 3b} 
$\Vdash_\PP V \prec_{\Sigma^1_1(\kappa)}V[\dot{G}]$
\end{enumerate-(1)} 
If moreover $2^{<\kappa}=\kappa$ holds,\footnote{The assumption $2^{<\kappa}=\kappa$ is not needed for \ref{Bagaria's characterisation 4b} $\Rightarrow$ \ref{Bagaria's characterisation 3b}.} then the next condition is equivalent to \ref{Bagaria's characterisation 1b}, \ref{Bagaria's characterisation 2b} and \ref{Bagaria's characterisation 3b}: 
\begin{enumerate-(1)} 
\setcounter{enumi}{3}
\item 
\label{Bagaria's characterisation 4b} 
$1_\PP$ forces that no new bounded subset of $\kappa$ are added. 
\end{enumerate-(1)} 
If there exists no inner model with a Woodin cardinal,\footnote{The assumption that there is no inner model with a Woodin cardinal is not used for \ref{Bagaria's characterisation 5b} $\Rightarrow$ \ref{Bagaria's characterisation 3b}.} then the next condition is equivalent to \ref{Bagaria's characterisation 1b}, \ref{Bagaria's characterisation 2b} and  \ref{Bagaria's characterisation 3b}: 
\begin{enumerate-(1)} 
\setcounter{enumi}{4}
\item 
\label{Bagaria's characterisation 5b} 
$\Vdash_\PP H_{\kappa^+}^V \prec_{\Sigma_1} H_{\kappa^+}^{V[\dot{G}]}$
\end{enumerate-(1)} 
\end{theorem}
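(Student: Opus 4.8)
The plan is to follow the template of Theorem~\ref{Bagaria's characterisation}, reusing verbatim every step there that did not rely on $\cof(\kappa)>\omega$, and then to treat the two genuinely new equivalences (with \ref{Bagaria's characterisation 4b} and with \ref{Bagaria's characterisation 5b}) separately.

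\textbf{The core loop \ref{Bagaria's characterisation 1b} $\Leftrightarrow$ \ref{Bagaria's characterisation 2b} $\Leftrightarrow$ \ref{Bagaria's characterisation 3b}.} First I would observe that none of the implications \ref{Bagaria's characterisation 1} $\Rightarrow$ \ref{Bagaria's characterisation 2} $\Rightarrow$ \ref{Bagaria's characterisation 3} $\Rightarrow$ \ref{Bagaria's characterisation 1} in Theorem~\ref{Bagaria's characterisation} used the cofinality hypothesis. Indeed \ref{Bagaria's characterisation 1b} $\Rightarrow$ \ref{Bagaria's characterisation 2b} is immediate from $\BFA^\kappa_{\PP,\kappa}\Leftrightarrow\fo\BN^\kappa_{\PP,\kappa}(1)$ (Theorem~\ref{correspondence bounded forcing axioms name principles}, valid since $\kappa\le\lambda=\kappa$) together with the trivial implication $\fo\BN^\kappa_{\PP,\kappa}(1)\Rightarrow\fo\BN^1_{\PP,\kappa}(1)$; the implication \ref{Bagaria's characterisation 2b} $\Rightarrow$ \ref{Bagaria's characterisation 3b} is the name-principle witness extraction for a $1$-bounded rank $1$ name $\sigma$ with $p\forces\varphi(\sigma,\check y)$, exactly as before; and \ref{Bagaria's characterisation 3b} $\Rightarrow$ \ref{Bagaria's characterisation 1b} is the observation that, since each of the $\kappa$-many predense sets has size $\le\kappa$, the existence of a filter meeting all of them is a $\Sigma^1_1(\kappa)$-statement (over a coding of the relevant size-$\le\kappa$ part of $\PP$) witnessed by the generic filter in $V[\dot G]$, hence reflected to $V$. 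I would simply transcribe these three arguments.

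\textbf{Equivalence with \ref{Bagaria's characterisation 4b} under $2^{<\kappa}=\kappa$.} For \ref{Bagaria's characterisation 4b} $\Rightarrow$ \ref{Bagaria's characterisation 3b} I would use Lemma~\ref{tree projecting to a set}: given a $\Sigma^1_1(\kappa)$-formula $\exists x\,\varphi(x,y)$ with $y\in V$ (tuples coded into single variables) and a tree $T\in V$ as in that lemma, if $V[\dot G]\models\exists x\,\varphi(x,y)$ then $[T]\ne\emptyset$ in $V[\dot G]$. Since $T$ has height $\mu=\omega$, the statement ``$[T]\ne\emptyset$'' is equivalent to illfoundedness of $T$ under reverse end-extension, which is absolute between $V$ and $V[\dot G]$ by Mostowski absoluteness, as a rank function in $V$ persists upward. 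Hence $[T]\ne\emptyset$ already in $V$, and applying the ``moreover'' clause of Lemma~\ref{tree projecting to a set} in $V$ (the permitted case $V[G]=V$) produces $x\in(2^\kappa)^V$ with $\varphi(x,y)$, so $V\models\exists x\,\varphi(x,y)$; note this direction needs no cardinal arithmetic. For \ref{Bagaria's characterisation 3b} $\Rightarrow$ \ref{Bagaria's characterisation 4b} I would argue contrapositively: if $\PP$ adds a new bounded subset of $\kappa$, then, using $2^{<\kappa}=\kappa$ to fix a parameter $A\subseteq\kappa$ coding an enumeration $\langle a_\alpha\mid\alpha<\kappa\rangle$ of \emph{all} bounded subsets of $\kappa$ in $V$, the $\Sigma^1_1(\kappa)$-statement ``there is a bounded $x\subseteq\kappa$ with $x\ne a_\alpha$ for every $\alpha<\kappa$'' holds in $V[\dot G]$ but fails in $V$, contradicting \ref{Bagaria's characterisation 3b}.

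\textbf{Equivalence with \ref{Bagaria's characterisation 5b} under no inner model with a Woodin cardinal.} The easy direction \ref{Bagaria's characterisation 5b} $\Rightarrow$ \ref{Bagaria's characterisation 3b} is unconditional, since any $\Sigma^1_1(\kappa)$-formula is equivalent over $H_{\kappa^+}$ to a $\Sigma_1$-formula with parameter $\kappa$, exactly as in Theorem~\ref{Bagaria's characterisation}. The substantial direction is \ref{Bagaria's characterisation 3b} $\Rightarrow$ \ref{Bagaria's characterisation 5b}: I would imitate the implication \ref{Bagaria's characterisation 3} $\Rightarrow$ \ref{Bagaria's characterisation 4} of Theorem~\ref{Bagaria's characterisation}, reducing ``$H_{\kappa^+}\models\psi$'' for $\Sigma_1$ $\psi$ with $y\in H_{\kappa^+}^V$ to a $\Sigma^1_1(\kappa)$-statement $\theta$ asserting the existence of $B\subseteq\kappa$ coding a \emph{wellfounded} extensional model $M$ with $y\in M$ and $M\models\psi$; transferring $\theta$ downward by \ref{Bagaria's characterisation 3b} and collapsing $M$ then gives $H_{\kappa^+}^V\models\psi$ by upward $\Sigma_1$-absoluteness. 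The one clause of $\theta$ that is not manifestly $\Sigma^1_1(\kappa)$ is wellfoundedness of the coded relation $\in_M$, and by the Dimonte--Motto Ros result cited after Theorem~\ref{Bagaria's characterisation} this cannot be expressed $\Sigma^1_1(\kappa)$ in general when $\cof(\kappa)=\omega$. This is the main obstacle. To overcome it I would invoke the hypothesis that there is no inner model with a Woodin cardinal: under this assumption the core model $K$ exists and is suitably generically absolute, so illfoundedness of the relevant coded structures is certified by a bounded object (a comparison/iterability disagreement with $K$), allowing wellfoundedness of $\in_M$ to be replaced by a $\Sigma^1_1(\kappa)$-expressible condition. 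Making this reduction precise --- pinning down exactly which absoluteness property of $K$ below a Woodin cardinal certifies wellfoundedness on a $\kappa$ of countable cofinality --- is the crux of the argument and the step I expect to require the most care.
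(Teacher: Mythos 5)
Your treatment of the core loop \ref{Bagaria's characterisation 1b} $\Leftrightarrow$ \ref{Bagaria's characterisation 2b} $\Leftrightarrow$ \ref{Bagaria's characterisation 3b}, of \ref{Bagaria's characterisation 5b} $\Rightarrow$ \ref{Bagaria's characterisation 3b}, and of the equivalence with \ref{Bagaria's characterisation 4b} matches the paper's proof: the first three implications are indeed transcribed unchanged from Theorem \ref{Bagaria's characterisation}, and \ref{Bagaria's characterisation 4b} $\Rightarrow$ \ref{Bagaria's characterisation 3b} is exactly the argument via Lemma \ref{tree projecting to a set} and absoluteness of illfoundedness for the height-$\omega$ tree. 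Your contrapositive for \ref{Bagaria's characterisation 3b} $\Rightarrow$ \ref{Bagaria's characterisation 4b} codes all bounded subsets at once where the paper fixes a single $\gamma<\kappa$ and codes only $\pow(\gamma)$, but both statements are $\Sigma^1_1(\kappa)$ and the variation is harmless.

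The genuine gap is in \ref{Bagaria's characterisation 3b} $\Rightarrow$ \ref{Bagaria's characterisation 5b}. You correctly identify the obstacle (wellfoundedness of $\in_M$ is not $\Sigma^1_1(\kappa)$-expressible when $\cof(\kappa)=\omega$), but the fix you propose --- having the core model ``certify'' illfoundedness via a comparison/iterability disagreement, so that wellfoundedness itself becomes $\Sigma^1_1(\kappa)$ --- is not how the argument goes, and I do not see how to make it work; you yourself flag it as the unresolved crux. The paper's resolution is elementary and does not involve $K$ at this point: replace ``$\in_M$ is wellfounded'' by ``$\in_M$ is wellfounded \emph{of rank at most $\gamma$}'' for a fixed ordinal $\gamma<(\kappa^+)^V$. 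This bounded statement \emph{is} $\Sigma^1_1(\kappa)$ in a parameter $C\subseteq\kappa$ with $(\kappa,p^{-1}[C])\cong(\gamma,<)$, since it just asserts the existence of an order-preserving map $f\colon\kappa\to\gamma$, with no cofinality restriction. For this to capture a witness $M\in H_{\kappa^+}^{V[G]}$ one needs its rank to lie below $(\kappa^+)^V$, i.e.\ one needs $(\kappa^+)^{V[G]}=(\kappa^+)^V$; this is where (and only where) ``no inner model with a Woodin cardinal'' enters, via weak covering for the Jensen--Steel core model at the singular cardinal $\kappa$, which rules out collapsing $\kappa^+$ while preserving $\kappa$. (The case where $\kappa$ itself is collapsed is handled first: then both \ref{Bagaria's characterisation 3b} and \ref{Bagaria's characterisation 5b} fail, since ``$\kappa$ is not a cardinal'' is $\Sigma^1_1(\kappa)$.) Without the rank-bounding trick and this preliminary case split, your sketch of this direction does not constitute a proof.
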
 

\begin{proof}
The proofs of \ref{Bagaria's characterisation 1} $\Leftrightarrow$ \ref{Bagaria's characterisation 2} $\Leftrightarrow$ \ref{Bagaria's characterisation 3} $\Leftarrow$ \ref{Bagaria's characterisation 5b} in Theorem \ref{Bagaria's characterisation} work for all uncountable cardinals $\kappa$. 

\ref{Bagaria's characterisation 3b} $\Rightarrow$ \ref{Bagaria's characterisation 4b}: 
We assume $2^{<\kappa}=\kappa$. 
Towards a contradiction, suppose that $V[G]$ is a generic extension that adds a new subset of $\gamma<\kappa$. 
% some $p\in \PP$ forces that $\sigma$ is a new subset of $\gamma<\kappa$. 
Note that $2^\gamma\leq \kappa$. 
Let $\vec{y}=\langle y_i \mid i<2^\gamma\rangle$ list all subsets of $\gamma$. 
We define $x \subseteq \gamma\cdot 2^\gamma \subseteq \kappa$ by letting 
$\gamma\cdot i + j \in x \Leftrightarrow j\in y_i$. 
The next formula expresses ``there is a new subset of $\gamma<\kappa$'' as a $\Sigma^1_1(\kappa)$-statement in parameters coding the $+$ and $\cdot$ operations: 
$$\exists z\ [z\subseteq \gamma \wedge \neg\exists i\ \forall j<\gamma\ ( j\in z \Leftrightarrow \gamma\cdot i + j \in x)].$$ 
This contradicts $\Sigma^1_1(\kappa)$-absoluteness. 
 
\ref{Bagaria's characterisation 4b} $\Rightarrow$ \ref{Bagaria's characterisation 3b}: 
Suppose that $\exists x\ \psi(x,y)$ is a $\Sigma^1_1(\kappa)$-formula and $y\in (2^\kappa)^V$. 
Let $T$ be a subtree of $((2^{<\kappa})^{<\omega})^2$ as in Lemma \ref{tree projecting to a set}. 
%$$ T_{f_x}=\{(\bar{u},\bar{t})\in (2^n)^2 \mid n\in\omega,\  (f_x{\upharpoonright}n,\bar{u},\bar{t})\in T \}. $$ 
Let $G$ be $\PP$-generic over $V$ with $V[G]\vDash \exists x\ \psi(x,y)$. 
$V[G]$ does not have new bounded subsets of $\kappa$ by assumption. 
Then $[T]$ has a branch in $V[G]$ by the property of $T$ in Lemma \ref{tree projecting to a set}. 
Since wellfoundedness is absolute, $T$ has a branch $\langle s_n, t_n\mid n\in\omega \rangle$ in $V$. 
Then $\bigcup_{n\in\omega} s_n=f_x$ for some $x\in 2^\kappa$ by the properties of $T$. 
Since 
$$\psi(x,y) \Longleftrightarrow \exists g\ (f_x,g)\in [T],$$ 
 we have $V\models  \psi(x,y)$. 
 
\ref{Bagaria's characterisation 3b} $\Rightarrow$ \ref{Bagaria's characterisation 5b}: 
Note that the implication holds vacuously if $\kappa$ is collapsed in some $\PP$-generic extension of $V$. 
In this case, both \ref{Bagaria's characterisation 3b} and \ref{Bagaria's characterisation 5b} fail, since the statement ``$\kappa$ is not a cardinal'' is $\Sigma^1_1(\kappa)$.  
%We show: since \ref{Bagaria's characterisation 2} holds, $\PP$ forces that $\kappa$ is preserved. 
%Recall for $q\in \PP$, $\PP_q$ denotes the forcing $\PP_q=\{p\in \PP \mid p\leq q \}$. 

We next show: 
if $q\in \PP$ forces that $\kappa^+$ is preserved, then $q \Vdash H_{\kappa^+}^V \prec_{\Sigma_1} H_{\kappa^+}^{V[\dot{G}]}$ holds. 
To see this, let $G$ be $\PP$-generic over $V$ with $q\in G$. 
%Then this is forced by some $q\in \PP$. 
%We show that \ref{Bagaria's characterisation 2b} $\Rightarrow$ \ref{Bagaria's characterisation 3b} holds. 
%We claim that for any $q\in \PP$, the implication \ref{Bagaria's characterisation 2b} $\Rightarrow$ \ref{Bagaria's characterisation 3b} holds for $\PP_q$ if $\kappa^+$ is preserved in every $\PP_q$-generic extension $V[G]$ of $V$. 
Suppose $\psi=\exists x\ \varphi(x,y)$ is a $\Sigma_1$-formula with a parameter $y\in H_{\kappa^+}$. 
We follow the proof of \ref{Bagaria's characterisation 3} $\Rightarrow$ \ref{Bagaria's characterisation 4} in Corollary \ref{Bagaria's characterisation} to construct a $\Sigma^1_1(\kappa)$-formula $\theta$ that is equivalent to $\psi$.  
However, we replace the first condition by: 
\begin{itemize} 
\item 
$\in_M$ is extensional and wellfounded of rank $\gamma$ 
\end{itemize} 
for a fixed $\gamma<(\kappa^+)^V=(\kappa^+)^{V[G]}$. 
If $\psi$ is true, then for sufficiently large $\gamma$, $\theta$ will be true. 
Now we only need to modify the last step of the above proof. 
%Suppose that in $V[G]$, 
Let $C$ be a subset of $\kappa$ such that $(\kappa,p^{-1}[C])\cong (\gamma,<)$. 
Suppose $R$ is a binary relation on $\kappa$. 
The condition ``$R$ is wellfounded of rank ${\leq}\gamma$'' is $\Sigma^1_1(\kappa)$ in $C$, since it is equivalent to the existence of a function $f\colon \kappa\rightarrow \gamma$ such that for all $\alpha,\beta<\kappa$, $(\alpha,\beta)\in R \Rightarrow f(\alpha)<f(\beta)$. 

%We now prove \ref{Bagaria's characterisation 3b} $\Rightarrow$ \ref{Bagaria's characterisation 4b}. 
Towards a contradiction, suppose that there is no inner model with a Woodin cardinal and in some $\PP$-generic extension $V[G]$ of $V$, $H_{\kappa^+}^V \prec_{\Sigma_1} H_{\kappa^+}^{V[G]}$ fails. 
By the previous remarks, $\kappa$ is preserved and $\kappa^+$ is collapsed in $V[G]$. 
Since there is no inner model with a Woodin cardinal, the Jensen-Steel core model $K$ from \cite{jensen2013k} is generically absolute and satisfies $(\lambda^+)^K=\lambda^+$ for all singular cardinals $\lambda$ by \cite[Theorem 1.1]{jensen2013k}. 
Therefore any generic extension $V[G]$ of $V$ which does not collapse $\lambda$ satisfies $(\lambda^+)^V=(\lambda^+)^{V[G]}$. 
For $\lambda=\kappa$, this contradicts our assumption. 
%The proof of \ref{Bagaria's characterisation 4b} $\Rightarrow$ \ref{Bagaria's characterisation 3b} is easy and works as in Corollary \ref{Bagaria's characterisation}. 
% works for all uncountable cardinals $\kappa$. 
\end{proof} 

Can one remove the assumption that there is no inner model with a Woodin cardinal? 
A forcing $\PP$ that witnesses the failure of \ref{Bagaria's characterisation 3b} $\Rightarrow$ \ref{Bagaria's characterisation 5b} must preserve $\kappa$ and collapse $\kappa^+$ by the above proof. 
%The next forcing $\QQ$ is of this form and satisfies \ref{Bagaria's characterisation 3}, but it is open whether it satisfies \ref{Bagaria's characterisation 4}. 
The existence of a forcing $\PP$ with these two properties is consistent relative to the existence of a $\lambda^+$-supercompact cardinal $\lambda$ by a result of Adolf, Apter and Koepke \cite[Theorem 7]{adolf2018singularizing}. 
Their forcing does not add new bounded subsets of $\kappa$ as in \ref{Bagaria's characterisation 4b} and thus also satisfies \ref{Bagaria's characterisation 1b}-\ref{Bagaria's characterisation 3b}. 
However, we do not know if it satisfies \ref{Bagaria's characterisation 5b}. 

\begin{question} 
Is it consistent that there exist an uncountable cardinal $\kappa$ with $\cof(\kappa)=\omega$ and a forcing $\PP$ with the properties: 
\begin{enumerate-(a)} 
\item 
$\PP$ does not add new bounded subsets of $\kappa$ and 
%\item 
%$\PP$ collapses $\kappa^+$. 
\item 
$\Vdash_\PP H_{\kappa^+}^V \prec_{\Sigma_1} H_{\kappa^+}^{V[\dot{G}]}$ fails? 
\end{enumerate-(a)} 
(Thus $\PP$ necessarily collapses $\kappa^+$.) 
\end{question}

%%%%%%%%%%
%\section{A curious connection with Boolean ultrapowers}
\subsection{Boolean ultrapowers} 

In this section, we translate the above correspondence to Boolean ultrapowers and use this to characterise forcing axioms via elementary embeddings. 
%The embeddings come from ultrapowers formed in $V$. 

The Boolean ultrapower construction generalises ultrapowers with respect to ultrafilters on the power set of a set to ultrafilters on arbitrary Boolean algebras. 
We recall the basic definitions from Hamkins' and Seabold's work on Boolean ultrapowers \cite[Section 3]{hamkins2012well}. 
%Hamkins, Seabold: Well-founded Boolean ultrapowers as large cardinal embeddings]. 
Suppose that $\PP$ is a forcing and $\BB$ its Boolean completion. 
Fix an 
%nonprincipal 
ultrafilter $U$ on $\BB$, which may or may not be in the ground model. 
We define two relations $=_U$ and $\in_U$ on $V^\BB$: 
$$\sigma =_U \tau :\Leftrightarrow \llbracket \sigma=\tau \rrbracket \in U$$ 
$$\sigma \in_U \tau :\Leftrightarrow \llbracket \sigma\in\tau \rrbracket \in U$$ 
Let $[\sigma]_U$ denote the equivalence class of $\sigma\in V^\BB$ with respect to $=_U$. 
Let $V^{\BB}/U=\{ [\sigma]_U \mid \sigma\in V^\BB \}$ denote the quotient with respect to $=_U$. 
$\in_U$ is well-defined on equivalence classes and $(V^\BB/U, \in_U)$ is a model of $\ZFC$ \cite[Theorem 3]{hamkins2012well}. 
It is easy to see from these definitions that for any $\PP$-generic filter $G$ over $V$, $V^\BB/G$ is isomorphic to the generic extension $V[G]$. 
Moreover, we can determine the truth of sentences in $V^\BB/U$ via \L os' theorem \cite[Theorem 10]{hamkins2012well}: $V^\BB/U \models \varphi ([\sigma_0]_U,\dots[\sigma_n]_U) \Longleftrightarrow \llbracket \varphi(\sigma_0,\dots,\sigma_n) \rrbracket \in U$. 
In other words, the forcing theorem holds. 

The \emph{Boolean ultrapower} is the subclass 
$$ \check{V}_U= \{ [\sigma]_U \mid \llbracket\sigma\in \check{V}\rrbracket\in U  \} $$ 
%the class of $[\sigma]$ such that some $p\in U$ forces $\sigma\in \check{V}$. 
of $V^{\BB}/U$. It is isomorphic to $V$ if and only if $U$ is generic over $V$. 
The \emph{Boolean ultrapower embedding} is the elementary embedding  
$$j_U\colon V \rightarrow \check{V}_U, \text{ \ } j_U(x) = [\check{x}]_U.$$ 
%How much $V$ and $\check{V}_U$ agree depends upon $U$: 
We are interested in the case that $U$ is an ultrafilter in the ground model. 
In particular, $U$ is not $\PP$-generic over $V$. 
$j_U$ has the following properties: 
\begin{itemize} 
\item 
If $U$ is generic, then $j_U$ is an isomorphism. 
\item 
If $U$ is not generic, then $\check{V}_U$ is ill-founded and $\crit(j_U)$ equals the least size of a maximal antichain in $\BB$ not met by $U$ \cite[Theorem 17]{hamkins2012well}. 
For example, if $\PP$ is c.c.c. then $\crit(j_U)=\omega$. 
\end{itemize} 

For any $x\in V^\BB/U$, let $x^{\in_U}=\{y \in V^\BB/U\mid y \in_U x\}$ denote the set of all $\in_U$-elements of $x$. 
If $\kappa$ is a cardinal and $\sigma$ is a name for a subset of $\kappa$, then $[\sigma]_U^{\in_U}\cap j[\kappa]=j[\sigma^{(U)}]$, since 
$V^\BB/U \models j_U(\alpha)= [\check{\alpha}]_U \in [\sigma]_U \Leftrightarrow \llbracket \check{\alpha}\in \sigma\rrbracket \in U \Leftrightarrow  \alpha\in \sigma^{(U)}$ for all $\alpha<\kappa$. 

\begin{theorem} 
\label{characterisation of forcing axioms by Boolean ultrapowers} 
The following statements are equivalent: 

\begin{enumerate-(1)} 
\item 
\label{characterisation of forcing axioms by Boolean ultrapowers 1} 
$\FA_{\PP,\kappa}$ 

\item 
\label{characterisation of forcing axioms by Boolean ultrapowers 2} 
For any transitive set $M\in H_{\kappa^+}$ and for every $\kappa$-small $M$-name $\sigma$, there is an ultrafilter $U\in V$ on $\PP$ such that 
$$ j_U{\upharpoonright} M \colon M \rightarrow j_U(M)^{\in_U}$$ 
is an elementary embedding from  
$(M,\in,\sigma^U)$ to $(j_U(M)^{\in_U},\in_U,[\sigma]_U)$. 

\item 
\label{characterisation of forcing axioms by Boolean ultrapowers 3} 
For any transitive set $M\in H_{\kappa^+}$ and for any $\kappa$-small $M$-name $\sigma$, there is an ultrafilter $U$ on $\PP$ such that 
$$(M,\in,\sigma^U)\equiv (j_U(M)^{\in_U},\in_U,[\sigma]_U),$$ 
i.e. these structures are elementarily equivalent.  
\end{enumerate-(1)} 
%Moreover, one obtains equivalent versions of \ref{characterisation of forcing axioms by Boolean ultrapowers 2} and \ref{characterisation of forcing axioms by Boolean ultrapowers 3} by replacing $\sigma$ by a rank $1$ name for a subset of $\kappa$, $H_\kappa$ by $\kappa$ and $H_{j_U(\kappa)}^{\check{V}_U}$ by $j_U(\kappa)$. 
\end{theorem}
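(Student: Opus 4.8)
The plan is to deduce the equivalences from the correspondence between forcing axioms and simultaneous name principles. By Theorem \ref{correspondence forcing axioms name principles}, $\FA_{\PP,\kappa}$ is equivalent to $\fo\NP_{\PP,\kappa}(\infty)$, and by Lemma \ref{Lemma_FA iff N} it already follows from the rank $1$ principle $\NP_{\PP,\kappa}$. The bridge to the ultrapower side is \L os' theorem \cite[Theorem 10]{hamkins2012well}: for any $\in$-formula $\chi$, $V^\BB/U\models \chi([\tau_0]_U,\dots,[\tau_n]_U)$ iff $\llbracket \chi(\tau_0,\dots,\tau_n)\rrbracket\in U$. The implication \ref{characterisation of forcing axioms by Boolean ultrapowers 2} $\Rightarrow$ \ref{characterisation of forcing axioms by Boolean ultrapowers 3} is immediate, since an elementary embedding of structures witnesses their elementary equivalence. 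So it remains to prove \ref{characterisation of forcing axioms by Boolean ultrapowers 1} $\Rightarrow$ \ref{characterisation of forcing axioms by Boolean ultrapowers 2} and \ref{characterisation of forcing axioms by Boolean ultrapowers 3} $\Rightarrow$ \ref{characterisation of forcing axioms by Boolean ultrapowers 1}; throughout I may assume that $\sigma$ is forced to be a name for a subset of $M$, so that $\sigma^U$ and $[\sigma]_U$ define unary predicates on $M$ and on $j_U(M)^{\in_U}$ respectively.

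For \ref{characterisation of forcing axioms by Boolean ultrapowers 3} $\Rightarrow$ \ref{characterisation of forcing axioms by Boolean ultrapowers 1} I would argue directly, following the pattern of Lemma \ref{Lemma_FA iff N}. Given a sequence $\langle D_\gamma\mid\gamma<\kappa\rangle$ of dense subsets of $\PP$, set $M=\kappa$ and $\sigma=\{(\check\gamma,p)\mid \gamma<\kappa,\ p\in D_\gamma\}$, a $\kappa$-small rank $1$ $M$-name with $\PP\forces \sigma=\check\kappa$. Since $\llbracket \sigma=\check M\rrbracket=1\in U$, we have $[\sigma]_U=j_U(M)$, so the structure $(j_U(M)^{\in_U},\in_U,[\sigma]_U)$ satisfies the sentence $\forall x\,\dot\sigma(x)$. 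Applying the ultrafilter $U$ provided by \ref{characterisation of forcing axioms by Boolean ultrapowers 3} and elementary equivalence, $(M,\in,\sigma^U)\models \forall x\,\dot\sigma(x)$, i.e. $\sigma^U=\kappa$. Unwinding the recursive interpretation, for each $\gamma<\kappa$ there is $p\in U$ with $(\check\gamma,p)\in\sigma$, that is $p\in U\cap D_\gamma$. Thus $U\in V$ is a filter meeting every $D_\gamma$, witnessing $\FA_{\PP,\kappa}$.

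The implication \ref{characterisation of forcing axioms by Boolean ultrapowers 1} $\Rightarrow$ \ref{characterisation of forcing axioms by Boolean ultrapowers 2} is the main work. Fix $M$ and $\sigma$. The key reduction is that, for every $\in$-formula $\psi$ and $\vec a\in M$, \L os' theorem identifies $(j_U(M)^{\in_U},\in_U,[\sigma]_U)\models\psi(j_U(\vec a))$ with $\llbracket \psi^M(\check{\vec a},\sigma)\rrbracket\in U$, where $\psi^M$ denotes the relativisation of $\psi$ to $M$ with $\dot\sigma$ read as membership in $\sigma$; this relativisation is a bounded $\in$-formula, so internal satisfaction in $V^\BB/U$ of $\psi^M$ coincides with external satisfaction of $\psi$ over the $\in_U$-extension of $j_U(M)$. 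There are at most $\kappa$ many such formulas $\psi^M(\check{\vec a},\sigma)$, since $\lvert M\rvert\leq\kappa$. I would therefore apply $\FA_{\PP,\kappa}$ via Lemma \ref{collection of dense sets to witness first order statement} to the $\leq\kappa$ many dense sets $\mathcal D_{\psi^M}$ and $\mathcal D_{\neg\psi^M}$, together with the sets $\{p\in\BB\mid p\leq\llbracket\psi^M\rrbracket \vee p\leq -\llbracket\psi^M\rrbracket\}$ deciding each Boolean value and, for each $a\in M$, a dense set deciding whether $\check a\in\sigma$ by strong forcing (as in Lemma \ref{Lemma FA bracket interpretation}), obtaining a filter $g\in V$. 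For this $g$ one gets $(M,\in,\sigma^g)\models\psi(\vec a)\iff \llbracket\psi^M\rrbracket\in g$ for all $\psi,\vec a$. Finally I would extend $g$ to an ultrafilter $U\supseteq g$ using $\BPI$, check that $U$ decides each relevant $\llbracket\psi^M\rrbracket$ exactly as $g$ does and that the recursive interpretation is unchanged, $\sigma^U=\sigma^g$, and conclude via the \L os reduction that $j_U{\restriction}M$ is elementary.

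The main obstacle is the bookkeeping in \ref{characterisation of forcing axioms by Boolean ultrapowers 1} $\Rightarrow$ \ref{characterisation of forcing axioms by Boolean ultrapowers 2}, of two kinds. First, the \L os bridge must be set up carefully: one must verify that relativised satisfaction in the ill-founded model $V^\BB/U$ really computes external satisfaction over $(j_U(M)^{\in_U},\in_U,[\sigma]_U)$, which is fine because each $\psi$ is a fixed finite formula and $\psi^M$ only quantifies over $\in_U$-elements of the single set $j_U(M)$. Second, and more delicate, one must ensure that passing from the name-principle filter $g$ to a genuine ultrafilter $U$ neither flips the finitely many Boolean values $\llbracket\psi^M\rrbracket$ already decided by $g$ nor enlarges the recursive interpretation $\sigma^U$ beyond $\sigma^g$; the latter is guaranteed because, for every $a\in M$, the filter $g$ meets a dense set deciding ``$\check a\in\sigma$'' by strong forcing, so membership in $\sigma^U$ is pinned down by conditions already lying in $g$.
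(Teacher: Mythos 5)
Your overall architecture matches the paper's: (2)$\Rightarrow$(3) is immediate; (3)$\Rightarrow$(1) uses the canonical name $\sigma=\{(\check\gamma,p)\mid \gamma<\kappa,\ p\in D_\gamma\}$ and the sentence $\forall x\,\dot\sigma(x)$ exactly as the paper does (your version is if anything slightly more self-contained, since you read off $U\cap D_\gamma\neq\emptyset$ directly instead of routing through Lemma \ref{Lemma FA bracket interpretation}); and (1)$\Rightarrow$(2) combines Lemma \ref{collection of dense sets to witness first order statement} with \L os' theorem and relativisation of formulas to $\check M$, which is the paper's strategy as well.

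The gap is in your transfer from the filter $g$ to the ultrafilter $U$ in (1)$\Rightarrow$(2). You first establish $(M,\in,\sigma^g)\models\psi(\vec a)\iff\llbracket\psi^M\rrbracket\in g$ and then claim that $\sigma^U=\sigma^g$ is guaranteed because $g$ meets, for each $a\in M$, a dense set deciding $\check a\in\sigma$ by strong forcing. That argument only works when $\sigma$ has rank at most $1$. The theorem quantifies over arbitrary $\kappa$-small $M$-names, so $\sigma$ may contain pairs $(\tau,p)$ where $\tau$ is itself a name of positive rank; then $\sigma^U=\{\tau^U\mid \exists p\in U\ (\tau,p)\in\sigma\}$ depends on how $U$ interprets each subname $\tau$, and nothing in your dense sets prevents conditions in $U\setminus g$ from changing $\tau^U$ (take $(\check 0,q)\in\tau$ with $q$ undecided by $g$ but lying in $U$). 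Since the source structure in (2) is $(M,\in,\sigma^U)$ and not $(M,\in,\sigma^g)$, your argument as written proves elementarity for the wrong structure. The repair is what the paper does: do not compare $\sigma^U$ with $\sigma^g$ at all, but observe that $U$ is itself a filter meeting every dense set in $\bigcup_{\varphi}\mathcal D_{\varphi}$ (because $U\supseteq g$), so Lemma \ref{collection of dense sets to witness first order statement} applies directly to $U$ and yields $\varphi(y_0,\dots,y_k,M,\sigma^U)$ in $V$ whenever some $p\in U$ forces $\varphi(\check y_0,\dots,\check y_k,\check M,\sigma)$. With that change the rest of your argument goes through, and your auxiliary dense sets deciding the Boolean values $\llbracket\psi^M\rrbracket$ become unnecessary, since $U$ is an ultrafilter and decides them anyway.
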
 
\begin{proof} 
\ref{characterisation of forcing axioms by Boolean ultrapowers 1} $\Rightarrow$ \ref{characterisation of forcing axioms by Boolean ultrapowers 2}: 
Recall from Lemma \ref{collection of dense sets to witness first order statement} that for any finite sequence $\vec{\sigma}= \sigma_0,\dots,\sigma_k$ of $\kappa$-small names and and every $\Sigma_0$-formula $\varphi(x_0,\dots,x_k)$, there is a collection $\mathcal{D}_{\varphi(\vec{\sigma})}$ of ${\leq}\kappa$ many dense subsets of $\PP$ with the following property: 
if $g$ is any filter meeting every set in $\mathcal{D}_{\varphi(\vec{\sigma})}$ and $g$ contains some $p$ such that $p\forces\varphi(\vec{\sigma})$, then in fact $\varphi(\vec{\sigma^g})$ holds in $V$. 
Let $\mathcal{D}$ be the union of all collections $\mathcal{D}_{\varphi(\vec{\sigma})}$, where $k\in \omega$, $\varphi(x_0,\dots,x_k)$ is a $\Sigma_0$-formula and each $\sigma_i$ is $\sigma$, $\check{M}$ or $\check{x}$ for some $x\in M$. 
By $\FA_{\PP,\kappa}$, there is a filter $g$ which meets all sets in $\mathcal{D}$. 
We extend $g$ to an ultrafilter $U$.

Suppose that $\psi(x_0,\dots,x_k)$ is a formula such that $(j_U(M)^{\in_U},\in_U,[\sigma]_U) \models \psi(j_U(y_0),\dots,j_U(y_k))$. 
We obtain $\varphi(x_0,\dots,x_{k+2})$ by replacing the unbounded quantifiers in $\psi$ by quantifiers bounded by $x_{k+1}$, and any occurence of $[\sigma]_U$ by $x_{k+2}$. 
Then 
$$(V^\BB/U,\in_U) \models \varphi(j_U(y_0),\dots,j_U(y_k),j_U(M),[\sigma]_U).$$ 
Recall that $j_U(y)=[\check{y}]_U$ for all $u\in M$. 
Therefore by \L os' theorem, we have $\llbracket \varphi(\check{y}_0,\dots,\check{y}_k,\check{M},\sigma) \rrbracket\in U$. 
So there is some $p\in U$ with $p\forces \varphi(\check{y}_0,\dots,\check{y}_k,\check{M},\sigma)$. 
Since $U$ meets all dense sets in $\mathcal{D}_{\varphi(\check{y}_0,\dots,\check{y}_k,\check{M},\sigma)}$, 
$$(V,\in)\models \varphi(y_0,\dots,y_k,M,\sigma^U).$$ 
Hence $(M,\in,\sigma^U)\models \psi(y_0,\dots,y_k)$. 

\ref{characterisation of forcing axioms by Boolean ultrapowers 2} $\Rightarrow$ \ref{characterisation of forcing axioms by Boolean ultrapowers 3}: 
This is clear. 

\ref{characterisation of forcing axioms by Boolean ultrapowers 3} $\Rightarrow$ \ref{characterisation of forcing axioms by Boolean ultrapowers 1}: 
Let  $M=\kappa$ and suppose that $\sigma$ is a rank $1$ $M$-name such that $\PP\Vdash \sigma=\check{\kappa}$. 
Then $\sigma^{(g)}=\kappa$ for any filter $g$ on $\PP$. 
It suffices to find a filter $g$ with $\sigma^g=\kappa$ by Lemma \ref{Lemma FA bracket interpretation}. 
Let $U$ be an ultrafilter as in \ref{characterisation of forcing axioms by Boolean ultrapowers 3}. 
%Then $\llbracket \sigma=\check{\kappa} \rrbracket =1\in U$. 
Since $M=\kappa$ and $j_U(M)=j_U(\kappa)=[\check{\kappa}]_U=[\sigma]_U$, we have $(j_U(M)^{\in_U},\in_U,[\sigma]_U)\models \forall x\ x\in_U [\sigma]_U$. 
Thus $(\kappa,\in,\sigma^{U})\models \forall x\ x\in_U \sigma^{U}$ by elementary equivalence.  Thus $\sigma^{U}=\kappa$. 
%Show: $\sigma^U=\sigma^{(U)}$: For any $\alpha\in\kappa$, we have $\alpha\in \sigma^U \Leftrightarrow V^\BB/U \models j_U(\alpha)= [\check{\alpha}]_U \in [\sigma]_U \Leftrightarrow \llbracket \check{\alpha}\in \sigma\rrbracket \in U \Leftrightarrow  \alpha\in \sigma^{(U)}$ 
\end{proof} 

A version of Theorem \ref{characterisation of forcing axioms by Boolean ultrapowers} for $\BFA^\lambda_{\PP,\kappa}$ and $\lambda$-bounded names also holds for any cardinal $\lambda\geq\kappa$. 
The proof is essentially the same.

%%%%%%%%%%
\subsection{An application to $\ub\FA$} 

\begin{lemma} 
\label{ubFA implies BFA} 
If $\PP$ is a complete Boolean algebra that does not add reals, then 
$$(\forall q\in \PP\ \ub\FA_{\PP_q,\omega_1}) \Longrightarrow \BFA_{\PP,\omega_1}^{\omega_1}.$$ 
More generally, if $\kappa$ an uncountable cardinal and $\PP$ is a complete Boolean algebra that does not add bounded subsets of $\kappa$, then 
$$(\forall q\in \PP\ \ub\FA_{\PP_q,\kappa}) \Longrightarrow \BFA_{\PP,\kappa}^\kappa.$$ 
\end{lemma}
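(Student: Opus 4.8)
The plan is to route through the generic-absoluteness characterisation of the bounded forcing axiom and then use the unbounded forcing axiom, applied below a condition, to pull a witnessing branch back into $V$. By Theorem \ref{Bagaria's characterisation} (when $\cof(\kappa)>\omega$) and Theorem \ref{variant of Bagaria's characterisation for countable cofinality} (when $\cof(\kappa)=\omega$), the axiom $\BFA^\kappa_{\PP,\kappa}$ is equivalent to the $\Sigma^1_1(\kappa)$-generic absoluteness $\Vdash_\PP V\prec_{\Sigma^1_1(\kappa)}V[\dot{G}]$, so it suffices to establish this absoluteness from the hypotheses. Fix a $\Sigma^1_1(\kappa)$-formula $\exists x\ \varphi(x,y)$, where $\varphi$ quantifies only over ordinals below $\kappa$, a parameter $y\in(2^\kappa)^V$, and a condition $p$ with $p\Vdash_\PP\exists x\ \varphi(x,\check{y})$; the goal is to produce $x\in V$ with $\varphi(x,y)$. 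I note at the outset that every ordinal ${<}\kappa$ lies in $V$, so once $x\in V$ is found the structure $(\kappa,\in,p,x,y)$ is a ground-model structure and $\varphi(x,y)$ is absolute between $V$ and all its extensions.

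The central device is the approximation tree of Lemma \ref{tree projecting to a set}, which I would run for an arbitrary uncountable $\kappa$ by indexing its levels directly by ordinals $\alpha<\kappa$ rather than by $\cof(\kappa)$. A node at level $\alpha$ is a pair $(s,t)$ in which $s\in(2^\alpha)^V$ approximates the witness and $t$ codes a locally consistent Henkin-style truth table together with a promise function for existential witnesses, all of whose data involves only parameters ${<}\alpha$; at limit levels one takes inverse limits. The two properties I need are: \emph{(soundness)} every branch of length $\kappa$, say $\langle(s_\alpha,t_\alpha)\mid\alpha<\kappa\rangle$, gives a genuine $x=\bigcup_\alpha s_\alpha\in V$ with $\varphi(x,y)$, which is exactly the $\Leftarrow$-direction of Lemma \ref{tree projecting to a set} and whose Henkin bookkeeping is insensitive to $\cof(\kappa)$; and \emph{(density)} the nodes are forced into $V$ densely below $p$. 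For density, fix a name $\dot{x}$ with $p\Vdash\varphi(\dot{x},\check{y})$ and, for each $\alpha<\kappa$, let $D_\alpha$ be the set of $r\le p$ that force $\dot{x}\restriction\alpha$ together with the level-$\alpha$ truth table and promise function of $(\kappa,\in,p,\dot{x},\check{y})$ to equal fixed check-names. Each of these objects is coded by a bounded subset of $\kappa$; since $\PP$ adds no new bounded subsets of $\kappa$, such objects already lie in $V$, so $D_\alpha$ is dense below $p$.

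Now I apply $\ub\FA_{\PP_p,\kappa}$ to $\langle D_\alpha\mid\alpha<\kappa\rangle$. This is the only point at which the universally quantified hypothesis $\forall q\ \ub\FA_{\PP_q,\kappa}$ enters, and it is essential to work below $p$, so that every condition of the resulting filter forces $\varphi(\dot{x},\check{y})$. The axiom yields a filter $g\in V$ on $\PP_p$ meeting $D_\alpha$ for all $\alpha$ in some unbounded set $A\subseteq\kappa$. Since $g$ is directed, the nodes it decides at the levels in $A$ cohere; and because a node at level $\alpha$ determines, by restriction, the nodes at all levels $\beta\le\alpha$, an unbounded coherent family of nodes recovers a node at \emph{every} level $\beta<\kappa$, hence a full branch of length $\kappa$ lying in $V$. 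Soundness then gives $x=\bigcup_\alpha s_\alpha\in V$ with $\varphi(x,y)$, so $V\models\exists x\ \varphi(x,y)$. This establishes $\Sigma^1_1(\kappa)$-absoluteness, and hence $\BFA^\kappa_{\PP,\kappa}$. The $\omega_1$ statement is the special case $\kappa=\omega_1$, using that ``no new bounded subsets of $\omega_1$'' means precisely ``no new reals'' and that $\cof(\omega_1)>\omega$.

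I expect the conceptual heart of the argument to be exactly the point that the \emph{unbounded} axiom suffices in place of full $\FA_{\PP,\kappa}$: one must verify that an unbounded family of coherently decided nodes already reconstructs a genuine length-$\kappa$ branch, which hinges on the inverse-limit structure of the tree at limit levels and on the fact that lower nodes are recovered by restriction. The remaining technical burden is to check carefully that the level-$\alpha$ truth-table and promise data really is a bounded subset of $\kappa$ that gets forced into $V$, so that the $D_\alpha$ are dense; this is precisely the use of the ``no new bounded subsets'' hypothesis and mirrors the density arguments in the proof of Theorem \ref{variant of Bagaria's characterisation for countable cofinality}.
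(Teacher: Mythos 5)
Your proof is correct and follows essentially the same route as the paper's: both reduce $\BFA^\kappa_{\PP,\kappa}$ to $\Sigma^1_1(\kappa)$-generic absoluteness via Theorems \ref{Bagaria's characterisation} and \ref{variant of Bagaria's characterisation for countable cofinality}, and both use the approximation tree of Lemma \ref{tree projecting to a set} together with the unbounded forcing axiom below a condition (the paper phrases this via the equivalent failure of $\ub\NP_{\PP_q,\kappa}$) to recover a branch, with ``no new bounded subsets of $\kappa$'' guaranteeing that the nodes are densely forced into $V$. The only differences are organizational: the paper argues by contrapositive and splits into the cases $\cof(\kappa)=\omega$ (where the tree has height $\omega$, branch existence is absolute, and the $\ub\FA$ hypothesis is not needed at all) and $\cof(\kappa)>\omega$, whereas you run a single direct argument with the tree reindexed by all $\alpha<\kappa$.
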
 
\begin{proof} 
If $\cof(\kappa)=\omega$, then adding no new bounded subsets of $\kappa$ already implies $\BFA^\kappa_{\PP,\kappa}$ by the proof of \ref{Bagaria's characterisation 4b} $\Rightarrow$ \ref{Bagaria's characterisation 3b} in Theorem \ref{variant of Bagaria's characterisation for countable cofinality}. 
Now suppose that $\cof(\kappa)>\omega$. 
Towards a contradiction, suppose that $\BFA^\kappa_{\PP,\kappa}$ fails. 
Then $\Sigma^1_1(\kappa)$-absoluteness fails for some $\Sigma^1_1(\kappa)$-formula $\exists x\ \psi(x,y)$ and some $y\in (2^\kappa)^V$ by Theorem \ref{Bagaria's characterisation}. 
Take a subtree $T$ of $(2^{<\kappa}\times \kappa^{<\kappa})^{<\cof(\kappa)}$ for $\psi$ as in Lemma \ref{tree projecting to a set}. 
Then $[T]\neq \emptyset$ in $V[G]$ in some $\PP$-generic extension $V[G]$, but $[T]=\emptyset$ in $V$. 
Let $\sigma$ denote a rank $1$ $T$-name and let $q\in \PP$ such that $q \Vdash_\PP \sigma \in [T]$. 
Let 
$$ \tau = \{ (\alpha,p)  \mid  p\leq q\wedge \exists s\in \Lev_\alpha(T)\  p \sforces_\PP \check{s}\in \sigma  \} $$ 
Then $\Vdash_{\PP_q} \tau=\kappa$. 
For any filter $g\in V$ on $\PP_q$ we have $\tau^g = \dom(\sigma^g)$. 
But $\dom(\sigma^g)\in \kappa$, since  $[T]=\emptyset$. 
Therefore $\ub\NP_{\PP_q,\kappa}$ fails and hence $\ub\FA_{\PP_q,\kappa}$ fails by Lemma \ref{Lemma ubFA to ubN}.  
%Alternatively, one can apply $\ub\FA_{\PP_q,\kappa}$ directly to the dense sets 
%$$ D_\alpha :=  \{ p\in \PP_q  \mid p \Vdash_{\PP_q} (s,t) \in \Lev_\alpha(T)\}.$$ 
%By $\ub\FA_{\PP,\kappa}$, there is a filter $g$ on $\PP$ and an unbounded subset $S$ of $\kappa$ such that $g\cap D_\alpha\neq\emptyset$ for all $\alpha\in S$. 
%Let $(s_\alpha,t_\alpha)$ denote the unique element of $g\cap D_\alpha$. 
%Let $x=\bigcup_{\alpha\in A} s_\alpha$ and $y=\bigcup_{\alpha\in A} t_\alpha$. 
%Then $(x,y)\in [T]$, contradicting the fact that $[T]=\emptyset$. 
\end{proof} 

We have seen in Lemma \ref{ubFA implies FA for sigma-distributive forcings} that for any ${<}\kappa$-distributive forcing $\PP$, $\ub\FA_{\PP,\kappa}$ implies $\FA_{\PP,\kappa}$. 
In combination with the previous lemma, this begs the question: 

\begin{question} 
\label{question ubFA BFA} 
If $\lambda>\kappa$ is a cardinal and $\PP$ is a complete Boolean algebra that does not add new elements of ${}^{<\kappa}\lambda$, then does the implication 
$$(\forall q\in \PP\ \ub\FA_{\PP_q,\omega_1}) \Longrightarrow \BFA^\lambda_{\PP,\omega_1}$$ 
hold? 
\end{question} 

%The previous remark suggest to ask: 
%
%\begin{question} 
%If $\PP$ is a complete Boolean algebra that does not add reals, then does $\ub\FA_{\PP,\omega_1}$ imply $\FA_{\PP,\omega_1}$? 
%\end{question} 

%%%%%%%%%%
\section{Specific classes of forcings} 
\label{Section specific classes of forcings}  

%%%%%%%%%%
\subsection{Classes of forcings} 

We now move on to look, over the next few sections, at what further results we can prove if we assume $\PP$ is some specific kinds of forcing. We shall mostly return to the rank $1$ cases for this and discuss the $\mathsf{club}$, $\mathsf{stat}$, $\mathsf{ub}$ and $\omega\text{-}\mathsf{ub}$ axioms in Figure \ref{diagram of implications}.

%%%%%%%%%%
\subsubsection{$\sigma$-distributive forcings} 
We begin with a relatively simple case, where $\PP$ is ${<}\kappa$-distributive. In this case, several of our axioms turn out to be equivalent to one another. 
The implications for the class of ${<}\kappa$-distributive forcings are summarised in the next diagram. 

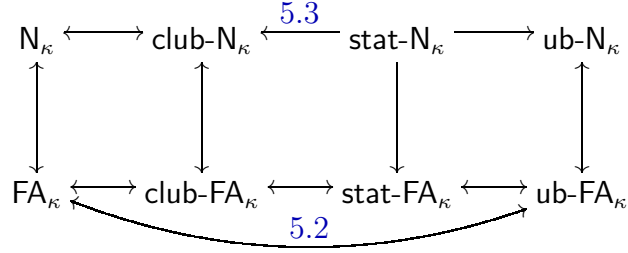
\begin{figure}[H]%[hb]
\[ \xymatrix@R=3.5em{ 
{\txt{$\NP_\kappa$}} \ar@{<->}[r] \ar@{<->}[d] & {\txt{$\club\NP_\kappa$}} \ar@{<->}[d] \ar@{<-}[r]^{\ref{Remark_strength of statNsigma-closed}}  & \txt{$\stat\NP_\kappa$} \ar@{->}[r] \ar@{->}[d] & \txt{$\ub\NP_\kappa$} \ar@{<->}[d]& \\ 
{\txt{$\FA_\kappa$}} \ar@{<->}[r]& \txt{$\club\FA_\kappa$} \ar@{<->}[r] & \txt{$\stat\FA_\kappa$} \ar@{<->}[r] & \txt{$\ub\FA_\kappa$}\ar@/^0.8cm/@{<->}[lll]_{\ref{distributive stat}}  \\ 
%\txt{$\BN^1$} 
%\txt{$\BFA^1$} & \txt{$\club\BFA^1$} & \txt{$\stat\BFA^1$} & \txt{$\ub\BFA^1$} & 
}\]
\caption{Forcing axioms and name principles for any ${<}\kappa$-distributive forcing for regular $\kappa$. 
Lemma \ref{Remark_strength of statNsigma-closed} shows that $\stat\NP_{\PP,\omega_1}$ is strictly stronger than the remaining principles for some $\sigma$-closed forcing $\PP$. 
} 
\label{diagram of implications for distributive forcings} 
\end{figure}

\begin{lemma} 
\label{ubFA implies FA for sigma-distributive forcings} 
For any ${<}\kappa$-distributive forcing $\PP$, $\ub\FA_{\PP,\kappa} \implies\FA_{\PP,\kappa}$. 
\end{lemma}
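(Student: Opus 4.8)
The plan is to use ${<}\kappa$-distributivity to bundle the given dense sets into a $\subseteq$-decreasing chain of dense sets, so that meeting the chain unboundedly often already forces meeting each original dense set. So fix a sequence $\vec{D}=\langle D_\gamma\mid \gamma<\kappa\rangle$ of dense subsets of $\PP$ witnessing the instance of $\FA_{\PP,\kappa}$ to be verified. First I would pass to the downward closures $D_\gamma^{\downarrow}=\{q\in\PP\mid \exists p\in D_\gamma\ q\leq p\}$. Each $D_\gamma^{\downarrow}$ is open (downward closed) and dense, and for any filter $g$ one has $g\cap D_\gamma^{\downarrow}\neq\emptyset$ if and only if $g\cap D_\gamma\neq\emptyset$: the nontrivial direction holds because a filter is upward closed, so any $q\in g$ with $q\leq p\in D_\gamma$ forces $p\in g$, whence $p\in g\cap D_\gamma$.

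Next I would set $E_\gamma=\bigcap_{\delta\leq\gamma}D_\delta^{\downarrow}$ for each $\gamma<\kappa$. Since $\gamma<\kappa$ and $\kappa$ is a cardinal, this is an intersection of strictly fewer than $\kappa$ many dense open sets, so $E_\gamma$ is dense (and open) by ${<}\kappa$-distributivity. By construction the sequence $\vec{E}=\langle E_\gamma\mid\gamma<\kappa\rangle$ is $\subseteq$-decreasing, and any condition lying in $E_\gamma$ lies in $D_\delta^{\downarrow}$ for every $\delta\leq\gamma$. Now I would apply $\ub\FA_{\PP,\kappa}$ to $\vec{E}$ to obtain a filter $g\in V$ with $\Tr_{g,\vec{E}}$ unbounded in $\kappa$. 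Given any $\delta<\kappa$, choose $\gamma\geq\delta$ with $g\cap E_\gamma\neq\emptyset$; the witnessing condition then lies in $D_\delta^{\downarrow}$, so $g\cap D_\delta^{\downarrow}\neq\emptyset$, and hence $g\cap D_\delta\neq\emptyset$ by the equivalence established above. As $\delta<\kappa$ was arbitrary, $g$ meets every $D_\delta$, which is exactly the conclusion of $\FA_{\PP,\kappa}$.

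There is no genuinely hard step here: the whole content is the observation that distributivity lets one replace a $\kappa$-sequence of independent dense requirements by a decreasing chain, on which "unboundedly often" automatically upgrades to "cofinally, and hence for every index". The only points that require care are the bookkeeping between dense and dense open sets (handled by the downward-closure equivalence, which genuinely uses that $g$ is upward closed) and the check that each $E_\gamma$ is an intersection of strictly fewer than $\kappa$ dense open sets, so that ${<}\kappa$-distributivity applies; this uses that $\kappa$ is a cardinal but not that it is regular. Finally I would remark that distributivity cannot be dropped: $\sigma$-centred forcings such as Cohen forcing provably satisfy $\ub\FA_{\PP,\omega_1}$ in $\ZFC$ (Lemma \ref{Lemma stat-NP for sigma-centred}) yet add reals and so are far from ${<}\omega_1$-distributive, and $\FA_{\PP,\omega_1}$ may fail for them.
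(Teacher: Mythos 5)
Your proof is correct and follows essentially the same route as the paper: form the decreasing intersections $E_\gamma=\bigcap_{\delta\leq\gamma}D_\delta$, use ${<}\kappa$-distributivity to see each is dense, and observe that meeting the $E_\gamma$ unboundedly often yields meeting every $D_\delta$. The only difference is your explicit passage to downward closures to reduce dense sets to open dense sets, a bookkeeping step the paper elides by stating its argument directly for open dense sets.
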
 
\begin{proof} 
Given a sequence $\vec{D}=\langle D_i\mid i<\kappa\rangle$ of open dense subsets of $\PP$, let $E_j=\bigcap_{i\leq j}D_i$ for $j<\kappa$. 
If for a filter $g$, $g\cap E_j\neq\emptyset$ for unboundedly many $j<\kappa$, then $g\cap D_i\neq \emptyset$ for all $i<\kappa$. 
\end{proof} 

\begin{lemma}
\label{distributive stat} 
	Let $\PP$ be ${<}\kappa$-distributive. $\stat\NP_{\PP,\kappa}\implies\FA^+_{\PP,\kappa}$
\end{lemma}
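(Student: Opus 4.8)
The plan is to fold the two ``jobs'' of $\FA^+$---meeting all the dense sets and correctly interpreting the stationary name---into a single stationary name, and then to apply $\stat\NP_{\PP,\kappa}$ to that name. The device that turns ``unboundedly many'' into ``all'' is exactly the nesting trick of Lemma \ref{ubFA implies FA for sigma-distributive forcings}, now performed inside the name itself.

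First I would fix the data of $\FA^+_{\PP,\kappa}$: a sequence $\vec{D}=\langle D_\gamma\mid \gamma<\kappa\rangle$ of dense subsets of $\PP$ and a rank $1$ name $\tau$ for a subset of $\kappa$ with $\PP\forces$``$\tau$ is stationary''. Replacing each $D_\gamma$ by its downward closure (which a filter meets iff it meets $D_\gamma$), I may assume the $D_\gamma$ are open. Using ${<}\kappa$-distributivity, I set
\[ E_\gamma=\bigcap_{i\le\gamma}D_i, \]
so each $E_\gamma$ is open dense and the sequence is decreasing, $E_{\gamma'}\subseteq E_\gamma$ for $\gamma\le\gamma'$. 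The decisive observation, as in Lemma \ref{ubFA implies FA for sigma-distributive forcings}, is that for any filter $g$ the set $T_g=\{\gamma<\kappa\mid g\cap E_\gamma\neq\emptyset\}$ is downward closed: if $p\in g\cap E_{\gamma'}$ then $p\in E_\gamma$ for all $\gamma\le\gamma'$. Hence if $T_g$ is unbounded it equals $\kappa$, and then $g$ meets every $D_\gamma\supseteq E_\gamma$.

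The heart of the argument is the rank $1$ name
\[ \rho=\{(\check{\gamma},p)\mid \gamma<\kappa,\ p\in E_\gamma,\ p\sforces \check{\gamma}\in\tau\}. \]
The key claim, which I expect to be the main technical point, is that $1\forces \rho=\tau$. The inclusion $\rho^G\subseteq\tau^G$ is immediate, since $p\sforces\check{\gamma}\in\tau$ gives $p\forces\check{\gamma}\in\tau$ by Proposition \ref{Prop_sforcingAndForcing}. For the reverse inclusion I would use genericity: if $\gamma\in\tau^G$, witnessed by $(\check{\gamma},q)\in\tau$ with $q\in G$, then as $G$ meets the dense set $E_\gamma$ I can pick $p\in G\cap E_\gamma$ and a common extension $r\le p,q$ in $G$; openness of $E_\gamma$ gives $r\in E_\gamma$, while $q\ge r$ with $(\check{\gamma},q)\in\tau$ gives $r\sforces\check{\gamma}\in\tau$, whence $(\check{\gamma},r)\in\rho$ and $\gamma\in\rho^G$. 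Consequently $\PP\forces$``$\rho$ is stationary''.

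Finally I would apply $\stat\NP_{\PP,\kappa}$ to $\rho$ to obtain a filter $g\in V$ with $\rho^g$ stationary, and verify that this single $g$ witnesses $\FA^+_{\PP,\kappa}$. By definition of $\rho$, each $\gamma\in\rho^g$ satisfies $g\cap E_\gamma\neq\emptyset$, so $\rho^g\subseteq T_g$; since $\rho^g$ is stationary it is unbounded, hence $T_g=\kappa$ and $g$ meets every $D_\gamma$. Moreover $\gamma\in\rho^g$ yields some $p\in g$ with $p\sforces\check{\gamma}\in\tau$, so $\gamma\in\tau^g$ by Proposition \ref{Prop_sforcingAndInterpretation}; thus $\rho^g\subseteq\tau^g$ and $\tau^g$ is stationary. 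These are precisely the two conditions demanded of $g$, so $\FA^+_{\PP,\kappa}$ holds. (One may alternatively read off the conclusion through the characterisation of $\FA^+$ in Lemma \ref{Lemma_characterisation_of_FA+}.)
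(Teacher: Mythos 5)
Your proposal is correct and follows essentially the same route as the paper: both interleave the nested open dense sets $E_\gamma=\bigcap_{i\le\gamma}D_i$ (obtained from ${<}\kappa$-distributivity) into a new rank $1$ name forced equal to the given stationary name, apply $\stat\NP_{\PP,\kappa}$ once, and read off both conclusions of $\FA^+$ from the single filter via the downward-closedness of $\{\gamma\mid g\cap E_\gamma\neq\emptyset\}$. Your write-up is in fact somewhat more explicit than the paper's (which leaves the verification that the two names are forced equal, and the binding of its index $j$, to the reader), but the underlying argument is the same.
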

\begin{proof} 
Suppose that $\vec{D}=\langle D_i\mid i<\kappa\rangle$ is a sequence of open dense subsets of $\PP$ and $\sigma=\{(\check{\alpha},p)\mid p\in S_\alpha\}$ is a name with $1\forces_\PP ``\sigma$ is stationary''. 
For each $j<\kappa$, let $E_j=\bigcap_{i\leq j}D_i$. 
For $j<\kappa$ and $p\in\PP$, let $E_{j,p}$ denote a subset of $\{q\in E_j\mid q\leq p\}$ that is dense below $p$. 
Let 
$$\tau=\{(\check{\alpha},q)\mid \alpha<\kappa,\ \exists p\in S_\alpha\ q\in E_{j,p}\}.$$ 
$1\Vdash_\PP ``\tau$ is stationary'', since $1\Vdash_\PP \sigma=\tau$. 
By $\stat\NP_{\PP,\kappa}$, there is a filter $g$ such that $\tau^g$ is stationary. 
By the definition of $\tau$, $\tau^g\subseteq \sigma^g$. 
Thus $\sigma^g$ is stationary. 
We further have $g\cap E_j$ for unboundedly many $j<\kappa$ and hence $g\cap D_i\neq\emptyset$ for all $i<\kappa$. 
\end{proof}

An equivalent argument can be made with names for unbounded sets, or for sets containing a club.

%%%%%%%%%%
\subsubsection{$\sigma$-closed forcings} 
Note that $\FA_{\PP,\omega_1}$ fails for some $\sigma$-distributive forcings, for instance for Suslin trees. 
But $\FA_{\sigma-\mathrm{closed},\omega_1}$ is provable: if $\langle D_\alpha \mid \alpha<\omega_1 \rangle$ is a sequence of dense subsets of a $\sigma$-closed $\PP$, let $\langle p_\alpha\mid \alpha<\omega_1\rangle$ be a decreasing sequence of conditions in $\PP$ with $p_\alpha\in D_\alpha$ and let $g=\{q \in \PP \mid \exists \alpha<\omega_1\ p_\alpha\leq q\}$. 
Therefore, the other principles in Figure \ref{diagram of implications for distributive forcings} are provable, with the exception of $\stat\NP_{\PP,\omega_1} $ by the next lemma. The lemma follows from known results. 

\begin{lemma} 
\label{Remark_strength of statNsigma-closed} 
It is consistent that there is a $\sigma$-closed forcing $\PP$ such that $\stat\NP_{\PP}$ fails. 
\end{lemma}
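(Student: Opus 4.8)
The plan is to produce a single explicit counterexample built from a non‑reflecting stationary set, which is the ``known result'' the statement alludes to. Recall that it is consistent (it holds in $L$, and follows from $\square_{\omega_1}$) that there is a stationary set $E\subseteq\{\xi<\omega_2 : \cf(\xi)=\omega\}$ which is \emph{non-reflecting}, meaning that $E\cap\gamma$ is nonstationary in $\gamma$ for every $\gamma<\omega_2$ with $\cf(\gamma)=\omega_1$. I would work in a model carrying such an $E$ and let $\PP$ be the forcing whose conditions are countable closed bounded subsets $c$ of $\omega_2$, ordered by end-extension (so $c'\leq_\PP c$ iff $c'$ end-extends $c$). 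This $\PP$ is $\sigma$-closed: the union of a descending sequence $c_0\geq_\PP c_1\geq_\PP\cdots$ (so $c_n\subseteq c_{n+1}$) is a countable bounded subset of $\omega_2$, and adjoining its supremum, which lies below $\omega_2$ since $\cf(\omega_2)>\omega$, yields a lower bound. The generic object $D=\bigcup G$ is a closed cofinal subset of $\omega_2$ of order type $\omega_1$ (each condition is countable, so $\otp(D)\leq\omega_1$, while cofinality in $\omega_2$ forces $\otp(D)=\omega_1$); let $e\colon\omega_1\to\omega_2$ be its continuous increasing enumeration.

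I would then consider the rank $1$ name
\[ \sigma=\{(\check\alpha,c) : \alpha<\omega_1,\ \alpha<\otp(c),\ \text{the $\alpha$-th element of }c\in E\}, \]
which is forced to equal $A:=e^{-1}(E)=\{\alpha<\omega_1 : e(\alpha)\in E\}$, and argue that (1) $\Vdash_\PP$ ``$\sigma$ is stationary'', while (2) $\sigma^g$ is nonstationary for every filter $g\in V$. Note that $\stat\NP_\PP$ only requires $\Vdash_\PP$ ``$\sigma$ is stationary'', so $\sigma$ need not be forced equal to any ground model set (indeed $A$ is new); (1) and (2) together exhibit the failure of $\stat\NP_\PP$.

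For (1) the plan is a density argument. Given a condition $c_0$ and a name $\dot C$ for a club in $\omega_1$, I would use that $E$ is \emph{stationary} in $\omega_2$ to fix some $\delta\in E$ above $\max c_0$ together with an increasing $\omega$-sequence cofinal in $\delta$, and then build a descending sequence $c_0\geq_\PP c_1\geq_\PP\cdots$ that simultaneously forces an increasing sequence of ordinals into $\dot C$ and pushes $\max c_n$ upward to $\delta$. Putting $\lambda=\otp(\bigcup_n c_n)$, continuity of $e$ gives $e(\lambda)=\delta\in E$, so $c_\omega:=\bigcup_n c_n\cup\{\delta\}$ forces $\lambda\in A$; moreover $\lambda\in\dot C$ since $\dot C$ is forced to be club and $\lambda$ is the limit of forced members of $\dot C$. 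Hence $A$ meets every club, i.e.\ is stationary.

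The crux, and the reason no ground model filter can reproduce this, is (2), which hinges on the closedness of conditions. Since any two compatible conditions are end-extension comparable, a filter $g\in V$ is a chain, and $D_g:=\bigcup g$ is a \emph{closed} subset of its supremum $\gamma:=\sup D_g$ (a limit point of $D_g$ below $\gamma$ already lies in one of the countable closed conditions in $g$). If $\otp(D_g)<\omega_1$ then $\sigma^g$ is a bounded, hence nonstationary, subset of $\omega_1$. Otherwise $\otp(D_g)=\omega_1$, so $D_g$ is club in $\gamma$ with $\cf(\gamma)=\omega_1$ and $\gamma<\omega_2$, and its enumeration $e_g\colon\omega_1\to\gamma$ is continuous, increasing and cofinal, with $\sigma^g=e_g^{-1}(E\cap\gamma)$. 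By non-reflection $E\cap\gamma$ is nonstationary in $\gamma$, so there is a club $C^*\subseteq\gamma$ disjoint from $E$; then $e_g^{-1}(C^*)$ is club in $\omega_1$ and disjoint from $\sigma^g$, whence $\sigma^g$ is nonstationary. The main obstacle in writing this out is the coordination in the density argument for (1), interleaving the approximation of $\dot C$ with the steering of $e(\lambda)$ into $E$; the structural point that makes (2) go through is precisely that conditions are closed, which forces every ground model trace $e_g$ to be \emph{continuous}, so that non-reflection of $E$ can be applied.
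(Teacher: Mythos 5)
Your construction is correct, but it takes a genuinely different route from the paper. The paper does not exhibit a counterexample at all: it observes that $\stat\NP_\PP$ implies $\FA^+_\PP$ for $\sigma$-closed $\PP$ (Lemma \ref{distributive stat}) and then cites results of Foreman--Magidor--Shelah, Sakai, Trang and Sargsyan to conclude that $\stat\NP_{\mathrm{Col}(\omega_1,\mu)}$ implies the existence of an inner model with a proper class of Woodin cardinals, so the principle must consistently fail. That argument buys a consistency-strength lower bound but is a black box. Your argument is elementary and local: it produces an explicit $\sigma$-closed forcing and an explicit rank $1$ name witnessing the failure in any model carrying a non-reflecting stationary $E\subseteq S^{\omega_2}_\omega$ (so in $L$, or under $\square_{\omega_1}$), and it isolates the right structural mechanism --- a ground model filter is a chain of closed conditions, so its trace enumerates a \emph{closed} set continuously and non-reflection at $\gamma=\sup\bigcup g$ kills stationarity of $\sigma^g$. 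Part (2) is complete as written.

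One point in (1) needs to be pinned down: you cannot take an \emph{arbitrary} $\delta\in E$ above $\max c_0$. To interleave ``force a new element of $\dot C$ above $\otp(c_n)$'' with ``keep $\max c_{n+1}$ below $\delta$'' you need the relevant dense sets to be meetable by conditions whose maximum stays below $\delta$; for a badly chosen $\delta$, every condition deciding the next element of $\dot C$ may overshoot. The standard fix is to choose $\delta=\sup(N\cap\omega_2)$ for a countable $N\prec H_\theta$ with $\PP,\dot C,E,c_0\in N$: conditions in $N$ are countable, hence subsets of $N$, hence have maximum below $\delta$, and an $(N,\PP)$-generic descending sequence through $N$ performs the required interleaving. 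The set of such suprema is an $\omega$-club in $\omega_2$, so it meets $E$ precisely because $E$ is a stationary subset of $S^{\omega_2}_\omega$. With that adjustment, $c_\omega=\bigcup_n c_n\cup\{\delta\}$ forces $\lambda=\otp(\bigl(\bigcup_n c_n\bigr))$ into $\dot C\cap\sigma$ as you describe, and the proof closes.
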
 
\begin{proof} 
It suffices to argue that $\stat\NP_{\PP}$ has large cardinal strength for some $\sigma$-closed forcing $\PP$. 
Note that $\stat\NP_\PP$ implies $\FA^+_\PP$ for any $\sigma$-closed forcing $\PP$ by Lemma \ref{distributive stat}. 
There is a cardinal $\mu\geq\omega_2$ such that $\FA^+_{\mathrm{Col}(\omega_1,\mu)}$ implies the failure of $\square(\kappa)$ for all regular $\kappa\geq\omega_2$ by \cite[Page 20 \& Proposition 14]{foreman1988martin} and \cite[Theorem 2.1]{sakai2015stationary}.\footnote{A more direct argument using \cite[Page 20]{foreman1988martin} and \cite[Theorem 3.8]{velickovic1992forcing} should be possible, but the required results are not explicitly mentioned there.} 
The proofs show that a single collapse suffices for the conclusion. 
%Let $\PP=\mathrm{Col}(\omega_1,H_{\omega_2})$ and suppose that $\FA^+_\PP$ holds. Then the weak reflection principle $\mathsf{WRP}$ holds for $|H_{\omega_2}|$ \cite[Page 20]{foreman1988martin}: for any stationary subset of $H_{\omega_2}$, there is some $X\subseteq H_{\omega_2}$ with $\omega_1\subseteq X$ and $|X|=\omega_1$ such that $S\cap P_\omega(X)$ is stationary in $P_\omega(X)$. In fact, their argument shows that one can choose $X\in \omega_2$. By a result of Velickovic \cite[Theorem 3.8]{velickovic1992forcing}, this implies the failure of 
The failure of $\square(\kappa^+)$ and thus Jensen's $\square_\kappa$ at a singular strong limit cardinal $\kappa$ implies the existence of an inner model with a proper class of Woodin cardinals by \cite[Theorem 0.2]{trang2016pfa} and \cite[Corollary 0.7]{sargsyan2012strength}. 
\end{proof} 

Presaturation of the nonstationary ideal on $\omega_1$ is another interesting consequence of $\stat\NP_{\sigma\text{-}\mathrm{closed},\omega_1}$ (equivalently, of $\FA^+_{\sigma\text{-closed},\omega_1}$) \cite[Theorem 25]{foreman1988martin}. 
Even for very well-behaved $\sigma$-closed forcings $\PP$, $\stat\NP_{\PP,\omega_1}$ is an interesting axiom. 
For instance, Sakai shoed in \cite[Section 3]{sakaiMA} that $\FA^+_{\Add(\omega_1),\omega_1}$ and thus $\stat\NP_{\Add(\omega_1),\omega_1}$ is not provable in $\ZFC$. 
We have not studied the weakest stationary name principle for $\sigma$-closed forcing: 

\begin{question} 
Is $\stat\BN^1_{\sigma\text{-}\mathrm{closed}}$ provable in $\ZFC$? 
\end{question}

%%%%%%%%%%
\subsubsection{c.c.c. forcings} 
\label{section ccc forcings} 
%We study the class of all c.c.c. forcings. 
% and its axioms $\FA_{c.c.c.,\omega_1}$, $\NP_{c.c.c.,\omega_1}$ etc. 
The class of c.c.c. forcings is rather more interesting. It has also historically been a class where forcing axioms have been frequently used; for example $\FA_{\text{c.c.c.},\omega_1}$ is the well-known Martin's Axiom $\MA_{\omega_1}$. 
Note that $\FA_{\PP,\kappa}$ is equivalent to $\BFA^\omega_{\PP,\kappa}$. 

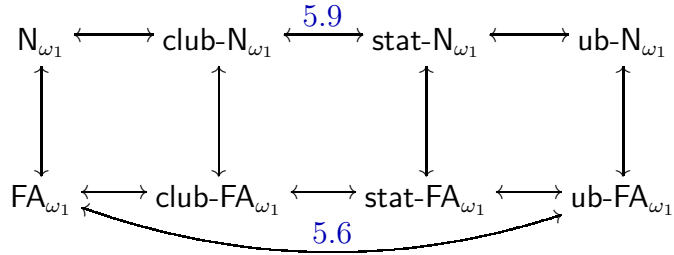
\begin{figure}[H]%[hb]
	\[ \xymatrix@R=3.5em{ 
		{\txt{$\NP_{\omega_1}$}} \ar@{<->}[r] \ar@{<->}[d] & {\txt{$\club\NP_{\omega_1}$}} \ar@{<->}[d] \ar@{<->}[r]^{\ref{Baumgartner's lemma}}  & \txt{$\stat\NP_{\omega_1}$} \ar@{<->}[r] \ar@{<->}[d] & \txt{$\ub\NP_{\omega_1}$} \ar@{<->}[d]& \\ 
		{\txt{$\FA_{\omega_1}$}} \ar@{<->}[r]& \txt{$\club\FA_{\omega_1}$} \ar@{<->}[r] & \txt{$\stat\FA_{\omega_1}$} \ar@{<->}[r] & \txt{$\ub\FA_{\omega_1}$}\ar@/^0.8cm/@{<->}[lll]_{\ref{characterisation of precaliber}}  \\ 
		%\txt{$\BN^1$} 
		%\txt{$\BFA^1$} & \txt{$\club\BFA^1$} & \txt{$\stat\BFA^1$} & \txt{$\ub\BFA^1$} & 
	}\]
	\caption{Forcing axioms and name principles at $\omega_1$ for the class of all c.c.c. forcings. } 
	\label{diagram of implications for ccc forcings} 
\end{figure} 

All principles in Figure \ref{diagram of implications} for $\kappa=\omega_1$ turn out to be equivalent to $\FA_{\omega_1}$. 
The implications are valid for the class of all c.c.c. forcings, but not for all single c.c.c. forcings. 
For instance, for the class of $\sigma$-centred forcings, the right side of Figure \ref{diagram of implications} is provable in $\ZFC$ by Lemma \ref{Lemma stat-NP for sigma-centred}, but the left side is not.

We first derive the implication $\ub\FA_{c.c.c.,\omega_1} \Longrightarrow \FA_{c.c.c.,\omega_1}$ from well-known results. 
Note that this implication does not hold for individual c.c.c. forcings, for instance it fails for Cohen forcing by Lemma \ref{Lemma stat-NP for sigma-centred} and Remark \ref{Remark_FACohen_meagre}. 
We need the following definition: 

\begin{definition} 
\label{definition centred} 
Suppose that $\PP$ is a forcing. 
\begin{enumerate-(1)} 
%\item 
%A subset $A$ of $\PP$ is \emph{linked} if it consists of pairwise compatible conditions. 
%\item 
%$\PP$ is \emph{$\kappa$-Knaster} if, whenever $A\in [\PP]^\kappa$, there is some $B\in [A]^\kappa$ that is linked. 
\item 
A subset $A$ of $\PP$ is \emph{centred} if every finite subset of $A$ has a lower bound in $\PP$. 
%\item 
$A$ is \emph{$\sigma$-centred} if it is a union of countably many centred sets. 
\item 
$\PP$ is \emph{precaliber $\kappa$} if, whenever $A\in [\PP]^\kappa$, there is some $B\in [A]^\kappa$ that is centred. 
\end{enumerate-(1)} 
\end{definition} 

The hard implications in the next lemma are due to Todor\v{c}evi\'c and Veli\v{c}kovi{\'c} \cite{todorcevic1987martin}. 

\begin{lemma} 
\label{characterisation of precaliber} 
The following conditions are equivalent: 
\begin{enumerate-(1)} 
\item 
\label{characterisation of precaliber 1} 
$\ub\FA_{\mathrm{c.c.c.},\omega_1}$ holds.  
\item 
\label{characterisation of precaliber 2} 
Every c.c.c. forcing is precaliber $\omega_1$. 
\item 
\label{characterisation of precaliber 3} 
Every c.c.c. forcing of size $\omega_1$ is $\sigma$-centred. 
\item 
\label{characterisation of precaliber 4} 
$\FA_{\mathrm{c.c.c.},\omega_1}$ holds. 
\end{enumerate-(1)} 
\end{lemma}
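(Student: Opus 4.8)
The plan is to close the cycle $(4)\Rightarrow(1)\Rightarrow(2)\Rightarrow(4)$, leaving $(3)$ inside the Todor\v{c}evi\'c--Veli\v{c}kovi\'c cluster. The implication $(4)\Rightarrow(1)$ is immediate: since $\FA_{\PP,\omega_1}\Rightarrow\ub\FA_{\PP,\omega_1}$ for every forcing by Lemma \ref{Lemma basic FA implications}, the class version $\FA_{\mathrm{c.c.c.},\omega_1}$ yields $\ub\FA_{\mathrm{c.c.c.},\omega_1}$. For the equivalences $(2)\Leftrightarrow(3)\Leftrightarrow(4)$ I would simply invoke the theorem of Todor\v{c}evi\'c and Veli\v{c}kovi\'c \cite{todorcevic1987martin}, which identifies $\MA_{\omega_1}$ with ``every c.c.c.\ forcing has precaliber $\omega_1$'' and with ``every c.c.c.\ forcing of size $\omega_1$ is $\sigma$-centred''. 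These are the hard implications, and I would not attempt to reprove them.

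All the new work is therefore in $(1)\Rightarrow(2)$. Fix a c.c.c.\ forcing $\QQ$ and an uncountable set $A=\{a_\alpha\mid\alpha<\omega_1\}\subseteq\QQ$ of distinct conditions; I must produce an uncountable centred $B\subseteq A$. Call $q\in\QQ$ \emph{positive} if it is compatible with uncountably many $a_\alpha$, and \emph{hereditarily positive} if in addition every $q'\leq q$ is positive. The first step is to show that a hereditarily positive condition $q^*$ exists: otherwise the set $N$ of non-positive conditions (those compatible with only countably many $a_\alpha$) is dense, so a maximal antichain $W\subseteq N$ is countable by the c.c.c.; since $W$ is predense, every $a_\alpha$ is compatible with some $w\in W$, so $A$ is covered by the countably many countable sets $\{a_\alpha\mid a_\alpha\parallel w\}$, contradicting $|A|=\omega_1$.

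The second step is a selection forcing. Let $\PP=\{(q,s)\mid q\leq q^*,\ s\in[\omega_1]^{<\omega},\ \forall\alpha\in s\ q\leq a_\alpha\}$, ordered by $(q',s')\leq(q,s)$ iff $q'\leq q$ and $s'\supseteq s$. Two conditions are compatible exactly when their first coordinates are, so antichains in $\PP$ project to antichains in $\QQ$ and $\PP$ is c.c.c. For $\alpha<\omega_1$ put $D_\alpha=\{(q,s)\in\PP\mid \max s\geq\alpha\}$. These sets are dense: given $(q,s)\in\PP$, the first coordinate satisfies $q\leq q^*$ and is therefore positive, hence compatible with $a_\beta$ for some $\beta\geq\alpha$; any common lower bound $q''\leq q,a_\beta$ then gives $(q'',s\cup\{\beta\})\in D_\alpha$. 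Applying $\ub\FA_{\PP,\omega_1}$, which holds by hypothesis $(1)$, to $\langle D_\alpha\mid\alpha<\omega_1\rangle$ produces a filter $g$ meeting $D_\alpha$ for unboundedly many $\alpha$, so $X=\bigcup\{s\mid (q,s)\in g\}$ is unbounded in $\omega_1$. The upward closure in $\QQ$ of $\{q\mid(q,s)\in g\}$ is a filter containing $a_\alpha$ for every $\alpha\in X$, whence $\{a_\alpha\mid\alpha\in X\}$ is an uncountable centred subset of $A$, as required.

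The main obstacle I anticipate is that $\ub\FA$ only delivers a filter meeting \emph{unboundedly many} dense sets, and a priori such a filter might ``escape'' from $A$ by descending below a condition that is incompatible with all but countably many $a_\alpha$, so that the natural sets $D_\alpha$ fail to be dense and the filter becomes useless. The hereditarily positive condition $q^*$ is precisely what forbids this, and its existence is the one place where the c.c.c.\ hypothesis on $\QQ$ is genuinely used. Beyond this, the only delicate point is invoking the Todor\v{c}evi\'c--Veli\v{c}kovi\'c equivalences in the exact form needed to close the cycle.
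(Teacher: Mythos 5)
Your proposal is correct and follows essentially the same route as the paper: the cycle $(1)\Rightarrow(2)\Rightarrow(3)\Rightarrow(4)\Rightarrow(1)$ with the hard implications $(2)\Rightarrow(3)$ and $(3)\Rightarrow(4)$ cited from Todor\v{c}evi\'c--Veli\v{c}kovi\'c and $(4)\Rightarrow(1)$ immediate. For $(1)\Rightarrow(2)$ the paper merely cites the proof of Jech's Theorem 16.21 with the remark that meeting unboundedly many dense sets suffices; your written-out argument is a correct implementation of exactly that idea (your auxiliary selection forcing $(q,s)$ is a harmless elaboration --- one can work directly with the dense sets $D_\alpha=\{q\leq q^*\mid \exists\beta\geq\alpha\ q\leq a_\beta\}$ in the cone below $q^*$, which is what Jech does).
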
 
\begin{proof} 
\ref{characterisation of precaliber 1}$\Rightarrow$\ref{characterisation of precaliber 2}: This follows immediately from the proof of \cite[Theorem 16.21]{jech2013set}. 
The proof only requires meeting unboundedly many dense sets. 

\ref{characterisation of precaliber 2}$\Rightarrow$\ref{characterisation of precaliber 3}: See \cite[Corollary 2.7]{todorcevic1987martin}. 

\ref{characterisation of precaliber 3}$\Rightarrow$\ref{characterisation of precaliber 4}: See \cite[Theorem 3.3]{todorcevic1987martin}.   

\ref{characterisation of precaliber 4}$\Rightarrow$\ref{characterisation of precaliber 1}: This is immediate. 
\end{proof}

Given Lemma \ref{characterisation of precaliber}, one wonders whether the characterisation also holds for $\sigma$-centered forcings instead of c.c.c. forcings. 
The next lemma together with the fact that $\FA_{\sigma\text{-centred}}$ is equivalent to $\mathfrak{p}>\omega_1$ (see  \cite[Theorem 3.1]{todorcevic1987martin}) shows that this is not the case. 

\begin{lemma} 
\label{Lemma stat-NP for sigma-centred} 
For any cardinal $\kappa$ with $\cof(\kappa)>\omega$, 
$\stat\NP_{\sigma\text{-}\mathrm{centred},\kappa}$ holds. 
\end{lemma}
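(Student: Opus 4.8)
The plan is to combine the covering of $\PP$ by its centred pieces with the $\sigma$-completeness of the nonstationary ideal. Write $\PP=\bigcup_{n<\omega}P_n$, where each $P_n$ is centred, and let $\sigma=\{(\check\alpha,p)\mid \alpha<\kappa,\ p\in S_\alpha\}$ be a rank $1$ name with $\PP\forces$ ``$\sigma$ is stationary''. In $V$, set
\[
T_n=\{\alpha<\kappa\mid \exists p\in P_n\ (\check\alpha,p)\in\sigma\}.
\]
First I would check that $\sigma^G\subseteq\bigcup_{n}T_n$ for any $\PP$-generic $G$: if $\alpha\in\sigma^G$, witnessed by $p\in G\cap S_\alpha$, then $p\in P_n$ for some $n$, and since $(\check\alpha,p)\in\sigma$ this gives $\alpha\in T_n$.

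Next I would argue that some $T_n$ is stationary in $V$, and this is where $\cof(\kappa)>\omega$ enters. Suppose not; then in $V$ choose clubs $C_n\subseteq\kappa$ with $C_n\cap T_n=\emptyset$, and let $C=\bigcap_n C_n$. Since $\cof(\kappa)>\omega$, a countable intersection of clubs is club, so $C$ is club in $V$. Closedness and unboundedness of a subset of $\kappa$ are absolute (they are computed from $C\cap\gamma$ and from $\sup C=\kappa$), so $C$ remains club in $V[G]$; but $\sigma^G\cap C\subseteq\bigcup_n(T_n\cap C)=\emptyset$, contradicting the stationarity of $\sigma^G$ in $V[G]$. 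So I may fix $n$ with $T_n$ stationary and choose witnesses $p_\alpha\in P_n\cap S_\alpha$ for $\alpha\in T_n$.

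It then remains to convert the centred family $\{p_\alpha\mid\alpha\in T_n\}$ into a genuine filter $g\in V$ realizing $T_n$. Since the $p_\alpha$ lie in the centred set $P_n$, they are finitely compatible, hence enjoy the finite intersection property in the Boolean completion $\BB$ of $\PP$; the filter $F$ they generate in $\BB$ is proper and contains every $p_\alpha$. As each $S_\alpha\subseteq\PP$, the interpretation only sees $g:=F\cap\PP$, and $p_\alpha\in g\cap S_\alpha$ yields $\sigma^g=\{\alpha\mid g\cap S_\alpha\neq\emptyset\}\supseteq T_n$, which is stationary.

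The step I expect to be the main obstacle is this final one: ensuring that what we produce is an honest filter \emph{on $\PP$}. In an arbitrary poset a centred set need not extend to a filter — its finite lower bounds need not cohere — and accordingly $F\cap\PP$ need not be downward directed. The construction is transparent when $\PP$ is well-met or a complete Boolean algebra: there the finite meets of the $p_\alpha$ already lie in $\PP$, so the conditions lying above some finite meet form a genuine filter containing all the $p_\alpha$. I would therefore first establish the lemma in that case, and then treat a general $\sigma$-centred $\PP$ by passing to its Boolean completion, taking care to descend from the $\BB$-filter $F$ to a filter on $\PP$ that still meets stationarily many of the $S_\alpha$; this descent, rather than the covering argument, is the crux.
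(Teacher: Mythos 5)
Your argument is essentially the paper's: split $\PP$ into centred pieces $P_n$, use $\cof(\kappa)>\omega$ (countable completeness of the club filter together with the fact that clubs of $V$ remain clubs in $V[G]$) to find one $n$ for which the set of $\alpha$ admitting a witness $(\check\alpha,p)\in\sigma$ with $p\in P_n$ is stationary, and then put all those witnesses into a single filter. Your covering step is carried out correctly, and in fact more explicitly than in the paper, which just fixes one witness $p_\alpha$ per $\alpha$ and applies the pigeonhole to the colouring $\alpha\mapsto f(p_\alpha)$.

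The step you single out as the crux --- converting the centred family $\{p_\alpha\mid\alpha\in T_n\}$ into an honest filter on $\PP$ --- is exactly the step the paper performs in one line (``let $g$ be a filter containing $p_\alpha$ for all $\alpha$'') with no further justification: it implicitly takes for granted that a centred subset of $\PP$ extends to a filter, which is automatic for well-met forcings and for Boolean algebras but not for arbitrary posets. So you have not missed an idea that the paper supplies; you have located the one point where the paper's proof is silent. Your proposed repair via the Boolean completion does not close the gap, as you yourself observe: the trace $F\cap\PP$ of the generated filter need not be downward directed, and one can build $\sigma$-centred posets containing infinite centred sets that extend to no filter whatsoever. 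As written, then, your proof (like the paper's) is complete for well-met $\PP$ and for complete Boolean algebras, and in the fully general case both rest on the same unproved assertion that the chosen witnesses lie in a common filter.
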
 
\begin{proof} 
Suppose that $\sigma$ is name for a stationary subset of $\omega_1$. 
%$\sigma=\{(\alpha,p_\alpha)\mid \alpha\in S\}$ with $S\subseteq \omega_1$ is a small name for a stationary subset of $\omega_1$. 
%It follows that $S$ is stationary. 
Let $f\colon \PP\rightarrow \omega$ witness that $\PP$ is $\sigma$-centered. 
Let $S$ be the stationary set of $\alpha$ such that $p\forces \alpha\in \sigma$ for some $p\in\PP$. 
For each $\alpha\in S$, let $p_\alpha$ be such that $(\alpha,p_\alpha)\in \sigma$. 
There is a stationary subset $R$ of $S$ and $n\in\omega$ with $f(p_\alpha)=n$ for all $\alpha\in R$. 
Let $g$ be a filter containing $p_\alpha$ for all $\alpha\in S$. 
Then $R\subseteq \sigma^g$, as required. 
\end{proof} 

This suggests to ask whether $\FA_{\sigma\text{-}\mathrm{centred}}$ implies $\FA^+_{\sigma\text{-}\mathrm{centred}}$ as well. 

A long-standing open question asks whether one can replace precaliber $\omega_1$ by Knaster in the implication \ref{characterisation of precaliber 2}$\Rightarrow$\ref{characterisation of precaliber 4} of Lemma \ref{characterisation of precaliber}. 
Recall that a subset of $\PP$ is \emph{linked} if it consists of pairwise compatible conditions. 
$\PP$ is called \emph{Knaster} if, whenever $A\in [\PP]^{\omega_1}$, there is some $B\in [A]^{\omega_1}$ that is linked. 
%Lemma \ref{characterisation of precaliber} is connected with the following well-known open question (see \cite[Problem 11.1]{todorvcevic2011forcing}). 

\begin{question} \cite[Problem 11.1]{todorvcevic2011forcing} 
Does the statement that every c.c.c. forcing is  Knaster imply $\FA_{\mathrm{c.c.c.},\omega_1}$? 
\end{question} 

We now turn to the implication $\FA_{\mathrm{c.c.c.},\omega_1} \Longrightarrow \stat\NP_{\mathrm{c.c.c.},\omega_1}$. 
To this end, we reconstruct 
%Next is our reconstruction of 
Baumgartner's unpublished result $\FA_{\mathrm{c.c.c.},\kappa} \Longrightarrow\FA_{\mathrm{c.c.c.},\kappa}^{+n}$ that is mentioned without proof in \cite[Section 8]{baumgartner1984applications} and \cite[Page 14]{beaudoin1991proper}. 
Here $\FA_\kappa^{+n}$ denotes the version of $\FA^+$ with $n$ many names for stationary subsets of $\kappa$. 

\begin{lemma}[Baumgartner] 
\label{Baumgartner's lemma} 
For any uncountable cardinal $\kappa$ and for any $n\in\omega$, 
$\FA_{\mathrm{c.c.c.},\kappa}$ implies $\FA_{\mathrm{c.c.c.},\kappa}^{+n}$. 
\end{lemma}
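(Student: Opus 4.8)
The plan is to derive $\FA^{+n}_{\mathrm{c.c.c.},\kappa}$ from $\FA_{\mathrm{c.c.c.},\kappa}$ by feeding auxiliary c.c.c.\ forcings built from $\PP$ back into the hypothesis, so I may assume $\kappa$ is regular (otherwise stationarity is the wrong notion). Fix a c.c.c.\ forcing $\PP$, rank $1$ names $\sigma_1,\dots,\sigma_n$ with $\PP\forces$``$\sigma_j$ is stationary'' for each $j$, and a sequence $\vec D=\langle D_i\mid i<\kappa\rangle$ of dense sets. Writing $\sigma_j=\{(\check\alpha,p)\mid \alpha<\kappa,\ p\in P^j_\alpha\}$, set $S_j=\{\alpha<\kappa\mid P^j_\alpha\neq\emptyset\}$. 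The first routine step is to note that each $S_j$ is stationary in $V$: for generic $G$ we have $\sigma_j^G\subseteq S_j$, and $\sigma_j^G$ is stationary in $V[G]$, hence in $V$, since a c.c.c.\ forcing adds no club avoiding a ground model stationary set. For $\alpha\in S_j$ fix a witness $p^j_\alpha\in P^j_\alpha$.

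Next I would introduce the natural finite side-condition forcing $\mathbb R$: a condition is a tuple $(q,F_1,\dots,F_n)$ with $q\in\PP$ and each $F_j$ a finite set of pairs $(\alpha,p)$ with $(\check\alpha,p)\in\sigma_j$ and $q\leq p$, ordered by $(q',\vec F')\leq(q,\vec F)$ iff $q'\leq q$ and $F'_j\supseteq F_j$ for all $j$. Then $\mathbb R$ is c.c.c., by a direct argument rather than a $\Delta$-system: if $s\leq q_\xi,q_\eta$ then $s$ already lies below every witness recorded in $F^\xi_j\cup F^\eta_j$, so $(s,\langle F^\xi_j\cup F^\eta_j\rangle_j)$ is a common extension; thus two conditions are compatible as soon as their $\PP$-parts are, and any antichain in $\mathbb R$ projects to an antichain in $\PP$. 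Applying $\FA_{\mathbb R,\kappa}$ (legitimate since $\mathbb R$ is c.c.c.) to the dense sets $\{(q,\vec F)\mid q\in D_i\}$ and $\{(q,\vec F)\mid \exists(\alpha,p)\in F_j\ \alpha>\beta\}$ (dense because each $\sigma_j$ is forced unbounded), and reading off $g=\{r\mid\exists(q,\vec F)\in h\ q\leq r\}$, yields a filter $g$ meeting $\vec D$ with every $\sigma_j^g$ \emph{unbounded}. This is the natural baseline, and it already exposes the real difficulty: meeting only $\kappa$ dense sets can never force a generically assembled set to meet \emph{every} ground model club, because the club filter on $\kappa$ is not generated by $\kappa$ many clubs. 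So the finite-condition forcing cannot by itself upgrade \emph{unbounded} to \emph{stationary}, and this upgrade is the main obstacle.

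To obtain stationarity I would instead pass to a c.c.c.\ subforcing $Q\subseteq\PP$ of size $\leq\kappa$ containing all witnesses $p^j_\alpha$ together with maximal antichains deciding each ``$\check\alpha\in\sigma_j$'', with compatibility absolute between $Q$ and $\PP$, and then prove from $\FA_{\mathrm{c.c.c.},\kappa}$ that $Q$ is a union of fewer than $\kappa$ centred pieces $\langle Q_k\mid k<\mu\rangle$ whose generated filters may simultaneously be required to meet $\vec D$. For $\kappa=\omega_1$ this is precisely the Todor\v{c}evi\'c--Veli\v{c}kovi\'c theorem that $\MA_{\omega_1}$ makes every c.c.c.\ poset of size $\omega_1$ be $\sigma$-centred (Lemma \ref{characterisation of precaliber}); the general case replaces $\sigma$-centredness by a $({<}\kappa)$-centred decomposition. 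Granting this, each witness $p^j_\alpha$ lands in some piece, giving a map $S_j\to\mu$; since $\mathrm{NS}_\kappa$ is $\kappa$-complete and $\mu<\kappa$, pressing down yields a stationary $S'_j\subseteq S_j$ whose witnesses lie in a single centred $Q_k$, and a further pressing-down across the finitely many coordinates locates one centred piece stationary in every coordinate, whose generated filter $g$ meets $\vec D$ and satisfies $\sigma_j^g\supseteq\{\alpha\in S_j\mid p^j_\alpha\in Q_k\}$, stationary for all $j$. I expect the $({<}\kappa)$-centred decomposition theorem (the stationary strengthening of precaliber $\kappa$) to be the crux, with the finiteness of $n$ entering only in the final simultaneity step; the c.c.c.\ verification of $\mathbb R$ and the pressing-down bookkeeping are routine.
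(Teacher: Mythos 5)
Your overall strategy is the same as the paper's: pass to a c.c.c.\ subforcing $\QQ\subseteq\PP$ of size at most $\kappa$ containing the relevant antichains, use the Weiss/Todor\v{c}evi\'c--Veli\v{c}kovi\'c theorem (from $\FA_{\mathrm{c.c.c.},\kappa}$, every c.c.c.\ poset of size $\leq\kappa$ is $\sigma$-centred, and the covering filters can be arranged to meet $\vec D$) to cover $\QQ$ by countably many filters $g_k$, and then argue that some single $g_k$ interprets all the names as stationary. The opening digression via the side-condition forcing $\mathbb R$ is correctly discarded by you as insufficient, and the single-name case of your argument is essentially fine.

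The gap is in the last step, ``a further pressing-down across the finitely many coordinates locates one centred piece stationary in every coordinate.'' For each $j$ separately you do get some index $k_j$ such that $\{\alpha\in S_j\mid p^j_\alpha\in g_{k_j}\}$ is stationary, but nothing forces the $k_j$ to coincide: with $n=2$ and two pieces, all witnesses for $\sigma_1$ could land in $g_0$ and all witnesses for $\sigma_2$ in $g_1$, and no completeness or pressing-down argument produces a single piece that is stationary in both coordinates, because the witnesses $p^1_\alpha$ and $p^2_\beta$ are unrelated conditions. This is exactly where the finiteness of $n$ has to do real work, and it is the step the paper handles differently: for each \emph{tuple} $\vec\alpha\in\kappa^n$ one fixes a maximal antichain $A_{\vec\alpha}$ of conditions that strongly decide $\alpha_i\in\sigma_i$ for \emph{all} $i<n$ simultaneously, so that for $\vec\alpha\in\prod_{i<n}\sigma_i^G$ a \emph{single} condition of $G\cap\QQ$ witnesses the whole tuple; this yields $\prod_{i<n}\sigma_i^G\subseteq\bigcup_{k\in\omega}\prod_{i<n}\sigma_i^{g_k}$, and intersecting the putative avoiding clubs $C_k$ and choosing $\alpha_i\in\sigma_i^G\cap\bigcap_k C_k$ in $V[G]$ gives the contradiction. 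Two smaller points: a centred subset of a general poset need not generate a filter (finite subsets have lower bounds, but not coherently chosen ones), so you should quote the theorem in the form that directly provides a covering by \emph{filters} meeting the prescribed $\kappa$ many dense sets, as the paper does; and the decomposition you need is into countably many pieces, not merely fewer than $\kappa$ --- though either suffices for the completeness argument when $\kappa$ is regular.
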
 
\begin{proof} 
Suppose that for each $i<n$, $\sigma_i$ is a rank $1$ $\PP$-name for a stationary subset of $\omega_1$. 
For each $\vec{\alpha}=\langle \alpha_i\mid i< n\rangle \in \kappa^n$, let $A_{\vec{\alpha}}$ be a maximal antichain of conditions which strongly decide $\alpha\in\sigma_i$ for each $i<k$. 
Let $A=\bigcup_{\vec{\alpha}\in \kappa^n}A_{\vec{\alpha}}$. 
Since $\PP$ satisfies the c.c.c. and $|A|\leq\omega_1$, there exists a subforcing $\QQ \subseteq \PP$ with $A\subseteq \QQ$ and $|\QQ|\leq\omega_1$ such that compatibility is absolute between $\PP$ and $\QQ$. 
In particular, $\QQ$ is c.c.c. 

Since every c.c.c. forcing of size $\omega_1$ is $\sigma$-centred by $\MA_{\omega_1}$ (see \cite[Theorem 4.5]{weiss1984versions}), there is a sequence $\vec{g}=\langle g_k\mid k\in\omega\rangle$ of filters $g_k$ on $\PP$ with $\QQ\subseteq \bigcup_{k\in\omega} g_k$. 
Morover, it follows from the proof of \cite[Theorem 4.5]{weiss1984versions} (by a density argument) that we can choose the filters $g_k$ such that $g_k\cap B_\alpha\neq\emptyset$ for all $(k,\alpha)\in \omega\times \kappa$, where $\vec{B}=\langle B_\alpha\mid \alpha<\kappa\rangle$ is any sequence of dense subsets of $\PP$. (The conditions in the c.c.c. forcing consists of finite approximations to finitely many filters.) 

It remains to  find some $k\in\omega$ such that for all $i<n$, the set $\sigma_i^{g_k}$ is stationary. 
Let $G$ be $\PP$-generic over $V$. 
We claim that   
$$\prod_{i<n} \sigma_i^G \subseteq \bigcup_{k\in\omega}\ \prod_{i<n}\sigma_i^{g_k}.$$ 
To see this, suppose that $\vec{\alpha}=\langle \alpha_i\mid i<n\rangle \in \prod_{i<n} \sigma_i^G $ and let $p \in A_{\vec{\alpha}}\cap G$. 
Then $p\Vdash^+ \bigwedge_{i<n} \alpha_i\in \sigma_i$. 
Since $p\in\QQ$, we have $p\in g_k$ for some $k\in\omega$. 
Hence $\vec{\alpha} \in \prod_{i<n} \sigma_i^{g_k}$. 
Since $\sigma_i^G$ is stationary for all $i<n$, the above inclusion easily yields that there is some $k\in\omega$ such that $\prod_{i<n} \sigma_i^{g_k}$ is stationary. 
\end{proof} 

Our proof of the previous lemma does not work for $\MA^{+\omega}$. 
In fact, Baumgartner asked in \cite[Section 8]{baumgartner1984applications}: 

\begin{question}[Baumgartner 1984] Does $\MA_{\omega_1}$ imply 
%$\MA^{+\omega}_{\omega_1}$, or even 
$\MA^{+\omega_1}_{\omega_1}$? 
\end{question} 

We finally turn to bounded name principles for c.c.c. forcings. 

\begin{lemma} \ 
\label{clubBNccc} 
\begin{enumerate-(1)} 
\item 
\label{clubBNccc 1}
$\club\BN_{\mathrm{c.c.c.}}^1$ holds.
\item
\label{clubBNccc 2}
For any c.c.c. forcing $\PP$, 
$\ub\BN_{\PP}^1$ implies $\ub\FA_\PP$. 
\end{enumerate-(1)} 
\end{lemma}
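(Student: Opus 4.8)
The plan is to prove the two parts separately, in both cases exploiting the classical fact that c.c.c.\ forcings capture ground-model clubs together with the fact (the proposition following Definition~\ref{Defn_Interpretation}) that for a $1$-bounded rank $1$ name the interpretation $\sigma^g$ and the quasi-interpretation $\sigma^{(g)}$ agree for every filter $g$.

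For \ref{clubBNccc 1}, I would fix a c.c.c.\ forcing $\PP$ and a $1$-bounded rank $1$ name $\sigma$ for a subset of $\omega_1$ with $\PP\forces$ ``$\sigma$ contains a club''. By the maximal principle choose a name $\dot C$ with $\PP\forces$ ``$\dot C$ is a club and $\dot C\subseteq\sigma$''. Then I recall the standard c.c.c.\ reflection argument: for each $\alpha<\omega_1$ the conditions deciding $\min(\dot C\setminus\alpha)$ are pairwise incompatible, hence countably many by c.c.c., so the map $\alpha\mapsto\sup\{\beta+1:\exists p\ p\forces\min(\dot C\setminus\alpha)=\beta\}$ stays below $\omega_1$, and its closure points form a ground-model club $C'$ with $\PP\forces C'\subseteq\dot C\subseteq\sigma$. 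Consequently $1\forces\check\alpha\in\sigma$ for every $\alpha\in C'$, so $\alpha\in\sigma^{(g)}$ for every filter $g$; since $\sigma^g=\sigma^{(g)}$ by $1$-boundedness, $C'\subseteq\sigma^g$ for every $g$. Taking the trivial filter $g=\{1_\PP\}$ already gives that $\sigma^g$ contains the club $C'$, as required.

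For \ref{clubBNccc 2}, I would fix a c.c.c.\ forcing $\PP$, assume $\ub\BN^1_\PP$, and let $\vec D=\langle D_\alpha:\alpha<\omega_1\rangle$ be a sequence of predense sets. The idea is to encode $\vec D$ into a single $1$-bounded name. First I replace each $D_\alpha$ by a countable predense set $A_\alpha=\{p_{\alpha,n}:n<\omega\}$ contained in the downward closure $\{q:\exists d\in D_\alpha\ q\le d\}$; such an $A_\alpha$ exists as a maximal antichain of that dense set, is countable by c.c.c., and each $p_{\alpha,n}$ lies below some element of $D_\alpha$. Writing $\gamma_{\alpha,n}:=\omega\cdot\alpha+n$, which gives a bijection $\omega_1\times\omega\to\omega_1$ with $\alpha\le\gamma_{\alpha,n}<\omega\cdot(\alpha+1)$, I set
\[ \sigma=\{(\check{\gamma}_{\alpha,n},\,p_{\alpha,n}):\alpha<\omega_1,\ n<\omega\}, \]
a $1$-bounded rank $1$ name for a subset of $\omega_1$. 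Since each $A_\alpha$ is predense, every generic $G$ meets it, so for cofinally many $\alpha$ some $\gamma_{\alpha,n}$ enters $\sigma^G$; as $\gamma_{\alpha,n}\ge\alpha$, this shows $\PP\forces$ ``$\sigma$ is unbounded''. Now $\ub\BN^1_\PP$ yields a filter $g$ with $\sigma^g$ unbounded; whenever $\gamma_{\alpha,n}\in\sigma^g$ we have $p_{\alpha,n}\in g$ and hence $g\cap D_\alpha\neq\emptyset$, so $\alpha\in\Tr_{g,\vec D}$. Because $\gamma_{\alpha,n}<\omega\cdot(\alpha+1)$, unboundedness of $\sigma^g$ forces these $\alpha$ to be cofinal in $\omega_1$, so $\Tr_{g,\vec D}$ is unbounded and $\ub\FA_\PP$ holds.

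The two c.c.c.\ ingredients (club capture and the existence of countable predense refinements below a predense set) are routine. The step requiring the most care is the bookkeeping in \ref{clubBNccc 2}: I must keep $\sigma$ genuinely $1$-bounded while ensuring that membership in $\sigma^g$ certifies a hit on $D_\alpha$ (this is why the refining antichain must lie below $D_\alpha$) and that the index set $\{\gamma_{\alpha,n}\}$ is cofinal in $\omega_1$ (handled by the pairing $\omega\cdot\alpha+n$). The main conceptual point, however, is simply to notice that $1$-boundedness is exactly what trivializes the club principle in \ref{clubBNccc 1}, by forcing $\sigma^g$ and $\sigma^{(g)}$ to coincide so that a forced ground-model club is realized by \emph{any} filter.
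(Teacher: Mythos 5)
Your proof is correct and follows essentially the same route as the paper: part \ref{clubBNccc 1} reduces to capturing a ground-model club $C'$ with $1\forces \check{C}'\subseteq\sigma$ and then observing that $1$-boundedness makes every filter interpret $C'$ into $\sigma^g$ (the paper notes $(\check\alpha,1)\in\sigma$ directly where you invoke $\sigma^g=\sigma^{(g)}$), and part \ref{clubBNccc 2} uses the identical coding $\sigma=\{(\check{\gamma}_{\alpha,n},p_{\alpha,n})\}$ with $\gamma_{\alpha,n}=\omega\cdot\alpha+n$ over countable (pre)dense refinements given by c.c.c. The only cosmetic difference is that the paper takes maximal antichains inside dense sets while you refine below predense sets; both work.
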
 
\begin{proof} 
\ref{clubBNccc 1} 
If $\sigma$ is a $\PP$-name for a set that contains a club, then by the c.c.c. there is a club $C$ with $1\forces C\subseteq \sigma$. 
Since $\sigma$ is $1$-bounded, $(\alpha,1)\in \sigma$ for all $\alpha\in C$. 
Thus for every filter $g$, we have $C\subseteq \sigma^g$. 

\ref{clubBNccc 2} 
Suppose that $\PP$ satisfies the c.c.c. 
Suppose that $\vec{D}=\langle D_\alpha\mid \alpha<\omega_1\rangle$ is a sequence of dense subsets of $\PP$. 
Let $A_\alpha$ be a maximal antichain in $D_\alpha$ and let $\vec{a}_\alpha=\langle a^n_\alpha\mid n\in\omega\rangle$ enumerate $A_\alpha$. 
(For ease of notation, we assume for that each $A_\alpha$ is infinite.) 
Let $\sigma=\{(\omega\cdot\alpha + n, a^n_\alpha)\mid \alpha<\omega_1,\ n\in\omega\}$. 
By $\ub\BN^1_\PP$, there is a filter $g$ such that $\sigma^g$ is unbounded. 
Hence $D_\alpha\cap g\neq\emptyset$ for unboundedly many $\alpha<\omega_1$. 
\end{proof} 

For any c.c.c. forcing $\PP$, the principles $\ub\BN_{\PP}^1$, $\ub\NP_{\PP}$ and $\ub\FA_{\PP}$ are equivalent by Lemma \ref{clubBNccc} \ref{clubBNccc 2} and the implications in Figure \ref{diagram of implications for ccc forcings}. 
We do not know what is their relationship with $\stat\BN^1_{\mathrm{c.c.c.}}$. 
However, we will show in Lemma \ref{CH implies failure of statN(random)} below that $\stat\BN^1_{\mathrm{random},\omega_1}$ is not provable in $\ZFC$. 

Regarding Lemma \ref{clubBNccc} \ref{clubBNccc 1}, it is also easy to see that $\club\BN_{\sigma\text{-}\mathrm{closed}}^1$ is provable. 
This suggests to ask: 

\begin{question} 
Is $\club\BN_{\PP}^1$ is provable for any proper forcing $\PP$? 
\end{question}

%%%%%%%%%%
\subsection{Specific forcings}

%%%%%%%%%%
\subsubsection{Cohen forcing} 
\label{section - Cohen forcing} 

\iffalse 
%remove/change option hb to change placement of figure 
 \begin{figure}%[hb]
 \label{Figure.Theories}
  \begin{tikzpicture}[theory/.style={draw,rounded corners,scale=.5,minimum height=6.5mm},scale=.3]
     % draw the nodes
     \node (1) at (0,0) [theory, thick,blue!80!black,scale=1.8] {{\parbox[c][1.7cm]{1.8cm}{$\FA^+$ \\ $\FA$ \\ $\club\FA$ \\ $\club\NP$ }}}; 
%     \node (B) at (0,20) {$\cos(x)$};
     \node (2) at (0,-10) [theory, thick,blue!80!black,scale=1.8] {\parbox[c][3.6cm]{1.8cm}{$\club\BN^1$ \\ $\stat\FA$ \\ $\stat\NP$ \\ $\stat\BN^1$ \\ $\ub\FA$ \\ $\ub\NP$ \\ $\ub\BN^1$ \\ $\ZFC$}}; 
     % draw the arrows
     \draw[->]
            (1) edge (2); 
  \end{tikzpicture}
  \caption{Forcing axioms and name principles for Cohen forcing}
\end{figure}
\fi

We will now drop down from classes of forcings, to forcing axioms on specific forcings. This is also where we prove most of the negative results in the diagram from earlier. We start with the simplest, Cohen forcing and let $\kappa=\omega_1$. 
For Cohen forcing, all principles in the right part of the next diagram are provable in $\ZFC$ by Lemma \ref{Lemma stat-NP for sigma-centred} (on $\sigma$-centred forcing) and the basic implications in Figure \ref{diagram of implications}. 
The left part is not provable by Remark \ref{Remark_FACohen_meagre} below. 
% except $\FA_{\omega_1}$ in Figure \ref{diagram of implications} are provable in $\ZFC$. 

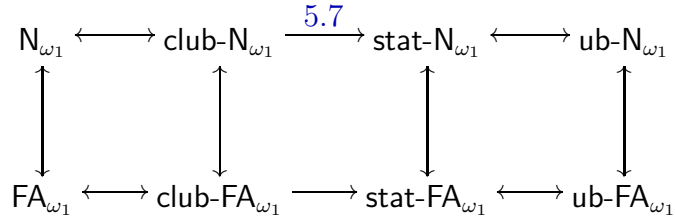
\begin{figure}[H]%[hb]
\[ \xymatrix@R=3.5em{ 
{\txt{$\NP_{\omega_1}$}} \ar@{<->}[r] \ar@{<->}[d] & {\txt{$\club\NP_{\omega_1}$}} \ar@{<->}[d] \ar@{->}[r]^{\ref{Lemma stat-NP for sigma-centred}}  & \txt{$\stat\NP_{\omega_1}$} \ar@{<->}[r] \ar@{<->}[d] & \txt{$\ub\NP_{\omega_1}$} \ar@{<->}[d]& \\ 
{\txt{$\FA_{\omega_1}$}} \ar@{<->}[r]& \txt{$\club\FA_{\omega_1}$} \ar@{->}[r] & \txt{$\stat\FA_{\omega_1}$} \ar@{<->}[r] & \txt{$\ub\FA_{\omega_1}$}  \\ 
%\txt{$\BN^1$} 
%\txt{$\BFA^1$} & \txt{$\club\BFA^1$} & \txt{$\stat\BFA^1$} & \txt{$\ub\BFA^1$} & 
}\]
\caption{Forcing axioms and name principles at $\omega_1$ for Cohen forcing. } 
\label{diagram of implications for Cohen forcing} 
\end{figure} 

%Combining Lemma \ref{Lemma stat-NP for sigma-centred} and Lemma \ref{Lemma statN to statFA}, we see that $\stat\FA$ and $\stat\NP$  always hold on any $\sigma$-centred forcing, including Cohen forcing. 
Our first result is an improvement to Lemma \ref{Lemma stat-NP for sigma-centred}. 
It shows that a simultaneous version of the stationary forcing axiom for countably many sequences of dense sets holds. 

\begin{lemma}  
Let $\PP$ be Cohen forcing and $\kappa$ a cardinal with $\cof(\kappa)>\omega$. 
For each $n\in\omega$, let $\vec{D}_n=\langle D^n_\alpha \mid \alpha<\omega_1 \rangle$ be a sequence of dense sets. 
Then there exists a filter $g\in V$ such that for all $n$, the trace $\Tr_{g,\vec{D}_n}$ is stationary in $\kappa$.\footnote{See Definition \ref{notation_trace}.} 
\end{lemma}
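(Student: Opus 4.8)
The plan is to build the required filter $g$ by a recursion of length $\omega$, handling one sequence $\vec{D}_n$ at each stage. Identify Cohen forcing $\PP$ with $2^{<\omega}$ ordered by $s\leq t \iff s\supseteq t$, so that a filter is generated by any $\subseteq$-chain of conditions and its trace $\Tr_{g,\vec{D}_n}$ records which $D^n_\alpha$ it meets. This is the simultaneous analogue of the single-sequence argument behind Lemma \ref{Lemma stat-NP for sigma-centred}: the crucial features are that Cohen forcing has only countably many conditions, and that the hypothesis $\cof(\kappa)>\omega$ makes the nonstationary ideal $\NS_\kappa$ $\sigma$-complete (the intersection of countably many clubs in $\kappa$ is again club, since a countable increasing sequence of ordinals below $\kappa$ has supremum below $\kappa$, even when $\kappa$ is singular). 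Consequently $\kappa$ is never the union of countably many nonstationary sets.

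The recursion will produce an increasing chain $s_0\subseteq s_1\subseteq\cdots$ of conditions together with stationary sets $R_n\subseteq\kappa$ such that $s_n\in D^n_\alpha$ for every $\alpha\in R_n$. At stage $n$, suppose $s_{n-1}$ has been defined. Since each $D^n_\alpha$ is dense, applying density to $s_{n-1}$ shows that every $\alpha<\kappa$ admits an extension $q\supseteq s_{n-1}$ with $q\in D^n_\alpha$. As there are only countably many conditions $q\supseteq s_{n-1}$, we obtain
\[ \kappa=\bigcup_{q\supseteq s_{n-1}}\{\alpha<\kappa\mid q\in D^n_\alpha\}, \]
a countable union. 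By $\sigma$-completeness of $\NS_\kappa$, one of these sets is stationary, say for $q=s_n$; put $R_n=\{\alpha<\kappa\mid s_n\in D^n_\alpha\}$. This $s_n$ extends $s_{n-1}$, so the chain condition is preserved.

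Finally, let $g$ be the filter generated by $\{s_n\mid n\in\omega\}$; as these conditions form a $\subseteq$-chain, $g=\{t\in 2^{<\omega}\mid \exists n\ t\subseteq s_n\}$ is a genuine filter lying in $V$. For each $n$ and each $\alpha\in R_n$ we have $s_n\in g\cap D^n_\alpha$, whence $R_n\subseteq\Tr_{g,\vec{D}_n}$; since $R_n$ is stationary, so is every trace $\Tr_{g,\vec{D}_n}$, as required.

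The argument is essentially a bookkeeping recursion, so I expect no serious obstacle; the one step needing care is the pigeonhole in the middle paragraph, in particular verifying that $\cof(\kappa)>\omega$ yields $\sigma$-completeness of $\NS_\kappa$ for singular $\kappa$ as well, which is exactly what allows the countably many Cohen conditions above $s_{n-1}$ to cover $\kappa$ by a single stationary piece. Equivalently, one may phrase the whole proof through the names $\sigma_n=\{(\check{\alpha},p)\mid \alpha<\kappa,\ p\in D^n_\alpha\}$, for which $\Tr_{g,\vec{D}_n}=\sigma_n^g$, thereby reading the statement as a simultaneous stationary name principle for Cohen forcing.
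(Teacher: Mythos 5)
Your proof is correct, but it takes a genuinely different route from the paper's. The paper argues by contradiction via the Baire category theorem: assuming no filter works, each branch filter $g_x=\{x\restriction n : n\in\omega\}$ fails for some $n_x$, the sets $A_n=\{x : n_x=n\}$ cannot all be nowhere dense, and a density argument inside a basic open set on which some $A_n$ is dense, combined with intersecting the countably many witnessing clubs $C_x$, yields a contradiction. You instead give a direct recursive construction: a descending chain $s_0\geq s_1\geq\cdots$ of Cohen conditions where at stage $n$ the countability of $\{q\leq s_{n-1}\}$ lets you write $\kappa$ as a countable union $\bigcup_{q\leq s_{n-1}}\{\alpha : q\in D^n_\alpha\}$ and the countable completeness of $\NS_\kappa$ (which is exactly where $\cof(\kappa)>\omega$ enters, for singular $\kappa$ too) picks out a stationary piece $R_n$ and a condition $s_n$ handling $\vec{D}_n$. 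This is really the natural extension to countably many sequences of the pigeonhole argument behind Lemma \ref{Lemma stat-NP for sigma-centred}, exploiting that Cohen forcing is countable rather than merely $\sigma$-centred. What each approach buys: yours is more elementary (no Baire category theorem) and constructive, explicitly exhibiting the stationary sets $R_n\subseteq\Tr_{g,\vec{D}_n}$ and a filter generated by a single chain; the paper's category method has the advantage that it adapts to the subsequent results in that subsection (Lemmas \ref{lemma_omega-FA-Cohen} and \ref{lemma_FA_implies_FA+}), where one must correctly interpret finitely many \emph{names} for stationary sets and the stage-by-stage pigeonhole no longer suffices because the relevant conditions cannot be amalgamated along a single chain. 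Both proofs use $\cof(\kappa)>\omega$ in the same way, namely that countable intersections of clubs in $\kappa$ are club.
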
 
\begin{proof} 
Suppose that there is no filter $g$ as described.
For $x\in 2^\omega$, let us write $g_x$ to denote the filter $\{x{\upharpoonright}n : n\in\omega\}$.
Then for each $x\in 2^\omega$, the filter $g_x$ does not have the required property. 
So there is a natural number $n_x$ and a club $C_x\subseteq \kappa$ with $g_x\cap D^{n_x}_\alpha=\emptyset$ for all $\alpha\in C_x$.
Then the sets $A_n:=\{x\in 2^\omega\mid  n_x=n\}$ partition $2^\omega$. 
By the Baire Category Theorem, not all $A_n$ are nowhere dense. 
So there is some $n\in\omega$ and basic some open subset $N_t=\{x \in 2^\omega \mid t \subseteq x\}$ for some $t\in 2^{<\omega}$ such that $A_n\cap N_t$ is dense in $N_t$. 
%Without loss of generality, we can assume there is $p\in 2^{<\omega}$ such that $U=\{x\in 2^\omega: x \text{ extends } p\}$.
Fix a countable set $D\subseteq A_n\cap U$ which is dense in $U$. 
Let $\alpha$ be an element of the club $\bigcap_{x\in D}C_x$. 
% is a club, since $D$ is countable. 
%Since $C\neq\emptyset$, let $\alpha\in C$.
Let further $u\in D^n_\alpha$ with $u\leq t$. 
Since $D$ is dense in $N_t$, there is some $x\in D\cap N_u$. 
Then $u\in g_x\cap D^n_\alpha$ and hence $g_x \cap D^n_\alpha\neq\emptyset$. 
On the other hand, we have  $x\in A_n$ and hence $n_x=n$. Since also $\alpha\in C_x$, we have  $g_x \cap D^n_\alpha=\emptyset$.
\end{proof} 

Using a variant of the previous proof, we can also improve $\stat\NP_\PP$ to work for finitely many names. 
%Recall that in general, the stationary name principle is stronger than the stationary forcing axiom. For Cohen forcing, however, even the stationary name principle is provable in $\ZFC$.

\begin{lemma} 
\label{lemma_omega-FA-Cohen} 
Let $\PP$ be Cohen forcing and $\kappa$ a cardinal with $\cof(\kappa)>\omega$. 
Suppose that $\vec{\sigma}=\langle \sigma_i \mid i\leq n\rangle$ is a sequence of rank $1$ $\PP$-names such that for each $i\leq n$, $
\PP\Vdash \sigma_i$ is stationary in $\kappa$. 
Then there is a filter $g$ on $\PP$ such that for all $i\leq n$, $\sigma_i^g$ is stationary in $\kappa$. 
In particular, $\stat\NP_{\PP,\kappa}$ holds. 
\end{lemma}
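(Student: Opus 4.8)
The plan is to argue by contradiction and adapt the Baire category argument from the previous lemma, the new ingredient being that we partition $2^\omega$ according to \emph{which} of the finitely many names fails to be interpreted as stationary. So suppose no filter $g\in V$ makes all $\sigma_i^g$ stationary. Identifying a branch $x\in 2^\omega$ with the filter $g_x=\{x{\restriction}m\mid m\in\omega\}$, for each $x$ there is a least $i_x\leq n$ and a club $C_x\subseteq\kappa$ with $\sigma_{i_x}^{g_x}\cap C_x=\emptyset$. The sets $A_i=\{x\mid i_x=i\}$ form a finite partition of $2^\omega$, so (a finite union of nowhere dense sets cannot be all of the Baire space $2^\omega$) some $A_i$ is dense in a basic open set $N_t=\{x\mid t\subseteq x\}$ for some $t\in 2^{<\omega}$; I fix this $i$ and a countable $D\subseteq A_i\cap N_t$ that is dense in $N_t$.

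Two facts then drive the argument. First, since $\cof(\kappa)>\omega$, the club filter is closed under countable intersections, so $C:=\bigcap_{x\in D}C_x$ is a club in $\kappa$; as $C\subseteq C_x$ and $i_x=i$ for each $x\in D$, we get $\sigma_i^{g_x}\cap C=\emptyset$ for all $x\in D$. Second, density of $D$ in $N_t$ yields that every condition $u\leq t$ lies in $g_x$ for some $x\in D$: indeed $D\cap N_u\neq\emptyset$ produces $x\in D$ with $u\in g_x$. Thus $\bigcup_{x\in D}g_x$ contains all conditions below $t$.

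To reach a contradiction I would use that $\PP\forces$ ``$\sigma_i$ is stationary'' together with the fact that the ground-model club $C$ remains a club in every Cohen extension (closure, and unboundedness of the fixed set $C$ with $\sup C=\kappa$, are both absolute). Hence $t\forces \sigma_i\cap\check C\neq\emptyset$, so there are $u\leq t$ and $\alpha\in C$ with $u\forces\check\alpha\in\sigma_i$. Applying Proposition \ref{Prop_sforcingAndForcing} I strengthen $u$ to some $u'\leq u\leq t$ with $u'\sforces\check\alpha\in\sigma_i$. By the second fact above, $u'\in g_x$ for some $x\in D$, and then Proposition \ref{Prop_sforcingAndInterpretation} gives $\alpha=\check\alpha^{g_x}\in\sigma_i^{g_x}$; but $\alpha\in C$, contradicting $\sigma_i^{g_x}\cap C=\emptyset$. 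The final assertion $\stat\NP_{\PP,\kappa}$ is the special case $n=0$. The step requiring the most care — and the main obstacle — is exactly this last extraction: one cannot directly choose a pair $(\check\alpha,u)\in\sigma_i$ with $\alpha\in C$, so one must pass through strong forcing and exploit that $\bigcup_{x\in D}g_x$ exhausts the conditions below $t$, guaranteeing that the strongly-forcing condition $u'$ lands inside one of the relevant filters $g_x$.
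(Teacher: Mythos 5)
Your proof is correct, but it is not the route the paper takes for this lemma. The paper isolates the claim that \emph{every} dense $D\subseteq 2^\omega$ contains some $x$ with all $\sigma_i^{g_x}$ stationary, and proves it by intersecting the clubs $C_x$ for $x\in D$ into a single club $C$ and then finding \emph{one} condition $p$ which, after finitely many strengthenings, strongly forces some $\alpha_i\in C$ into $\sigma_i$ for every $i\leq n$ simultaneously; any $x\in D$ extending $p$ then yields the contradiction. You instead transplant the partition-and-category argument from the preceding lemma on countably many sequences of dense sets: split $2^\omega$ by the least failing index, localize to a single index $i$ and a single cone $N_t$, and then produce a witness for just the one name $\sigma_i$ below $t$. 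Both arguments are sound and share the core ingredients (branches as filters, countable intersections of clubs via $\cof(\kappa)>\omega$, passage from forcing to strong forcing for rank $1$ names, where $\tilde{\tau}$ in Proposition \ref{Prop_sforcingAndForcing} is necessarily a check name forced equal to $\check\alpha$, hence equal to it). What the paper's version buys is the standalone claim about arbitrary dense subsets of $2^\omega$, which is reused in the proof of Lemma \ref{lemma_FA_implies_FA+}; your version does not state that claim, though it adapts to it since a finite partition of a dense set has a somewhere-dense piece. What your version buys is more notable: finiteness of the list of names enters only through the pigeonhole step that a finite union of nowhere dense sets cannot cover $2^\omega$, and the Baire category theorem serves equally well for countably many pieces, so your argument appears to extend verbatim to $\omega$ many names --- precisely the case where the paper says its own method breaks down (the obstruction there, needing a condition below infinitely many $p_i$, never arises for you, since after localization you handle a single name). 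If that extension survives careful checking, it is worth recording explicitly.
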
 
\begin{proof}
As in the previous proof, let $g_x=\{x{\upharpoonright}n : n\in\omega\}$ for $x\in 2^\omega$. 
The result will follow from the next claim. 

\begin{claim} 
\label{claim_omega-FA-Cohen} 
If $D$ is any dense subset of $2^\omega$, then there is some $x\in D$ such that $\sigma_i^{g_x}$ is stationary in $\kappa$ for all $i\leq n$. 
\end{claim} 
\begin{proof} 
%Towards a contradiction, suppose that no such $x\in D$ exists. 
We can assume that $D$ is countable. 
If the claim fails, then for each $x\in D$, there is  some $i\leq n$ and a club $C_x$ such that $\sigma_i^{g_x} \cap C_x=\emptyset$. 
Then $C:=\bigcap_{x\in D} C_x$ is a club. 
Moreover, for each $x\in D$, there is some $i\leq n$ such that $\sigma_i^{g_x} \cap C=\emptyset$. 
There is some $p\in \PP$ such that for each $i\leq n$, there is some $\alpha_i\in C$ such that $p\forces \check{\alpha}_i\in\sigma_i$. 
By Lemma \ref{Prop_sforcingAndForcing}, we can assume that $p\sforces \check{\alpha}_i\in\sigma_i$ for all $i\leq n$. 
%We claim that there is a condition $p$ and some $\alpha\in C$ such that $p\sforces \check{\alpha}\in\sigma$. Certainly $1\forces ``\exists \beta \in \sigma \cap C$, since $C$ is a club in any generic extension. Take $q$ which decides $\beta$; so $q\forces \check{\alpha}\in \sigma$ for some $\alpha$. By \ref{Prop_sforcingAndForcing} we can find $p\leq q$ with $p\sforces \check{\alpha}\in \sigma$.
Now, since $D$ is dense, we can find some $x\in D$ with $p\subseteq x$. 
Then $p\in g_x$, so by \ref{Prop_sforcingAndInterpretation} we conclude $\alpha_i\in \sigma_i^{g_x}$ for all $i\leq n$. 
This contradicts the above property of $C$. 
\end{proof} 
This completes the proof of Lemma \ref{lemma_omega-FA-Cohen}. 
%Thus $\alpha \in\sigma^{g_x}\cap C\neq \emptyset$. 
\end{proof} 

Given the previous result about $\stat\FA$, we might expect to be able to correctly interpret $\omega$ many names. But 
%this does not seem to be the case; or at least 
the above proof does not work: 
it breaks down where we introduce $p$. For each $i$, we can find $p_i$ strongly forcing $\alpha_i\in \sigma_i$; but then we would want to take some $p$ that was below every $p_i$ and that is only possible in $\sigma$-closed forcings.

We can, however, apply the same technique in the presence of $\FA$ to prove $\FA^+$.

\begin{lemma}
\label{lemma_FA_implies_FA+} 
Let $\PP$ be Cohen forcing and $\kappa$ a cardinal with $\cof(\kappa)>\omega$. 
Then $\FA_\PP$ implies $\FA^+_\PP$. 
\end{lemma}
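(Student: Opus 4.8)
The plan is to prove $\FA^+_{\PP,\kappa}$ directly from its definition, using $\FA_{\PP,\kappa}$ only to manufacture a countable \emph{covering} family of branches and then running a Baire-category argument of exactly the same shape as Claim \ref{claim_omega-FA-Cohen}. Fix a sequence $\vec D=\langle D_\gamma\mid\gamma<\kappa\rangle$ of dense subsets of $\PP=2^{<\omega}$ and a rank $1$ name $\sigma$ with $\PP\forces$``$\sigma$ is stationary''; as before write $g_x=\{x{\restriction}n\mid n\in\omega\}$ for $x\in 2^\omega$, and set
\[ U=\{x\in 2^\omega\mid g_x\cap D_\gamma\neq\emptyset\text{ for all }\gamma<\kappa\}. \]
The first and crucial step is to upgrade the mere nonemptiness of $U$ (which is all $\FA_{\PP,\kappa}$ literally asserts) to topological density of $U$ in $2^\omega$. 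Given $t\in 2^{<\omega}$, the cone $\PP_t$ is isomorphic to Cohen forcing and each $D_\gamma$ remains dense below $t$, so applying $\FA_{\PP,\kappa}$ to this isomorphic copy produces a filter meeting every $D_\gamma$ and containing $t$; such a filter is $g_x$ for some $x\supseteq t$, so $U\cap N_t\neq\emptyset$. This is the only place the hypothesis is used.

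Next I would enumerate $2^{<\omega}=\{t_k\mid k\in\omega\}$ and, using density of $U$, choose $x_k\in U\cap N_{t_k}$ for each $k$. The countable family $\langle g_{x_k}\mid k\in\omega\rangle$ then has two properties: each $g_{x_k}$ meets every $D_\gamma$ (since $x_k\in U$), and the family \emph{covers} $\PP$, because any $s\in 2^{<\omega}$ is some $t_k$ and then $s\subseteq x_k$ gives $s\in g_{x_k}$. Covering is what allows a single condition that strongly forces a value into $\sigma$ to be realized inside one of the $g_{x_k}$.

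Finally I would close with the argument of Claim \ref{claim_omega-FA-Cohen} applied to this family. Suppose toward a contradiction that $\sigma^{g_{x_k}}$ is nonstationary for every $k$, and pick clubs $C_k\subseteq\kappa$ with $\sigma^{g_{x_k}}\cap C_k=\emptyset$; since $\cof(\kappa)>\omega$, the intersection $C=\bigcap_{k}C_k$ is a club. As $\PP\forces\sigma\cap\check C\neq\emptyset$, there are $\alpha\in C$ and $p\in\PP$ with $(\check\alpha,p)\in\sigma$, hence $p\sforces\check\alpha\in\sigma$; by covering $p\in g_{x_k}$ for some $k$, and Proposition \ref{Prop_sforcingAndInterpretation} gives $\alpha\in\sigma^{g_{x_k}}$, contradicting $\alpha\in C\subseteq C_k$. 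Therefore some $g_{x_k}$ simultaneously meets all $D_\gamma$ and interprets $\sigma$ as a stationary set, which is precisely $\FA^+_{\PP,\kappa}$. The main obstacle is conceptual rather than computational: meeting all $\kappa$ dense sets while avoiding \emph{every} club would naively require intersecting up to $2^\kappa$ many comeager/dense sets, far beyond what $\FA_{\PP,\kappa}$ can supply, so the heart of the proof is the trick of converting $\FA_{\PP,\kappa}$ into density of $U$ and then thinning to a countable covering family---this reduces the club bookkeeping to the single countable intersection that the ZFC proof of $\stat\NP_{\PP,\kappa}$ already handles.
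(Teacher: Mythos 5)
Your proof is correct and follows essentially the same route as the paper: you establish that $U=\{x\mid g_x\cap D_\gamma\neq\emptyset\text{ for all }\gamma\}$ is dense in $2^\omega$ via $\FA$ applied to the (isomorphic) cones, and then run the countable-intersection-of-clubs argument of Claim \ref{claim_omega-FA-Cohen}, with your explicit countable covering family $\langle g_{x_k}\rangle$ playing the role of the countable dense subset used there. The only (harmless) differences are that you inline a single-name version of that claim rather than citing it for finitely many names, and that the filter produced on a cone is contained in (rather than equal to) some $g_x$ — which is all the density argument needs.
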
 
\begin{proof} 
We will in fact prove a stronger version for finitely many names. 
Suppose that $\vec{\sigma}=\langle \sigma_i \mid i\leq n\rangle$ is a sequence of rank $1$ $\PP$-names such that for each $i\leq n$, $
\PP\Vdash \sigma_i$ is stationary in $\kappa$. 
%Let $\sigma_i$ be a rank $1$ name for a stationary set for each $i\leq n$, and l
Suppose that $\vec{D}=\langle D_\alpha\mid  \alpha<\kappa\rangle$ is a sequence of dense open sets. 
Then 
$$D:=\{x\in 2^\omega\mid \forall \alpha<\kappa\ \exists p\in D_\alpha\ p\subseteq x\}$$ 
consists of all reals $x$ such that $g_x \cap D_\alpha\neq\emptyset$ for all $\alpha<\omega_1$. 

The next claim suffices. 
By Claim \ref{claim_omega-FA-Cohen}, it implies that for some $x\in D$, $\sigma_i^{g_x}$ is stationary for all $i\leq n$. 

\begin{claim} 
$D$ is dense in $2^\omega$. 
\end{claim} 
\begin{proof} 
Fix $q\in \PP$; we will find some $x\in D$ with $q\subseteq x$. 
%that there is some $x\in R$ which extends $p$. 
Since the forcing $\PP_q:=\{p\in \PP\mid p\leq q\}$ is isomorphic to Cohen forcing via the map $r\mapsto q^\smallfrown r$, $\FA_{\PP_q}$ holds. Hence, we can find a filter $g$ on $\PP_q$ which meets $D_\alpha \cap \PP_q$ for every $\alpha<\omega_1$. 
%Now, 
$\cup g$ is an element of $2^{\leq \omega}$ with $q\subseteq \cup g$ by compatibility of elements of a filter. 
Then any real $x$ with $\cup g \subseteq x$ satisfies $x\in D$ and $q\subseteq x$. 
%If $\cup g\in 2^{<\omega}$, then extend it in some arbitrary way to a real $x$. If $\cup g \in 2^\omega$ then just take $x=g$. In either case we have that 
%Then $x\in D$ and $x$ extends $p$. 
\end{proof} 
Lemma \ref{lemma_FA_implies_FA+} follows. 
\end{proof}

\begin{remark} 
\label{Remark_FACohen_meagre} 
Note that $\FA_{\text{Cohen},\omega_1}$ also has a well known characterisation via sets of reals: 
it is equivalent to the statement that the union of $\omega_1$ many meagre sets does not cover $2^\omega$. 
In particular, $\FA_{\text{Cohen},\omega_1}$ is not provable in $\ZFC$. 
\end{remark}

%%%%%%%%%%
\subsubsection{Random forcing} 

The product topology on $2^\omega$ is induced by the basic open sets $N_t= \{ x\in 2^\omega \mid t\subseteq x \}$ for $t\in 2^{<\omega}$. 
\emph{Lebesgue measure} is by definition the unique measure $\mu$ on the Borel subsets of $2^\omega$ with $\mu(N_t)=\frac{2}{2^n}$. 
% or \emph{uniform measure}. 

\begin{definition} 
\emph{Random forcing} $\PP$ is the set of Borel subsets of $2^\omega$ with positive Lebesgue measure. 
$\PP$ is quasi-ordered by inclusion, i.e. $p\leq q :\Leftrightarrow p\subseteq q$ for $p,q \in \PP$. 
\end{definition} 

Strictly speaking, random forcing is the partial order obtained by taking the quotient of the preorder, where two conditions are equivalent if their symmetric difference has measure $0$. 
To simplify notation, we will talk about Borel sets of positive measure as if they were conditions in random forcing.

\begin{figure}[H]%[hb]
\[ \xymatrix@R=3.5em{ 
{\txt{$\NP_{\omega_1}$}} \ar@{<->}[r] \ar@{<->}[d] & {\txt{$\club\NP_{\omega_1}$}} \ar@{<->}[d] \ar@{<..}[r]^{\ref{Lemma random statN implies FA+}}  & \txt{$\stat\NP_{\omega_1}$} \ar@{..>}[r] \ar@{..>}[d] & \txt{$\ub\NP_{\omega_1}$} \ar@{<->}[d]& \\ 
{\txt{$\FA_{\omega_1}$}} \ar@{<->}[r]& \txt{$\club\FA_{\omega_1}$} \ar@{<->}[r] & \txt{$\stat\FA_{\omega_1}$} \ar@{<->}[r] & \txt{$\ub\FA_{\omega_1}$}\ar@/^0.8cm/@{<->}[lll]_{\ref{Lemma_Random ubFA}}  \\ 
%\txt{$\BN^1$} 
%\txt{$\BFA^1$} & \txt{$\club\BFA^1$} & \txt{$\stat\BFA^1$} & \txt{$\ub\BFA^1$} & 
}\]
\caption{Forcing axioms and name principles at $\omega_1$ for random forcing. } 
\label{diagram of implications for random forcing} 
\end{figure}
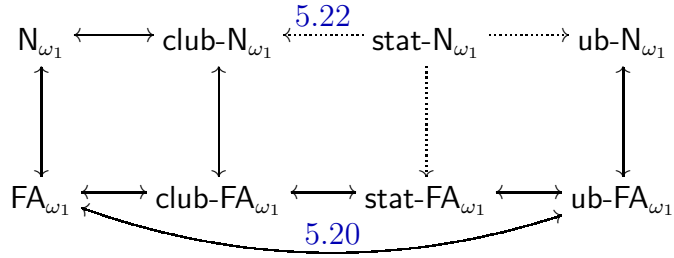

We have seen in Lemma \ref{Lemma stat-NP for sigma-centred} and the following remark that $\ub\FA_\PP$ implies $\FA_\PP$ for $\sigma$-centred forcings. 
However, random forcing is not $\sigma$-centred by \cite[Lemma 3.7]{brendle2009forcing}. 
The implication still holds: 

\begin{lemma}
	\label{Lemma_Random ubFA}
	Let $\PP$ denote random forcing. The following are equivalent:
	\begin{enumerate-(1)}
		\item 
		\label{Lemma_Random ubFA 1}
		$\FA_{\PP,\omega_1}$
		\item 
		$\ub\FA_{\PP,\omega_1}$
	    \label{Lemma_Random ubFA 2}
		\item 
		$2^\omega$ is not the union of $\omega_1$ many null sets
        \label{Lemma_Random ubFA 3}
	\end{enumerate-(1)}
\end{lemma}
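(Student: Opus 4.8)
The plan is to establish the cycle $(1)\Rightarrow(2)\Rightarrow(3)\Rightarrow(1)$. The implication $(1)\Rightarrow(2)$ is immediate from Lemma~\ref{Lemma basic FA implications}, since $\FA_{\PP,\omega_1}$ always implies $\ub\FA_{\PP,\omega_1}$. The equivalence $(1)\Leftrightarrow(3)$ is in essence the classical computation of the Martin number of random forcing, $\mathfrak{m}(\mathrm{random})=\mathrm{cov}(\mathcal{N})$; I would nonetheless give the two forcing-theoretic arguments directly, the nontrivial one being $(3)\Rightarrow(1)$. So the genuinely new content is the single implication $(2)\Rightarrow(3)$, which I expect to be the main obstacle; everything else is bookkeeping around it.

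For $(3)\Rightarrow(1)$ I would use the Lebesgue density theorem to convert a point into a filter. Given dense open sets $\langle D_\alpha\mid\alpha<\omega_1\rangle$, use the c.c.c.\ to pick a countable maximal antichain $A_\alpha\subseteq D_\alpha$, so that $U_\alpha:=\bigcup A_\alpha$ is conull. Let $N_\alpha$ be the null set of points $x$ that either lie outside $U_\alpha$ or fail to be a Lebesgue density point of the (unique, mod null) element of $A_\alpha$ containing them. Assuming $(3)$, fix $x\notin\bigcup_\alpha N_\alpha$ and set $g=\{q\in\PP\mid x\text{ is a density point of }q\}$. A short density computation shows that $g$ is a filter (density points are inherited by supersets, and if $x$ is a density point of both $p$ and $q$ then $\mu(p\cap q)>0$), and by construction $g$ meets every $D_\alpha$; this gives $\FA_{\PP,\omega_1}$. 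For the contrapositive of $(1)\Rightarrow(3)$, a cover $2^\omega=\bigcup_\alpha N_\alpha$ yields a witness to the failure of $\FA$ by combining the dense sets $\{p\mid p\cap N_\alpha=\emptyset\}$ with the countably many bit-deciding dense sets $\{p\mid p\subseteq[s]\text{ or }p\cap[s]=\emptyset\}$: a filter meeting all of them would determine a ground-model point avoiding every $N_\alpha$, which is impossible.

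The hard part is $(2)\Rightarrow(3)$, which I would prove in contrapositive form, passing through the name principle $\ub\NP_{\PP,\omega_1}$ (equivalent to $\ub\FA_{\PP,\omega_1}$ by Lemmas~\ref{Lemma ubN to ubFA} and~\ref{Lemma ubFA to ubN}). Assuming $\mathrm{cov}(\mathcal{N})=\omega_1$, fix an increasing cover $\langle N_\alpha\mid\alpha<\omega_1\rangle$ by null sets, so that the conull sets $U_\alpha=2^\omega\setminus N_\alpha$ decrease with $\bigcap_\alpha U_\alpha=\emptyset$. I would then aim to construct a rank $1$ name $\sigma$ for an unbounded subset of $\omega_1$ for which $\sigma^g$ is bounded for \emph{every} ground-model filter $g$. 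The delicate point, and the real obstacle, is that a filter is only finitely directed, so meeting the sequence unboundedly often (equivalently, uncountably often) cannot by itself pin down any prescribed countable amount of information about the generic real: an $\omega_1$-indexed requirement taking values in $\omega$ must repeat on an uncountable set, and two uncountable subsets of $\omega_1$ can be disjoint, so no fixed countable subfamily of dense sets is guaranteed to be met. The plan is to defeat this not by pinning bits but by exploiting the measure structure: one arranges the dense sets so that any filter meeting uncountably many of them must have infimum $0$ in the measure algebra and its conditions concentrate (via Lebesgue density together with a Borel--Cantelli estimate reflecting that a random real avoids every ground-model null set) around a single point $x$; since the met indices are then uncountable and the $U_\alpha$ decrease, $x$ would lie in cofinally, hence all, of the $U_\alpha$, contradicting $\bigcap_\alpha U_\alpha=\emptyset$. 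Arranging this concentration uniformly for every ground-model filter, and not merely for the ``density-point'' filters produced in $(3)\Rightarrow(1)$, is where I expect the genuine difficulty to lie.
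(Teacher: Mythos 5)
Your cycle $(1)\Rightarrow(2)\Rightarrow(3)\Rightarrow(1)$ matches the paper's decomposition, and your $(3)\Rightarrow(1)$ is essentially the paper's argument: it defines $T_\alpha$ as the union of the density points of the members of a countable maximal antichain refining $D_\alpha$, shows $S_\alpha=2^\omega\setminus T_\alpha$ is null via the Lebesgue density theorem, and reads off a filter $g_x=\{B\mid x\in D(B)\}$ from a point $x$ avoiding all the $S_\alpha$. The problem is $(2)\Rightarrow(3)$: you have not proved it. What you offer is a plan (``arrange the dense sets so that any filter meeting uncountably many of them must concentrate around a single point $x$'') together with an explicit admission that you do not know how to carry it out (``is where I expect the genuine difficulty to lie''). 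That is the one implication on which the whole lemma rests, so as it stands the proof has a genuine gap.

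Moreover, the obstacle you identify --- that a filter is only finitely directed and so cannot pin down countably much information about a real --- is illusory, and the paper's proof of $(2)\Rightarrow(3)$ is short. Work in contrapositive: let $2^\omega=\bigcup_{\alpha<\omega_1}S_\alpha$ with the $S_\alpha$ null and, without loss of generality, \emph{increasing}. Let $D_\alpha$ be the set of \emph{closed} positive-measure sets contained in $2^\omega\setminus S_\alpha$; this is dense by inner regularity. Now take any filter $g\in V$ and extend it to an ultrafilter. For each $n$ the clopen sets $N_t$, $t\in 2^n$, partition $2^\omega$, so exactly one lies in the ultrafilter, and these cohere to a single point $x$ with $N_t\in g$ for all $t\subseteq x$; by pairwise compatibility, $x$ lies in the \emph{closure} of every condition in $g$ --- no density, Borel--Cantelli, or concentration estimate is needed, only that the conditions in $D_\alpha$ are closed, so that ``in the closure'' becomes ``in the set''. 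If $g$ met $D_\alpha$ for unboundedly many $\alpha$, then $x\notin S_\alpha$ for unboundedly many $\alpha$, hence (since the $S_\alpha$ increase) for all $\alpha$, contradicting $\bigcup_\alpha S_\alpha=2^\omega$. So no filter meets unboundedly many $D_\alpha$ and $\ub\FA_{\PP,\omega_1}$ fails. The two devices you are missing are precisely the monotonicity of the cover (which converts ``unboundedly many'' into ``all'') and the restriction to closed conditions (which converts the topological information carried by an arbitrary ground-model filter into actual membership). A smaller remark: your direct $(1)\Rightarrow(3)$ sketch has the same soft spot --- the point determined by a filter lands only in the closures of its conditions, so the dense sets $\{p\mid p\cap N_\alpha=\emptyset\}$ must also be required to consist of closed sets --- but that implication is redundant in your cycle anyway.
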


The equivalence of \ref{Lemma_Random ubFA 1} and \ref{Lemma_Random ubFA 3} is a well-known fact, but we really interested in the equivalence of \ref{Lemma_Random ubFA 1} and \ref{Lemma_Random ubFA 2}.
The proof of \ref{Lemma_Random ubFA 2}$\Rightarrow$\ref{Lemma_Random ubFA 3} also works for certain  forcings of the form $\PP_I$. 
$\PP_I$ consists of all  Borel subsets $B\notin I$ of $2^\omega$, where $I$ is a $\sigma$-ideal on the Borel subsets of the Cantor space, ordered by inclusion up to sets in $I$. 
For  \ref{Lemma_Random ubFA 2}$\Rightarrow$\ref{Lemma_Random ubFA 3}, it suffices that the set of closed $p\in \PP$ is dense in $\PP$ and $N_t\notin I$ for all $t\in 2^{<\omega}$. 
If additionally \ref{Lemma_Random ubFA 3}$\Rightarrow$\ref{Lemma_Random ubFA 1} holds, then $\ub\FA_{\PP_I,\omega_1}$ implies $\FA_{\PP_I,\omega_1}$.

\begin{proof}
	\ref{Lemma_Random ubFA 1}$\Rightarrow$\ref{Lemma_Random ubFA 2}: Immediate.
	
	\ref{Lemma_Random ubFA 2}$\Rightarrow$\ref{Lemma_Random ubFA 3}: We prove the contrapositive. Suppose $2^\omega = \bigcup_{\alpha<\omega_1} S_\alpha$, where $S_\alpha \subseteq 2^\omega$ has measure $0$. Without loss of generality, we may assume that $\langle S_\alpha \mid \alpha<\omega_1 \rangle$ is an increasing sequence; i.e. $\alpha<\beta<\omega_1\Rightarrow S_\alpha \subseteq S_\beta$. 
Then 
$$D_\alpha=\{B\in \PP\mid  B\subseteq 2^\omega\setminus S_\alpha \text{ and } B \text{ is closed}\}$$ 
%It is easy to see $D_\alpha$ 
is dense. 

Let $g\in V$ be a filter. Without loss of generality, assume $g$ is an ultrafilter. 
Then for any $n\in\omega$, there is some $t\in 2^n$ with $N_t\in g$. 
%sets of arbitrarily small positive measure. 
It follows that there is a unique $x\in 2^\omega$ such that $N_t\in g$ for all $t\subseteq x$. 
%Clearly $x$ is unique. 
%By standard arguments, there is a unique $x\in 2^\omega$ such that $g$ contains sets of arbitrarily small measure which contain $x$. 
It is easy to check that $x$ is in the closure of any element of $g$. 

Towards a contradiction, suppose that for unboundedly many $\alpha$ we can find $B_\alpha\in D_\alpha\cap g$. Then $B_\alpha$ is closed, so $x\in B_\alpha\subseteq 2^\omega \setminus S_\alpha$ so $x\not \in S_\alpha$. 
This contradicts the assumptions that  
%Contradiction, since we assumed 
$2^\omega = \bigcup S_\alpha$ and the $S_\alpha$ are increasing.
	
	\ref{Lemma_Random ubFA 3}$Rightarrow$\ref{Lemma_Random ubFA 1}: Again we prove the contrapositive. Let $\langle D_\alpha\mid  \alpha<\omega_1\rangle$ be a sequence of predense sets such that there is no filter in $V$ meeting all of them. $\PP$ has the c.c.c., so without loss of generality we may assume every $D_\alpha$ is countable.
	
	Fix the following notation. 
	Recall that $x\in 2^\omega$ is a density point of $B$ if $\frac{\mu(B\cap N_{(x\vert k)})}{\mu(N_{(x\vert k)})}$ tends to $1$ as $k$ tends to infinity. 
	For $B\in\PP$, let $D(B)$ be the set of density points of $B$. 	
	For $\alpha<\omega_1$, let 
	$$T_\alpha=\bigcup_{B\in D_\alpha}D(B) \text{ \ \ \ and \ \ \ } S_\alpha =2^\omega\setminus T_\alpha.$$ 
	
	We first show that $S_\alpha$ is a null set. 
	To see this, suppose that $S_\alpha$ has positive measure. Then we can find a closed subset $C\subseteq S_\alpha$ with positive measure. 
	%$C\in \PP$ so by density 
	Since $D_\alpha$ is predense, we can find some $B\in D_\alpha$ with $\mu(B\cap C)>0$. 
	 Since $B\triangle D(B)$ is null by Lebesgue's Density Theorem, we have $\mu(D(B)\cap C)>0$. 
	 This contradicts $D(B)\cap C\subseteq  T_\alpha \cap C= \emptyset$. 
	 
	 	%Hence $S_\alpha$ is null.
	
	We now show $\bigcup_{\alpha<\omega_1}S_\alpha=2^\omega$. 
	To see this, take any $x\in 2^\omega$ and let 
	$$g_x=\{B\in \PP: x\in D(B)\}$$ 
	denote the filter generated by $x$. 
	%Recall that there is no filter $g$ with $g\cap D_\alpha\neq\emptyset $ for all $\alpha<\omega_1$. 
    Take $\alpha<\omega_1$ such that $g_x\cap D_\alpha=\emptyset$. 
    We show that $x\in S_\alpha$, as required. 
	Otherwise $x\in T_\alpha$, 
	%For $\alpha<\omega_1$, we show that either $x\in S_\alpha$ or $g_x\cap D_\alpha\neq\emptyset$. Suppose $x\not \in S_\alpha$. Then $x\in T_\alpha$, 
	so we can find $B\in D_\alpha$ with $x\in D(B)$. But then $B\in g_x\cap D_\alpha$. 
	This contradicts $g_x\cap D_\alpha=\emptyset$. 
	%But for some $\alpha<\omega_1$, we know $g_x\cap D_\alpha=\emptyset$. Hence $x\in\bigcup_{\alpha<\omega_1}S_\alpha$. 
	%As $x\in 2^\omega$ was arbitrary, $\bigcup_{\alpha<\omega_1}S_\alpha=2^\omega$.
\end{proof}

Combining the proofs of 	\ref{Lemma_Random ubFA 2}$\Rightarrow$\ref{Lemma_Random ubFA 3} and 	\ref{Lemma_Random ubFA 3}$\Rightarrow$\ref{Lemma_Random ubFA 1}, we can obtain the following refinement:

\begin{lemma}
\label{corollary_dense subsets for ubFA and FA} 
Let $\PP$ be random forcing. 
	Let $\langle D_\alpha\mid \alpha<\omega_1\rangle$ be a collection of predense sets. 
	%We can find 
	There exists another collection $\langle D'_\alpha\mid \alpha<\omega_1\rangle$ of dense sets, such that if a filter $g$ meets unboundedly many $D'_\alpha$, then it can be extended to a filter $g'$ which meets every $D_\alpha$.
\end{lemma}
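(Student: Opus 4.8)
The plan is to merge the two contrapositive arguments in the proof of Lemma \ref{Lemma_Random ubFA} into a single direct construction. First I would reduce to the case that each $D_\alpha$ is countable, using that random forcing is c.c.c. Then, exactly as in the implication \ref{Lemma_Random ubFA 3}$\Rightarrow$\ref{Lemma_Random ubFA 1}, I would attach to the given predense sets the sets $S_\alpha := 2^\omega \setminus \bigcup_{B \in D_\alpha} D(B)$, where $D(B)$ denotes the set of density points of $B$. Predensity of $D_\alpha$ guarantees that $\bigcup_{B \in D_\alpha} B$ is co-null, and since each $D(B)$ agrees with $B$ up to a null set and $D_\alpha$ is countable, each $S_\alpha$ is null. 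The decisive feature, recorded for later use, is that whenever $x \notin S_\alpha$ the collection $g_x := \{B \in \PP : x \in D(B)\}$ meets $D_\alpha$; here $g_x$ is genuinely a filter because a common density point of $B_0,B_1$ is a density point of $B_0 \cap B_1$ (which is therefore of positive measure). Replacing $S_\alpha$ by $\widehat S_\alpha := \bigcup_{\beta \le \alpha} S_\beta$ keeps the sets null, being a countable union, while making them increasing, and now $x \notin \widehat S_\alpha$ forces $g_x$ to meet $D_\beta$ for every $\beta \le \alpha$.

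Next, following the implication \ref{Lemma_Random ubFA 2}$\Rightarrow$\ref{Lemma_Random ubFA 3}, I would define $D'_\alpha := \{B \in \PP : B \text{ is closed and } B \subseteq 2^\omega \setminus \widehat S_\alpha\}$. Since $\widehat S_\alpha$ is null, for any condition $C$ the set $C \setminus \widehat S_\alpha$ has full relative measure and hence contains a closed set of positive measure, so $D'_\alpha$ is dense. This is the required family.

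Now suppose a filter $g$ meets $D'_\alpha$ for unboundedly many $\alpha$. I would extend $g$ to an ultrafilter $\bar g$ on $\PP$ and pass to the canonical point $x \in 2^\omega$ determined by $\bar g$, exactly as in the proof of Lemma \ref{Lemma_Random ubFA}: $N_t \in \bar g$ precisely when $t \subseteq x$, and $x$ lies in the closure of every element of $\bar g$. For each $\alpha$ in the unbounded set choose a witness $B_\alpha \in g \cap D'_\alpha$; as $B_\alpha$ is closed we get $x \in \overline{B_\alpha} = B_\alpha \subseteq 2^\omega \setminus \widehat S_\alpha$, so $x \notin \widehat S_\alpha$. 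Since this happens for unboundedly many $\alpha$ and the $\widehat S_\alpha$ increase, $x \notin \widehat S_\alpha$ for all $\alpha$, whence $x \notin \bigcup_\alpha S_\alpha$. By the property recorded above, $g_x$ then meets every $D_\alpha$, and I would take $g'$ to be the filter generated by $g \cup g_x$.

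The main obstacle is the very last step: verifying that $g \cup g_x$ generates a proper filter, so that $g'$ genuinely extends $g$ while still meeting every $D_\alpha$ through $g_x$. The danger is that some $C \in g$ could be incompatible with a density-point condition $B \in g_x$, since the ultrafilter point $x$ need only be a closure point, not a density point, of the elements of $g$. I expect to resolve this by arranging $x$ to be a common density point of the witnessing conditions $B_\alpha$ rather than a mere member of them, which is possible because each closed positive-measure $B_\alpha$ has $D(B_\alpha)$ co-null in $B_\alpha$, and then invoking Lebesgue's density theorem to see that finite intersections of elements of $g$ with density-point conditions retain positive measure. This compatibility verification, parallel to the density-point bookkeeping already carried out in Lemma \ref{Lemma_Random ubFA}, is the only nonroutine part; everything else is a direct transcription of the two cited arguments.
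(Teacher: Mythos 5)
Your construction of the auxiliary apparatus coincides exactly with the paper's: the null sets $S_\alpha$ built from density points of the countable (by c.c.c.) predense sets, their increasing unions, the dense sets $D'_\alpha$ of closed conditions avoiding them (dense by inner regularity), the fact that a closed witness in $g\cap D'_\alpha$ places the relevant point outside $\widehat S_\alpha$, and the conclusion that $g_x$ then meets every $D_\alpha$. The divergence --- and the genuine gap --- is the endgame. The paper never passes to an ultrafilter: it takes a point $x$ with $g\subseteq g_x$, i.e.\ a common density point of \emph{all} members of $g$, and then simply sets $g':=g_x$, so that no amalgamation of two filters ever arises. You instead take the canonical point $x$ of an ultrafilter extending $g$, for which $x$ is merely a \emph{closure} point of the members of $g$, and are left having to show that $g\cup g_x$ generates a filter. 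You flag this honestly, but your proposed repair does not close it.

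Concretely, two things go wrong. First, compatibility is needed between the chosen $D_\alpha$-witnesses in $g_x$ and \emph{every} $C\in g$, not only the witnessing conditions $B_\alpha\in g\cap D'_\alpha$; and a closure point of a closed $C\in g$ can have Lebesgue density $0$ for $C$ (e.g.\ $C=\{x\}\cup\bigcup_k F_k$ with $F_k$ closed subsets of $N_{x\restriction k}$ of rapidly shrinking relative measure). At such a point one can build a closed positive-measure $B$ with $x\in D(B)$ and $\mu(B\cap C)=0$, so $B$ and $C$ are incompatible as random conditions; Lebesgue's density theorem gives no lower bound on $\mu(B\cap C)$ where $C$ has density $0$, so ``finite intersections retain positive measure'' fails. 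Second, ``arranging $x$ to be a common density point of the witnessing conditions'' is not available: the ultrafilter point carries no density information, and an infinite family of closed conditions in a filter need not admit \emph{any} common density point --- one can construct a decreasing sequence $\langle B_n\mid n\in\omega\rangle$ of closed positive-measure sets with $\bigcap_n B_n=\{y\}$ while, in each coordinate block, a small fraction of the extensions of the all-zero prefix is deleted so that the relative measure of every $B_n$ dips on arbitrarily small neighbourhoods of $y$; then $D(B_n)\subseteq B_n$ pins all candidates to $y$, yet $y\notin D(B_n)$ for every $n$, so $\bigcap_n D(B_n)=\emptyset$. Thus your route genuinely needs what the paper's proof asserts outright --- an $x$ with $g\subseteq g_x$ --- and neither the ultrafilter detour nor density of the witnesses alone can supply it; the final step of the proposal is a real gap, not a routine verification.
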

\begin{proof}
	Define $S_\alpha$ as in the proof of 	\ref{Lemma_Random ubFA 3}$\Rightarrow$\ref{Lemma_Random ubFA 1}. 
	Then for any $x\in 2^\omega$, we have $g_x\cap D_\alpha\neq\emptyset$ or $x \in S_\alpha$. 	
	Consider the null sets $S'_\alpha=\bigcup_{\beta<\alpha}S_\beta$. 
	%These are still null sets. 	
	Then define $D'_\alpha$ from $S'_\alpha$ in the same way we defined $D_\alpha$ from $S_\alpha$ in the proof of \ref{Lemma_Random ubFA 2}$\Rightarrow$\ref{Lemma_Random ubFA 3}. 
As in the proof of \ref{Lemma_Random ubFA 2}$\Rightarrow$\ref{Lemma_Random ubFA 3}, we obtain the following for any  $x \in 2^\omega$ and $\alpha<\omega_1$: if $g_x\cap D'_\alpha \neq\emptyset$, then $x\notin S'_\alpha$. 
Let $g$ be a filter which meets unboundedly many $D'_\alpha$. 
Then $g\subseteq g_x$ for some $x\in 2^\omega$. 
We have seen that $x\notin S'_\alpha$ for unboundedly many $\alpha$. 
Therefore $x$ misses all $S'_\alpha$ and all $S_\alpha$. 
By the choice of the $S_\alpha$, we have $g_x\cap D_\alpha\neq\emptyset$ for all $\alpha<\omega_1$. 
\end{proof}

This then allows us to prove that $\stat\NP$ alone gives us the full $\FA^+$.

\begin{lemma} 
\label{Lemma random statN implies FA+} 
Let $\PP$ be random forcing. 
Then $\stat\NP_\PP\implies\FA^+_\PP$. 
\end{lemma}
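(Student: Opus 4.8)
The plan is to adapt the argument of Lemma~\ref{distributive stat}, where the key move was to thin out the conditions appearing in a name for a stationary set so that landing in the interpretation also forces a dense set to be met. For a $<\kappa$-distributive forcing that thinning used the intersections $E_j=\bigcap_{i\le j}D_i$, which need not be dense for random forcing; the substitute is the sequence $\langle D'_\alpha\mid\alpha<\omega_1\rangle$ produced by Lemma~\ref{corollary_dense subsets for ubFA and FA}, for which meeting merely \emph{unboundedly many} $D'_\alpha$ is enough to extend to a filter meeting \emph{every} $D_\alpha$. Concretely, I would fix an instance of $\FA^+_\PP$, that is a sequence $\vec D=\langle D_\alpha\mid\alpha<\omega_1\rangle$ of dense sets and a rank $1$ name $\sigma=\{(\check\alpha,p)\mid\alpha<\omega_1,\ p\in S_\alpha\}$ with $\PP\forces$``$\sigma$ is stationary''. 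Passing to $\langle D'_\alpha\rangle$ via Lemma~\ref{corollary_dense subsets for ubFA and FA}, for each $\alpha$ and each $p\in S_\alpha$ the set $F_{\alpha,p}:=\{q\in D'_\alpha\mid q\le p\}$ is dense below $p$, and I would define the rank $1$ name
\[
\tau=\{(\check\alpha,q)\mid\alpha<\omega_1,\ \exists p\in S_\alpha\ q\in F_{\alpha,p}\}.
\]

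First I would verify that $1\forces\tau=\sigma$, so that $\PP\forces$``$\tau$ is stationary'' and $\stat\NP_\PP$ may be applied to $\tau$. The inclusion $\tau^G\subseteq\sigma^G$ is immediate since every $q\in F_{\alpha,p}$ lies below some $p\in S_\alpha$; for the converse, if $\alpha\in\sigma^G$ choose $p\in S_\alpha\cap G$ and use that $F_{\alpha,p}$ is dense below $p\in G$ to find $q\in F_{\alpha,p}\cap G$. Applying $\stat\NP_\PP$ then yields a filter $g\in V$ with $\tau^g$ stationary, and this single stationary set is designed to carry both conclusions of $\FA^+$ at once.

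The extraction step is where the construction pays off. If $\alpha\in\tau^g$, witnessed by some $q\in g\cap F_{\alpha,p}$, then $q\le p$ forces $p\in g$ by upward closure, so $\alpha\in\sigma^g$; hence $\tau^g\subseteq\sigma^g$ and $\sigma^g$ is stationary. Simultaneously that same $q$ lies in $D'_\alpha$, so $g\cap D'_\alpha\neq\emptyset$ for every $\alpha\in\tau^g$, and since $\tau^g$ is stationary, hence unbounded, $g$ meets unboundedly many $D'_\alpha$. Lemma~\ref{corollary_dense subsets for ubFA and FA} then extends $g$ to a filter $g'$ meeting every $D_\alpha$, and as $\sigma$ has rank $1$ its interpretation is monotone in the filter, so $\sigma^{g'}\supseteq\sigma^g$ is still stationary; thus $g'$ witnesses $\FA^+_\PP$. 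I expect the main obstacle to be the interplay in the final paragraph: one must confirm that a \emph{single} stationary name can be rigged so that its interpretation being stationary forces both the stationarity of $\sigma^g$ and the unbounded meeting of the $D'_\alpha$ — which is exactly why the $\ub$-reduction of Lemma~\ref{corollary_dense subsets for ubFA and FA} (rather than a distributivity-based intersection) is the crucial ingredient, and why the argument goes through for the non-distributive random forcing.
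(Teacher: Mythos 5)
Your proposal is correct and follows essentially the same route as the paper: the name $\tau$ you build from the sets $F_{\alpha,p}$ is exactly the paper's $\tau=\{(\check\alpha,q)\mid q\in D'_\alpha \wedge q\sforces\check\alpha\in\sigma\}$, and the remaining steps (applying $\stat\NP_\PP$ to $\tau$, observing $\tau^g\subseteq\sigma^g$, and extending $g$ via Lemma~\ref{corollary_dense subsets for ubFA and FA} while using monotonicity of interpretation) coincide with the paper's argument.
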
 
\begin{proof} 
Suppose that $\langle D_\alpha \mid \alpha <\omega_1\rangle$ is a sequence of dense subsets of $\PP$. 
Suppose further that $\sigma$ 
%=\{ (\check{\alpha}, p) \mid p \in A_\alpha\}$ 
is a rank $1$ name which is forced to be stationary. 
%where $A_\alpha$ is a subset of $\PP$ for each $\alpha<\omega_1$. 
Let $\langle D'_\alpha \mid \alpha<\omega_1\rangle$ be a sequence as in Lemma \ref{corollary_dense subsets for ubFA and FA} and 
$$ \tau = \{ (\check{\alpha}, p) \mid p\in D'_\alpha \wedge p \sforces \check{\alpha}\in \sigma  \}.  $$
%We define a name $\tau$ from $\sigma$ as follows. 
%Let $B_\alpha :=  \{ q\in D'_\alpha \mid \exists p\in A_\alpha\ q\leq p\}$ and $\tau=\{ (\check{\alpha}, p) \mid q \in B_\alpha\}$. 
Note that $\PP\forces \sigma=\tau$. 
By $\stat\NP_\PP$, we obtain a filter $g$ such that $\tau^g$ is stationary. 
Since $\tau^h \subseteq \sigma^h$ for all filters $h$, $\sigma^g$ is stationary as well. 
Moreover, 
%$g \cap B_\alpha\neq \emptyset$, and thus 
$g \cap D'_\alpha\neq \emptyset$, for stationarily many $\alpha$. 
By the choice of $\langle D'_\alpha \mid \alpha<\omega_1\rangle$, we can extend $g$ to a filter $g'$ such that $g'\cap D_\alpha\neq \emptyset$ for all $\alpha<\omega_1$. 
Moreover, $\sigma^g \subseteq \sigma^{g'}$, so $\sigma^{g'}$ is stationary. 
\end{proof} 

The missing link in Figure \ref{diagram of implications for random forcing} is: 

\begin{question} 
If $\PP$ denotes random forcing, does $\FA_{\PP,\omega_1}$ imply $\stat\NP_{\PP,\omega_1}$? 
\end{question}

We finally show that the $1$-bounded stationary name principle for random forcing is non-trivial, as we discussed at the end of Section \ref{section ccc forcings}. 
 
\begin{lemma} 
\label{CH implies failure of statN(random)}
Let $\kappa=2^{\aleph_0}$ and assume that every set of size ${<}\kappa$ is null.\footnote{This assumption is equivalent to $\mathrm{non}(\mathrm{null})=2^{\aleph_0}$. It follows from $\MA$, but not from $\FA_{\mathrm{random}}$ by known facts about Cichon's diagram.} 
Then $\stat\BN_{\PP,\kappa}^1$ fails for random forcing $\PP$. 
In particular, $\CH$ implies that $\stat\BN_{\PP,\omega_1}^1$ fails. 
%Let $\PP$ be random forcing, and assume $\diamondsuit_{\omega_1}$. Then $\stat\BN_{\PP,\omega_1}^1$ fails. 
\end{lemma}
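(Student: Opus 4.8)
The plan is to exhibit a single $1$-bounded rank $1$ name $\sigma$ for a subset of $\kappa$ that witnesses the failure: random forcing will force $\sigma$ to be stationary, yet $\sigma^g$ will be bounded (hence nonstationary) for every filter $g\in V$. I will take $\sigma=\{(\check\alpha,p_\alpha)\mid \alpha<\kappa\}$ where each $p_\alpha$ is a closed set of positive measure, so that for the random real $r$ (the generic point) one has $\sigma^G=\{\alpha<\kappa\mid r\in p_\alpha\}$. Fix an enumeration $\langle x_\xi\mid \xi<\kappa\rangle$ of $(2^\omega)^V$ and set $Z_\alpha=\{x_\xi\mid \xi<\alpha\}$. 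Since $|Z_\alpha|=|\alpha|<\kappa$, the hypothesis that every set of size ${<}\kappa$ is null guarantees that $Z_\alpha$ is null, so by outer regularity of $\mu$ I may fix an open set $U_\alpha\supseteq Z_\alpha$ with $\mu(U_\alpha)=\delta_\alpha$ as small as I like and put $p_\alpha=2^\omega\setminus U_\alpha$. This is exactly where the hypothesis (equivalently, $\CH$ in the final clause) is used.

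For the nonstationarity of ground-model interpretations, note first that every filter extends to an ultrafilter and that $\sigma^g$ is monotone in $g$, so it suffices to treat an ultrafilter $g\in V$. As in the proof of Lemma~\ref{Lemma_Random ubFA}, $g$ meets each finite partition $\{N_t\mid t\in 2^n\}$ in a unique piece, so there is $x\in 2^\omega$ with $N_t\in g$ for all $t\subseteq x$. If $p_\alpha\in g$, then $p_\alpha\wedge N_t\in g$ is nonzero for every $t\subseteq x$, hence $p_\alpha$ meets every basic neighbourhood of $x$ and $x\in\overline{p_\alpha}=p_\alpha$ (recall $p_\alpha$ is closed). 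Thus $\sigma^g\subseteq\{\alpha\mid x\in p_\alpha\}$. Writing $x=x_\xi$, the inclusion $Z_\alpha\subseteq U_\alpha$ gives $x_\xi\notin p_\alpha$ whenever $\xi<\alpha$, so $\sigma^g\subseteq \xi+1$ is bounded in $\kappa$ and therefore nonstationary.

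The main work is to force $\sigma$ stationary. Since $\PP$ is c.c.c. and $\cf(\kappa)>\omega$, every club of $\kappa$ in a generic extension contains a ground-model club, so it suffices to show $\Vdash_\PP \sigma\cap \check C\neq\emptyset$ for every ground-model club $C$. This Boolean value is $\bigvee_{\alpha\in C}p_\alpha$, which equals $1$ exactly when $\bigcap_{\alpha\in C}U_\alpha$ is null; and since $\bigcap_{\alpha\in C}U_\alpha$ has outer measure at most $\inf_{\alpha\in C}\mu(U_\alpha)$, it is enough that $\inf_{\alpha\in C}\delta_\alpha=0$ for every club $C$. I therefore choose the $\delta_\alpha$ so that $\{\alpha<\kappa\mid \delta_\alpha<\epsilon\}$ is stationary for each $\epsilon>0$ --- for instance, fix pairwise disjoint stationary sets $\langle T_n\mid n<\omega\rangle$ and put $\delta_\alpha=2^{-n}$ for $\alpha\in T_n$ --- so that every club meets each $T_n$ and the infimum vanishes. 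The delicate point, and the heart of the argument, is precisely this reconciliation: the interpretations are kept nonstationary by excising the initial segments $Z_\alpha$ (so that the columns $\{\alpha\mid x\in p_\alpha\}$ stay bounded), while stationarity of $\sigma$ is secured by keeping $\mu(U_\alpha)$ small along a stationary set, so that the sets $p_\alpha$ cover $2^\omega$ up to measure zero over every club. Once both are in hand, $\sigma$ witnesses the failure of $\stat\BN_{\PP,\kappa}^1$, and the final clause follows since under $\CH$ one has $\kappa=\omega_1$ and every set of size ${<}\omega_1$ is countable, hence null.
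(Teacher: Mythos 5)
Your proof is correct in substance but reaches the result by a genuinely different route than the paper. The paper first reduces the $1$-bounded principle to the $\omega$-bounded one via Corollary~\ref{corollary_equivalence of lambda-bounded and 1-bounded name principles} (using that random forcing is well-met and that each $\PP_q$ is isomorphic to $\PP$), and then builds an $\omega$-bounded name $\sigma=\{(\check{\alpha},p)\mid p\in A_\alpha\}$ in which each countable set $A_\alpha$ of closed conditions avoiding $\{x_\xi\mid \xi<\alpha\}$ is predense, so that $\sigma$ is forced to equal $\check{\kappa}$. You instead attach a \emph{single} closed condition $p_\alpha=2^\omega\setminus U_\alpha$ to each $\alpha$, which makes the name genuinely $1$-bounded and bypasses the Hamkins-style reduction entirely; the price is that the conditions can no longer be predense level by level, so you must argue stationarity directly, which you do via the c.c.c.\ reduction to ground-model clubs together with the stationary partition $\langle T_n\mid n<\omega\rangle$ controlling the measures $\delta_\alpha$. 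The nonstationarity half (an ultrafilter determines a real $x=x_\xi$, closed conditions in the filter must contain $x$, and the excised initial segments force $\sigma^g\subseteq\xi+1$) is essentially the paper's argument. Your version is self-contained and arguably cleaner; the paper's version yields the slightly stronger information that already $\stat\BN^\omega_{\PP,\kappa}$ fails via a name forced to equal $\check{\kappa}$.

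One intermediate step is stated incorrectly, although your construction repairs it. For an uncountable family, $\bigvee_{\alpha\in C}[p_\alpha]=1$ is \emph{not} equivalent to $\bigcap_{\alpha\in C}U_\alpha$ being null: the Boolean infimum of the $[U_\alpha]$ is not computed by the set-theoretic intersection (for instance, the sets $U_x=2^\omega\setminus\{x\}$ for $x\in 2^\omega$ have empty intersection while $\bigwedge_x [U_x]=1$). What you actually need, and what your choice of the $\delta_\alpha$ delivers, is the direct computation: if $B$ is a Borel set with $\mu(B\cap p_\alpha)=0$ for all $\alpha\in C$, then $\mu(B)=\mu(B\cap U_\alpha)\leq\delta_\alpha$ for every $\alpha\in C$, hence $\mu(B)\leq\inf_{\alpha\in C}\delta_\alpha=0$; so $\{p_\alpha\mid \alpha\in C\}$ is predense and $\bigvee_{\alpha\in C}[p_\alpha]=1$. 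With that substitution the argument is complete.
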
 
\begin{proof} 
It suffices to show that $\stat\BN_{\PP,\kappa}^\omega$ fails. 
To see this, apply Corollary \ref{corollary_equivalence of lambda-bounded and 1-bounded name principles} and use the fact that random forcing is well-met and for any $q\in \PP$, the forcing $\PP_q$ is isomorphic to $\PP$ by \cite[Theorem 17.41]{kechris2012classical}. 
Let $\vec{x}=\langle x_\alpha \mid \alpha<\kappa\rangle$ enumerate all reals. 
Then $C_\beta:= \{ x_\alpha \mid \alpha<\beta  \}$ is null for all $\beta<\kappa$. 
For each $\alpha<\kappa$, let $A_\alpha$ be a countable set of approximations to the complement of $C_\alpha$ in the following sense: 
\begin{enumerate-(a)} 
\item 
Each element of $A_\alpha$ is a closed set disjoint from $C_\alpha$, and 
\item 
\label{condition for random forcing 2} 
For all $\epsilon>0$, $A_\alpha$ contains a set $C$ with $\mu(C)\geq 1-\epsilon$. 
\end{enumerate-(a)} 
Let $\sigma= \{ (\check{\alpha},p) \mid p \in A_\alpha \}$. 
Then $\Vdash_{\PP} \sigma$ is stationary, since each $A_\alpha$ is predense by \ref{condition for random forcing 2}. 
We claim that there is no filter $g$ in $V$ such that $\sigma^g$ is unbounded. 
If $g$ were such a filter, then we could assume that for every $n\in\omega$, $g$ contains $N_{t_n}$ for some (unique) $t_n\in 2^n$ by extending $g$. (Clearly $\sigma^g$ will remain unbounded.) 
Let $x=\bigcup_{n\in\omega} t_n$ and 
suppose that $x=x_\alpha$. 
Since $\sigma^g$ is unbounded, there is some $\gamma>\alpha$ in $\sigma^g$. 
Find some $p\in A_\gamma$ with $p\in g$. 
By the definition of $A_\gamma$, $p$ is a closed set with $x_\alpha\notin p$. 
Hence $p \cap N_{t_n}=\emptyset$ for some $n\in\omega$. 
But this contradicts the fact that both $p$ and $N_{t_n}$ are in $g$. 
\end{proof}

%%%%%%%%%%
\subsubsection{Hechler forcing} 
For $\sigma$-centred forcings $\PP$, the principles on the right side of Figure \ref{diagram of implications} are provable in $\ZFC$ (see Lemma \ref{Lemma stat-NP for sigma-centred}). 
A subtle difference appears when we add the requirement that the filter has to meet countably many fixed dense sets. 
We write $\omega\text{-}\ub\FA$ for this axiom (see Definition \ref{Defn_SpecialFA}). 
For some forcings, this axiom is stronger that $\ub\FA$. 
To see this, we will make use of the fact that for Hechler forcing, a filter that meets certain countably many dense sets corresponds to a real. 
Recall that a subset $A\subseteq \omega^\omega$ is \emph{unbounded} if no $y\in \omega^\omega$ eventually strictly dominates all $x\in A$, i.e. $\exists m\ \forall n\geq m\ x(n)< y(n)$. 
%The \textit{unbounded number}\footnote{In standard terminology, the \emph{bounding number}.} is the least $\kappa$ such that there is an unbounded family of size $\kappa$. 
%set $B\subseteq 2^\omega$ of cardinality $\kappa$ such that for all $x\in 2^\omega$ we can find $y\in B$ such that $y$ eventually dominates $x$. ($y$ eventually dominates $x$ if there is some $n\in \omega$ such that for all $m\geq n$, $y(m)\geq x(m)$ is true.) 
%Obviously, this is at most $2^\omega$ in size. 
% and in particular at most $\omega_1$ if the continuum hypothesis holds.
%Therefore, t
The next result shows that $\omega\text{-}\ub\FA_{\omega_1}$ for Hechler forcing implies the negation of the continuum hypothesis. 
% and is thus stronger than $\ub\FA_{\sigma\text{-}\mathrm{centred}}$. 

\begin{lemma}
	Let $\PP$ denote Hechler forcing. If $\omega\text{-}\ub\FA_\PP$ holds, then the size of any unbounded family is at least $\omega_2$. 
	%number is at least $\omega_2$. 
\end{lemma}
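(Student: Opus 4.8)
The plan is to prove the statement in the equivalent form $\mathfrak{b}\geq\omega_2$, where $\mathfrak{b}$ is the least cardinality of an unbounded family. Since any countable family $\{y_n\mid n<\omega\}$ is strictly dominated by the diagonal function $k\mapsto\max_{n\leq k}y_n(k)+1$, every unbounded family is uncountable, so it suffices to rule out $\mathfrak{b}=\omega_1$. Assuming toward a contradiction that some unbounded family has size $\omega_1$, I would invoke the standard fact that $\mathfrak{b}$ equals the least length of a $\leq^*$-increasing unbounded sequence, and fix such a sequence $\langle x_\alpha\mid\alpha<\omega_1\rangle$: thus $\alpha\leq\beta\Rightarrow x_\alpha\leq^* x_\beta$, while no single $y\in\omega^\omega$ satisfies $x_\alpha<^* y$ for all $\alpha$. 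Passing to an increasing scale is the key preparatory move, since it will let me upgrade ``dominates cofinally many $x_\alpha$'' to ``dominates every $x_\beta$''.

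Next I would set up two families of dense sets to feed into the combined form of $\omega\text{-}\ub\FA$ (the version with a separate $\omega$-sequence $\vec{E}$ and an $\omega_1$-sequence $\vec{D}$ described after Definition \ref{Defn_SpecialFA}). Writing a Hechler condition as $(s,f)$ with $s\in\omega^{<\omega}$, $s\subseteq f\in\omega^\omega$, and the usual ordering, I would put $E_n=\{(s,f)\mid |s|>n\}$ and
\[
D_\alpha=\{(s,f)\mid \forall m\geq|s|\ f(m)>x_\alpha(m)\}.
\]
Both are dense: $E_n$ by extending the stem along $f$, and $D_\alpha$ by replacing $f$ above $|s|$ with the pointwise maximum of $f$ and $x_\alpha+1$ while keeping the stem fixed. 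The purpose of the $E_n$ is that any filter $g$ meeting all of them determines a real $y_g:=\bigcup\{s\mid\exists f\ (s,f)\in g\}\in\omega^\omega$, because the stems of conditions in a filter are pairwise comparable (their common extensions end-extend both) and meeting every $E_n$ forces $\dom(y_g)=\omega$.

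The central computation is then: if $g$ meets every $E_n$ and meets $D_\alpha$, then $y_g>^* x_\alpha$. Fixing $(s_0,f_0)\in g\cap D_\alpha$ and $m\geq|s_0|$, I would use that $g$ meets $E_m$ together with directedness of the filter to find a common extension $(u,k)\in g$ with $|u|>m$; the Hechler ordering forces new stem entries to dominate the side function, giving $u(m)\geq f_0(m)>x_\alpha(m)$, and since $y_g\restriction|u|=u$ this yields $y_g(m)>x_\alpha(m)$ for all $m\geq|s_0|$. Applying $\omega\text{-}\ub\FA$ to $\vec{E}$ and $\vec{D}=\langle D_\alpha\mid\alpha<\omega_1\rangle$ produces a filter $g$ with $\Tr_{g,\vec{E}}=\omega$ and $\Tr_{g,\vec{D}}$ unbounded in $\omega_1$, so $y_g$ is well defined and $y_g>^* x_\alpha$ for cofinally many $\alpha$. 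Finally, given an arbitrary $\beta<\omega_1$ I would choose such an $\alpha\geq\beta$ and use the $\leq^*$-increasing property to conclude $x_\beta\leq^* x_\alpha<^* y_g$; hence $x_\beta<^* y_g$ for all $\beta$, contradicting unboundedness of the scale.

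The main obstacle is conceptual rather than computational: one must recognize that $\omega\text{-}\ub\FA$ only delivers a real dominating \emph{cofinally many} members of the family, which is worthless against an arbitrary unbounded family, and that this is repaired exactly by first replacing the family with a $\leq^*$-increasing unbounded sequence of length $\mathfrak{b}$. The only other point requiring care is the filter-to-real correspondence, namely that meeting the $E_n$ genuinely produces an element of $\omega^\omega$ and that meeting $D_\alpha$ forces eventual strict domination; this hinges on the chain structure of stems in a filter together with the requirement in the Hechler ordering that new stem entries dominate the side function.
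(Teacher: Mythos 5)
Your proof is correct and follows essentially the same route as the paper's: the dense sets $E_n$ and $D_\alpha$, the filter-to-real correspondence via the stems, and the upgrade from ``dominates cofinally many'' to ``dominates all'' by first monotonizing the family are all exactly the paper's argument. The only cosmetic difference is that you monotonize by citing the standard fact that $\mathfrak{b}$ is witnessed by a $\leq^*$-increasing unbounded sequence, whereas the paper constructs the dominating reals $y_\alpha(n)=\max\{x_\gamma(n):\pi(\gamma)\leq n\}$ by hand from the given family; both achieve the same effect.
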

\begin{proof}
Towards a contradiction, suppose $\omega\text{-}\ub\FA_\PP$ holds and $A$ is an unbounded family of size $\omega_1$. 
%Assume the forcing axiom, and suppose the bounding number is less than $\omega_2$. Then we can find a set $B\subseteq 2^\omega$ of size $\omega_1$ which eventually dominates every element of $2^\omega$. 
Let us enumerate its elements as $\vec{x}=\langle x_\alpha \mid \alpha<\omega_1 \rangle$. 
	We define the following dense sets: 
	For $\alpha<\omega_1$, we define a real $y_\alpha$ by taking a sort of ``diagonal maximum'' of $\vec{x}$. 
	Let $\pi:\alpha \rightarrow \omega$ be a bijection and let 
	%	\begin{equation*}
		$$y_\alpha(n)=\max \{x_\gamma(n): \pi(\gamma)\leq n\}.$$  
%	\end{equation*}
	It is easy to check that $y_\alpha$ is well defined, and that it eventually dominates $x_\gamma$ for all $\gamma<\alpha$.
	We now define
	\begin{equation*}
		D_\alpha = \{(s,x)\in \PP: x \text{ eventually strictly dominates }y_\gamma\}
	\end{equation*}
			For $n<\omega$, let
	\begin{equation*}
		E_n=\{(s,x)\in \PP: \text{length}(s)\geq n\}
	\end{equation*}

	Now let $g\in V$ be a filter meeting unboundedly many $D_\alpha$ and all $E_n$. Since $g$ meets all $E_n$, the first components of its conditions are arbitrarily long. Since all its elements are compatible, this means that the union $\cup \{s: (s,x)\in g\}$ is a real $y$. And $y$ must eventually strictly dominate $x$ for every $(s,x)\in g$. 
	But there are unboundedly many $\alpha$ such that $g$ meets $D_\alpha$. For any such $D_\alpha$, then, we have $(s,x)\in g$ where $x$ eventually strictly dominates $y_\alpha$. 
	Hence, $y$ must eventually strictly dominate unboundedly many $y_\alpha$ and hence every $x \in A$. 
	But $A$ was assumed to be unbounded. 
\end{proof}

%\begin{lemma}
%	Let $\PP$ be Hechler forcing. Then $\ub\FA_\PP$ holds.
%\end{lemma}
%\begin{proof}
%	Let $D_\alpha: \alpha<\omega_1$ be a sequence of dense subsets of $\PP$. There are two cases to consider.
%	
%	\begin{enumerate}
%		\item Some $s\in \omega^{<\omega}$ appears as the first component of a condition in unboundedly many $D_\alpha$. Then let $g=\{(s,x): x\in 2^\omega\}$. This is a filter, and clearly meets unboundedly many $D_\alpha$.
%		\item For every $t\in \omega^{<\omega}$ the set of $\alpha$ such that $t$ appears at the first component of a condition in $D_\alpha$ is bounded, by some $\gamma_t<\omega_1$ say. But if this were to happen, we could define $\gamma = \sup\{\gamma_t: t\in \omega^{<\omega}\}<\omega_1$, and then for $\alpha \geq \gamma$ we would conclude that $D_\alpha$ were empty! So this case cannot occur.
%	\end{enumerate}
%\end{proof}

%\begin{corollary}
%	$\ub\FA$ does not imply $\omega-\ub\FA$.
%\end{corollary}
%\begin{proof}
%	If the bounding number is $\omega_1$ then $\ub\FA$ holds and $\omega-\ub\FA$ fails for Hechler forcing.
%\end{proof}

%%%%%%%%%%
\subsubsection{Suslin trees} 

A Suslin tree is a tree of height $\omega_1$, with no uncountable branches or antichains. The existence of Suslin trees is not provable from $\ZFC$, but follows from $\diamondsuit_{\omega_1}$. We can of course think of a Suslin  tree $T$ as a forcing; it will add a cofinal branch through the tree. 
We use Suslin trees as test cases for the weakest principles defined above. 
As expected, we can show that $\stat\BN^1_{T,\omega_1}$ fails in most cases. 

\begin{lemma}
	Suppose $T$ is a Suslin tree. Then $\stat\BN^\omega_{T,\omega_1}$ fails.
\end{lemma}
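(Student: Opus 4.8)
The plan is to exhibit a single $\omega$-bounded rank $1$ name $\sigma$ for a subset of $\omega_1$ which is forced to be stationary, but whose interpretation by \emph{any} ground-model filter is bounded (hence nonstationary). I view $T$ as the forcing $\PP_T$ (nodes ordered so that $p\leq q$ means $p$ end-extends $q$ in the tree) and use the fact, already noted in the text, that forcing with $T$ adds a cofinal branch. Since $T$ is Suslin, each level $\Lev_\alpha(T)$ is an antichain and hence countable, so I would set
\[ \sigma = \{(\check\alpha, t) \mid \alpha<\omega_1,\ t\in \Lev_\alpha(T)\}. \]
This is a rank $1$ $\omega_1$-name, and it is $\omega$-bounded: for each $\alpha$ the set of conditions paired with $\check\alpha$ is exactly the countable set $\Lev_\alpha(T)$, and each $\check\alpha$ is itself $1$-bounded.

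First I would verify that $T\Vdash\text{``}\sigma\text{ is stationary''}$. For a generic filter $G$, the induced branch is cofinal and downward closed under tree-predecessors, so it contains exactly one node at each level $\alpha<\omega_1$; therefore $\sigma^G=\{\alpha \mid \exists t\in G\ (t\in\Lev_\alpha(T))\}=\omega_1$, which is trivially stationary. The crux is then to show that no ground-model filter $g$ makes $\sigma^g$ even unbounded. A filter $g$ on $\PP_T$ is closed under tree-predecessors and consists of pairwise compatible conditions; but two conditions are compatible in $\PP_T$ only when they are tree-comparable, so $g$ is a chain through $T$. Because $T$ has no uncountable branch, every such chain is countable, whence the set of levels it meets, $\{\alpha \mid g\cap\Lev_\alpha(T)\neq\emptyset\}$, is a countable set of ordinals and so is bounded by some $\delta<\omega_1$. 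Since $\sigma^g$ is precisely this set of levels, $\sigma^g\subseteq\delta$ is nonstationary, and $\sigma$ witnesses the failure of $\stat\BN^\omega_{T,\omega_1}$.

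The argument is essentially bookkeeping once the right name is isolated; there is no serious obstacle. The one conceptual point requiring care is the identification of ground-model filters with bounded chains, which is exactly where the defining property of a Suslin tree (no uncountable branch) does all the work. I would also double-check the definitional details — that $\sigma$ genuinely qualifies as an $\omega$-bounded rank $1$ name, and that pairing each ordinal $\alpha$ with nodes \emph{exactly} at level $\alpha$ (rather than at levels $\geq\alpha$) is what forces $\sigma^g$ to track the height of $g$, so that a bounded $g$ yields a bounded $\sigma^g$.
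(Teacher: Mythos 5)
Your proof is correct and is essentially the same as the paper's: the paper uses exactly the name $\sigma=\{(\check\alpha,p) \mid \alpha<\omega_1,\ p\in T,\ \mathrm{height}(p)=\alpha\}$, notes it is $\omega$-bounded because levels of a Suslin tree are countable antichains, and observes that any ground-model filter is contained in a branch in $V$ and hence countable, so $\sigma^g$ is bounded. Your additional verification that filters are chains and that pairing $\alpha$ with level-$\alpha$ nodes ties $\sigma^g$ to the height of $g$ is just the same argument spelled out in more detail.
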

\begin{proof}
	Let $\sigma=\{\langle \alpha,p\rangle: \alpha<\omega_1, p\in T, \text{height}(p)=\alpha\}$. It is easy to see that $\sigma$ is $\omega$ bounded, and is forced to be equal to $\omega_1$. But any filter $g\in V$ is a subset of a branch in $V$, and therefore countable. So $\sigma^g$ is not stationary, or even unbounded.
\end{proof}

\begin{corollary}
\label{Suslin trees} 
	Suppose that a Suslin tree exists. 
	Then there exists a Suslin tree $T$ such that  $\stat\BN^1_{T,\omega_1}$ fails.
\end{corollary}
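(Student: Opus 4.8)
The plan is to derive Corollary~\ref{Suslin trees} by combining the preceding Lemma, which shows that $\stat\BN^\omega_{T,\omega_1}$ fails for every Suslin tree $T$, with Hamkins' Corollary~\ref{lemma_failure of lambda-bounded and 1-bounded name principle for trees}, which pushes a failure of the $\lambda$-bounded stationary name principle on a tree down to densely many cones where the $1$-bounded principle already fails. The only extra ingredient is the observation that cones of a suitably normal Suslin tree are again Suslin trees.

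First I would reduce to a normal Suslin tree. Given that some Suslin tree exists, a standard pruning argument produces a normal Suslin tree $S$, that is, one in which every node has extensions at every level below $\omega_1$. Normality guarantees that for each $p\in S$ the cone $S_p$ consisting of all nodes of $S$ extending $p$ has height $\omega_1$. Since any chain, respectively antichain, of $S_p$ is also a chain, respectively antichain, of $S$, and $S$ has no uncountable chains or antichains, $S_p$ is itself a Suslin tree. Writing $\PP_S$ for $S$ with the reversed order, one checks directly from the definitions that a condition $q$ satisfies $q\leq_{\PP_S} p$ exactly when $q$ extends $p$ in $S$, so $(\PP_S)_p=\PP_{S_p}$, the cone $S_p$ regarded as a forcing.

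Next I would apply the two cited results. By the preceding Lemma, $\stat\BN^\omega_{\PP_S,\omega_1}$ fails. Since $\omega<\omega_1$ and $\PP_S$ is the reversed order of a tree, Corollary~\ref{lemma_failure of lambda-bounded and 1-bounded name principle for trees}, applied with $\lambda=\omega$ and $\kappa=\omega_1$, yields densely many conditions $p\in\PP_S$ for which $\stat\BN^1_{(\PP_S)_p,\omega_1}$ fails. Fixing any such $p$ and setting $T:=S_p$, the previous paragraph gives that $T$ is a Suslin tree and $(\PP_S)_p=\PP_T$, whence $\stat\BN^1_{T,\omega_1}$ fails, which is what we wanted.

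I expect the only genuine subtlety to be the reduction to a normal Suslin tree together with the verification that its cones remain Suslin; everything else is a direct appeal to the two cited statements. In particular, the one place care is needed is in confirming that passing to the cone $S_p$ does not destroy the height-$\omega_1$ requirement, which is exactly what normality secures, so that the density of good conditions supplied by Corollary~\ref{lemma_failure of lambda-bounded and 1-bounded name principle for trees} can be combined with the observation that \emph{every} cone of a normal Suslin tree is Suslin.
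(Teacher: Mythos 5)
Your proposal is correct and follows essentially the same route as the paper: apply the preceding lemma to get a failure of $\stat\BN^\omega_{T,\omega_1}$ and then invoke Corollary~\ref{lemma_failure of lambda-bounded and 1-bounded name principle for trees} to descend to a cone where the $1$-bounded principle fails. Your preliminary reduction to a normal Suslin tree, ensuring every cone is again Suslin, is a careful addition that the paper leaves implicit (one can alternatively observe that any cone of countable height is a countable forcing, for which $\stat\BN^1$ holds trivially, so the densely many bad cones automatically have height $\omega_1$), but it does not change the substance of the argument.
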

\begin{proof}
	Let $T$ be any Suslin tree. By the previous lemma we know that $\stat\BN^\omega_{T,\omega_1}$ fails. But then by Corollary \ref{lemma_failure of lambda-bounded and 1-bounded name principle for trees}, $T$ contains a subtree $S$ such that $\stat\BN^1_{S,\omega_1}$ fails.
\end{proof}

This also tells us that $\stat\BN^1_{\PP,\omega_1}$ is not equivalent to $\stat\BFA^1_{\PP,\omega_1}$, since the latter is trivially provable for any forcing in $\ZFC$. 

In fact, if we assume $\diamondsuit_{\omega_1}$ (which is somewhat stronger than the existence of a Suslin tree, see \cite[Section 3]{rinot2011jensen}) then we can do better than this: we can show that $\stat\BN^1_{\omega_1}$ fails for every Suslin tree. 

\begin{lemma} 
\label{Lemma diamond, Suslin tree and failure of statBN1} 
Suppose $\diamondsuit_{\omega_1}$ holds. If $T$ is a Suslin tree, then $\stat\BN^1_{T,\omega_1}$ fails.
\end{lemma}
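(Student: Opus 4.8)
The plan is to reduce the statement to the single task of constructing one $1$-bounded rank-$1$ name that is forced stationary. First observe the easy half: any filter $g\in V$ on the forcing $\PP_T$ is a set of pairwise compatible conditions, and in a tree two conditions are compatible only if comparable, so $g$ is a chain in $T$. Since $T$ is Suslin it has no uncountable chains, so $g$ is countable; as $\sigma$ is $1$-bounded there is at most one $p$ with $(\check\alpha,p)\in\sigma$ for each $\alpha$, whence $\sigma^g\subseteq\{\alpha : p_\alpha\in g\}$ is countable and in particular non-stationary. Therefore, to refute $\stat\BN^1_{T,\omega_1}$ it suffices to exhibit one $1$-bounded rank-$1$ $\PP_T$-name $\sigma$ with $\PP_T\forces$``$\sigma$ is stationary''. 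We may assume $T$ is normal, i.e.\ every node has extensions of every height $<\omega_1$ (this is the standard reduction; restrict to the dense subtree of nodes with uncountable cone, where the generic branch is cofinal, and a $1$-bounded name witnessing failure there lifts to $T$ since $\sigma^G=\sigma^{G\cap T'}$). Such a $\sigma$ is given by a partial function $\alpha\mapsto p_\alpha\in T$, and for the generic branch $\dot b$ one has $\sigma^{\dot b}=\{\alpha : p_\alpha\in\dot b\}$, where any $r\le p_\alpha$ forces $p_\alpha\in\dot b$.

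First I would fix a $\diamondsuit_{\omega_1}$-guessing sequence for pairs: using a fixed bijection $T\cong\omega_1$ placing each node of height $\eta$ at an index $\ge\eta$, a routine consequence of $\diamondsuit_{\omega_1}$ gives $\langle (q_\alpha,D_\alpha)\mid \alpha<\omega_1\rangle$ with $q_\alpha\in T$ of height $<\alpha$ and $D_\alpha\subseteq\alpha$, such that for every node $q\in T$ and every $D\subseteq\omega_1$ the set $\{\alpha : q_\alpha=q \text{ and } D_\alpha=D\cap\alpha\}$ is stationary. The name is built by recursion on limit ordinals $\alpha$: if $D_\alpha$ is cofinal in $\alpha$, let $p_\alpha$ be some node of height exactly $\alpha$ extending $q_\alpha$ in $T$ (this exists by normality); otherwise leave $p_\alpha$ undefined. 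Set $\sigma=\{(\check\alpha,p_\alpha) : \alpha \text{ limit},\ p_\alpha \text{ defined}\}$, which is visibly a $1$-bounded rank-$1$ name.

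The verification that $\PP_T\forces$``$\sigma$ is stationary'' is where the work lies. Fix a condition $q$ and a name $\dot C$ forced to be a club in $\omega_1$; it suffices to find $p\le q$ forcing $\sigma\cap\dot C\neq\emptyset$, since this makes the set of conditions forcing $\sigma\cap\dot C\neq\emptyset$ dense, for each club name $\dot C$. Here I invoke the standard fact that $\PP_T$ is c.c.c.\ (Suslin trees have no uncountable antichains) together with the ground-model club lemma for c.c.c.\ forcing: there is a club $D\in V$ with $q\forces \check D\subseteq\dot C$. By the guessing property the set $\{\alpha : q_\alpha=q,\ D_\alpha=D\cap\alpha\}$ is stationary, so it meets the club $\Lim(D)$ of limit points of $D$; choose $\alpha$ in the intersection with $\alpha>\mathrm{ht}(q)$. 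Then $D_\alpha=D\cap\alpha$ is cofinal in $\alpha$, so the construction defined $p_\alpha$ to be a height-$\alpha$ node extending $q_\alpha=q$; thus $p_\alpha\le q$ in $\PP_T$, and $p_\alpha\forces p_\alpha\in\dot b$, giving $p_\alpha\forces\check\alpha\in\sigma$. Moreover $\alpha\in\Lim(D)\subseteq D$ and $p_\alpha\le q$ forces $\check D\subseteq\dot C$, so $p_\alpha\forces\check\alpha\in\dot C$. Hence $p_\alpha\forces\check\alpha\in\sigma\cap\dot C$, completing the density argument and showing $\PP_T\forces$``$\sigma$ is stationary''. Combined with the first paragraph, $\stat\BN^1_{T,\omega_1}$ fails.

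The main obstacle is organizing the \emph{simultaneous} anticipation of an arbitrary condition $q$ and an arbitrary ground-model club $D$ at a single stage $\alpha$, which forces the use of $\diamondsuit$-guessing of pairs rather than plain $\diamondsuit$, together with the reduction (via c.c.c.) from arbitrary club names $\dot C$ to ground-model clubs $D$. A secondary technical point, which I would flag but not belabour, is the normality reduction ensuring that $q_\alpha$ always has an extension at height $\alpha$ and that the generic branch is cofinal, so that every condition genuinely forces $\sigma$ stationary.
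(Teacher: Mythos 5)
Your proof is correct, and it certifies the key point --- that the constructed name is forced to be stationary --- by a genuinely different argument than the paper's. Both proofs share the same architecture: use $\diamondsuit_{\omega_1}$ to build a $1$-bounded rank-$1$ name that plants, at certain guessing stages $\alpha$, at most one node of height at least $\alpha$, and then observe that any filter in $V$ is a countable chain and hence interprets the name as a bounded (so nonstationary) set. The difference is in the stationarity verification. The paper guesses only ground-model clubs and argues by contradiction: at each stage $\alpha$ where the guessed family $\bigcup_{\gamma\in A_\alpha}B_\gamma$ fails to be predense it inserts a node incompatible with everything so far, so a ``bad'' club would produce an uncountable antichain, contradicting the c.c.c. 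You instead guess \emph{pairs} $(q,D)$ of a condition and a ground-model club and give a direct density argument: at a guessing stage $\alpha\in\Lim(D)$ you plant a height-$\alpha$ node above $q$, which then forces $\check\alpha\in\sigma\cap\dot C$. Your route trades the antichain trick for a (standard) strengthening of $\diamondsuit_{\omega_1}$ to guessing pairs, and yields a non-recursive, arguably cleaner construction; both proofs use the c.c.c.\ only through the ground-model-club lemma. One small caveat: the set of nodes with uncountable cone need not be forcing-dense in $T$ (a node with countable cone has no extension in that set), so your normality reduction is not quite as you state it; but the lemma genuinely requires the tree to be atomless/well-pruned (a maximal node would generate a generic filter lying in $V$), and the paper's proof makes the same implicit assumption (``since $T$ is atomless''), so this is a shared convention rather than a gap in your argument.
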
 
\begin{proof} 

	Let $(A_\gamma)$ be the sequence given by $\diamondsuit_{\omega_1}$. That is, let it be such that $A_\gamma \subseteq \gamma$ and for any $S\subseteq \omega_1$, the set $\{\gamma<\omega_1:S\cap \gamma=A_\gamma\}$ is stationary. We build up a rank 1 name $\sigma=\{\langle \check{\alpha},p\rangle:\alpha<\gamma, p\in B_\alpha\}$ recursively as follows.
	
	Suppose we have defined $B_\gamma$ for all $\gamma<\alpha$. Consider $\bigcup_{\gamma\in A_\alpha}B_\gamma$. If this union is predense, then we let $B_\alpha=\emptyset$. Otherwise, choose a condition $p\in T$, sitting beyond level $\alpha$ of the Suslin tree, such that $p$ is incompatible with every element of that union. Let $B_\alpha=\{p\}$.
	
	If $G$ is a generic filter, then every club $C'\subseteq \omega_1$ in $V[G]$ contains a club $C\in V$. Hence, to show that $T\Vdash``\sigma \text{ is stationary}"$ we only need to show that for every club $C\in V$, the set $\bigcup_{\alpha\in C}B_\alpha$ is predense. Suppose for some club $C$ that is not the case. For stationarily many $\alpha$, we have that $C\cap \alpha=S_\alpha$ and hence the union we are looking at in defining $B_\alpha$ is $\bigcup_{\gamma\in A_\alpha}B_\gamma=\bigcup_{\gamma\in C\cap \alpha} B_\gamma$. Hence, the union is not predense, and $B_\alpha$ contains an element that is incompatible with every element of $\bigcup_{\gamma\in C\cap \alpha} B_\gamma$. But this is true for unboundedly many such $\alpha$, so this gives us an $\omega_1$ long sequence of pairwise incompatible conditions, i.e. an uncountable antichain. Since a Suslin tree is by definition c.c.c., this is a contradiction. Hence $T\forces ``\sigma \text{ is stationary}"$.
	
	But now let $g\in V$ be a filter. By extending it if necessary, without loss of generality we can assume $g$ is a maximal branch of the tree. Since $g\in V$, we know that $g$ is countable, so let the supremum of the heights of its elements be $\gamma$. Let $\alpha>\gamma$, and let $q\in g$. Since $B_\alpha$ is at most a singleton $\{p\}$ with $\text{ht}(p)\geq \alpha >\gamma > \text{ht}(q)$, and since $T$ is atomless, we know there is some $r\leq q$ with $r\forces \alpha \not \in \sigma$. Hence $q \not\forces \alpha \in \sigma$. Since this is true for all $q\in g$, it follows that $\alpha \not \in \sigma^{(g)}$. Hence far from being stationary, $\sigma^{(g)}$ is not even unbounded!
\end{proof} 

So (assuming the existence of Suslin trees) there are certainly some Suslin trees in which $\stat\BN^1$ fails. And with strong enough assumptions, we can show that $\stat\BN^1$ fails for every tree. So it's natural to ask:

\begin{question} 
Can we show in $\ZFC$ that $\stat\BN^1_{T,\omega_1}$ fails for every Suslin tree $T$? 
\end{question} 

Note that we can show the failure of $\ub\BN^1_{T,\omega_1}$ for any Suslin tree. Enumerate its level $\alpha$ elements as $\{p_{\alpha,n} : n\in \omega\}$. Now let 
$$ \sigma = \{(\check{\beta}, p_{\alpha,n}) : \alpha<\omega_1,n\in \omega, \beta=\omega.\alpha+n\}$$
Then $\sigma$ is forced to be unbounded but if $g\in V$ is such that $\sigma^g$ is unbounded, then $g$ defines an uncountable branch through $T$.

%%%%%%%%%%
\subsubsection{Club shooting} 

The next lemma is a counterexample to the implication $\club\BFA_\kappa^\lambda$ $\Rightarrow$ $\club\BN_\kappa^\lambda$ in Figure \ref{diagram of implications - bounded with lambda<kappa}. 
It is open whether there is such a counterexample for complete Boolean algebras.

Suppose that $S$ is a stationary and co-stationary subset of $\omega_1$. 
Let $\PP_S$ denote the forcing that shoots a club through $S$. 
Its conditions are closed bounded subsets of $S$, ordered by end extension. 

\begin{lemma} \ 
\label{Lemma separating BFA from clubBN} 
\begin{enumerate-(1)} 
\item 
\label{Lemma separating BFA from clubBN 1} 
$\BFA^\omega_{\PP_S,\omega_1}$ holds. 
\item 
\label{Lemma separating BFA from clubBN 2} 
$\club\BN^1_{\PP_S,\omega_1}$ fails. 
\end{enumerate-(1)} 
In particular, for no $1\leq \lambda\leq \omega$ does $\BFA^\lambda_{\PP_S,\omega_1}$ imply $\club\BN^\lambda_{\PP_S,\omega_1}$. 
\end{lemma}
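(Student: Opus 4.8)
The plan is to establish the two parts separately and then obtain the displayed non-implication by monotonicity in $\lambda$. Throughout I will use the tree structure of $\PP_S$: two conditions are compatible exactly when one end-extends the other, so every filter is a chain of initial segments and $1_{\PP_S}=\emptyset$ lies in every filter.

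For \ref{Lemma separating BFA from clubBN 1} the heart of the matter -- and the only real content -- is that in $\PP_S$ every \emph{countable} predense set is trivial, i.e.\ contains $\emptyset$. I would argue as follows: given a countable predense $D$ with $\emptyset\notin D$, every $d\in D$ is a nonempty closed bounded subset of $S$, so $\beta:=\sup\{\max d+1:d\in D\}<\omega_1$ by regularity of $\omega_1$. Since $S$ is unbounded, fix $\xi\in S$ with $\xi\geq\beta$ and look at the condition $\{\xi\}$. By predensity it is compatible with, hence comparable to, some $d\in D$; as $\max d<\beta\leq\xi$, the only possibility is that $d$ is a proper initial segment of $\{\xi\}$, forcing $d=\emptyset$ -- a contradiction. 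Consequently, for any $\omega_1$-sequence of countable predense sets each member contains $\emptyset$, and the trivial filter $\{\emptyset\}$ meets them all; this is $\BFA^\omega_{\PP_S,\omega_1}$. (This is precisely the degeneracy of bounded forcing axioms for non-Boolean forcings noted after Definition \ref{Defn_BFA}.)

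For \ref{Lemma separating BFA from clubBN 2} the tempting witness is a $1$-bounded name for the generic club $\bigcup\dot G$, but this cannot exist: committing each $\check\alpha$ to a single condition cannot track the generic initial segment reaching $\alpha$, and this is the genuine obstacle. The fix is to take instead $\sigma:=\check S$, a rank $1$, $1$-bounded $\omega_1$-name. The forcing $\PP_S$ adds a club $C=\bigcup\dot G\subseteq S$ and preserves $\omega_1$ -- the latter by the standard argument that, below any condition, one can build an $M$-generic descending sequence for a countable $M\prec H_\theta$ with $\delta:=M\cap\omega_1\in S$ (using stationarity of $S$) and close it off with top $\delta\in S$. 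Hence $\PP_S\forces$ ``$\check S\supseteq C$ and $C$ is club'', so $\PP_S\forces\mathsf{club}(\sigma)$. On the other hand $\check S^{g}=S$ for every filter $g\in V$, and $S$ contains no club in $V$ since $\omega_1\setminus S$ is stationary. So no $g\in V$ gives $\mathsf{club}(\sigma^g)$, and $\club\BN^1_{\PP_S,\omega_1}$ fails.

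For the final clause, fix $1\leq\lambda\leq\omega$. Meeting predense sets of size ${\leq}\lambda$ is weaker than meeting those of size ${\leq}\omega$, so $\BFA^\omega_{\PP_S,\omega_1}$ yields $\BFA^\lambda_{\PP_S,\omega_1}$; and the witness $\check S$ is $1$-bounded, hence $\lambda$-bounded, so it also refutes $\club\BN^\lambda_{\PP_S,\omega_1}$. Therefore $\BFA^\lambda_{\PP_S,\omega_1}$ holds while $\club\BN^\lambda_{\PP_S,\omega_1}$ fails, for every such $\lambda$, which is the asserted non-implication. The only step requiring care beyond the elementary tree analysis is the $\omega_1$-preservation of $\PP_S$.
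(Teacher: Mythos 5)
Your proposal is correct and follows essentially the same route as the paper: part \ref{Lemma separating BFA from clubBN 1} is the observation that $\PP_S$ has no nontrivial countable predense sets (the paper phrases this via maximal antichains and uses $\min(p)$ where you use $\max d$, but the tree argument is the same), and part \ref{Lemma separating BFA from clubBN 2} uses the identical witness $\sigma=\check S$. The extra verification of $\omega_1$-preservation is harmless but not needed, since the generic $C=\bigcup\dot G$ is closed and unbounded in $\omega_1^V$ by the density arguments alone.
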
 
\begin{proof} 
\ref{Lemma separating BFA from clubBN 1}: 
We claim that every maximal antichain $A\neq \{1_{\PP_S} \}$ is uncountable. 
(This shows that $\BFA^\omega_{\PP_S,\omega_1}$ holds vacuously.) 
To see this, suppose that $A$ is countable. 
Let $\alpha=\sup\{ \min(p)\mid p\in A\}$ and find some $\beta>\alpha$ in $S$. 
Then $q=\{\beta\}$ is incompatible with all $p\in A$, so $A$ cannot be maximal. 

\ref{Lemma separating BFA from clubBN 2}:  
$\sigma=\check{S}$ is $1$-bounded and $\PP_S \forces ``\sigma$ contains a club''.  
But for every filter $g$, $\sigma^g=S$ does not contain a club, since $S$ is co-stationary. 
\end{proof}

%%%%%%%%%%
\section{Conclusion} 

The above results show that often, name principles are equivalent to forcing axioms. 
This provides an understanding of basic name principles $\NP_{\PP,\kappa}$ and of simultaneous name principles for $\Sigma_0$-formulas. 
For bounded names, the results provide new characterisations of the bounded forcing axioms $\BFA^\lambda$ for $\lambda\geq\kappa$. 
Name principles are closely related with generic absoluteness and can be used to reprove Bagaria's equivalence between bounded forcing axioms of the form $\BFA^\kappa$and generic absoluteness principles. 
Bagaria's result has been recently extended by Fuchs \cite{fuchs2021aronszajn}. 
He introduced a notion of $\Sigma^1_1(\kappa,\lambda)$-absoluteness for cardinals $\lambda\geq\kappa$ and proved that it is equivalent to $\BFA^\lambda_\kappa$. 
It remains to see if this can be derived from our results. 

Several problems about the unbounded name principle $\ub\FA_\kappa$ remain unclear. 
The results in Lemmas \ref{ubFA implies BFA} and \ref{ubFA implies FA for sigma-distributive forcings} about obtaining (bounded) forcing axioms from $\ub\FA_\kappa$ for forcings that do not add reals or ${<}\kappa$-sequences, respectively, hint at possible generalisations (see Question \ref{question ubFA BFA}). 
For forcings which add reals, we have that $\ub\FA_{\omega_1}$ is trivial for all $\sigma$-linked forcings and implies $\FA_{\omega_1}$ for random forcing. 
In all these cases, $\ub\FA_{\omega_1}$ and $\stat\FA_{\omega_1}$ are either both trivial or both equivalent to $\FA_{\omega_1}$. 
Can we separate $\ub\FA_{\omega_1}$ from $\stat\FA_{\omega_1}$ (See Question \ref{Question ubFA versus statFA})? Can $\ub\FA_{\omega_1}$ be nontrivial but not imply $\FA_{\omega_1}$? 
It remains to study other forcings adding reals and Baumgartner's forcing \cite[Section 3]{baumgartner1984applications} (see Question \ref{Question Baumgartner's forcing}). 

The stationary name principle $\stat\NP_{\omega_1}$ follows from the forcing axiom $\FA_{\omega_1}$ for some classes of forcings. 
For example, for the class of c.c.c. forcings both $\stat\NP_{\omega_1}$ and $\FA^+_{\omega_1}$ are equivalent to $\FA_{\omega_1}$ by results of Baumgartner (see Lemma \ref{Baumgartner's lemma}), Todor\v{c}evi\'c and Veli\v{c}kovi{\'c} \cite{todorcevic1987martin} (see Lemma \ref{characterisation of precaliber}). 
In general, $\FA^+$ goes beyond the forcing axiom, since being stationary is not first-order over $(\kappa,\in)$. 
For example, for the class of proper forcings,  $\PFA^+$ is strictly stronger that $\PFA$ by results of Beaudoin \cite[Corollary 3.2]{beaudoin1991proper} and Magidor (see \cite{shelah1987semiproper}). 
So $\FA^+$ and $\BFA^+$ do not fall in the scope of generic absoluteness principles, unless one artificially adds a predicate for the nonstationary ideal. 
Can one formulate $\PFA^+$ as a generic absoluteness or name principle for a logic beyond first order? 
Some questions remain about the weak variant $\stat\BN_{\PP,\omega_1}^1$ of $\stat\NP_{\omega_1}$. 
It is nontrivial for random forcing (see Lemma \ref{CH implies failure of statN(random)}) and for Suslin trees (see Corollary \ref{Suslin trees}). 
What is its relation with other principles? 
Does  $\stat\BN_{c.c.c.,\omega_1}^1$ imply $\MA_{\omega_1}$?

%\todo[inline]{Problem: the link to Sargsyan's homepage (linked in the bibliography) has been broken for months. Since the paper is available nowhere else, we have to check f the webpage works when we get the paper back from the referee.} 

\bibliographystyle{plain}
\bibliography{references}

\end{document}